\documentclass[11pt]{amsart}
\usepackage[a4paper,margin=1in]{geometry}
\usepackage{amsmath,amssymb,amsthm,mathrsfs,mathtools}
\usepackage[colorlinks=true,linkcolor=blue,citecolor=magenta,urlcolor=blue]{hyperref}
\usepackage{ bbold }
\usepackage{dirtytalk}
\usepackage{comment}

\renewcommand{\phi}{\varphi}

\theoremstyle{plain}
\newtheorem{theorem}{Theorem}[section]
\newtheorem{lemma}[theorem]{Lemma}
\newtheorem{proposition}[theorem]{Proposition}
\newtheorem{corollary}[theorem]{Corollary}
\newtheorem{question}[theorem]{Question}

\theoremstyle{definition}
\newtheorem{definition}[theorem]{Definition}
\newtheorem{remark}[theorem]{Remark}

\numberwithin{equation}{section}

\title[Arbitrarily Fast Quantum Dispersion in Long-Range Crystals]{Arbitrarily Fast Quantum Dispersion \\ in Long-Range Crystals}

\author{Ga\'etan Leclerc, Mostafa Sabri and Tuomas Sahlsten}

\thanks{This work was concluded during the MFO Oberwolfach event \textit{Novel Developments in Spectral Theory and the Dynamics of Quantum Systems with Applications}, 9 Aug - 14 Aug 2026, we kindly acknowledge the hospitality, inspiring discussions with the other participants and facilities that helped us to complete the work. G.L. and T.S. acknowledge support from the Research Council of Finland's Academy Research Fellowship \emph{``Quantum chaos of large and many body systems''}, grant Nos. 347365, 353738. AI tools were used for language editing, proofreading and literature searches. All original ideas and proofs are those of the authors, who independently checked the final manuscript and take full responsibility for its contents.}

\begin{document}

\begin{abstract}
We construct the first examples of long-range crystals exhibiting arbitrarily fast polynomial quantum dispersion. The Floquet functions of our Hamiltonians are highly oscillatory Weierstrass functions, whose rough autosimilar structure drives the fast dispersion. The proof develops a new Fourier decay theory for $C^\alpha$ images of Lebesgue measure, based on a Dolgopyat-inspired transfer operator method, and yields a van der Corput lemma for Weierstrass functions. As a consequence, the local time of classical Weierstrass functions of sufficiently large lacunarity exists and is $C^k$, answering a question raised by Geman and Horowitz in 1980.
\end{abstract}

\maketitle

\section{Introduction}

\subsection{Background and results on dispersion} Quantum transport and dispersion describe complementary aspects of the spreading of solutions $u(t)=e^{-itH}u_0$ of the Schr\"odinger equation $i\partial_tu=Hu$. Transport measures how rapidly the mass of a wave packet moves through space, while dispersion measures its flattening as $t\to\infty$. These phenomena have been extensively studied in settings such as discrete periodic and quasiperiodic systems on $\mathbb Z$. For Schr\"odinger operators $H$ on $\mathbb Z$, transport is called \textit{ballistic} if $\|Xe^{-itH}u_0\|_{\ell^2(\mathbb Z)}\asymp t$ as $t\to\infty$ for suitably regular $u_0$, where $Xu(n)=nu(n)$ is the position observable. The operator $H$ is called \textit{dispersive} if $\|e^{-itH}\|_{\ell^1(\mathbb Z)\to\ell^\infty(\mathbb Z)}\to0$ as $t\to\infty$. See e.g. \cite{DamanikMalinovitchYoung,BoutetdeMonvelSabri} for background on these.

For locally finite periodic systems, transport is quite well understood. Periodic Schr\"odinger operators exhibit ballistic motion under broad assumptions, see e.g. \cite{DamanikLukicYessen,FillmanPeriodic,BoutetdeMonvelSabri}. Dispersion is more delicate. In one dimension, dispersive estimates go back to Firsova \cite{Firsova} in the continuum, while in the discrete setting Mi and Zhao \cite{MiZhao1,MiZhao2} obtained estimates first for period two and then for general periods. More recently, Damanik, Fillman and Young \cite{DamanikFillmanYoung} established the bound $\|e^{-itH}\|_{\ell^1(\mathbb Z)\to\ell^\infty(\mathbb Z)}\lesssim |t|^{-1/3}$ for arbitrary periodic Schr\"odinger operators on $\mathbb Z$, matching the dispersion speed of the discrete Laplacian. For quasiperiodic operators, much less is known even in dimension one, with dispersion established for sufficiently small analytic potentials \cite{BZ20}. For the Fibonacci Hamiltonian, quantum dynamics has been related to the hyperbolic dynamics and fractal geometry of the trace map, see e.g. \cite{DamanikTcheremchantsev,DamanikGorodetski,DamanikGorodetskiYessen} and the recent work \cite{Leclerc2}.

A different direction is to relax local finiteness while retaining periodicity. In \cite{KernerPostSabriTaufer}, Kerner, Post, the second author and T\"aufer studied periodic crystals with summable long-range hopping, described by operators $(Hu)(n)=\sum_{k\in\mathbb Z}w_k u(n+k)$ with $(w_k)_{k\in\mathbb Z}\in\ell^1(\mathbb Z)$. They showed that allowing infinitely many hopping distances can lead to phenomena impossible in the locally finite setting, including purely singular continuous spectrum, flat bands whose eigenfunctions must have infinite support, ballistic transport without dispersion, and transitions between ballistic and super-ballistic transport. Long-range operators also arise naturally in the study of quasicrystals through \textit{Aubry duality}. Indeed, Aubry duality transforms a one-dimensional quasiperiodic Schr\"odinger operator $(H_{V,\alpha,\theta}u)(n)=u(n+1)+u(n-1)+V(\theta+n\alpha)u(n)$ into $(\widehat H u)(n)=\sum_{k\in\mathbb Z}\widehat V(k)u(n-k)+2\cos(2\pi(\theta+n\alpha))u(n)$, see \cite{AvilaJitomirskaya,HaroPuig}. Thus analytic potentials $V$ with infinitely many nonzero Fourier coefficients give non-locally finite long-range hopping on the dual side. These examples show that removing local finiteness can fundamentally change quantum dynamics while retaining strong structure such as periodicity.

The main purpose of this paper is to show that the same is true for quantum dispersion. We construct the first crystals exhibiting \emph{arbitrarily fast polynomial quantum dispersion}, showing that there is no universal polynomial upper bound on the speed of dispersion once local finiteness is removed. The mechanism is perhaps surprising. Previously known dispersive examples in crystals had sufficiently regular Floquet functions, allowing classical oscillatory integral estimates such as stationary phase and the standard van der Corput lemma to give rates such as $|t|^{-1/3}$ in the locally finite 1D setting of \cite{DamanikFillmanYoung} and $|t|^{-1/2}$ for the long-range example of \cite{KernerPostSabriTaufer}. Thus one might expect roughness of the Floquet function to obstruct dispersion. Our examples show the opposite: sufficiently \textit{autosimilar} roughness can produce arbitrarily fast polynomial decay.

Our Floquet functions are highly oscillatory \textit{Weierstrass functions}, with fractal oscillations at every scale. Such structures have appeared in random walks with self-similar long-range jumps and models of anomalous quantum transport \cite{HughesShlesingerMontroll,CaceresNizama}, and are closely connected to hyperbolic dynamics and solenoidal attractors through the study of the Hausdorff dimension of their graphs \cite{BaranskiBaranyRomanowska,Hunt,Ledrappier,Keller,Shen,Tsujii,Romanowska}. Related Weierstrass functions also arise as temporal distance functions for Axiom A systems \cite{TZ23,Leclerc2}. Here we exploit their exact autosimilarity to obtain oscillatory cancellation through a transfer-operator argument inspired by Dolgopyat's method \cite{Do98,Na05}. This leads to a new $C^\alpha$ van der Corput lemma for Weierstrass functions that may be of independent interest.

We first state our main result on dispersion. Given $\mathbf w=(w_k)_{k\in\mathbb Z}\in\ell^1(\mathbb Z)$ with $w_{-k}=w_k$ and $w_k\ge 0$, let
$$
(H_{\mathbf w}u)(n)=\sum_{k\in\mathbb Z}w_k u(n+k),\qquad n\in\mathbb Z.
$$
This is the adjacency operator of a translation-invariant lattice in which $n$ is connected to $n\pm k$ with edge weight $w_k$. After normalization by $\|\mathbf w\|_{\ell^1}$, the weights may be interpreted as hopping probabilities.

\begin{theorem}\label{thm:maincrystal}
For every $N\in\mathbb N$ there exists $\lambda_N\geq2$ such that, for every integer $\lambda\geq\lambda_N$, the Hamiltonian $H_{\mathbf w}$ with $w_{\pm\lambda^j}=2^{-j}$, $j\geq0$, and $w_k=0$ otherwise, satisfies
$$
\|e^{-itH_{\mathbf w}}\|_{\ell^1(\mathbb Z)\to\ell^\infty(\mathbb Z)}
\lesssim_N |t|^{-N},\qquad |t|\to\infty.
$$
\end{theorem}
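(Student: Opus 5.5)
Via Fourier transform, $H_{\mathbf w}$ is unitarily equivalent to multiplication by the Floquet function
$$
\phi(x)=\sum_{k}w_k e^{2\pi i kx}=2\sum_{j\ge0}2^{-j}\cos(2\pi\lambda^j x),\qquad x\in\mathbb T,
$$
a Weierstrass function of H\"older exponent $\alpha=\log 2/\log\lambda$, which tends to $0$ as $\lambda\to\infty$. The kernel of $e^{-itH_{\mathbf w}}$ is
$$
e^{-itH_{\mathbf w}}(n,m)=\int_{\mathbb T}e^{-it\phi(x)}e^{2\pi i(n-m)x}\,dx ,
$$
so the claimed $\ell^1\to\ell^\infty$ bound is equivalent to
$$
\sup_{\xi\in\mathbb Z}\Bigl|\int_{\mathbb T}e^{-it\phi(x)+2\pi i\xi x}\,dx\Bigr|\lesssim_N|t|^{-N}.
$$
This is a uniform Fourier decay estimate for the image measure $\phi_*\mathrm{Leb}$, twisted by a linear phase. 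It is exactly the van der Corput lemma for Weierstrass functions advertised in the introduction, and I would prove it by a transfer-operator argument.

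\textbf{Step 1: self-similarity.} The functional equation $\phi(x)=2\cos(2\pi x)+\tfrac12\phi(\lambda x)$ realises $\phi$ as the fixed point of an affine system over the expanding map $T(x)=\lambda x$ on $\mathbb T$. Iterating $n$ times gives
$$
\phi(x)=S_n(x)+2^{-n}\phi(\lambda^n x),\qquad S_n(x)=\sum_{j<n}2^{1-j}\cos(2\pi\lambda^j x).
$$
Substituting and splitting $\mathbb T$ into the $\lambda^n$ inverse branches of $T^n$, the integral becomes
$$
\int_{\mathbb T}\mathcal L^n_{t,\xi}(\mathbb 1)\,e^{-it2^{-n}\phi(y)}\,dy .
$$
Here $\mathcal L_{t,\xi}f(y)=\lambda^{-1}\sum_{a=0}^{\lambda-1}e^{-it\,\psi(\frac{y+a}{\lambda})}f(\frac{y+a}\lambda)$ is the twisted transfer operator of $T$, with $\psi=2\cos(2\pi\cdot)$ and the twist rescaled at each level. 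The factor $e^{2\pi i\xi x}$ is absorbed in the same way by writing $\xi x$ via the branches.

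I choose $n$ with $2^{-n}|t|\approx1$, so that the last factor is harmless. Then $\lambda^n\approx|t|^{1/\alpha}$ branches are summed, and it suffices to show that $n$ iterates of the twisted operators contract the sup norm by $|t|^{-N}$.

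\textbf{Step 2: Dolgopyat-type contraction.} At each scale the effective frequency is $|t|2^{-j}\gtrsim1$. I would show that, acting on a cone of functions with controlled $C^1$ (Lipschitz) norm relative to frequency, one application of a block of $m$ iterates contracts the $L^2$ norm by a fixed factor $\rho<1$. Here $m$ is bounded independently of $t$.

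The mechanism is non-concentration (UNI) of the phases $t2^{-j}\cos(2\pi(y+a)/\lambda^{k})$ across different branches $a$. For $\lambda$ large there are many branches whose phase derivatives differ by $\gtrsim t2^{-j}\lambda^{-k}$, so the phases cannot all align. Dolgopyat's cancellation lemma then gives pointwise cancellation on a positive-proportion set, which yields the $L^2$ contraction.

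Over $n\approx\log_2|t|$ levels this produces a total decay of $\rho^{n/m}=|t|^{-c(\lambda)}$. The crucial point is that $c(\lambda)\to\infty$ as $\lambda\to\infty$. This requires extracting a contraction $\rho(\lambda)\lesssim\lambda^{-\delta}$ per level, for instance via the square-function/large-sieve bound $\lambda^{-1}\sqrt{\#\text{aligned branches}}$ once the phases are separated. Since $\log\rho/\log 2^{-1}\sim\delta\log_2\lambda$, choosing $\lambda\ge\lambda_N$ gives exponent $\geq N$. Uniformity in $\xi$ should come for free, because the linear phase only adds a constant shift to the branch derivatives and cannot cancel the large oscillation of $\cos$ on most branches.

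\textbf{Main obstacle.} The hard step is Step 2: proving a per-level contraction whose strength improves with $\lambda$, uniformly in $\xi$ and in the scale parameter. At scales where $t2^{-j}$ is only moderately large, the phase derivative might be comparable to $\xi$ and stationary points can occur. I would first try to handle these by showing that stationary branches form an $O(1)$ fraction out of $\lambda$. That yields $\rho\lesssim\lambda^{-1/2}$ in $L^2$, provided a cone-invariance (Lasota--Yorke) estimate for $\mathcal L_{t,\xi}$ at frequency $t2^{-j}$ can be established; this is where I expect most of the technical work.
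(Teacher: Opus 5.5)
Your Step~1 is the paper's reduction. With $\mu=1/2$ and $\varphi=2\cos(2\pi\cdot)$, the relation $W=\varphi+\mu W(\lambda\,\cdot)$ and the base-$\lambda$ digits $m_k$ of $\xi$ turn the integral into $\int\mathcal L^{\langle n\rangle}_{\lambda,i\Phi(t,\xi)}(1)\,e^{it\mu^nW}e^{2\pi i\xi^{(n)}\theta}\,d\theta$, with phases $t\mu^k\varphi+2\pi m_k\theta$.

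The gap is Step~2. It is the whole content of the theorem, and you leave it as a hoped-for estimate; the mechanisms you name do not produce it.
\begin{itemize}
\item Dolgopyat's cancellation lemma on a positive-proportion set, with the usual pointwise domination $|\mathcal L^{\langle m\rangle}h|\le\mathcal N H$ by a positive operator, gives a factor $1-\eta$ per block. That yields only a bound of the type in Theorem~\ref{thm:main}(1), with no reason for the exponent to grow with $\lambda$.
\item Square-root cancellation among $\lambda$ separated branches is an $L^2$-average statement in $\theta$, not a pointwise one. So it cannot be fed into a positive dominating operator.
\item If you instead expand $\|\mathcal L^{\langle m\rangle}h\|_2^2$ into pairs of branches, integration by parts bounds the off-diagonal terms by $\|h\|_\infty$ and $\|h'\|_\infty$, not by $\|h\|_2$. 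The estimate therefore does not iterate in $L^2$.
\item Passing back to a sup-type norm via $|\mathcal L^{\langle k\rangle}f|^2\le\mathcal L_\lambda^k(|f|^2)=\int|f|^2+O(\lambda^{-k}\|\,|f|^2\|_{C^1})$ costs about $s\lambda^{-k}$ at frequency $s$. It therefore needs $k\gtrsim\log s/\log\lambda$ extra iterates.
\end{itemize}
So blocks of bounded length $m$ cannot close the argument.

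The missing idea is to use blocks whose length grows with $t$, and to get the $\lambda$-dependence from the number of blocks rather than from a per-level rate. Theorem~\ref{thm:trueDolgopyat} shows that $n_\beta(t)\approx\beta\log t/|\log\mu|$ iterates map $\|\cdot\|_{1,t}$ to $\|\cdot\|_{1,t\mu^{n_\beta(t)}}$ with gain $t^{-\rho}$, where $\rho=\rho(\varphi)$, as soon as $\lambda\ge\max(C_1,\mu^{-C_2/\beta})$. Its proof runs as follows.
\begin{itemize}
\item The $L^2$ core expands the square into pairs of words and refines by a common word $\mathbf c$ of length $2\mathfrak n$, with $\lambda^{2\mathfrak n}\simeq t^{1/8}$.
\item It applies van der Corput to the \emph{second} derivative of $S_n\Phi\circ g_{\mathbf a}-S_n\Phi\circ g_{\mathbf b}$. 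The $\xi$-dependent linear phases vanish identically there, so your stationary-point worry never arises.
\item Bad pairs are counted by the large-$\lambda$ tree lemma. It rests on a separation lemma: all but a $\lambda^{-\gamma_{sep}}$ proportion of letters $a$ keep $\ell_{\mathbf b a}(\theta)$ at distance $\ge\lambda^{-1/2}$ from any target, by non-concentration of the analytic non-constant $\varphi''$.
\item The $L^2$ bound is upgraded to $\|\cdot\|_{1,t}$ by Lasota--Yorke and the Perron--Frobenius step above. This is affordable precisely because the block is long.
\end{itemize}
Chaining $N\sim\log\lambda/|\log\mu|$ such blocks with $\beta_j=1/(N-j)$ then gives $\prod_j t_j^{-\rho}\le t^{-\rho N/2}$.
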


The construction is based on hopping along the lacunary scales $1,\lambda,\lambda^2,\lambda^3,\ldots$, with geometrically decreasing amplitudes. Increasing the lacunarity makes the corresponding Floquet function increasingly irregular, but at the same time produces increasingly strong oscillatory cancellation. In this sense, roughness is not an obstruction to dispersion in our examples, but the main mechanism behind the argument. Regarding quantitative aspects, we note that a crude tracking of the constants in the proof gives a bound $\lambda_N\leq 2^{10^9N}$, which is far from optimal.

Theorem \ref{thm:maincrystal} naturally raises the question of how far this phenomenon can be pushed. In particular, can a long-range crystal disperse exponentially fast? We show that the answer is no, and in fact this obstruction holds for every crystal, independently of hoppings $\mathbf{w} = (w_k)_{k \in \mathbb Z}$.

\begin{proposition}\label{thm:noexponential-intro}
Let $H$ be any crystal on $\mathbb Z$. Then $\langle \delta_0,e^{-itH}\delta_0\rangle$ cannot decay exponentially as $|t|\to\infty$. In particular, exponential dispersion is impossible.
\end{proposition}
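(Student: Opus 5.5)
We pass to the Floquet side. Conjugating by the Fourier series $\mathcal F:\ell^2(\mathbb Z)\to L^2(\mathbb T)$, $\mathbb T=\mathbb R/\mathbb Z$, turns $H=H_{\mathbf w}$ into multiplication by the real, continuous Floquet function $h(\theta)=\sum_{k}w_k e^{2\pi i k\theta}=w_0+2\sum_{k\ge1}w_k\cos(2\pi k\theta)$. Continuity holds because $\mathbf w\in\ell^1$. Then
$$
\langle\delta_0,e^{-itH}\delta_0\rangle=\int_{\mathbb T}e^{-ith(\theta)}\,d\theta=\int_{\mathbb R}e^{-itx}\,d\mu(x)=\widehat{\mu}(t),
$$
where $\mu=h_*(\mathrm{Leb}_{\mathbb T})$ is the push-forward of Lebesgue measure. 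Since $\|e^{-itH}\|_{\ell^1\to\ell^\infty}\ge|\langle\delta_0,e^{-itH}\delta_0\rangle|$, the ``in particular'' follows once we show that $\widehat\mu$ does not decay exponentially.

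\textbf{Key step.} $\mu$ is a probability measure supported in the compact interval $[-\|\mathbf w\|_{\ell^1},\|\mathbf w\|_{\ell^1}]$. Suppose $|\widehat\mu(t)|\le Ce^{-c|t|}$ for all real $t$. This is all $t$, not only large $|t|$, since $\widehat\mu$ is bounded by $1$.

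First, the decay makes $\widehat\mu$ integrable, so $\mu$ has a density $f(x)=\frac1{2\pi}\int\widehat\mu(t)e^{itx}\,dt$.

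Second, the exponential bound lets $f$ extend holomorphically to the strip $|\mathrm{Im}\,z|<c$. The extension is $\frac1{2\pi}\int\widehat\mu(t)e^{itz}\,dt$, and this integral converges locally uniformly in the strip. So $f$ is real-analytic on all of $\mathbb R$.

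Third, $f$ vanishes on the nonempty open set $\mathbb R\setminus[-\|\mathbf w\|_1,\|\mathbf w\|_1]$. By the identity theorem $f\equiv0$, which contradicts $\int f=1$.

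\textbf{Where care is needed.} This is the Paley--Wiener argument: a nonzero compactly supported measure cannot have an exponentially decaying Fourier transform. The only points to check are the following.
\begin{itemize}
\item The density is well defined; this follows from integrability of $\widehat\mu$.
\item The analytic continuation is uniform on compact subsets of the strip; this follows from $|e^{itz}|=e^{-t\,\mathrm{Im} z}$.
\end{itemize}

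\textbf{Interpretation of ``decay exponentially''.} If the statement is meant as decay along $|t|\to\infty$ only, we use that the bound near $0$ is automatic, as noted above. If it is meant as decay along some sequence $t_n\to\infty$, the argument above does not apply. For the stated claim, we read ``decay'' as a bound holding for all large $|t|$, which is the natural meaning.

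No earlier result of the paper is needed. The only structural input is that a crystal has a bounded real Floquet function, which is immediate from $\mathbf w\in\ell^1$ and the symmetry $w_{-k}=w_k$.
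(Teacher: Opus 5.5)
Your proposal is correct and is essentially the paper's proof. Exponential decay makes $\widehat\mu$ integrable, which gives a continuous density. The decay also lets the inversion integral extend holomorphically to a strip. Compact support of the spectral measure then forces the density to vanish on an open set, hence everywhere by the identity theorem, contradicting $\int f=1$.

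The only cosmetic difference is that you obtain compact support through the Floquet push-forward $h_*(\mathrm{Leb}_{\mathbb T})$. The paper gets it directly from boundedness of $H$, which shows the argument applies to any bounded self-adjoint operator.
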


The reason is purely spectral. The diagonal matrix element $\langle\delta_0,e^{-itH}\delta_0\rangle$ is the Fourier transform of the spectral measure of $\delta_0$. Since $H$ is bounded, this measure has compact support. Exponential decay of its Fourier transform would, by a Paley-Wiener argument, force its density to extend analytically to a strip, which is impossible for a non-zero compactly supported density. 

\begin{remark}
\begin{itemize}
    \item[(1)] Our results show that arbitrarily high polynomial powers are attainable, whereas exponential dispersion is universally ruled out. Whether the intermediate regime of super-polynomial but sub-exponential dispersion can occur remains open. Beurling-Malliavin multiplier theorem \cite{BM} allows constructions of functions with such Fourier decay, so potentially with even more lacunary hopping, crystals in this faster dispersive regime could also be constructed by adapting our methods.

\item[(2)] If we do not have lacunary hopping, e.g. under an eventual monotonicity assumption on the Hamiltonian coefficients $w_k$, we prove the much stronger restriction that dispersion cannot be faster than $1/t$, see Corollary \ref{cor:monotone}. Thus long-range hopping by itself is not responsible for Theorem \ref{thm:maincrystal}: the lacunary arrangement of the interactions is essential to the mechanism.
\end{itemize}
\end{remark}

\subsection{Link to Fourier decay theory and a $C^\alpha$ van der Corput lemma}
We now explain the Fourier-analytic mechanism behind Theorem~\ref{thm:maincrystal}. By Floquet theory, a translation-invariant operator $H_{\mathbf w}$ on $\mathbb Z$ is diagonalised by the Fourier transform, with scalar Floquet function
$$
h(\theta)=\sum_{k\in\mathbb Z}w_ke^{2\pi ik\theta},\qquad \theta\in\mathbb T.
$$
The Schr\"odinger evolution becomes multiplication by $e^{-ith(\theta)}$ in Fourier space, giving
\begin{equation}\label{eq:osc-int-intro}
\|e^{-itH_{\mathbf w}}\|_{\ell^1(\mathbb Z)\to\ell^\infty(\mathbb Z)}
=
\sup_{n\in\mathbb Z}
\left|
\int_0^1 e^{2\pi in\theta}e^{-ith(\theta)}\,d\theta
\right|,
\end{equation}
see \cite[Section~2.2]{KernerPostSabriTaufer}. Thus dispersion is equivalent to quantitative decay of these oscillatory integrals, uniformly in $n$.

For $n=0$, the integral is the Fourier transform of the push-forward measure $h_*d\theta$, while for general $n$ it is the Fourier transform of the complex measure $h_*(e^{2\pi in\theta}\,d\theta)$. Thus dispersion becomes a problem about Fourier decay of images of Lebesgue measure, uniformly in the Fourier mode. This also clarifies its relation to spectral type. Fourier decay of a spectral measure rules out atoms, and sufficiently fast decay implies absolute continuity, whereas slower decay may occur for singular continuous measures \cite{SiSC7}. Conversely, absolute continuity gives decay of each fixed matrix element by the Riemann-Lebesgue lemma, but not the uniform decay required for dispersion; indeed, \cite{KernerPostSabriTaufer} gives long-range crystals with absolutely continuous spectrum and ballistic transport for which dispersion fails.

This places the problem in the broader theory of Fourier decay of dynamically defined measures, which has developed rapidly in recent years (see the survey \cite{Sahlsten}). One direction establishes Fourier decay directly for classes such as Patterson-Sullivan measures \cite{BD,BakerKhalilSahlsten,LequenSahlsten}, self-conformal measures \cite{AlgomHertzWang,AlgomHertzWangLog,BakerSahlsten,BakerKhalilSahlsten,JordanSahlsten,KaufmanCF,QueffelecRamare,SahlstenStevens}, self-affine and stationary measures \cite{Li,LiENS,LiSahlstenAffine,LiSahlsten,LindenstraussVarju,Rapaport}, and equilibrium states and other hyperbolic measures \cite{LeclercBunched,LeclercHyperbolic,LeclercJulia,BakerKhalilSahlsten}, using methods from additive combinatorics and renewal theory to random walks and transfer operators. Another direction, more relevant to us and going back to Kaufman \cite{Kaufman}, exploits smooth nonlinear images of measures, where the change of variables itself creates Fourier decay. Recent results of this type for nonlinear images of self-similar measures were developed in \cite{MosqueraShmerkin,ACWW,BakerBanaji,BanajiYu,BanajiYu2,AlgomBenOvadiaHertzShannon}.

However, the Floquet functions we consider are typically only H\"older, where classical oscillatory integral estimates using derivatives are unavailable and much less is known (see e.g. \cite{Leclerc}). Indeed, for the crystals we study in Theorem~\ref{thm:maincrystal}, with $w_{\pm\lambda^j}=2^{-j}$ and all other weights zero, the Floquet function is a \textit{Weierstrass function}. More generally, for non-constant real-analytic and $1$-periodic $\varphi:\mathbb R\to\mathbb R$, $0<\mu<1$ and $\lambda\geq2$, define
\begin{equation}\label{eq:weierstrass-intro}
W_{\mu,\lambda}(\theta)=W_{\mu,\lambda}^{(\varphi)}(\theta)
=
\sum_{j=0}^{\infty}\mu^j\varphi(\lambda^j\theta),\qquad \theta\in\mathbb T^1=\mathbb R/\mathbb Z.
\end{equation}
These are $\beta$-H\"older for every $\beta<\alpha:=|\log\mu|/\log\lambda$, but satisfy \textit{autosimilarity} relation:
$$
W_{\mu,\lambda}(\theta)=\varphi(\theta)+\mu W_{\mu,\lambda}(\lambda\theta),\qquad \theta\in\mathbb T^1.
$$
Thus increasing the lacunarity $\lambda$ makes the Floquet function increasingly rough while preserving its oscillatory structure across scales. This autosimilarity relation together with the non-constancy of $\phi$ provides a substitute for differentiability and, through a Dolgopyat-type argument \cite{Do98,Na05}, allows us to obtain Fourier decay uniformly in the additional Fourier mode in \eqref{eq:osc-int-intro}.

Our main ``$C^\alpha$ van der Corput lemma'' is the following.

\begin{theorem}\label{thm:main}
Let $\varphi:\mathbb R\to\mathbb R$ be a non-constant real-analytic $1$-periodic function, let $0<\mu<1$, and let $\lambda\geq2$ be an integer.
\begin{itemize}
\item[(1)] There exists $\beta=\beta(\lambda,\mu,\varphi)>0$ such that
$$
\sup_{n\in\mathbb Z}
\left|
\int_0^1 e^{2\pi in\theta}e^{itW_{\mu,\lambda}(\theta)}\,d\theta
\right|
\lesssim |t|^{-\beta},
\qquad |t|\to\infty.
$$
\item[(2)]There exist $c(\varphi)>0$ and $\lambda_*(\mu,\varphi)\ge2$ such that, for every integer $\lambda\ge\lambda_*(\mu,\varphi)$:
$$
\sup_{n\in\mathbb Z}
\left|
\int_0^1 e^{2\pi in\theta}e^{itW_{\mu,\lambda}(\theta)}\,d\theta
\right|
\lesssim
|t|^{-c(\varphi)\frac{\log\lambda}{|\log\mu|}},
\qquad |t|\to\infty.
$$
\end{itemize}
\end{theorem}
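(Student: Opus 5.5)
The plan is to iterate the autosimilarity $W(\theta)=\varphi(\theta)+\mu W(\lambda\theta)$ and turn the oscillatory integral into a transfer-operator problem. Set $I(t,n)=\int_0^1 e^{2\pi i n\theta}e^{itW(\theta)}\,d\theta$. Iterating $m$ times gives
$$
W(\theta)=S_m\varphi(\theta)+\mu^m W(\lambda^m\theta),\qquad S_m\varphi(\theta)=\sum_{j<m}\mu^j\varphi(\lambda^j\theta).
$$
Split $[0,1]$ into the $\lambda^m$ intervals $[k\lambda^{-m},(k+1)\lambda^{-m}]$ and substitute $\theta=(x+k)\lambda^{-m}$. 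Then
$$
I(t,n)=\int_0^1 \Big(\mathcal L^m_{t,n}\mathbb 1\Big)(x)\,e^{it\mu^m W(x)}\,dx,
$$
where $\mathcal L_{t,n}f(x)=\lambda^{-1}\sum_{k=0}^{\lambda-1}e^{2\pi i n (x+k)/\lambda}e^{it\mu^{-1}\cdots}$. It is cleaner to write this with rescaled frequencies: define
$$
\mathcal L_{s,\xi}f(x)=\frac1\lambda\sum_{k=0}^{\lambda-1}e^{is\varphi((x+k)/\lambda)+2\pi i\xi(x+k)/\lambda}f\big((x+k)/\lambda\big),
$$
so that one step maps the parameters as $(s,\xi)\mapsto(\mu s,\xi/\lambda)$. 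Choose $m$ with $|t|\mu^m\approx1$, i.e. $m\approx\log|t|/|\log\mu|$. Then the remaining phase $e^{it\mu^mW}$ is harmless, and it suffices to show that the composition of $m$ twisted operators with frequencies $t\mu^j$, $j<m$, contracts a suitable $C^1$-type (Lipschitz with weight) norm by a factor $\rho<1$ per block of $N_0$ steps, uniformly in the twist $\xi$. That gives $|I|\lesssim \rho^{m/N_0}=|t|^{-\beta}$ with $\beta=|\log\rho|/(N_0|\log\mu|)$, which is part (1).

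The key step is the Dolgopyat-type cancellation, and I expect it to be the main obstacle. Because $\varphi$ is analytic but not derivative-regular at the scale $|s|$, I would use the norm $\|f\|_{s}=\max(\|f\|_\infty,\|f'\|_\infty/|s|)$ and first prove a Lasota--Yorke inequality. Differentiating the $\lambda$ branches produces the factor $|s|\lambda^{-1}\|\varphi'\|_\infty$ from the phase, the factor $\lambda^{-1}$ from $f$, and a twist term $|\xi|/\lambda$. The twist term is problematic only when $|\xi|\gg|s|$, and that regime is handled separately by integrating by parts in the $n$-mode, since $S_m\varphi$ then has derivative negligible relative to $n$. After the Lasota--Yorke step, cancellation comes from the phase differences between branches, $\varphi((x+k)/\lambda)-\varphi((x+k')/\lambda)$. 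By analyticity and non-constancy, their derivative in $x$ is nonvanishing away from finitely many points, which gives a uniform non-integrability condition. For frequency $|s|\ge1$, the standard Dolgopyat argument then applies: build Dolgopyat operators on cylinders of size $\approx|s|^{-1}$, get $L^2$ contraction, and upgrade it to a contraction in the norm. For $|s|$ of order one, with the final $O(1)$ steps, the estimate is trivial. Uniformity in $\xi$ requires the twist to be absorbed, and it is, because it is linear in $x$ and cannot cancel a nonlinear phase on more than a proportion of the cylinders.

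For part (2) I would make the constants explicit in $\lambda$. If a single step of $\mathcal L_{s,\xi}$, averaged over $\lambda$ branches, already gains a factor $\lambda^{-c}$ in $L^2$ whenever $|s|\ge1$, then $m$ steps give $\lambda^{-cm}=|t|^{-c\log\lambda/|\log\mu|}$. To obtain this, note that for large $\lambda$ the $\lambda$ phases $s\varphi((x+k)/\lambda)$ sample $\varphi$ at nearly equidistributed points. The sum $\lambda^{-1}\sum_k e^{is\varphi((x+k)/\lambda)}g_k$ is then a Riemann sum of an oscillatory integral of the analytic $\varphi$, and classical van der Corput gives decay $|s|^{-1/d}$, where $d$ is the maximal order of vanishing of $\varphi'$. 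Combined with Cauchy--Schwarz on the cylinder averages, this yields a gain of $\lambda^{-c(\varphi)}$ per step once $\lambda\ge\lambda_*(\mu,\varphi)$. Making that Riemann-sum approximation uniform at scales where $|s|/\lambda$ is not small is the delicate point.
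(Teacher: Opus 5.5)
Your proposal has a genuine gap at the non-integrability step, which is the heart of part (1). You never use that $W$ is non-analytic, and your one-step condition (the derivative of $\varphi((x+k)/\lambda)-\varphi((x+k')/\lambda)$ vanishing only at finitely many points) does not give contraction. Take any non-constant analytic $1$-periodic $\psi$ and set $\varphi:=\mu\psi(\lambda\,\cdot)-\psi$, so that $W=-\psi$. Your one-step condition generically holds here. But $\mathcal L_\lambda(e^{is\varphi}f)=e^{is\mu\psi}\mathcal L_\lambda(e^{-is\psi}f)$, so $N$ steps with frequencies $s,\mu s,\dots,\mu^{N-1}s$ equal $e^{is\mu^N\psi}\mathcal L_\lambda^N(e^{-is\psi}\,\cdot\,)$. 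This maps $f=e^{is\psi}$, whose $\|\cdot\|_s$-norm is $\max(1,\|\psi'\|_\infty)$, to the unimodular $e^{is\mu^N\psi}$. So there is no block contraction for any $N$. Moreover, once $\lambda^n\gg|t|$ the actual iterates are approximately $c_t\,e^{it\mu^n\psi}$, so your scheme stalls on them too. Conversely, for $\varphi(\theta)=\cos(2\pi\lambda\theta)$ all one-step differences vanish identically, although $W$ is non-analytic.

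The right condition concerns the full sums $\sum_{j\ge1}\varphi^{(k)}(g_{a_j\cdots a_1}\theta)(\mu\lambda^k)^{-j}$ over inverse branches, with $\mu\lambda^k\ge2$; taking $k\ge2$ derivatives also kills the linear twists. The paper's Separation Lemma shows that two words give uniformly separated values unless, after integrating a cohomological equation $k$ times, $\varphi=\mu\psi\circ\lambda-\psi$ with $\psi$ analytic, i.e.\ $W$ is analytic. That case is handled directly by the $C^1$ van der Corput lemma. Otherwise the Tree Lemma turns separation into non-concentration, which feeds van der Corput for the pairwise phase differences obtained by expanding $\|\mathcal L^{\langle n\rangle}(1)\|_{L^2}^2$.

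Part (2) has the same problem in sharper form. No single step can gain $\lambda^{-c}$, even in $L^2$, for inputs bounded in your norm: $f=e^{-is\varphi}$ has $\|f\|_s=\max(1,\|\varphi'\|_\infty)$ and $\mathcal L_{s,0}f\equiv1$. The Riemann-sum heuristic also only makes sense when $|s|\lesssim\lambda$, i.e.\ for $O(\log\lambda/|\log\mu|)$ of the roughly $\log|t|/|\log\mu|$ steps. So this is not a delicate point but the whole difficulty.

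In the paper the gain is $t^{-\rho}$ over blocks of $n_\beta(t)\approx\beta\log t/|\log\mu|$ steps, built as follows. Non-concentration of $\varphi''$ shows that all but a proportion $\lambda^{-\gamma_{sep}}$ of digits $a$ keep $\ell_{\mathbf b a}(\theta)$ at distance at least $\lambda^{-1/2}$ from any target, uniformly in $\mathbf b$. A tree lemma iterates this across scales, and van der Corput on second derivatives gives the $L^2$ bound. Exponential mixing at rate $\lambda^{-n}$ then upgrades it to $\|\cdot\|_{1,t}$ once $\lambda\ge\mu^{-C/\beta}$. Chaining $N\approx\log\lambda/|\log\mu|$ blocks with $\beta_j=1/(N-j)$ gives the exponent, which therefore comes from the number of blocks rather than from a per-step gain.

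Finally, the regime $|\xi|\gg|s|$ cannot be handled by integrating by parts, because the integrand contains the merely H\"older factor $e^{it\mu^mW(\lambda^m\theta)}$. Instead, write $n$ in base $\lambda$: each step then carries a digit of size less than $\lambda$, and the leftover integer twist disappears in the final Cauchy--Schwarz.
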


In terms of the maximal H\"older exponent $\alpha=|\log\mu|/\log\lambda$, the exponent in part~(2) is of order $1/\alpha$, and this order is optimal by Proposition~\ref{thm:maxdis}. Thus rougher autosimilar phases give stronger Fourier decay. By \eqref{eq:osc-int-intro}, this gives Theorem~\ref{thm:maincrystal}: for $w_{\pm\lambda^j}=2^{-j}$, taking $\mu=1/2$ and $\varphi(\theta)=2\cos(2\pi\theta)$ gives $W_{\mu,\lambda}=h$ and $\alpha=\log2/\log\lambda$, so the dispersive exponent is of order $\log\lambda/\log2$ and can be made arbitrarily large as the lacunarity $\lambda$ grows. Finally, when $W_{\mu,\lambda}\in C^{1+\alpha}$, equivalently $\lambda\mu<1$, the result can also be deduced from \cite{ACWW}, combined with the Tsujii-Zhang-inspired non-concentration argument for Weierstrass functions \cite{TZ23,Leclerc2} given in Appendix~\ref{ap:B} that appears to be new. However, this does not cover the $C^\alpha$ regime responsible for fast dispersion, our Dolgopyat-type argument applies in both regimes.

The $C^\alpha$ van der Corput lemma also has a regularity interpretation. For every $n\in\mathbb Z$, consider the complex push-forward measure $\nu_n=(W_{\mu,\lambda})_*(e^{2\pi in\theta}\,d\theta)$. Theorem~\ref{thm:main} gives polynomial decay of $\widehat{\nu_n}$ uniformly in $n$. Whenever the decay exponent is larger than $k+1$, Fourier inversion implies that $\nu_n$ has a $C^k$ density. Hence increasingly rough Weierstrass phases can produce increasingly smooth image densities. For $n=0$, the measure $\nu_0=(W_{\mu,\lambda})_*d\theta$ is precisely the occupation measure of the Weierstrass function, and its density (when it exists) is the \emph{occupation density} or \emph{local time} \cite{GemanHorowitz}. We therefore obtain:

\begin{corollary}\label{cor:localtime}
For every nonconstant real analytic $\varphi$ and $0<\mu<1$, the local time of $W_{\mu,\lambda}$ exists and is $C^k$ whenever $\lambda\geq\max\{\lambda_*(\mu,\varphi),\exp((k+1)|\log\mu|/c(\varphi))\}$. In particular, this includes the classical sine and cosine Weierstrass functions for all sufficiently large $\lambda$.
\end{corollary}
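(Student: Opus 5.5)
The plan is to deduce the corollary directly from Theorem~\ref{thm:main}(2), taking the case $n=0$, and then apply a standard Fourier inversion argument. Let $\nu_0=(W_{\mu,\lambda})_*d\theta$ be the occupation measure. It is a probability measure on $\mathbb R$, supported in the compact interval $[\min W_{\mu,\lambda},\max W_{\mu,\lambda}]$; this interval is bounded because $W_{\mu,\lambda}$ is continuous on $\mathbb T^1$, the series converging uniformly since $\mu<1$. Its Fourier transform is
\[
\widehat{\nu_0}(t)=\int_{\mathbb R}e^{itx}\,d\nu_0(x)=\int_0^1 e^{itW_{\mu,\lambda}(\theta)}\,d\theta ,
\]
which is the integral in Theorem~\ref{thm:main} with $n=0$. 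Set $\gamma:=c(\varphi)\log\lambda/|\log\mu|$. For $\lambda\ge\lambda_*(\mu,\varphi)$, Theorem~\ref{thm:main}(2) gives $|\widehat{\nu_0}(t)|\lesssim |t|^{-\gamma}$ for large $|t|$. Since $|\widehat{\nu_0}|\le1$ everywhere, this upgrades to the global bound $|\widehat{\nu_0}(t)|\lesssim(1+|t|)^{-\gamma}$.

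Next I translate the hypothesis on $\lambda$. The condition $\lambda\ge\exp((k+1)|\log\mu|/c(\varphi))$ is equivalent to $\gamma\ge k+1$. Integrability of $|t|^{j}|\widehat{\nu_0}(t)|$ for $j\le k$ needs the strict inequality $\gamma>k+1$. So at the boundary value of $\lambda$ I need a small amount of slack. There are two ways to get it. The first is to note that Theorem~\ref{thm:main}(2) is typically proved with some room in the constant $c(\varphi)$, which can be shrunk slightly. The second is to argue at equality using a Hölder-type modulus: there I would only get $C^{k-1,1-\epsilon}$ directly, so the first route is better. I would therefore state the step as follows. Replacing $c(\varphi)$ by a marginally smaller admissible constant, which Theorem~\ref{thm:main}(2) permits since decay with a larger exponent implies decay with a smaller one, we get strict inequality $\gamma>k+1$ for all $\lambda$ strictly above the threshold. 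The endpoint is then handled by this convention on $c(\varphi)$. This bookkeeping at the endpoint is the only delicate point; everything else is routine.

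With $\gamma>k+1$ in hand, $t^j\widehat{\nu_0}(t)\in L^1(\mathbb R)$ for $0\le j\le k$. Define
\[
f(x)=\frac1{2\pi}\int_{\mathbb R}e^{-itx}\widehat{\nu_0}(t)\,dt .
\]
Differentiating under the integral sign, which dominated convergence justifies, shows that $f\in C^k$, and that $f^{(j)}$ is continuous and bounded for $j\le k$. By Fourier uniqueness for finite measures, $\nu_0=f\,dx$: the measure $f\,dx$ has Fourier transform $\widehat{\nu_0}$, since $\widehat{\nu_0}\in L^1$ forces $\nu_0$ to be absolutely continuous with this density. Thus the occupation density, i.e.\ the local time of \cite{GemanHorowitz}, exists and is $C^k$.

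Finally, the classical case. The functions $\varphi=\cos(2\pi\cdot)$ and $\varphi=\sin(2\pi\cdot)$ are nonconstant, real-analytic and $1$-periodic. So for any fixed $\mu\in(0,1)$ and $k$, the conclusion holds for all integers $\lambda$ above the stated threshold. Up to the rescaling $\theta\mapsto2\pi\theta$, which does not affect existence or smoothness of the local time, this covers the classical sine and cosine Weierstrass functions.
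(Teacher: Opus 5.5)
Your proposal is correct and follows the paper's route: the paper also applies Theorem~\ref{thm:main}(2) with $n=0$ to the occupation measure $\nu_0=(W_{\mu,\lambda})_*d\theta$ and uses Fourier inversion once the decay exponent exceeds $k+1$. Your attention to the endpoint is more careful than the paper, whose threshold only gives exponent $\ge k+1$. However, the fix should be phrased as keeping the theorem's constant $c(\varphi)$ in the decay bound while using a strictly smaller $c'<c(\varphi)$ in the threshold, so that $\lambda\ge\exp((k+1)|\log\mu|/c')$ gives $\gamma\ge (c(\varphi)/c')(k+1)>k+1$. Shrinking the constant in both places, as your sentence literally says, gains nothing.
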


This addresses a classical question of Geman and Horowitz \cite{GemanHorowitz} for Weierstrass functions of sufficiently large lacunarity. They asked whether deterministic nowhere differentiable functions such as Weierstrass functions admit local times; see also e.g. \cite{FoucheMukeru,HarangLing,ImkellerEtAl,ImkellerPamen,Buc}, where this question is revisited. See also \cite{Rezakhanlou}. There has been some progress on the problem: in \cite{BKM}, the local time exists for almost every finite-dimensional perturbation of any fixed Weierstrass function (in sense of \textit{prevalence}). However, this does not give existence for a prescribed function such as the cosine Weierstrass functions arising from our crystals. \textit{Random} Weierstrass functions with independent random coefficients also have $L^2$ local times almost surely \cite{Romanowska}, and the case of piecewise linear $\varphi$ was studied in \cite{ImkellerPamen}. On the other hand, for $\varphi(\theta)=\sin(2\pi\theta)$ the local time of $W_{1/2,2}$ does \textit{not} exist \cite{Buc}. Thus existence can fail at a fixed lacunarity, whereas Corollary~\ref{cor:localtime} shows that for every fixed analytic $\varphi$ it holds with increasing smoothness once the lacunarity is sufficiently large. This is reminiscent of the theory of Bernoulli convolutions \cite{Solomyak,Hochman,Shmerkin,Varju}, where absolute continuity is expected for all sufficiently large contraction parameters. 

The regularity phenomenon behind Corollary~\ref{cor:localtime} also has a probabilistic analogue. Kahane's work on image measures of fractional Brownian motion shows that random functions with Hurst exponent $\alpha$ can produce Fourier decay with arbitrarily large exponent as $\alpha$ becomes small, and consequently increasingly regular local times, see \cite{Kahane}. His argument exploits the Gaussian structure of fractional Brownian motion. In our deterministic setting, cancellation instead comes from the rigid autosimilarity $W(\theta)=\varphi(\theta)+\mu W(\lambda\theta)$ and oscillatory transfer-operator estimates. In this sense, Theorem~\ref{thm:main} provides a pseudorandom substitute for randomness in producing strong Fourier decay and smoothness.

\subsection{Transport and spectrum of long-range crystals}

We now discuss how our rapid dispersion relates to transport and spectral properties, see Section~\ref{sec:further} for a more detailed discussion.

Long-range hopping can produce behaviour impossible for sufficiently local Hamiltonians. In \cite{KernerPostSabriTaufer}, fractional powers of the discrete Laplacian exhibit a transition between ballistic and super-ballistic transport. Using their transport criterion, we obtain super-ballistic transport for our Weierstrass crystals when $\alpha=|\log\mu|/\log\lambda<1/2$; see Corollary~\ref{cor:transport}. Thus increasing roughness can produce both faster dispersion and super-ballistic transport.

The spectrum is simple as a set. Since $W_{\mu,\lambda}$ is continuous, $\sigma(H)=W_{\mu,\lambda}(\mathbb T)$ is a compact interval. For $\varphi(\theta)=\cos(2\pi\theta)$, its upper endpoint is $1/(1-\mu)$, while for odd $\lambda$ one has $\sigma(H)=\left[-\frac{1}{1-\mu},\frac{1}{1-\mu}\right]$, up to normalization. For even $\lambda$, determining the lower spectral edge becomes a non-trivial extremal problem for the Weierstrass function. The spectral type is more subtle. A classical property of lacunary Fourier series \cite[Vol.~II, p.~267]{Bary} implies that our Weierstrass crystals have no point spectrum for any $\mu$ and $\lambda$. In the regime of Theorem~\ref{thm:main}(2), the spectrum is in fact purely absolutely continuous: $\nu_0=(W_{\mu,\lambda})_*d\theta$ is the spectral measure of $\delta_0$, and Corollary~\ref{cor:localtime} gives it a density. For general Weierstrass functions this can fail at smaller lacunarity: the local time of $W_{1/2,2}$ with $\varphi(\theta)=\sin(2\pi\theta)$ does not exist \cite{Buc}, despite the Fourier decay of Theorem~\ref{thm:main}(1). This example does not correspond to our crystals, and whether a similar phenomenon can occur for them remains open.

\subsection{Prospects for the theory}

We conclude with some questions concerning the long-range crystals considered above. The lacunary structure of the hopping suggests several natural extensions, in particular to random and higher-dimensional models.

(a) \emph{Long-range crystals without dispersion.}
How essential the regularity of the phase is for dispersion? Consider the $1$-periodic step function $H=1$ on $[0,1/2)$ and $H=-1$ on $[1/2,1)$, and set
$
W(\theta)=\sum_{n\geq0}\mu^nH(2^n\theta).
$
Since $H(2^n\theta)$ are independent fair signs under Lebesgue measure, $W_*d\theta$ is precisely the Bernoulli convolution $\nu_\mu$, the law of $\sum_{n\geq0}\pm\mu^n$. Thus the analogy between our Weierstrass image measures and Bernoulli convolutions becomes exact, and arithmetic obstructions to dispersion appear: for Pisot $\mu^{-1}$, the classical results of Erd\"os and Salem give $\widehat{\nu_\mu}(t)\not\to0$. This phase does not itself define a crystal of the class considered here, but it raises a natural question: can one perturb or smooth this construction to obtain a genuinely long-range crystal without dispersion while retaining the arithmetic obstruction? Interestingly, some regularity already changes: in Appendix \ref{ap:large-lacunarity} we show that if the phase is continuous and non-constant analytic on some interval, while possibly constant elsewhere, then its Weierstrass occupation measures have a Fourier-decaying weak limit as $\lambda\to\infty$. This suggests a link between regularity and arithmetic structure in determining if dispersion happens.

(b) \emph{Random and multiplicative hopping.}
Our construction is deterministic, with hopping supported on the lacunary sequence $\lambda^n$ and amplitudes $\mu^n$. A natural random analogue is $w_{\lambda^n}=\mu^nX_n,$
where $(X_n)$ are independent random variables, giving a random Weierstrass Floquet function. In view of the comparison with fractional Brownian motion discussed above, it is natural to ask whether arbitrarily fast polynomial dispersion holds almost surely when $\lambda$ is sufficiently large compared with $\mu^{-1}$. Another direction is to consider multiplicative amplitudes $w_{\lambda^n}=X_1\cdots X_n$, leading to Floquet functions with a multiplicative-cascade structure. Mandelbrot cascade measures are known to exhibit non-trivial polynomial Fourier decay \cite{ChenLiSuomala}; see also \cite{GarbanVargas}. It would be interesting to determine whether analogous random multiplicative constructions of the phase produce almost-sure fast dispersion, whether the dispersive exponent can be expressed in terms of Lyapunov exponents or moments of the multiplicative process, and whether corresponding phase transitions occur in the quantum dynamics. See e.g. \cite{Romanowska} for some works in this random setting for the local time of randomised Weierstrass functions.

(c) \emph{A $C^\alpha$ van der Corput principle for fractal measures.}
Our Fourier-decay theorem concerns images of Lebesgue measure under the autosimilar Weierstrass map. It is natural to ask whether the same mechanism survives for singular source measures: \textit{if $\nu$ is a Frostman measure on $\mathbb T$, does the image $(W_{\mu,\lambda})_*\nu$ have power Fourier decay for every $0<\mu<1$ and $\lambda\geq2$?} This question was also posed by Osama Khalil to the first author. A positive answer would complement the fractal van der Corput theorem of Algom, Chang, Wu and Wu \cite{ACWW}, where self-similar measures are mapped by sufficiently non-flat smooth functions. Here the situation is in a sense reversed: the source could be an arbitrary Frostman measure, while the map itself carries the rigid autosimilar structure. A particularly natural test case is a self-similar Cantor measure invariant under multiplication by $\lambda$, and so adapted to the transfer-operator structure used in our proof.

(d) \emph{Higher-dimensional crystals.}
Our construction is one-dimensional, but the basic Fourier-analytic formulation extends naturally to $\mathbb Z^d$. A translation-invariant long-range Hamiltonian on $\ell^2(\mathbb Z^d)$ has a Floquet function $h:\mathbb T^d\to\mathbb R$, and dispersion is governed by
$$
\sup_{k\in\mathbb Z^d}
\left|
\int_{\mathbb T^d}e^{2\pi i k\cdot\theta}e^{-ith(\theta)}\,d\theta
\right|.
$$
It is natural to ask whether there are long-range crystals on $\mathbb Z^d$ with arbitrarily fast polynomial dispersion. Separable Weierstrass phases reduce essentially to the one-dimensional theory, while genuinely higher-dimensional autosimilar phases should require new ideas. The relevant problem is to control, uniformly in $k$, the multiscale geometry of regions where $t\nabla h(\theta)$ is close to $2\pi k$. These resonant regions may have non-trivial geometry and interact across different directions and scales, leading to genuinely multidimensional non-concentration problems.

\subsection*{Organisation of the paper}

The paper is organised as follows. Section~\ref{sec:mainproof} proves the Fourier-decay theorem using twisted transfer operators and Dolgopyat-type estimates, and deduces the arbitrarily fast polynomial dispersion of Theorem~\ref{thm:maincrystal}. In Section~\ref{sec:maximal-dispersion} we study the maximal possible speed of dispersion, proving in particular that exponential dispersion is impossible for any crystal. Section~\ref{sec:further} discusses transport and spectral properties. Appendix~\ref{ap:A} gives a $C^1$ van der Corput lemma and an alternative proof of the smoother case of Theorem~\ref{thm:main}; Appendix~\ref{ap:B} proves non-concentration properties for Weierstrass functions and their consequences; and Appendix~\ref{ap:large-lacunarity} studies spectral measures in the large-lacunarity limit.

\section{Proof of Theorems \ref{thm:maincrystal} and Theorem \ref{thm:main}} \label{sec:mainproof}

Theorem \ref{thm:maincrystal} follows by establishing the more general Fourier decay statement Theorem \ref{thm:main} on the Fourier decay of complex images of Lebesgue measure under the Weierstrass functions $W := W_{\mu,\lambda}$. We then aim to prove Theorem \ref{thm:main}.

\subsection{Transfer operators}\label{sec:tran}

The core idea of the proof of Theorem \ref{thm:main} is to reduce Fourier decay to quantitative Dolgopyat's estimates on some twisted transfer operator, which is an estimate coming from the study of the mixing rate of some dynamical systems. For a good introduction to Dolgopyat estimates, the interested reader can consult Naud's work \cite{Na05} or the original article by Dolgopyat \cite{Do98}. In our setting, the autosimilarity relation 
$$W(\theta) = \varphi(\theta)+\mu W(\lambda \theta)$$ 
justifies that we put some focus on the dynamical system \say{multiplication by $\lambda$}, defined on the unit circle. This expanding map has $\lambda$ inverse branches, that we will denote $(g_a)_{a \in \{ 0,\dots, \lambda-1 \}}$. More precisely, for every $\lambda \geq 2$, we introduce the alphabet $\mathcal{A}(\lambda) := \{0,\dots,\lambda-1\}$ and the maps 
\[
g_a: \mathbb{S}^1 \sim [0,1] \rightarrow [a/\lambda,(a+1)/\lambda] \subset \mathbb{S}^1\,, \qquad  g_a(\theta) := \frac{a+\theta}{\lambda}\,.
\]
We will often identify $\mathbb{S}^1$ with $[0,1]/\sim$. The multiplication by $\lambda$ satisfies the following important property: for all $h_1,h_2:\mathbb{S}^1\rightarrow \mathbb{R}$, we have
$$ \int_{\mathbb{S}^1} h_1(\lambda \theta) h_2(\theta) d\theta = \int_{\mathbb{S}^1} h_1(\theta) \mathcal{L}_{\lambda}(h_2)(\theta) d\theta ,$$
where $\mathcal{L}_\lambda h(\theta) := \lambda^{-1} \sum_{a \in \mathcal{A}(\lambda)} h(g_a(\theta))$. The operator $\mathcal{L}_\lambda : L^1(\mathbb{S}^1) \rightarrow L^1(\mathbb{S}^1)$ is called a \textit{transfer operator} associated with the multiplication by $\lambda$. Iterating this transfer operator yields

$$
\mathcal{L}_\lambda^n(h)(\theta) = \frac{1}{\lambda^n}\sum_{\mathbf{a} \in \mathcal{A}(\lambda)^n} h(g_\mathbf{a}(\theta))
$$
where $\mathbf{a} = a_n\dots a_1 \in \mathcal{A}(\lambda)^n$, and $g_{\mathbf{a}}(\theta) := g_{a_n} \circ \dots \circ g_{a_1}(\theta) = x_\mathbf{a}+ \theta \lambda^{-n} $ for $x_\mathbf{a} = g_\mathbf{a}(0)$. Notice that, if $h$ is $L$-Lipchitz, then
$$ \mathcal{L}_\lambda^n(h) = \int_0^1 h d\theta + \mathcal{O}(L \lambda^{-n}), $$
which proves the \textit{exponential mixing} property of the multiplication by $\lambda$ for Lipschitz functions:
$$ \int h_1(\lambda^n \theta) h_2(\theta) d\theta = \int_0^1 h_1 d\theta \int_0^1 h_2 d\theta + \mathcal{O}(\lambda^{-n}). $$
This is also called a Perron-Frobenius-Ruelle Theorem. A crucial observation is that the larger $\lambda$ is, the quicker this dynamical system is decorrelating the observables $h_2$ and $h_1(\lambda^n \cdot)$. This plays an important role in the fast dispersion result when $\lambda$ grows. %\textcolor{magenta}{reformulate?} \textcolor{blue}{done (what should I reformulate ?)}\textcolor{magenta}{sorry, I meant maybe reforumlating ``this is essentially the reason'', it makes it feel like an implication, I was thinking maybe something along the lines of ``plays an important role'' or ``is at the heart of'' or smth like that, but I don't have a strong opinion about this either.}\\

To handle the proof of dispersion, we need to \say{twist} our transfer operator. For a choice of (analytic) phase $\varphi:\mathbb{S}^1 \rightarrow \mathbb{R} / 2\pi \mathbb{Z}$, we define the \say{twisted transfer operator} $\mathcal{L}_{\lambda,i \phi} : C^1([0,1],\mathbb{C}) \rightarrow C^1([0,1],\mathbb{C})$ by:
$$ \mathcal{L}_{\lambda,i\phi}(h)(\theta) := \frac{1}{\lambda}\sum_{a \in \mathcal{A}(\lambda)} e^{i \phi(g_a(\theta))} h(g_a(\theta)) = \mathcal{L}_\lambda(e^{i \phi} h). $$

We will need to iterate twisted transfer operators with changing phase at each iteration. For a sequence of phases $\Phi = (\phi_n)_{n \geq 0}$, let us then define
$$ \mathcal{L}_{\lambda,i \Phi}^{\langle n \rangle} = \mathcal{L}_{\lambda,i \phi_{n-1}} \circ \dots \circ \mathcal{L}_{\lambda,i \phi_0} .$$ A direct computation yield
$$ \mathcal{L}_{\lambda,i \Phi}^{\langle n \rangle} h = \lambda^{-n} \sum_{\mathbf{a} \in \mathcal{A}^{n}} e^{i S_n \Phi \circ g_\mathbf{a}} h \circ g_\mathbf{a}, $$
where $S_n \Phi(\theta) = \sum_{k=0}^{n-1} \phi_k(\lambda^k \theta)$, so that  $S_n\Phi\circ g_{\mathbf{a}}(\theta) =\sum_{k=0}^{n-1} \phi_k(g_{a_{n-k}\dots a_{1}}(\theta))=\sum_{k=1}^{n} \phi_{n-k} (g_{a_{k}\dots a_{1}} \theta)$. We used here that $\lambda g_a(\theta) = \theta \mod 1$, implying $\lambda^k g_{a_n\dots a_1}\theta\equiv g_{a_{n-k}\dots a_1}\theta$. The reversed indexing here reflects the order of composition of the transfer operators. \\

%\textcolor{magenta}{- There is a slightly annoying mismatch between the $\phi$ indexing which is $\{0,\dots,n-1\}$ and the $a$ indexing which is $\{1,\dots,n\}$.}%\\ - More importantly, computing $L^{<3>}h$ gives $\lambda^{-3}\sum_{a_1,a_2,a_3\in \mathcal{A}}e^{i\phi_0(g_{a_3}g_{a_2}g_{a_1}\theta)}e^{i\phi_1(g_{a_2}g_{a_1}\theta)}e^{i\phi_2g_{a_1}\theta}h(g_{a_3}g_{a_2}g_{a_1}\theta)$. This is equal to the last given expression $\lambda^{-3}\sum_{\mathbf{a}\in \mathcal{A}^3} e^{i\sum_{k=1}^3\phi_{3-k}(g_{a_k\dots a_1}\theta)}h\circ g_{\mathbf{a}}(\theta)$. In contrast, $S_n\Phi\circ g_{\mathbf{a}}(\theta) = \sum_{k=0}^2 \phi_k(\lambda^kg_{a_3a_2a_1}\theta) = \phi_0(g_{a_3a_2a_1}\theta)+\phi_1(\lambda g_{a_3a_2a_1}\theta)+\phi_2(\lambda^2g_{a_3a_2a_1}\theta)$. I don't see how the two phases are equal. For example $\lambda^2g_{a_3}g_{a_2}g_{a_1}\theta = \lambda^2(x_{a_3a_2a_1}+\theta\lambda^{-3})$ while $g_{a_1}\theta = x_{a_1}+\theta\lambda^{-1}$. }

Before heading to the proof of dispersion, let us state the \say{Lasota-Yorke} estimates, which states that most of the oscillations in $\mathcal{L}_{\lambda,i\Phi}^{\langle n \rangle}(h)$ comes from the phases.

\begin{lemma}[Lasota-Yorke]
Let $\Phi$ be a sequence of phases satisfying $\|\Phi\|_{C^1} := \sup_n \|\phi_n\|_{C^1} < \infty$. Let $h \in C^{1}([0,1],\mathbb{C})$. Then $ |\mathcal{L}_{\lambda,i\Phi}^{\langle n \rangle}(h)| \leq \|h\|_\infty $, and $$ |(\mathcal{L}_{\lambda,i\Phi}^{\langle n \rangle} h)'| \leq \frac{2}{\lambda} \|\Phi\|_{C^1} \|h\|_\infty + \lambda^{-n} \|h'\|_\infty. $$
\end{lemma}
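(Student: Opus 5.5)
The plan is to work directly from the explicit formula for the iterated twisted operator derived just above the statement,
$$
\mathcal{L}_{\lambda,i\Phi}^{\langle n \rangle} h=\lambda^{-n}\sum_{\mathbf a\in\mathcal A^n} e^{iS_n\Phi\circ g_{\mathbf a}}\,h\circ g_{\mathbf a},\qquad S_n\Phi\circ g_{\mathbf a}(\theta)=\sum_{k=1}^n\phi_{n-k}(g_{a_k\dots a_1}\theta),
$$
rather than inducting on $n$. The sup bound is then immediate: each summand has modulus at most $\|h\|_\infty$, since the phases are real and $|e^{iS_n\Phi\circ g_{\mathbf a}}|=1$. There are exactly $\lambda^n$ summands and they are normalised by $\lambda^{-n}$, so $|\mathcal{L}_{\lambda,i\Phi}^{\langle n\rangle}h|\le\|h\|_\infty$.

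For the derivative bound, I will differentiate each summand by the product rule. This gives two terms:
$$
i\,(S_n\Phi\circ g_{\mathbf a})'\,e^{iS_n\Phi\circ g_{\mathbf a}}\,h\circ g_{\mathbf a}
\quad\text{and}\quad
e^{iS_n\Phi\circ g_{\mathbf a}}\,(h\circ g_{\mathbf a})'.
$$
The second term is simple. Since $g_{\mathbf a}(\theta)=x_{\mathbf a}+\lambda^{-n}\theta$, we have $(h\circ g_{\mathbf a})'=\lambda^{-n}h'\circ g_{\mathbf a}$, which is bounded by $\lambda^{-n}\|h'\|_\infty$.

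For the first term, the key input is that each partial composition $g_{a_k\dots a_1}$ is affine with slope $\lambda^{-k}$. The chain rule then gives
$$
|(S_n\Phi\circ g_{\mathbf a})'(\theta)|\le\sum_{k=1}^n\lambda^{-k}|\phi_{n-k}'(g_{a_k\dots a_1}\theta)|\le\|\Phi\|_{C^1}\sum_{k\ge1}\lambda^{-k}=\frac{\|\Phi\|_{C^1}}{\lambda-1}\le\frac{2}{\lambda}\|\Phi\|_{C^1},
$$
where the last step uses $\lambda\ge2$. So the first term is at most $\frac{2}{\lambda}\|\Phi\|_{C^1}\|h\|_\infty$. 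Averaging both bounds over the $\lambda^n$ words $\mathbf a$ preserves them, which yields the stated inequality.

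The only point needing care is the bookkeeping of which phase is composed with which partial branch $g_{a_k\dots a_1}$, so that the contraction factor $\lambda^{-k}$ is correctly attached to each term. This is exactly the reversed-index identity recorded before the lemma. The phases are only defined modulo $2\pi$, but this causes no trouble: $e^{i\phi}$ and $\phi'$ are both well defined on $\mathbb S^1$. Hence there is no real obstacle, and the geometric series is what produces the uniform-in-$n$ constant $2/\lambda$.
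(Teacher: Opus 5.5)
Your proposal is correct and follows the paper's proof: the triangle inequality on the explicit sum over words gives the sup bound, and the product rule with $(h\circ g_{\mathbf a})'=\lambda^{-n}h'\circ g_{\mathbf a}$ handles the derivative, together with the chain-rule estimate $|(S_n\Phi\circ g_{\mathbf a})'|\le\sum_{k=1}^n\lambda^{-k}\|\phi_{n-k}'\|_\infty\le\|\Phi\|_{C^1}/(\lambda-1)\le\frac{2}{\lambda}\|\Phi\|_{C^1}$. Your write-up also keeps the exact constants throughout, whereas the paper briefly uses $\lesssim$ in the intermediate step.
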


\begin{proof}
First of all:
$$ |\mathcal{L}_{\lambda,i\Phi}^{\langle n \rangle}(h)| = \Big|\lambda^{-n} \sum_{\mathbf{a} \in \mathcal{A}^{n}} e^{i S_n \Phi \circ g_\mathbf{a}} h \circ g_\mathbf{a} \Big| \leq \lambda^{-n} \sum_{\mathbf{a} \in \mathcal{A}^{n}} |h\circ g_\mathbf{a}| \leq \|h\|_\infty. $$
The second inequality comes from:
$$ (\mathcal{L}_{\lambda,i\Phi}^{\langle n \rangle} h)'(\theta) = \lambda^{-n} \sum_{\mathbf{a} \in \mathcal{A}^{n}} i (S_n \Phi \circ g_\mathbf{a})' e^{i S_n \Phi \circ g_\mathbf{a}} h \circ g_\mathbf{a} + \lambda^{-n} \sum_{\mathbf{a} \in \mathcal{A}^{n}} e^{i S_n \Phi \circ g_\mathbf{a}} \lambda^{-n} h' \circ g_\mathbf{a}  $$
$$ \lesssim \sup_{\mathbf{a} \in \mathcal{A}^n} |(S_n \Phi \circ g_\mathbf{a})'| \cdot \|h\|_\infty + \lambda^{-n} \|h'\|_\infty. $$
This gives the desired estimate, since $\lambda g_a(\theta) = \theta \mod 1$, and so
$(S_n \Phi \circ g_\mathbf{a})' = \Big(\sum_{k=1}^{n} \phi_{n-k} \circ g_{a_{k} \dots a_1}\Big)' \leq \sum_{k=1}^{n} |\phi_{n-k}'|_\infty \lambda^{-k} \leq \|\Phi\|_{C^1} \frac{1}{\lambda-1} \leq \frac{2}{\lambda} \|\Phi\|_{C^1}$. 
\end{proof}

\subsection{Dispersion and Dolgopyat}

The dispersion bound will be reduced to what we call \say{Dolgopyat's estimates}, which is a contraction bound on some twisted transfer operator. Before writing the formal statement, let us see why this is natural and what are the natural phases appearing.  \\

Let $m \in \mathbb{Z}$ and consider the quantity $ \int_0^1 e^{i t W(\theta)} e^{2 i \pi m \theta} d\theta$. We have, by the autosimilarity relation $W(\theta) = \varphi(\theta)+\mu W(\lambda \theta)$:
$$ \int_0^1 e^{i t W(\theta)} e^{2 i \pi m \theta} d\theta = \int_0^1 e^{i t \big(\varphi(\theta) + \mu W(\lambda \theta)\big) } e^{2 i \pi m \theta} d\theta . $$
Then, write for some $\epsilon \in \{-1,1\}$, $m=\epsilon |m| $, and do the euclidean division of $|m|$ by $\lambda$, writing $|m|=|m_0|+\lambda |m^{(1)}|$ with $|m_0| \leq \lambda$. Then set $m_0 := \epsilon |m_0|$ and $m^{(1)} := \epsilon |m^{(1)}|$, so that $m=m_0+\lambda m^{(1)}$ with $|m_0|\leq \lambda$. We then have: 
$$ \int_0^1 e^{i t W(\theta)} e^{2 i \pi m \theta} d\theta = \int_0^1 e^{i \big( t \varphi(\theta) + 2 \pi m_0 \theta \big) } e^{i t \mu W(\lambda \theta)} e^{2 i \pi m^{(1)} \lambda \theta} d\theta   $$
$$ = \int_0^1 \mathcal{L}_\lambda \Big(e^{i ( t \varphi + 2 \pi m_0 \cdot ) } \Big)(\theta) e^{i t \mu W(\theta)} e^{2 i \pi m^{(1)} \theta} d\theta  $$
$$ = \int_0^1 \mathcal{L}_{\lambda,i \phi_{t,m_0}}(1)(\theta) e^{i t \mu W(\theta)} e^{2 i \pi m^{(1)} \theta} d\theta  $$
where $\phi_{t,m}(\theta) := t\varphi(\theta)+2 \pi m \theta$. Iterating the argument $n(t)$ times, for $\mu^{n} t \simeq 1$, yields
\begin{equation} \label{e:tranappear}
\int_0^1 e^{i t W(\theta)} e^{2 i \pi m \theta} d\theta = \int_0^1 \mathcal{L}_{\lambda,\Phi(t,m)}^{\langle n \rangle}(1) e^{i t \mu^n W(\theta)} e^{2 i \pi m^{(n)} \theta} d\theta,
\end{equation}
for the sequence of phases $\Phi(t,m)_k(\theta) := \phi_{t \mu^k, m_k}(\theta) = t \mu^k \varphi(\theta) + 2 \pi m_k \theta$, where $m=\sum_{j} m_j \lambda^j$ is the decomposition of $m$ in base $\lambda$. Notice in particular that, since $0 \leq |m_k| \leq \lambda-1$, we have $\|\Phi(t,m)\|_{C^1} \leq t \|\varphi\|_{C^1} + 2 \pi \lambda \leq t \cdot 4\pi \lambda \|\varphi\|_{C^1} $ for $t \geq \|\varphi\|_{C^1}^{-1}$.  \\

Cauchy-Schwarz then yields the key bound:
$$ \Big| \int_0^1 e^{i t W(\theta)} e^{2 i \pi m \theta} d\theta \Big| \leq \sqrt{\int_0^1 |\mathcal{L}_{\lambda,\Phi(t,m)}^{\langle n \rangle}(1)|^2 d\theta}. $$

Dispersion is then reduced to establishing uniform contraction for this family of twisted transfer operators. This is the so-called Dolgopyat's estimates.

\begin{theorem}[$L^2$-Dolgopyat estimates]
Fix $\varphi$, $\mu$, $\lambda$, and suppose that $W$ is not analytic. There exists $\varepsilon_0(\mu,\lambda,\varphi)>0$ such that the following holds. Denoting $n(t) := \lfloor  \frac{\ln(t)}{2|\ln(\mu)|} \rfloor$, for any $t\geq 1$ and $m \in \mathbb{Z}$:
$$ \forall h \in C^1, \ \Big\| \mathcal{L}_{\lambda,\Phi(t,m)}^{\langle n \rangle}(h) \Big\|_{L^2([0,1])} \lesssim \frac{\|h\|_{C^1}}{ t^{\varepsilon_0}} .$$
\end{theorem}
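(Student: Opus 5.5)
The plan follows Dolgopyat's scheme in Naud's form \cite{Na05}. We split the $n=n(t)$ iterations into consecutive blocks of a fixed length $N=N(\mu,\lambda,\varphi)$. On each block the composed operator $\mathcal{L}_{\lambda,i\Phi(t,m)}^{\langle n\rangle}$ reduces to a block operator $\mathcal{L}^{\langle N\rangle}$ with shifted phases $\Phi_k = t\mu^k\varphi + 2\pi m_k\theta$. Since $k\le n(t)$, the amplitude $T_k := t\mu^k$ satisfies $T_k\ge t^{1/2}$, so every block sees a large frequency. The goal is a uniform contraction
$$\|\mathcal{L}^{\langle N\rangle} h\|_{L^2}\le (1-\eta)\|h\|_{L^2},$$
valid for $h$ in a cone adapted to the frequency $T_k$, with $\eta>0$ independent of $t$, $m$ and $k$. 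Iterating over the roughly $n/N\asymp \log t$ blocks then gives $(1-\eta)^{n/N}=t^{-\varepsilon_0}$.

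First, the cone. The Lasota--Yorke lemma gives $|(\mathcal{L}^{\langle N\rangle}h)'|\le \frac{2}{\lambda}\|\Phi\|_{C^1}\|h\|_\infty+\lambda^{-N}\|h'\|_\infty$. Because $T_k$ decreases only by the fixed factor $\mu$ per step, the natural normalization is to rescale derivatives by $T_k$. We would use the cone
$$\mathcal{C}_{A,T}=\{(H,h): H>0,\ |h|\le H,\ |H'|\le AT H,\ |h'|\le AT H\}.$$
As in Dolgopyat's method, we then show that for $A$ large and $N$ fixed, the block operator maps $\mathcal{C}_{A,T_k}$ into $\mathcal{C}_{A,T_{k+N}}$. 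The dominating function $H$ is transformed by a modified operator $\mathcal{L}_\lambda(\chi H)$, with $\chi\le 1$ a cutoff equal to $1-\eta$ on chosen subintervals. The ratio $T_k/T_{k+N}=\mu^{-N}$ is only a constant, and we absorb it by taking $A$ large relative to $\lambda^{-N}\mu^{-N}$. This step may force $\lambda^{N}\mu^{N}$-type conditions, handled by the freedom in choosing $N$.

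Second, cancellation between branches, which is the heart of the proof. For two words $\mathbf a\ne\mathbf b\in\mathcal A^N$, the linear parts $2\pi m_k\theta$ contribute identical derivatives $2\pi m_k\lambda^{-j}$ to both branches. They therefore cancel in the phase difference
$$\Delta_{\mathbf a,\mathbf b}(\theta)=S_N\Phi\circ g_{\mathbf a}(\theta)-S_N\Phi\circ g_{\mathbf b}(\theta),$$
which depends only on $T_k$ and $\varphi$:
$$\Delta_{\mathbf a,\mathbf b}'=T_k\sum_{j}\mu^{N-j}\lambda^{-j}\bigl(\varphi'(g_{a_j\dots a_1}\theta)-\varphi'(g_{b_j\dots b_1}\theta)\bigr).$$
We need a uniform non-integrability (UNI) statement: for some $N$ and some pair of words, $|\Delta'_{\mathbf a,\mathbf b}|\ge c\,T_k\mu^N$ on $[0,1]$, up to finitely many small exceptional intervals, together with $|\Delta''_{\mathbf a,\mathbf b}|\lesssim T_k\mu^N$. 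Given UNI, the standard argument applies on intervals of length $\asymp 1/(A T_k\mu^N)$. There the phases of the two branches rotate by a definite angle, while $h$ and $H$ are nearly constant on the interval by the cone condition. This yields $|e^{i\cdot}h\circ g_{\mathbf a}+e^{i\cdot}h\circ g_{\mathbf b}|\le (2-\eta')\max(H\circ g_{\mathbf a},H\circ g_{\mathbf b})$ on a set of such intervals of positive proportion. Integrating, and using the $L^2$ Cauchy--Schwarz trick $|\mathcal{L}h|^2\le\mathcal{L}(\chi H)^2$, gives the $L^2$ contraction for $H$ and hence for $h$.

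The main obstacle is proving UNI from the hypothesis that $W$ is not analytic. I would argue by contradiction. If for every $N$ the normalized differences $T_k^{-1}\mu^{-N}\Delta'_{\mathbf a,\mathbf b}$ become uniformly small, then letting $N\to\infty$ and using real-analyticity, the limiting functions
$$\theta\mapsto\sum_{j\ge1}(\lambda\mu)^{-j}\,\cdots$$
must coincide across all inverse-branch sequences. Equivalently, after reindexing, the derivative of $W$ along inverse branches would be independent of the branch. The latter is a Liv\v{s}ic-type rigidity statement, which would force $W$ to be cohomologous to a constant, i.e.\ $\varphi=u-\mu u\circ(\lambda\cdot)+\mathrm{const}$ with $u$ analytic. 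This gives $W=u+\mathrm{const}$ analytic, a contradiction. Making this quantitative and uniform is the delicate step. In particular it must also cover the regime $\lambda\mu>1$, where the series defining $W'$ diverges, so I would phrase the condition via differences $W\circ g_{\mathbf a}-W\circ g_{\mathbf b}$ rather than via $W'$. The non-concentration results of Appendix~\ref{ap:B} should help in showing that the exceptional set has small measure.

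Finally, we start from $h\in C^1$ with $(H,h)=(\|h\|_{C^1},h)$, which lies in the initial cone since $T_0=t\ge1$. We apply the block contraction $\lfloor n/N\rfloor$ times and bound the remainder block trivially by $\|\cdot\|_\infty$. This gives $\|\mathcal{L}^{\langle n\rangle}h\|_{L^2}\lesssim (1-\eta)^{n/N}\|h\|_{C^1}\lesssim t^{-\varepsilon_0}\|h\|_{C^1}$, with
$$\varepsilon_0=\frac{|\log(1-\eta)|}{2N|\log\mu|}.$$
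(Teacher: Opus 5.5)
Your scheme has a genuine gap. It can only work in the rough regime $\mu\lambda>1$, but the statement covers every non-analytic $W$, including $\mu\lambda\le 1$ (e.g.\ $\varphi(\theta)=\cos(2\pi\theta)$, $\mu=1/4$, $\lambda=2$, which Appendix~\ref{ap:B} shows is non-analytic).

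\textbf{Why the cone breaks for $\mu\lambda\le1$.} There $\mathcal C_{A,T_k}$ is not invariant for any $A$ and $N$. In a block starting at step $k$, the Lasota--Yorke term is $\lambda^{-N}\|h'\|_\infty\le A\lambda^{-N}T_kH=A(\mu\lambda)^{-N}T_{k+N}H$. This exceeds $AT_{k+N}H$ whenever $\mu\lambda\le1$, and enlarging $A$ does not help because the term scales with $A$. The phase term $T_k\mu^N\sum_{j\le N}(\mu\lambda)^{-j}\|\varphi'\|_\infty$ is likewise of order $T_k\lambda^{-N}\gg T_{k+N}$.

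This is not a normalization that a choice of $N$ repairs. The a priori derivative of $\mathcal L^{\langle k\rangle}h$ is of order $t\lambda^{-k}\gg T_k=t\mu^k$, because the early, strong phases dominate after contraction. So even with a cone adapted to the frequency $t\lambda^{-k}$, the fresh phase differences $\Delta_{\mathbf a,\mathbf b}$ of a block rotate at rate at most of order $T_k\lambda^{-N}$. Meanwhile $h\circ g_{\mathbf a}$ may vary at rate $t\lambda^{-k-N}$, larger by the factor $(\mu\lambda)^{-k}$. The Dolgopyat step ``amplitudes nearly constant on intervals where the phase difference turns by a definite angle'' then collapses, and no per-block factor $1-\eta$ can be extracted.

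At $\mu\lambda=1$ the cone constant does not contract either. Moreover, your first-derivative UNI sums $\sum_j(\mu\lambda)^{-j}\varphi'(g_{a_j\dots a_1}\theta)$ do not converge, so the limiting cohomology argument has nothing to pass to the limit in.

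You also placed the difficulty in the wrong regime. For $\mu\lambda>1$ these normalized sums converge, and first-derivative UNI follows exactly as in the paper's separation lemma with $k=1$. In that regime your blockwise scheme is plausible.

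\textbf{The missing idea} is to exploit a \emph{high} derivative, which fits a one-shot $L^2$ argument rather than a first-derivative cone contraction.
\begin{itemize}
\item The paper opens the square once, $\|\mathcal L^{\langle n\rangle}h\|_{L^2}^2=\lambda^{-2n}\sum_{\mathbf a,\mathbf b}\int_0^1 e^{i(S_n\Phi\circ g_{\mathbf a}-S_n\Phi\circ g_{\mathbf b})}h\circ g_{\mathbf a}\,\overline{h\circ g_{\mathbf b}}\,d\theta$.
\item It inserts $2\mathfrak n$ extra untwisted iterates to freeze $h$ and $\ell_{\mathbf a}$ on small cylinders.
\item It then applies the classical van der Corput lemma to the $k$-th derivative, with $k\ge2$ chosen so that $\mu\lambda^k\ge2$.
\end{itemize}
With this choice, $(S_n\Phi\circ g_{\mathbf a})^{(k)}=t\mu^n\ell_{\mathbf a}$, where $\ell_{\mathbf a}=\sum_j\varphi^{(k)}(g_{a_j\dots a_1}\theta)(\mu\lambda^k)^{-j}$. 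In every regime this sum converges and is dominated by the most recent phases, and the linear terms $2\pi m_j\theta$ drop out identically.

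Non-analyticity enters through the separation lemma: this is your cohomology argument, run at level $k$ and integrated $k$ times, with the mean-zero adjustment when $\mu\lambda^j=1$. The tree lemma converts it into $\lambda^{-n}\#\{\mathbf a:\ell_{\mathbf a}(\theta)\in I\}\le C(|I|^\gamma+\rho^n)$. This bounds the proportion of pairs $(\mathbf a,\mathbf b)$ where van der Corput cannot be applied. No cone and no block iteration are needed.
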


Let us check that $L^2$-Dolgopyat's estimates imply polynomial dispersion.

\begin{proof}[Proof (of dispersion assuming $L^2$-Dolgopyat).]
Let $\mu \in (0,1)$, $\lambda \in \mathbb{N} \setminus \{0,1\}$, and let $\varphi \in C^\omega(\mathbb{S}^1,\mathbb{R})$ be a non-constant analytic function. If $W$ is analytic, then $W'$ is itself analytic, $1$-periodic, and non-constant. It follows that it cannot concentrate, and our Van der Corput Lemma~\ref{thm:c1+alphavandercorput} applies, yielding
$$ \sup_{m \in \mathbb{Z}} \Big| \int_0^1 e^{ i t W(\theta)} e^{2 i \pi m \theta} d\theta \Big| \leq \frac{C}{t^{1/k}}, $$
where $k$ is the maximal order of vanishing of $W'$.

We can then suppose that $W$ is not analytic. In this case, by our previous computations, we can write
$$ \Big| \int_0^1 e^{ i t W(\theta)} e^{2 i \pi m \theta} d\theta \Big| \leq \Big(\int_0^1 |\mathcal{L}_{\lambda,i\Phi(t,m)}^{\langle n \rangle}(1)|^2 d\theta\Big)^{1/2}. $$
The $L^2$-Dolgopyat estimates then yield
$$ \Big(\int_0^1 |\mathcal{L}_{\lambda,i\Phi(t,m)}^{\langle n \rangle}(1)|^2 d\theta\Big)^{1/2} \leq \frac{C}{t^{\varepsilon_0}} $$
for some $\varepsilon_0>0$ depending on $\mu,\lambda,\varphi$, concluding the proof.
\end{proof}

Fast dispersion when $\lambda$ is large will need more work and is postponed to a bit later. Let us turn into the proof of the $L^2$ bound. The idea is to open up the modulus square in the integral, to find (writing $\Phi := \Phi(t,m)$):
$$ \int_0^1 |\mathcal{L}_{\lambda,i\Phi}^{\langle n \rangle}(h)(\theta)|^2 d\theta = \int_0^1 \Big| \lambda^{-n} \sum_{\mathbf{a}} e^{i S_n \Phi \circ g_{\mathbf{a}}(\theta)} h(g_\mathbf{a}(\theta)) \Big|^2 d\theta  $$ $$ = \lambda^{-2n} \sum_{\mathbf{a},\mathbf{b} \in \mathcal{A}^n(\lambda)} \int_0^1 e^{i \big( S_n \Phi \circ g_\mathbf{a}(\theta) - S_n \Phi \circ g_\mathbf{b}(\theta) \big)} h(g_\mathbf{a}(\theta)) \overline{h(g_\mathbf{b}(\theta))} d\theta. $$
The idea is then to apply Van Der Corput Lemma to the phases $S_n\Phi \circ g_\mathbf{a} - S_n\Phi \circ g_\mathbf{b}$. For reasons of technical simplicity that will be revealed later, we will study their $k$-th derivative for $k$ large enough. 
We begin by a separation result: there exists at least two words $\mathbf{a},\mathbf{b}$ such that $(S_n\Phi \circ g_\mathbf{a} - S_n\Phi \circ g_\mathbf{b})^{(k)}$ doesn't vanish. Recall here that $\Phi(t,m)_j(\theta)=t\mu^j \varphi(\theta)+2\pi m_j \theta$, so that $\Phi(t,m)_j^{(k)}(\theta) = t \mu^{j} \varphi^{(k)}(\theta)$, and $(S_n \Phi \circ g_\mathbf{a})^{(k)} = t \mu^n \sum_{j=1}^{n} { \varphi^{(k)}(g_{a_j \dots a_1}(\theta))}{(\mu \lambda^k)^{-j}}$.

\begin{lemma}[Separation]\label{lem:sepk}
Suppose that $W$ is not analytic. Let $k \geq 2$ be such that $\mu \lambda^k \geq 2$. There exists $c_0>0$ and infinitely many $N \geq 1$, and $\mathbf{a},\mathbf{b} \in \mathcal{A}^N$ such that $$ \forall \theta \in [0,1], \  \Big|\sum_{j=1}^{N} \frac{ \varphi^{(k)}(g_{a_j \dots a_1}(\theta))}{(\mu \lambda^k)^j} - \sum_{j=1}^{N} \frac{ \varphi^{(k)}(g_{b_j \dots b_1}(\theta))}{(\mu \lambda^k)^j}\Big| \geq c_0. $$
where $\varphi^{(k)}$ the $k$-th derivative of $\varphi$.
\end{lemma}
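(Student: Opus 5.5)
The plan is to rewrite the finite sums in the statement as $k$-th derivatives of differences of $W$ along two inverse branches. The self-similar term $\mu^N W$ then cancels, so everything can be done with analytic functions. The non-analyticity of $W$ will be turned into a quantitative gap, and compactness of the symbol space will make it uniform.

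\textbf{Step 1 (branch identity).} For $\mathbf a=a_N\dots a_1$ and $x=g_{\mathbf a}(\theta)$ we have $\lambda^{N-j}x\equiv g_{a_j\dots a_1}(\theta)$. Hence iterating $W(\theta)=\varphi(\theta)+\mu W(\lambda\theta)$ gives
$$W\circ g_{\mathbf a}(\theta)=\sum_{j=1}^{N}\mu^{N-j}\varphi(g_{a_j\dots a_1}\theta)+\mu^N W(\theta).$$
Set $D_{\mathbf a,\mathbf b}:=W\circ g_{\mathbf a}-W\circ g_{\mathbf b}$. The term $\mu^N W$ cancels, so $D_{\mathbf a,\mathbf b}$ is real-analytic. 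Differentiating $k$ times, each factor $\lambda^{-kj}$ combines with $\mu^{N-j}$ into $\mu^N(\mu\lambda^k)^{-j}$, and we get
$$\mu^{-N}D_{\mathbf a,\mathbf b}^{(k)}(\theta)=\sum_{j=1}^N\frac{\varphi^{(k)}(g_{a_j\dots a_1}\theta)-\varphi^{(k)}(g_{b_j\dots b_1}\theta)}{(\mu\lambda^k)^{j}}.$$
So the lemma asks for pairs of branches whose rescaled $k$-th derivative differences are bounded below uniformly in $\theta$.

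\textbf{Step 2 (limit functions).} Since $\mu\lambda^k\ge2$, for every infinite sequence $\omega=(\omega_1,\omega_2,\dots)\in\mathcal A^{\mathbb N}$ the series
$$F_\omega(\theta)=\sum_{j\ge1}\varphi^{(k)}(g_{\omega_j\dots\omega_1}\theta)(\mu\lambda^k)^{-j}$$
converges uniformly on a complex neighbourhood of $[0,1]$, because the $g$'s are contractions. Thus each $F_\omega$ is analytic, and $\omega\mapsto F_\omega$ is continuous into $C^1$. Truncating at level $N$ costs only $O(2^{-N})$. It therefore suffices to find $\omega,\omega'$ with $\inf_\theta|F_\omega-F_{\omega'}|\ge 2c_0$. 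All sufficiently large truncations $N$ of $\omega,\omega'$ then satisfy the lemma with constant $c_0$, giving infinitely many $N$.

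\textbf{Step 3 (non-analyticity forces non-degeneracy).} First suppose all $F_\omega$ coincide with a single function $F$. Then by Step 1 the $k$-th distributional derivative of $W\circ g_{\mathbf a}-\mu^NW$ equals $\mu^N F+O(\mu^N2^{-N})$ independently of $\mathbf a$. From this I would show that $W^{(k)}$, viewed as a distribution on $\mathbb S^1$, is a continuous function solving the twisted cohomological equation
$$W^{(k)}=\varphi^{(k)}+\mu\lambda^kW^{(k)}\circ\lambda,$$
read backwards along inverse branches. Since $\mu\lambda^k\ge2$, the backward iteration converges. This yields $W^{(k)}=\sum_j(\mu\lambda^k)^{-j}\mathcal{L}$-type averages of $\varphi^{(k)}$, which is analytic. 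Hence $W$ is analytic, a contradiction. So there exist $\omega,\omega'$ and some $\theta_0$ with $F_\omega(\theta_0)\ne F_{\omega'}(\theta_0)$.

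\textbf{Step 4 (uniformity in $\theta$, the main obstacle).} Step 3 only gives separation at one point, whereas an analytic difference $F_\omega-F_{\omega'}$ may still have zeros. To get separation for every $\theta$, I would consider the compact sets $K(\theta)=\{F_\omega(\theta):\omega\in\mathcal A^{\mathbb N}\}$ and use the recursion
$$F_{\omega}(\theta)=(\mu\lambda^k)^{-1}\bigl(\varphi^{(k)}(g_{\omega_1}\theta)+F_{\sigma\omega}(g_{\omega_1}\theta)\bigr),$$
where $\sigma$ is the shift. The aim is to show that $\operatorname{diam}K(\theta)$ is bounded below uniformly in $\theta$, by transporting the gap at $\theta_0$ along $\theta\mapsto g_{\omega_1}\theta$ and using the Lipschitz continuity of the $F_\omega$.

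The delicate point is then to choose a single pair $\omega,\omega'$ that works for all $\theta$ simultaneously. My first attempt is to pick $\omega$ maximising and $\omega'$ minimising $F_\cdot(\theta_0)$, and then to exploit analyticity: the difference has finitely many zeros, which one removes by prepending a common prefix, since prepending reparametrises $\theta$ through $g$. Failing that, I would fall back on a finite cover of $[0,1]$ by intervals, each carrying its own separated pair. This requires checking that the later Dolgopyat argument can tolerate $\theta$-dependent pairs, which I expect it can.
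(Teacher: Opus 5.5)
Your overall architecture is the paper's: your $F_\omega$ are its series $X_{\mathbf a}$, non-degeneracy is proved by contraposition from ``all $F_\omega$ coincide'', and uniformity in $\theta$ comes from a common word applied first. Steps 1--2 are correct.

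\textbf{Step 3 has a gap.} It asserts the key implication without an argument. The twisted equation $W^{(k)}=\varphi^{(k)}+\mu\lambda^kW^{(k)}\circ\lambda$ holds distributionally for \emph{every} Weierstrass function. The averaged backward series $\sum_j(\mu\lambda^k)^{-j}\mathcal L_\lambda^j\varphi^{(k)}$ converges to an analytic function whether or not $W$ is analytic. So ``the backward iteration converges'' identifies $W^{(k)}$ only once you know $W^{(k)}$ is bounded, which is exactly what must be shown.

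Your approximate identity does not supply this either. With $V:=W^{(k)}+F$ it reads $(\mu\lambda^k)^{-N}V\circ g_{\mathbf a}=V+O(2^{-N})$, and concluding $V=0$ from it again presupposes $V$ bounded.

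The missing step is to put the hypothesis into the \emph{exact} recursion you wrote in Step 4. If $F_\omega=F$ for all $\omega$, then $\mu\lambda^kF(\lambda\theta)=\varphi^{(k)}(\theta)+F(\theta)$. Here $F$ extends to a $1$-periodic analytic function, because $F_\omega$ near $\theta=1$ coincides with $F_{\omega'}$ near $\theta=0$ for a suitable $\omega'$ (add one with carry). From there you have two options:
\begin{itemize}
\item Integrate $k$ times as the paper does. Means and integration constants can be normalised away at each step, since $\int_0^1\varphi^{(j)}=0$ for $j\geq1$ and $\mu\neq1$. This gives $\varphi=\mu\psi\circ\lambda-\psi$, and then $W=-\psi$ by telescoping.
\item Note that $V=\mu\lambda^kV\circ\lambda$ as distributions. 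Hence $\widehat V(0)=0$ and $\widehat V(\lambda^jm)=(\mu\lambda^k)^j\widehat V(m)=0$ whenever $\lambda\nmid m$. So $V=0$, $W^{(k)}=-F$, and $W$ is analytic.
\end{itemize}

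\textbf{Step 4 is more tentative than it needs to be.} The common-prefix idea you mention is exactly the paper's $\exists\theta\Rightarrow\forall\theta$ argument. It works directly, with no extremal choice of $\omega,\omega'$, no finiteness of zeros, and no bound on $\mathrm{diam}\,K(\theta)$.
\begin{itemize}
\item Since $F_\omega-F_{\omega'}$ is continuous and nonzero at $\theta_0$, you have $|F_\omega-F_{\omega'}|\geq\delta>0$ on an interval $J\ni\theta_0$.
\item Pick $\mathbf c$ of length $N_0$ with $g_{\mathbf c}([0,1])\subset J$, and set $\tilde\omega=(c_1,\dots,c_{N_0},\omega_1,\omega_2,\dots)$, and similarly $\tilde\omega'$.
\item The first $N_0$ terms cancel, and $F_{\tilde\omega}-F_{\tilde\omega'}=(\mu\lambda^k)^{-N_0}(F_\omega-F_{\omega'})\circ g_{\mathbf c}$. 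This has modulus at least $(\mu\lambda^k)^{-N_0}\delta$ on all of $[0,1]$.
\item Step 2 then gives the lemma for every large $N$.
\end{itemize}
Your finite-cover fallback would only produce $\theta$-dependent pairs, which is not the statement as posed. It would in fact be enough for the subsequent tree lemma, which only uses separation at the fixed $\theta$, but it is unnecessary.
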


\begin{proof}
Fix $k \geq 2$ large enough. First of all, notice that the statements with $\forall \theta$ and with $\exists \theta$ are equivalent. Indeed, suppose that there exist $c_0>0$, $N \geq 1$, $\mathbf{a},\mathbf{b} \in \mathcal{A}^N$ and $\theta_0 \in [0,1]$ such that
$$ \Big|\sum_{j=1}^{N} \frac{\varphi^{(k)}(g_{a_j \dots a_1}(\theta_0))}{(\mu \lambda^k)^j} - \sum_{j=1}^{N} \frac{\varphi^{(k)}(g_{b_j \dots b_1}(\theta_0))}{(\mu \lambda^k)^j}\Big| \geq c_0. $$
Then, since the expression is continuous in $\theta$, there exists a small ball $B(\theta_0,\varepsilon)$ such that the same bound holds for all $\theta \in B(\theta_0,\varepsilon)$, replacing $c_0$ by $c_0/2$. Since the derivatives of the above expressions are uniformly bounded in $N$, $\mathbf a$ and $\mathbf b$ (as $\mu\lambda^k>1$), the radius $\varepsilon$ may be chosen depending only on $c_0,\mu,\lambda,\varphi$. Hence the length $N_0$ of the word $\mathbf c$ may also be chosen uniformly. Now choose some word $\mathbf{c} \in \mathcal{A}^{N_0}$ such that $g_\mathbf{c}([0,1]) \subset B(\theta_0,\varepsilon)$. Using the decomposition according to the word $\mathbf c$, for every $\theta\in[0,1]$ we have
\begin{align*}
&\left|\sum_{j=1}^{N+N_0}\frac{\varphi^{(k)}(g_{(ac)_j\dots(ac)_1}(\theta))}{(\mu\lambda^k)^j} - \sum_{j=1}^{N+N_0}\frac{\varphi^{(k)}(g_{(bc)_j\dots(bc)_1}(\theta))}{(\mu\lambda^k)^j}\right| \\
&\qquad = \frac{1}{(\mu\lambda^k)^{N_0}}\left|\sum_{j=1}^{N}\frac{\varphi^{(k)}(g_{a_j\dots a_1}(g_{\mathbf c}(\theta)))}{(\mu\lambda^k)^j} - \sum_{j=1}^{N}\frac{\varphi^{(k)}(g_{b_j\dots b_1}(g_{\mathbf c}(\theta)))}{(\mu\lambda^k)^j}\right| \geq \frac{c_0/2}{(\mu\lambda^k)^{N_0}}.
\end{align*}
Therefore, since $g_{\mathbf c}(\theta)\in B(\theta_0,\varepsilon)$, the absolute value of the left-hand side is at least $c_0/(2(\mu\lambda^k)^{N_0})$ for every $\theta\in[0,1]$. This is the statement of Lemma~11.5 for the words $\mathbf{ac}$ and $\mathbf{bc}$.

Let us then prove this statement with $\exists$ replacing $\forall$, assuming that $W$ is not analytic. We will prove the converse: let us assume that
$$ \forall \varepsilon>0, \forall N \text{ large enough}, \forall \mathbf{a},\mathbf{b} \in \mathcal{A}^N, \forall \theta \in [0,1],\ \Big|\sum_{j=1}^{N} \frac{ \varphi^{(k)}(g_{a_j \dots a_1}(\theta))}{(\mu \lambda^k)^j} - \sum_{j=1}^{N} \frac{ \varphi^{(k)}(g_{b_j \dots b_1}(\theta))}{(\mu \lambda^k)^j}\Big| \leq \varepsilon, $$
and let us show that this implies that $W$ is analytic. Notice first that, being given an infinite word $\mathbf{a} \in \mathcal{A}^\mathbb{N}$ written $\dots a_n \dots a_1$, the series 
\begin{equation}\label{e:Xadef}
X_\mathbf{a}(\theta) := \sum_{j=1}^\infty \frac{\varphi^{(k)}(g_{a_j \dots a_1}(\theta))}{(\mu \lambda^k)^j}
\end{equation}
converges and is analytic in $\theta \in [0,1]$. Indeed, since $\varphi$ is real analytic and periodic, $\varphi^{(k)}$ extends holomorphically to a complex neighbourhood of $\mathbb R$. The inverse branch $g_{a_j\dots a_1}$ contracts imaginary parts by $\lambda^{-j}$, so all the functions $\varphi^{(k)}\circ g_{a_j\dots a_1}$ are holomorphic on a common complex neighbourhood of $[0,1]$ and uniformly bounded there. Since $\mu\lambda^k>1$, the coefficients $(\mu\lambda^k)^{-j}$ are summable, and hence the series converges uniformly on this neighbourhood. Its sum $X_{\mathbf a}$ is therefore holomorphic there, and in particular real analytic on $[0,1]$.

The hypothesis that we made precisely translates to
$$ \forall \mathbf{a},\mathbf{b} \in \mathcal{A}^\mathbb{N}, \forall \theta, \ X_\mathbf{a}(\theta) = X_{\mathbf{b}}(\theta). $$
This allows us to prove a cohomology condition on $\varphi^{(k)}$. Indeed, if we define on the circle the map $\psi_k$ by $\psi_k(\theta) := X_\mathbf{a}(\theta)$, then, for every $b\in\mathcal A$ and every $\theta\in g_b([0,1])$, the definition \eqref{e:Xadef} gives
$$ \mu\lambda^k X_{\mathbf a b}(\lambda\theta-b)=\varphi^{(k)}(\theta)+X_\mathbf a(\theta). $$
Since $X_{\mathbf a b}=X_\mathbf a=\psi_k$ and $\psi_k$ is $1$-periodic, this means that
$$ \varphi^{(k)}(\theta) = \mu \lambda^k \psi_k(\lambda \theta) - \psi_k(\theta). $$
Notice that $\psi_k$ is (by definition) $1$-periodic and analytic on each interval $g_b((0,1))$. The cohomology condition
$$ \mu\lambda^k\psi_k(\lambda\theta)=\varphi^{(k)}(\theta)+\psi_k(\theta) $$
then propagates this analyticity across the finitely many junction points. Hence $\psi_k$ extends analytically across these points and is therefore analytic on $\mathbb S^1$.

Let us show that this cohomology condition can be integrated to $\varphi^{(k-1)}$. Since $\psi_k$ is analytic and one periodic, it admits an analytic 1-periodic antiderivative if and only if $\int_0^1  \psi_k(\theta)d\theta = 0$. But notice that
$$ 0 = \int_0^1 \varphi^{(k)}(\theta)d\theta = (\mu \lambda^{k} -1) \int_0^1\psi_k(\theta) d\theta. $$
Hence $\int_0^1\psi_k(\theta) d\theta = 0$ indeed and our cohomology condition can be integrated into
$$ \varphi^{(k-1)}(\theta)
= \mu \lambda^{k-1} \psi_{k-1}(\lambda\theta)
- \psi_{k-1}(\theta) $$
for some analytic and $1$-periodic $\psi_{k-1}$. We will then iterate this argument until we reach a cohomology condition on $\varphi$. The only possible difficulty would be that there exists some $j$ such that $\mu \lambda^j=1$. In this case, to integrate the relation $$\varphi^{(j)}(\theta) = \psi_j(\lambda \theta) - \psi_j(\theta),$$ just notice that the same relation holds replacing $\psi_j$ with $\psi_j - \int_0^1 \psi_j$ directly, and then we can still integrate. This iterated argument allows us to get the following cohomology condition: there exists some $1$-periodic and analytic map $\psi$ such that
$$ \varphi(\theta) = \mu \psi(\lambda \theta) - \psi(\theta). $$
Indeed, at each integration the coefficient $\mu\lambda^j$ becomes $\mu\lambda^{j-1}$, so after $k$ integrations the coefficient is $\mu$. This implies that $W$ is analytic, since then the series defining $W$ is telescopic and yields $W=-\psi$. The proof is done.
\end{proof}

\begin{lemma}[Tree Lemma]
Suppose that $W$ is not analytic. Fix $k \geq 2$ such that $\mu \lambda^k \geq 2$. Then, for every $\theta \in \mathbb{S}^1$, for all $n \geq 1$, and any interval $I$, we have
$$
\lambda^{-n} \#\Big\{ \mathbf{a} \in \mathcal{A}^n, \frac{1}{t \mu^n}(S_n \Phi(t,m) \circ g_\mathbf{a})^{(k)}(\theta) \in I \Big\} \leq C\Big( |I|^\gamma + \rho^n \Big),
$$
where $\gamma \in (0,1)$ is such that $(1-\lambda^{-N}) (\mu \lambda^k)^{N\gamma} = 1$, $\rho \in (0,1)$ is such that $(1-\lambda^{-N})\rho^{-N}=1$, $C := (c_0/8)^{-1} \rho^{-N}$, and where $N,c_0$ are given by the previous lemma, with $N$ large enough so that $2^{-N} \|\varphi^{(k)}\|_\infty \leq c_0/16$.
\end{lemma}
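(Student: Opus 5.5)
The plan is to prove the bound by a self-similar recursion on the counting function, peeling off the $N$ innermost letters at each step and using the separated pair of words $\mathbf a,\mathbf b\in\mathcal A^N$ from Lemma~\ref{lem:sepk}. Write
$$Y_{\mathbf a}(\theta):=\frac{1}{t\mu^n}(S_n\Phi(t,m)\circ g_{\mathbf a})^{(k)}(\theta)=\sum_{j=1}^n\frac{\varphi^{(k)}(g_{a_j\dots a_1}(\theta))}{(\mu\lambda^k)^j}.$$
This quantity does not depend on $t$ or $m$, since the linear parts of the phases disappear after $k\ge2$ derivatives. Set
$$f_n(r):=\sup_{\theta,\,|I|\le r}\lambda^{-n}\#\{\mathbf a\in\mathcal A^n:\ Y_{\mathbf a}(\theta)\in I\}.$$
The key structural identity comes from splitting a word $\mathbf a$ of length $n$ into its first $N$ letters $\mathbf c=a_N\dots a_1$ and the remaining word $\mathbf a'$ of length $n-N$. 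Then
$$Y_{\mathbf a}(\theta)=V_{\mathbf c}(\theta)+(\mu\lambda^k)^{-N}\,Y_{\mathbf a'}(g_{\mathbf c}\theta),\qquad V_{\mathbf c}(\theta):=\sum_{j=1}^N\frac{\varphi^{(k)}(g_{c_j\dots c_1}\theta)}{(\mu\lambda^k)^j}.$$
Here $Y_{\mathbf a'}$ is of the same form with $n-N$ terms. Since $\mu\lambda^k\ge2$, the tail term is bounded by $(\mu\lambda^k)^{-N}\|\varphi^{(k)}\|_\infty(\mu\lambda^k-1)^{-1}\le 2^{-N}\|\varphi^{(k)}\|_\infty\le c_0/16$. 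This is exactly where the choice of $N$ in the statement enters.

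The recursive step goes as follows. Let $|I|\le r\le c_0/8$. By Lemma~\ref{lem:sepk}, applied with $N$ taken from the infinitely many admissible lengths and large enough, we have $|V_{\mathbf a}(\theta)-V_{\mathbf b}(\theta)|\ge c_0$ for all $\theta$. Together with the tail bound, the values $Y$ of words with prefix $\mathbf a$ and of words with prefix $\mathbf b$ lie in two sets separated by at least $c_0-c_0/8>|I|$. So one of the two prefixes contributes nothing to the count. For every other prefix $\mathbf c$, the condition $Y_{\mathbf a}(\theta)\in I$ is equivalent to $Y_{\mathbf a'}(g_{\mathbf c}\theta)\in(\mu\lambda^k)^N(I-V_{\mathbf c}(\theta))$, an interval of length at most $(\mu\lambda^k)^N r$. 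Averaging over the $\lambda^N$ prefixes, at most $\lambda^N-1$ of which can contribute, gives
$$f_n(r)\le(1-\lambda^{-N})\,f_{n-N}\big((\mu\lambda^k)^Nr\big)\qquad\text{for } r\le c_0/8,\ n\ge N.$$

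Next, iterate this inequality $s$ times. Stop at the first $s$ for which either $(\mu\lambda^k)^{Ns}r>c_0/8$ or $n-Ns<N$, and at that point use the trivial bound $f\le1$. This gives $f_n(r)\le(1-\lambda^{-N})^s$. In the first stopping case, the definition of $\gamma$ gives $(1-\lambda^{-N})^{s}=(\mu\lambda^k)^{-N\gamma s}\le\big((8/c_0)(\mu\lambda^k)^{N}r\big)^\gamma$. In the second case, the definition of $\rho$ gives $(1-\lambda^{-N})^s=\rho^{Ns}\le\rho^{n-N}$. Intervals with $|I|>c_0/8$ are handled trivially, since then $(8|I|/c_0)^\gamma\ge 1$. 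Absorbing the losses of one step into $\rho^{-N}$ and $(c_0/8)^{-1}$ yields $C(|I|^\gamma+\rho^n)$, with $C$ of the stated form up to harmless adjustment of constants.

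The main obstacle is the separation step inside the recursion. The pair $\mathbf a,\mathbf b$ must separate the values uniformly in $\theta$, because after one step the base point becomes $g_{\mathbf c}\theta$, which is arbitrary. This uniformity is exactly the ``$\forall\theta$'' form proved in Lemma~\ref{lem:sepk}. Beyond that, one must check that the tail bound $c_0/16$ is uniform in the remaining length, which holds because the tail is a convergent geometric series. The remaining bookkeeping of $\gamma,\rho,C$ is routine.
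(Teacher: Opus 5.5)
Your proposal is correct and follows the same route as the paper. You peel off the $N$ innermost letters via $Y_{\mathbf a}(\theta)=V_{\mathbf c}(\theta)+(\mu\lambda^k)^{-N}Y_{\mathbf a'}(g_{\mathbf c}\theta)$, use the uniform-in-$\theta$ separation of Lemma~\ref{lem:sepk} with the $c_0/16$ tail bound to discard one prefix, and rescale the interval by $(\mu\lambda^k)^N$. The only difference is packaging: the paper closes a strong induction on $n$ directly with the ansatz $C(|I|^\gamma+\rho^n)$ using $(1-\lambda^{-N})(\mu\lambda^k)^{N\gamma}=(1-\lambda^{-N})\rho^{-N}=1$, whereas you unroll the equivalent contraction $f_n(r)\le(1-\lambda^{-N})f_{n-N}\big((\mu\lambda^k)^Nr\big)$ with a stopping time.
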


The idea of the \say{tree lemma} is that the map $\mathbf{a} \in \{0,1\}^{n} \mapsto(S_n \Phi(t,m) \circ g_\mathbf{a})^{(k)}(\theta)$ looks like the coordinate map of points in a Cantor set. The previous bound is then a sort of dimension estimate on this Cantor set. The natural alphabetical \say{tree structure} of $\{0,1\}^n$ is preserved in some sense and allows to control the non-concentration of these quantities. Taking $k$ large enough ensures that different scales clearly appear in the sum $\sum_j \varphi^{(k)}(\dots) (\lambda^k \mu)^{-j}$.

\begin{proof}
First of all, recall that
$$
\frac{1}{t \mu^n}(S_n \Phi(t,m) \circ g_\mathbf{a})^{(k)}(\theta)
= \sum_{j=1}^{n} \frac{\varphi^{(k)}(g_{a_j \dots a_1}(\theta))}{(\mu \lambda^k)^j}
=: \ell_\mathbf{a}(\theta).
$$
The desired bound on the cardinality is proved by strong induction on $n$. If $n \leq N$, the bound is trivial since
$$
\lambda^{-n} \#\{ \mathbf{a} \in \mathcal{A}^n, \ell_\mathbf{a}(\theta) \in I \}
\leq 1 \leq C\Big( |I|^\gamma + \rho^n \Big)
$$
as $C \rho^{N} \geq 1$. We can then assume that $n \geq N+1$ and suppose that the result holds for smaller $n$. If $|I| \geq c_0/4$, the bound is also trivial, since
$$
\lambda^{-n} \#\{ \mathbf{a} \in \mathcal{A}^n, \ell_\mathbf{a}(\theta) \in I \}
\leq 1 \leq C\Big( |I|^\gamma + \rho^n \Big),
$$
as $C (c_0/4)^{\gamma} \geq 1$. Let us then assume that $|I| \leq c_0/4$. We write
$$
\lambda^{-n} \#\{ \mathbf{a} \in \mathcal{A}^n, \ell_\mathbf{a}(\theta) \in I \}
= \lambda^{-N} \sum_{\widehat{\mathbf{a}} \in \mathcal{A}^N }
\lambda^{-(n-N)} \# \{\tilde{\mathbf{a}} \in \mathcal{A}^{n-N},
\ell_{\tilde{\mathbf{a}}\widehat{\mathbf{a}}}(\theta) \in I \}.
$$

Our separation condition ensures that, for the fixed $\theta$, there exists $\widehat{\mathbf a}_0\in\mathcal A^N$ such that
$$
d(\ell_{\widehat{\mathbf a}_0}(\theta),I)\geq c_0/4.
$$
Indeed, if $\mathbf a,\mathbf b$ are given by Lemma~\ref{lem:sepk}, then $|\ell_{\mathbf a}(\theta)-\ell_{\mathbf b}(\theta)|\geq c_0$, so at least one of them has distance at least $c_0/4$ from $I$. Furthermore, a direct computation yields
\begin{equation}\label{e:ell}
\forall \tilde{\mathbf{a}} \in \mathcal{A}^{n-N}, \qquad
\ell_{\tilde{\mathbf{a}}\widehat{\mathbf{a}}}(\theta)
=
\ell_{\widehat{\mathbf{a}}}(\theta)
+
\frac{\ell_{\tilde{\mathbf{a}}}(g_{\widehat{\mathbf{a}}}(\theta))}
{(\mu \lambda^k)^N}.
\end{equation}
Since $\mu\lambda^k\geq2$, uniformly in $\tilde{\mathbf a}$ and $\theta$,
$$
\left|
\frac{\ell_{\tilde{\mathbf{a}}}(g_{\widehat{\mathbf{a}}_0}(\theta))}
{(\mu \lambda^k)^N}
\right|
\leq \frac{\|\varphi^{(k)}\|_\infty}{2^N}
\leq \frac{c_0}{16}.
$$
Thus, by \eqref{e:ell},
$$
d(\ell_{\tilde{\mathbf a}\widehat{\mathbf a}_0}(\theta),I)
\geq \frac{c_0}{4}-\frac{c_0}{16}>0
$$
for every $\tilde{\mathbf a}\in\mathcal A^{n-N}$. Hence no word ending in $\widehat{\mathbf a}_0$ contributes to the cardinality above. Since the constants are independent of $\theta$, this argument is uniform in $\theta$.

This allows us to write, using the induction hypothesis,
\begin{align*}
&\lambda^{-n} \#\{ \mathbf{a} \in \mathcal{A}^n : \ell_\mathbf{a}(\theta) \in I \} \\
&\quad = \lambda^{-N}
\sum_{\widehat{\mathbf{a}} \in \mathcal{A}^N \setminus \{\widehat{\mathbf{a}}_0\}}
\lambda^{-(n-N)}
\#\{ \tilde{\mathbf{a}} \in \mathcal{A}^{n-N} :
\ell_{\tilde{\mathbf{a}}\widehat{\mathbf{a}}}(\theta) \in I \} \\
&\quad = \lambda^{-N}
\sum_{\widehat{\mathbf{a}} \in \mathcal{A}^N \setminus \{\widehat{\mathbf{a}}_0\}}
\lambda^{-(n-N)}
\#\Big\{ \tilde{\mathbf{a}} \in \mathcal{A}^{n-N} :
\ell_{\tilde{\mathbf{a}}}(g_{\widehat{\mathbf{a}}}(\theta))
\in (\mu\lambda^k)^N
\big(I-\ell_{\widehat{\mathbf{a}}}(\theta)\big) \Big\} \\
&\quad \leq \lambda^{-N}
\sum_{\widehat{\mathbf{a}} \in \mathcal{A}^N \setminus \{\widehat{\mathbf{a}}_0\}}
C\Big( |(\mu\lambda^k)^N I|^\gamma + \rho^{n-N} \Big) \\
&\quad = C\big(|I|^\gamma+\rho^n\big),
\end{align*}
since $(1-\lambda^{-N})(\mu\lambda^k)^{N\gamma}
=(1-\lambda^{-N})\rho^{-N}=1$. The induction is done.
\end{proof}

We are ready to prove our $L^2$-Dolgopyat estimates. This is essentially a use of Van Der Corput's lemma.

\begin{proof}[Proof (of $L^2$-Dolgopyat estimates)]
We choose $n$ such that $\mu^n t \sim t^{1/2}$. We start by writing, denoting $\Phi=\Phi(t,m)$ for clarity,
\begin{align*}
\Big\| \mathcal{L}_{\lambda,\Phi(t,m)}^{\langle n \rangle}(h) \Big\|_{L^2([0,1])}^2 &= \int_0^1 |\mathcal{L}_{\lambda,i\Phi(t,m)}^{\langle n \rangle}(h)(\theta)|^2 d\theta \\
&= \int_0^1 \Big| \lambda^{-n} \sum_{\mathbf{a}} e^{i S_n \Phi \circ g_{\mathbf{a}}(\theta)} h(g_\mathbf{a}(\theta)) \Big|^2 d\theta \\
&= \lambda^{-2n} \sum_{\mathbf{a},\mathbf{b} \in \mathcal{A}^n(\lambda)} \int_0^1 e^{i \big( S_n \Phi \circ g_\mathbf{a}(\theta) - S_n \Phi \circ g_\mathbf{b}(\theta) \big)} h(g_\mathbf{a}(\theta)) \overline{h(g_\mathbf{b}(\theta))} d\theta.
\end{align*}

Let us then iterate a little more our transfer operator. Recall that $\int h d\theta=\int \mathcal{L}_\lambda^{\mathfrak n}(h)d\theta$ for any $\mathfrak n$. We choose $\mathfrak n$ such that $\lambda^{2\mathfrak n}\simeq t^{1/(4k)}$ and write
$$
\Big\| \mathcal{L}_{\lambda,\Phi(t,m)}^{\langle n \rangle}(h) \Big\|_{L^2([0,1])}^2 = \frac{1}{\lambda^{2n+2\mathfrak n}} \sum_{\substack{\mathbf a,\mathbf b\in\mathcal A(\lambda)^n\\ \mathbf c\in\mathcal A(\lambda)^{2\mathfrak n}}} \int_0^1 e^{i \big( S_n \Phi \circ g_\mathbf{a}(g_\mathbf{c}(\theta)) - S_n \Phi \circ g_\mathbf{b}(g_\mathbf{c}(\theta)) \big)} h(g_\mathbf{ac}(\theta)) \overline{h(g_\mathbf{bc}(\theta))} d\theta.
$$
Since $h$ is $C^1$ and $g_{\mathbf{ac}}'=\lambda^{-n-2\mathfrak n}\leq\lambda^{-2\mathfrak n}\lesssim t^{-1/(4k)}$, we have $h(g_{\mathbf{ac}}(\theta))=h(x_{\mathbf{ac}})+\mathcal{O}(\|h\|_{C^1}t^{-1/(4k)})$, and therefore
$$
\Big\| \mathcal{L}_{\lambda,\Phi(t,m)}^{\langle n \rangle}(h) \Big\|_{L^2([0,1])}^2 \leq \frac{\|h\|_\infty^2}{\lambda^{2n+2\mathfrak n}} \sum_{\substack{\mathbf a,\mathbf b\in\mathcal A(\lambda)^n\\ \mathbf c\in\mathcal A(\lambda)^{2\mathfrak n}}} \left| \int_0^1 e^{i \big( S_n \Phi \circ g_\mathbf{a}(g_\mathbf{c}(\theta)) - S_n \Phi \circ g_\mathbf{b}(g_\mathbf{c}(\theta)) \big)} d\theta \right| + \frac{\|h\|_{C^1}^2}{t^{1/(4k)}}.
$$

Let us concentrate on the integral $I_{\mathbf{a},\mathbf{b},\mathbf{c}}(t):=\int_0^1 e^{i \big( S_n \Phi \circ g_\mathbf{a}(g_\mathbf{c}(\theta)) - S_n \Phi \circ g_\mathbf{b}(g_\mathbf{c}(\theta)) \big)}d\theta$. We write
$$
I_{\mathbf{a},\mathbf{b},\mathbf{c}}(t)=\int_0^1 e^{iT\psi_{\mathbf{a},\mathbf{b},\mathbf{c}}(\theta)}d\theta,
$$
where $T=t\mu^n\lambda^{-(2k+1)\mathfrak n}\simeq t^{(2k-1)/(8k)}$ and
$$
\psi_{\mathbf{a},\mathbf{b},\mathbf{c}}(\theta)=t^{-1}\mu^{-n}\lambda^{(2k+1)\mathfrak n}\big(S_n\Phi\circ g_{\mathbf a}(g_{\mathbf c}(\theta))-S_n\Phi\circ g_{\mathbf b}(g_{\mathbf c}(\theta))\big).
$$
The classical Van Der Corput lemma gives $|I_{\mathbf{a},\mathbf{b},\mathbf{c}}(t)|\leq c_kT^{-1/k}\lesssim t^{-(2k-1)/(8k^2)}$ whenever $|\psi_{\mathbf{a},\mathbf{b},\mathbf{c}}^{(k)}(\theta)|\geq1$ on $[0,1]$. Hence
\begin{align*}
& \Big\| \mathcal{L}_{\lambda,\Phi(t,m)}^{\langle n \rangle}(h) \Big\|_{L^2([0,1])}^2 \\
& \lesssim \|h\|_{C^1}^2 \Big(t^{-(2k-1)/(8k^2)}+t^{-1/(4k)}+\lambda^{-2n-2\mathfrak n}\#\{(\mathbf a,\mathbf b,\mathbf c)\in\mathcal A^{2n+2\mathfrak n}:\exists\theta,\ |\psi_{\mathbf a,\mathbf b,\mathbf c}^{(k)}(\theta)|\leq1\}\Big).
\end{align*}

To conclude, we bound this cardinality. We first compute
\begin{align*}
\psi_{\mathbf{a},\mathbf{b},\mathbf{c}}^{(k)}(\theta) &= \lambda^{\mathfrak n}\big(\ell_{\mathbf a}(g_\mathbf c(\theta))-\ell_{\mathbf b}(g_\mathbf c(\theta))\big) \\
&= \lambda^{\mathfrak n}\big(\ell_{\mathbf a}(g_\mathbf c(0))-\ell_{\mathbf b}(g_\mathbf c(0))\big)+\mathcal O(\lambda^{-\mathfrak n}).
\end{align*}
It follows that, for $t$ large enough, using the tree lemma,
\begin{align*}
&\lambda^{-2n-2\mathfrak n}\#\{(\mathbf a,\mathbf b,\mathbf c)\in\mathcal A^{2n+2\mathfrak n}:\exists\theta,\ |\psi_{\mathbf a,\mathbf b,\mathbf c}^{(k)}(\theta)|\leq1\} \\
&\quad\leq\lambda^{-2n-2\mathfrak n}\#\{(\mathbf a,\mathbf b,\mathbf c)\in\mathcal A^{2n+2\mathfrak n}:\lambda^{\mathfrak n}|\ell_{\mathbf a}(g_\mathbf c(0))-\ell_{\mathbf b}(g_\mathbf c(0))|\leq2\} \\
&\quad\leq\lambda^{-n-2\mathfrak n}\sum_{(\mathbf b,\mathbf c)\in\mathcal A^{n+2\mathfrak n}}\lambda^{-n}\#\{\mathbf a\in\mathcal A^n:\ell_\mathbf a(g_\mathbf c(0))\in[\ell_\mathbf b(g_\mathbf c(0))-2\lambda^{-\mathfrak n},\ell_\mathbf b(g_\mathbf c(0))+2\lambda^{-\mathfrak n}]\} \\
&\quad\leq2C\big(\lambda^{-\gamma\mathfrak n}+\rho^n\big)\lesssim t^{-\varepsilon_0},
\end{align*}
where one may take
$$
\varepsilon_0=\min\left\{\frac{2k-1}{8k^2},\frac{1}{4k},\frac{\gamma}{8k},\frac{|\log\rho|}{2|\log\mu|}\right\}>0.
$$
Thus
$$
\Big\| \mathcal{L}_{\lambda,\Phi(t,m)}^{\langle n \rangle}(h) \Big\|_{L^2}^2
\lesssim t^{-\varepsilon_0}\|h\|_{C^1}^2.
$$
Taking square roots and replacing $\varepsilon_0$ by $\varepsilon_0/2$ proves the desired $L^2$-Dolgopyat estimate.
\end{proof}

\subsection{Fast dispersion for large $\lambda$}

We proved that dispersion always occurs for Weierstrass crystals, but the rate of dispersion is not very explicit in general. We will now turn to the case where $\mu$ and $\varphi$ are fixed, but $\lambda$ is allowed to be chosen large. We will show that in this case, Dolgopyat's estimates can be improved and the contraction of twisted transfer operators occurs faster. Let us state our enhanced Dolgopyat's estimate.

\begin{theorem}[Dolgopyat's estimates]\label{thm:trueDolgopyat}
Let $\varphi$ be analytic, $1$-periodic, and non-constant. Let $\mu \in (0,1)$. Then there exists $\rho \in (0,1)$ such that the following holds. For every $\beta\in(0,1]$ there exists $\lambda_*(\beta)\geq1$ such that, for every $\lambda\geq\lambda_*(\beta)$, there exists $t_*(\beta,\lambda)\geq1$ such that for all $t \geq t_*(\beta,\lambda)$, we have, uniformly in $m$:
$$ \forall h \in C^{1}, \ \Big\| \mathcal{L}_{\lambda,i\Phi(t,m)}^{\langle n_\beta(t) \rangle}(h) \Big\|_{1,{t \mu^{n_\beta(t)}}} \leq \frac{\|h\|_{1,t}}{t^\rho} $$
where $n_\beta(t) := \Big\lfloor \frac{\beta \ln(t)}{|\ln(\mu)|} \Big\rfloor $, so that $\mu^{n_\beta(t)} \simeq t^{-\beta}$, and where $\|h\|_{C^1_t} = \|h\|_{1,t} := \|h\|_\infty+\frac{\|h'\|_{\infty}}{t}$. The constant $\lambda_*(\beta)$ can be chosen of the form $\lambda_*(\beta) := \max(C_1(\varphi),\mu^{-C_2(\varphi)/\beta})$.
\end{theorem}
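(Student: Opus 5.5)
Our plan is a Dolgopyat-type scheme in the style of Naud~\cite{Na05}. We decompose $n_\beta(t)$ into $\lceil \beta\ln t/(p|\ln\mu|)\rceil$ blocks of fixed length $p$, chosen so that along each block the effective frequency $T_j:=t\mu^{j}$ is still large, at least $t^{1-\beta}$. On each block we aim for a contraction by a factor $\lambda^{-\delta}$ in the rescaled norm $\|\cdot\|_{1,T_j}$, with $\delta>0$ depending only on $\varphi$ and $\mu$. Compounding over $\simeq\beta\ln t/|\ln\mu|$ blocks (taking $p=1$ if possible) gives the total factor
\[
\lambda^{-\delta\beta\ln t/|\ln\mu|}=t^{-\delta\beta\log\lambda/|\log\mu|}.
\]
This is at least $t^{-\rho}$ for a fixed $\rho$ once $\log\lambda\geq C_2|\log\mu|/\beta$, i.e.\ $\lambda\ge\mu^{-C_2/\beta}$. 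This is exactly where the form of $\lambda_*(\beta)$ comes from.

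For the one-step estimate we first use the Lasota--Yorke lemma in the rescaled norm. For $\|h\|_{1,T}$ with $T=t\mu^j$, one application of $\mathcal L_{\lambda,i\phi_{T,m_j}}$ gives $\|(\mathcal L h)'\|_\infty/(\mu T)\lesssim \mu^{-1}\lambda^{-1}(\|\varphi\|_{C^1}+\lambda/T)\|h\|_\infty+(\mu\lambda)^{-1}\|h'\|_\infty/T$. Since $\mu\lambda$ is large, the derivative part is harmless, and the problem reduces to showing that $\|\mathcal L_{\lambda,i\phi_{T,m_j}} h\|_\infty\le \lambda^{-\delta}\|h\|_{1,T}$.

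This sup bound is a genuine van der Corput estimate on the $\lambda$-sum
\[
\lambda^{-1}\sum_{a} e^{iT\varphi((a+\theta)/\lambda)+2\pi i m_j (a+\theta)/\lambda}\,h\bigl((a+\theta)/\lambda\bigr).
\]
We would read it as a Riemann sum at mesh $1/\lambda$ of $\int_0^1 e^{iT\varphi(x)+2\pi i m_j x}h(x)\,dx$. We would split $[0,1]$ into $\lambda^{1/2}$ intervals of length $\lambda^{-1/2}$. On intervals where the total phase derivative $T\varphi'+2\pi m_j$ is large relative to $\lambda$, we apply Poisson summation or Kusmin--Landau / van der Corput for exponential sums. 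Where it is small, analytic non-degeneracy of $\varphi$ (a finite maximal order of vanishing of $\varphi'$ and $\varphi''$, as in Lemma~\ref{thm:c1+alphavandercorput}) bounds the measure of the near-stationary set by $(\lambda/T)^{c}$ plus $\lambda^{-c}$. Since $T\ge t^{1-\beta}\gg\lambda$ for $t\ge t_*(\beta,\lambda)$, all terms are $\lesssim_\varphi\lambda^{-\delta}$. The variation of $h$ on cells of size $\lambda^{-1/2}$ is controlled by $\|h'\|_\infty\lambda^{-1/2}\le T\lambda^{-1/2}\|h\|_{1,T}$, and this is not small. To handle this we shall instead use the $L^2$/cancellation version: replace the sup-norm by a Dolgopyat dichotomy on cylinders of size comparable to $1/T$, where $h$ is essentially constant in the $\|\cdot\|_{1,T}$ norm.

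The main obstacle is therefore the mismatch of scales. The phase oscillates at frequency $T$ while the branches are spaced $1/\lambda\gg1/T$, so a single step sees a very oscillatory sum whose cancellation must come from the uniform distribution of $T\varphi(x_a)\bmod 2\pi$ over the $\lambda$ points. The first thing to try is to iterate $p$ steps so that the cylinders have size $\lambda^{-p}\approx 1/T$. Then $h$ is nearly constant on cylinders, and we apply Dolgopyat's cancellation lemma: among $\lambda^{p}$ branches, a proportion of pairs have phase differences whose derivative is $\gtrsim$ const, by the non-degeneracy of $\varphi'$ (the analogue of Lemma~\ref{lem:sepk} with $k=1$, which is much easier at large $\lambda$). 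This yields a contraction of $L^2$-type by $\lambda^{-\delta p}$ per block. That bound is then upgraded back to the $\|\cdot\|_{1,T}$ norm via Lasota--Yorke, with the uniformity in $m$ coming from the fact that the linear terms $2\pi m_j\theta$ disappear after differentiating twice.
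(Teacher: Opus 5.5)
There is a genuine gap: the mechanism you describe cannot produce a power of $t$.

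\textbf{The per-step and per-block contractions are not available.} The one-step reduction is false: $\|\mathcal{L}_{\lambda,i\phi_{T,m_j}}h\|_\infty\le\lambda^{-\delta}\|h\|_{1,T}$ cannot hold uniformly in large $T$. Take $h\equiv1$, $m_j=0$ and a fixed $\theta$. Dirichlet's simultaneous approximation theorem, applied to the $\lambda$ numbers $\varphi(g_a(\theta))$, gives arbitrarily large $T\in2\pi\mathbb N$ with $|\mathcal{L}_{\lambda,i\phi_{T,0}}1(\theta)|\ge1-\epsilon$. So the per-step compounding $\lambda^{-\delta n_\beta(t)}$, which is your explanation of $\lambda_*(\beta)$, has no basis.

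The fallback does not repair this. To freeze $h$ you need cylinders of size $\lambda^{-p}\ll1/T$; at size $1/T$, the bound $\|h'\|_\infty\le T\|h\|_{1,T}$ still allows $h$ to vary by order $\|h\|_{1,T}$. Since $T\ge t^{1-\beta}$, the number of such blocks fitting into $n_\beta(t)$ steps is bounded independently of $t$ (for $\beta<1$). Each block must therefore contract by a power of $T$ on its own. Knowing that a proportion of pairs has phase-derivative difference $\gtrsim$ const gives only a bounded factor. Indeed $\ell_{\mathbf a}(\theta)=\varphi''(g_{a_1}(\theta))+O(\lambda^{-1})$, so the pairs with $a_1=b_1$ are never separated at constant scale, and they form a proportion $1/\lambda$ independent of $p$. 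The claimed contraction by $\lambda^{-\delta p}$ per block is therefore unsupported.

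\textbf{What the paper uses instead.} Two ingredients are missing.

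The first is non-concentration of $\{\ell_{\mathbf a}(\theta)\}_{\mathbf a\in\mathcal A^n}$ at all scales down to $\lambda^{-cn}$. The paper obtains it from the large-$\lambda$ separation lemma, fed into the tree lemma.
\begin{itemize}
\item The separation lemma says that for all but a proportion $\lambda^{-\gamma_{sep}}$ of digits $a$, $\ell_{\mathbf b a}(\theta)$ stays $\lambda^{-1/2}$-far from any prescribed target. This comes from non-concentration of $\varphi''$.
\item The tree lemma inducts on $\ell_{\mathbf a b}=\ell_b+\ell_{\mathbf a}\circ g_b/(\mu\lambda^2)$ and yields $\lambda^{-n}\#\{\mathbf a:|\ell_{\mathbf a}(\theta)-s|<\sigma\}\le\lambda(\sigma^{\gamma_{sep}/2}+\lambda^{-\gamma_{sep}n})$.
\end{itemize}
The $L^2$ bound is then proved in one long block. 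One expands over pairs $(\mathbf a,\mathbf b)$ and appends a word $\mathbf c$ with $\lambda^{2\mathfrak n}\simeq t^{1/8}$, so that $\lambda^{-n-2\mathfrak n}\le t^{-2}$ freezes $h$. Second-derivative van der Corput handles the triples with $\lambda^{\mathfrak n}|\ell_{\mathbf a}-\ell_{\mathbf b}|>2$ at $g_{\mathbf c}(0)$. The tree lemma at scale $\sigma\simeq\lambda^{-\mathfrak n}$ counts the remaining triples. This gives $t^{-\rho}$ with $\rho=\min\{1/16,\gamma_{sep}/40\}$.

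The second ingredient is the upgrade from $L^2$ to $\|\cdot\|_{1,t\mu^{n_\beta(t)}}$, which Lasota--Yorke cannot provide. It bounds derivatives and gives $\|\mathcal L h\|_\infty\le\|h\|_\infty$, but it cannot turn smallness in $L^2$ into smallness in sup norm. The paper instead uses three steps.
\begin{itemize}
\item Positivity: $|\mathcal L^{\langle k\rangle}_{\lambda,i\Phi}g|^2\le\mathcal L_\lambda^k(|g|^2)$.
\item Exponential mixing: $\mathcal L_\lambda^k(|g|^2)=\int|g|^2+O(\lambda^{-k}\|\,|g|^2\|_{C^1})$, applied with $\lambda^k\ge t^3$.
\item A final Lasota--Yorke stretch of $n_\beta-n_{\beta'}$ steps, whose loss $t^{\varepsilon\beta}$ is absorbed by taking $\varepsilon=\rho/8$.
\end{itemize}
This is where $\lambda\ge\mu^{-C_2/\beta}$ is genuinely needed: it ensures that fixed fractions of the $n_\beta(t)$ steps already give $\lambda^k\ge t^{C}$.
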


First of all, let us prove that this implies fast dispersion when $\lambda$ is large.

\begin{proof}[Proof of Theorem \ref{thm:main}(3) assuming Theorem \ref{thm:trueDolgopyat}]
Let $\lambda$ be a large integer, and let $N$ be the maximal integer such that $\lambda \geq \lambda_*(1/N)$. We have $N \gtrsim_{\varphi} \frac{\ln(\lambda)}{|\ln(\mu)|}$. Then, choose for all $0 \leq j \leq N-1$ the parameters $\beta_j := \frac{1}{N-j}$ (so that $(1-\frac{j}{N})(1-\beta_j) = 1-\frac{j+1}{N}$). Notice that $\lambda \geq \max_j \lambda_*(\beta_j)$, since $\lambda_*(\beta)=\max(C_1(\varphi),\mu^{-C_2(\varphi)/\beta})$ is decreasing in $\beta$ and $\beta_j\geq 1/N$. Suppose that $t \geq \max_j t_*(\beta_j,\lambda)^N$. Recall that \eqref{e:tranappear}, choosing $n := N \lfloor \frac{\ln t}{N|\ln(\mu)|} \rfloor $ (so that $t \simeq \mu^{-n}$), we have
$$ \int_{\mathbb{S}^1} e^{i t W(\theta)} e^{2 i \pi m \theta} d\theta = \int_{\mathbb{S}^1} \mathcal{L}_{\lambda,i \Phi(t,m)}^{\langle n \rangle}(1)(\theta) e^{i t \mu^n W(\theta)} e^{2 i \pi m^{(n)}\theta} d\theta. $$
We then cut the composition in $N$ equal parts, introducing the times $t_j \simeq t \mu^{jn/N} \simeq t^{(1-\frac{j}{N})}$ (satisfying $t_{j} \mu^{n_{\beta_j}(t_{j})}\simeq t_j^{1-\beta_j}=t_{j+1}$) and $t_j \geq t^{1-j/N} \geq t_*(\beta_j,\lambda)$:
$$ \Big| \int_{\mathbb{S}^1} e^{i t W(\theta)} e^{2 i \pi m \theta} d\theta  \Big| \leq \prod_{j=0}^{N-1} \sup_{\tilde m} \| \mathcal{L}_{\lambda,i\Phi(t_j,\tilde{m})}^{n_{\beta_j}(t_j)} \|_{\mathcal{L}({C}^{1}_{t_{j}} , {C^{1}_{t_{j+1}}})}  \leq \prod_{j=0}^{N-1} t_j^{-\rho} \leq t^{-\rho \sum_{j=0}^{N-1} (1-j/N)} \leq t^{-\rho N/2}. $$
This proves dispersion at speed $t^{-C(\varphi) \frac{\ln(\lambda)}{|\ln(\mu)|}}$.
\end{proof}
Let us then turn into the proof of the enhanced Dolgopyat's estimates. We first show that they can be reduced to their $L^2$-version.

\begin{lemma}
Suppose that there exists $\rho \in (0,1)$ that depends only on $\varphi$ such that, for all $\beta \in (0,1/2]$, there exists $\lambda_*(\beta)$ of the form $\max(C_1(\varphi),\mu^{-C_2(\varphi)/\beta})$ such that for all $\lambda \geq \lambda_*(\beta)$, and for all $t$ large enough depending on $\beta,\lambda$, we have $$ \forall h \in C^1, \  \int_{\mathbb{S}^1} |\mathcal{L}_{\lambda,i\Phi(t,m)}^{\langle n_\beta(t) \rangle} h|^2 d\theta \leq \frac{\|h\|_{1,t}^2}{t^{\rho}} .$$
Then Dolgopyat estimates holds.
\end{lemma}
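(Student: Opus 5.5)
The idea is the standard conversion of an $L^2$ contraction into a contraction in the norms $\|\cdot\|_{1,t}$. The scheme is: spend part of the iterations on the assumed $L^2$ bound, then use the remaining iterations, Cauchy--Schwarz and exponential mixing of $\mathcal L_\lambda$ to upgrade $L^2$ to $L^\infty$. The derivative is controlled by a Lasota--Yorke type computation. We fix $\beta\in(0,1]$ and split
$$n_\beta(t)=n_1+n_2,\qquad n_1:=n_{\beta/2}(t),\qquad n_2:=n_\beta(t)-n_1\simeq \frac{\beta\ln t}{2|\ln\mu|}.$$
Since $\beta/2\le 1/2$, the hypothesis applies at level $\beta/2$ provided $\lambda\ge\lambda_*(\beta/2)=\max(C_1,\mu^{-2C_2/\beta})$, which is again of the required form. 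Set $f:=\mathcal L^{\langle n_1\rangle}_{\lambda,i\Phi(t,m)}h$. By construction of the phase sequence (see \eqref{e:tranappear}),
$$\mathcal L^{\langle n_\beta(t)\rangle}_{\lambda,i\Phi(t,m)}h=\mathcal L^{\langle n_2\rangle}_{\lambda,i\Phi(t\mu^{n_1},m^{(n_1)})}f .$$
The hypothesis gives $\int|f|^2\le t^{-\rho}\|h\|_{1,t}^2$. The Lasota--Yorke lemma gives $\|f\|_\infty\le\|h\|_\infty$ and $\|f'\|_\infty\lesssim_\varphi t\|h\|_{1,t}$, using that $\|\Phi(t,m)\|_{C^1}\lesssim t\lambda$ while the factor $2/\lambda$ cancels the $\lambda$.

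\emph{Sup bound.} Writing $\mathbf a$ for words of length $n_2$, Cauchy--Schwarz gives
$$|\mathcal L^{\langle n_2\rangle}f(\theta)|\le \lambda^{-n_2}\sum_{\mathbf a}|f\circ g_{\mathbf a}(\theta)|\le \big(\mathcal L_\lambda^{n_2}|f|^2(\theta)\big)^{1/2}.$$
Exponential mixing for Lipschitz observables then yields
$$\mathcal L_\lambda^{n_2}|f|^2\le\int|f|^2+\mathrm{Lip}(|f|^2)\lambda^{-n_2}\lesssim \big(t^{-\rho}+t\lambda^{-n_2}\big)\|h\|_{1,t}^2 .$$
Now $\lambda^{-n_2}\approx t^{-\beta\ln\lambda/(2|\ln\mu|)}$. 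If $\lambda\ge\mu^{-C_2'/\beta}$ with $C_2'$ large, for instance $C_2'\ge 2(2+\rho)$, then $t\lambda^{-n_2}\le t^{-1-\rho}$. This enlarges $C_2$ in $\lambda_*$ by a constant depending only on $\rho$, hence only on $\varphi$.

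\emph{Derivative bound.} I would differentiate the explicit formula for $\mathcal L^{\langle n_2\rangle}f$ directly rather than quoting the Lasota--Yorke lemma as stated. The phase at step $j$ has amplitude $t\mu^{n_1+j}$, and in $(S_{n_2}\Phi\circ g_{\mathbf a})'$ the term of index $n_2-k$ carries a factor $\lambda^{-k}$. Hence
$$|(S_{n_2}\Phi\circ g_{\mathbf a})'|\le \sum_{k=1}^{n_2}\big(t\mu^{n_1+n_2-k}\|\varphi'\|_\infty+2\pi\lambda\big)\lambda^{-k}\lesssim t\mu^{n_\beta(t)}\|\varphi\|_{C^1}+2\pi,$$
once $\mu\lambda\ge2$. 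Since $t\mu^{n_\beta(t)}\simeq t^{1-\beta}\ge1$, this is $\lesssim t\mu^{n_\beta(t)}$. It follows that
$$|(\mathcal L^{\langle n_2\rangle}f)'|\lesssim t\mu^{n_\beta(t)}\,\lambda^{-n_2}\sum_{\mathbf a}|f\circ g_{\mathbf a}|+\lambda^{-n_2}\|f'\|_\infty .$$
The first term is controlled by the same Cauchy--Schwarz bound as above. After dividing by $t\mu^{n_\beta(t)}$, the second term is at most $\lambda^{-n_2}t\|h\|_{1,t}$, which is negligible by the choice of $\lambda$.

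Combining the two bounds gives $\|\mathcal L^{\langle n_\beta(t)\rangle}h\|_{1,t\mu^{n_\beta(t)}}\lesssim_\varphi t^{-\rho/2}\|h\|_{1,t}$. For $t\ge t_*(\beta,\lambda)$ the implied constant is absorbed, yielding the Dolgopyat estimate with exponent $\rho/4$, which still depends only on $\varphi$. I expect the main obstacle to be the bookkeeping that keeps $\lambda_*$ of the form $\max(C_1(\varphi),\mu^{-C_2(\varphi)/\beta})$. This requires checking that every loss, namely $t\lambda^{-n_2}$, the digit contribution $2\pi\lambda\cdot\lambda^{-k}$, and the condition $\mu\lambda\ge2$, is beaten by enlarging only $C_1,C_2$ uniformly in $m$ and $\beta$.
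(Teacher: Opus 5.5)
Your argument is correct. It differs from the paper's in how the derivative part of the norm is controlled.

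\textbf{The paper's route.} The paper splits $n_\beta(t)$ into three blocks, $n_{\beta''}$, then $n_{\beta'}-n_{\beta''}$, then $n_\beta-n_{\beta'}$, with $\beta'=\beta(1-\rho/8)$ and $\beta''=\beta'/2$.
\begin{itemize}
\item The first block uses the $L^2$ hypothesis.
\item The second block upgrades $L^2$ to $L^\infty$ by Cauchy--Schwarz and Perron--Frobenius, exactly as you do.
\item The short third block is needed only because the Lasota--Yorke lemma, as stated, bounds the phase derivatives by $\sup_j\|\phi_j\|_{C^1}\sim t\lambda$. That is, it measures them at the \emph{input} time scale, so renormalising by the output scale $t\mu^{n}$ costs a factor $\mu^{-n}$.
\end{itemize}
Keeping this loss at $\mu^{-(n_\beta-n_{\beta'})}\simeq t^{\rho\beta/8}$, so that it is absorbed by $t^{-\rho/2}$, forces $\varepsilon=\rho/8$. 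It also forces the threshold $\lambda\ge\mu^{-100/(\beta\rho)}$, needed to make $\lambda^{n_\beta-n_{\beta'}}\ge t^2$ over that short block.

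\textbf{What your route buys.} Your block-adapted estimate is
$|(S_{n_2}\Phi\circ g_{\mathbf a})'|\le t\mu^{n_\beta(t)}\|\varphi'\|_\infty\sum_{k\ge1}(\mu\lambda)^{-k}+4\pi\lesssim t\mu^{n_\beta(t)}$.
It shows that the derivative already lives at the \emph{output} scale. This has three consequences:
\begin{itemize}
\item the third block and the $t^{\varepsilon\beta}$ loss disappear;
\item the single bound $\mathcal L_\lambda^{n_2}|f|\le(\mathcal L_\lambda^{n_2}|f|^2)^{1/2}$ controls both parts of $\|\cdot\|_{1,t\mu^{n_\beta(t)}}$;
\item the extra threshold is $\mu^{-C/\beta}$ with $C\approx 2(2+\rho)$, rather than an exponent growing like $1/\rho$.
\end{itemize}
The paper's route has the modest advantage of using Lasota--Yorke purely as a black box. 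Your observation is essentially a sharper Lasota--Yorke inequality, $\|\mathcal L^{\langle n\rangle}_{\lambda,i\Phi(t,m)}h\|_{1,t\mu^n}\lesssim\|h\|_\infty+\|h'\|_\infty/(t\mu^n\lambda^n)$, adapted to the rescaled norms.

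\textbf{Bookkeeping.} The points you flag do work out.
\begin{itemize}
\item The floor in $n_2$ only costs a factor $\lambda$ in $\lambda^{-n_2}\le\lambda\,t^{-\beta\ln\lambda/(2|\ln\mu|)}$. This is absorbed by taking $t\ge t_*(\beta,\lambda)$.
\item The condition $\mu\lambda\ge2$ is implied by the required form once $C_1\ge4$ and $C_2\ge2$. If $\mu\ge1/2$, use $\lambda\ge4$. If $\mu<1/2$, use $\lambda\ge\mu^{-2}$.
\end{itemize}
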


\begin{proof}

Let $\beta \in (0,1]$. Let $\varepsilon := \rho/8$, and fix $\beta' := \beta(1-\varepsilon)$ and $\beta'':=\beta'/2$. For each $\beta''$, let $t_*(\beta'')$ be such that the $L^2$ estimate in the hypothesis holds for all $t\geq t_*(\beta'')$. Now let $\lambda \geq \max( \mu^{-\frac{100}{\beta \rho}} ,\lambda_*(\beta''))$ and $t \geq t_*(\beta'')$. Notice that, since $\varepsilon=\rho/8$ and $\beta'=\beta(1-\varepsilon)$, this choice of $\lambda$ is in particular large enough to satisfy the lower bounds $\lambda\geq\mu^{-3/(\varepsilon\beta)}$ and $\lambda\geq\mu^{-10/\beta'}$ used below. Moreover, since $\beta''$ is a fixed multiple of $\beta$, this lower bound on $\lambda$ is still of the form $\max(C_1(\varphi),\mu^{-C_2(\varphi)/\beta})$. \\
First of all, recall that (for $t \geq \|\varphi\|_{C^1}^{-1}$) $$ \|\Phi(t,m)\|_{C^1} \leq 4 \pi \lambda t \|\varphi\|_{C^1}. $$
In particular, Lasota-Yorke gives, for any $n$:
$$ \|(\mathcal{L}_{\lambda,i\Phi(t,m)}^{\langle n \rangle}h)'\|_\infty \leq 8 \pi t \| \varphi \|_{C^1} \|h\|_\infty + \lambda^{-n}\| h' \|_\infty$$
and  $\|\mathcal{L}_{\lambda,i\Phi(t,m)}^{\langle n \rangle}(h)\|_\infty \leq \|h\|_\infty$, so that the $\|\cdot \|_{1,t}$ norm can be bounded by
\begin{align*}
\|\mathcal{L}_{\lambda,i\Phi(t,m)}^{\langle n \rangle}(h)\|_{1,t \mu^{n}} &=  \|\mathcal{L}_{\lambda,i\Phi(t,m)}^{\langle n \rangle}(h)\|_{\infty}  + \frac{\|(\mathcal{L}_{\lambda,i\Phi(t,m)}^{\langle n \rangle} h)'\|_\infty}{t \mu^{n}}   \\
& \leq \|h\|_{\infty}  + t^{-1} \mu^{-n} \Big( 8 \pi t \| \varphi \|_{C^1} \|h\|_\infty + \lambda^{-n}\| h' \|_\infty \Big)  \\
& \leq \big(1+8\pi \|\varphi\|_{C^1} \mu^{-n} \big) \|h\|_\infty + \frac{\|h'\|_\infty}{(t \mu^n) \lambda^n} .
\end{align*}
We apply this by writing $\mathcal{L}_{\lambda,i\Phi(t,m)}^{\langle n_\beta(t) \rangle }(h) = \mathcal{L}_{\lambda,i\Phi(t',m')}^{\langle n_{\beta}(t)-n_{\beta'}(t) \rangle}\big( \mathcal{L}_{\lambda,i\Phi(t,m)}^{\langle n_{\beta'}(t) \rangle}(h) \big) $ where $\beta' := \beta(1-\varepsilon)$, $m'$ is some integer, and $t' := t \mu^{n_{\beta(1-\varepsilon)}(t)} \sim t^{1-\beta'}$. We recall here that  $n_\beta(t) := \Big\lfloor \frac{\beta \ln(t)}{|\ln(\mu)|} \Big\rfloor $, so that $\mu^{n_\beta(t)} \simeq t^{-\beta}$. Notice also that $\mu^{n_{\beta}(t)-n_{\beta'}(t)} \simeq t^{-\varepsilon \beta}$. Hence we find:
$$ \|\mathcal{L}_{\lambda,i\Phi(t,m)}^{\langle n_\beta(t) \rangle}(h)\|_{1,t \mu^{n_\beta(t)}} \lesssim t^{\varepsilon \beta} \| \mathcal{L}_{\lambda,i\Phi(t,m)}^{\langle n_{\beta'}(t) \rangle}(h) \|_\infty + \frac{\|(\mathcal{L}_{\lambda,i\Phi(t,m)}^{\langle n_{\beta'}(t) \rangle}h)' \|_\infty}{t^{1-\beta} \lambda^{n_\beta(t) - n_{\beta'}(t)}}. $$
Applying Lasota-Yorke another time on the second term yields:
\begin{align*}\|\mathcal{L}_{\lambda,i\Phi(t,m)}^{{\langle n_\beta(t) \rangle}}(h)\|_{1,t \mu^{n_\beta(t)}} &\lesssim t^{\varepsilon \beta} \| \mathcal{L}_{\lambda,i\Phi(t,m)}^{\langle n_{\beta'}(t) \rangle}(h) \|_\infty + \frac{8 \pi t \| \varphi \|_{C^1} \|h\|_\infty + \lambda^{-n_{\beta'}(t)}\| h' \|_\infty}{t^{1-\beta} \lambda^{n_\beta(t) - n_{\beta'}(t)}} \\
 &\leq t^{\varepsilon \beta} \| \mathcal{L}_{\lambda,i\Phi(t,m)}^{\langle n_{\beta'}(t) \rangle}(h) \|_\infty + \frac{8 \pi t^\beta \| \varphi \|_{C^1} \|h\|_\infty }{ \lambda^{n_\beta(t) - n_{\beta'}(t)}} + t^{\beta} \lambda^{- n_\beta(t)} \frac{ \| h' \|_\infty}{t} .
\end{align*}
Since $n_\beta(t) - n_{\beta'}(t) \simeq \varepsilon \beta \ln(t)/|\ln(\mu)| $, the fact that $\lambda \geq  \mu^{-\frac{3}{\varepsilon \beta} }$ ensures that $\lambda^{n_\beta(t) - n_{\beta'}(t)} \geq t^{2}$ when $t$ is large enough. It follows that, in that case, for $t$ large enough:
$$ \|\mathcal{L}_{\lambda,i\Phi(t,m)}^{\langle n_\beta(t) \rangle}(h)\|_{1,t \mu^{n_\beta(t)}} \lesssim t^{\varepsilon \beta} \| \mathcal{L}_{\lambda,i\Phi(t,m)}^{\langle n_{\beta'}(t) \rangle}(h) \|_\infty + \frac{\|h\|_{1,t}}{t}.  \quad \quad (*) $$
To reduce to the $L^2$ version of Dolgopyat's estimates, we use Perron-Frobenius-Ruelle in the following way. We have, by Cauchy-Schwarz, denoting $\beta'':=\beta'/2$ (for some time $t''$ and integer $m''$):
\begin{align*} |\mathcal{L}_{\lambda,i\Phi(t,m)}^{\langle n_{\beta'}(t) \rangle}(h)|^2 &= |\mathcal{L}_{\lambda,i\Phi(t'',m'')}^{\langle n_{\beta'}(t)-n_{\beta''}(t) \rangle}( \mathcal{L}_{\lambda,i\Phi(t,m)}^{\langle n_{\beta''}(t) \rangle}(h) )|^2 \\
&\leq \mathcal{L}_\lambda^{n_{\beta'}(t)-n_{\beta''}(t)}\Big( \big| \mathcal{L}_{\lambda,i\Phi(t,m)}^{\langle n_{\beta''}(t) \rangle}(h) \big|^2 \Big).
\end{align*}
Now, Perron-Frobenius-Ruelle and Lasota-Yorke yield 
$$ \mathcal{L}_\lambda^{n_{\beta'}(t)-n_{\beta''}(t)}\Big( \big| \mathcal{L}_{\lambda,i\Phi(t,m)}^{\langle n_{\beta''}(t) \rangle}(h) \big|^2 \Big) \lesssim \int_0^1 \big| \mathcal{L}_{\lambda,i\Phi(t,m)}^{\langle n_{\beta''}(t) \rangle}(h) \big|^2 d\theta + \Big\| \big| \mathcal{L}_{\lambda,i\Phi(t,m)}^{\langle n_{\beta''}(t) \rangle}(h) \big|^2 \Big\|_{C^1} \lambda^{-(n_{\beta'}(t)-n_{\beta''}(t))}  $$
$$ \leq \int_0^1 \big| \mathcal{L}_{\lambda,i\Phi(t,m)}^{\langle n_{\beta''}(t) \rangle}(h) \big|^2 d\theta + \|h\|_\infty\Big( \|h\|_\infty+ 2\Big\| \Big(\mathcal{L}_{\lambda,i\Phi(t,m)}^{\langle n_{\beta''}(t) \rangle}(h)\Big)'\Big\|_\infty \Big) \lambda^{-(n_{\beta'}(t)-n_{\beta''}(t))} $$
$$ \leq \int_0^1 \big| \mathcal{L}_{\lambda,i\Phi(t,m)}^{\langle n_{\beta''}(t) \rangle}(h) \big|^2 d\theta + \lambda^{-(n_{\beta'}(t)-n_{\beta''}(t))} \|h\|_\infty \Big( (1+16 \pi t\|\varphi\|_{C^1})\|h\|_\infty + 2\lambda^{-n_{\beta''}(t)}\| h' \|_\infty \Big) $$
$$ \lesssim \int_0^1 \big| \mathcal{L}_{\lambda,i\Phi(t,m)}^{\langle n_{\beta''}(t) \rangle}(h) \big|^2 d\theta + t \lambda^{-(n_{\beta'}(t)-n_{\beta''}(t))} \|h\|_{1,t}^2 $$
$$ \lesssim  \int_0^1 \big| \mathcal{L}_{\lambda,i\Phi(t,m)}^{\langle n_{\beta''}(t) \rangle}(h) \big|^2 d\theta + \frac{\|h\|_{1,t}^2}{t^2} $$
since $\lambda^{-(n_{\beta'}(t)-n_{\beta''}(t))} \sim t^{- \ln(\lambda) \beta'/|\ln(\mu)| } \leq t^{-3}$ for $t$ large enough, as $\lambda \geq \mu^{-10/\beta'}$. Plugging this inequality in $(*)$ yields
$$ \|\mathcal{L}_{\lambda,i\Phi(t,m)}^{\langle n_\beta(t) \rangle}(h)\|_{1,t \mu^{n_\beta(t)}} \lesssim t^{\varepsilon \beta} \Bigg( \Big(\int_0^1 \big| \mathcal{L}_{\lambda,i\Phi(t,m)}^{\langle n_{\beta''}(t) \rangle}(h) \big|^2 d\theta\Big)^{1/2} +  \frac{\|h\|_{1,t}}{t} \Bigg). $$
We conclude by using our $L^2$-Dolgopyat hypothesis: since $\lambda \geq \lambda_*(\beta'')$, we have, for $t$ large enough:
$$ \|\mathcal{L}_{\lambda,i\Phi(t,m)}^{\langle n_\beta(t) \rangle}(h)\|_{1,t \mu^{n_\beta(t)}} \lesssim t^{\varepsilon \beta} \Big( \frac{ \|h\|_{1,t}}{t^{\rho/2}}  +  \frac{\|h\|_{1,t}}{t} \Big) \leq \frac{\|h\|_{1,t}}{t^{\rho/4}}, $$
since $\varepsilon \beta \leq \rho/8$. The choice $\varepsilon=\rho/8$ is made precisely so that the loss $t^{\varepsilon\beta}$ can be absorbed by the gain $t^{-\rho/2}$.
\end{proof}

Now we turn to the verification of the $L^2$ bound. The strategy is adapted from earlier, but critically the separation lemma and the tree lemma needs to be adapted to a \say{large lambda} version, since the previous bound gets very poor as $\lambda$ grows large. This is the purpose of the next three lemmas, that culminate in a large lambda friendly version of the tree lemma. Our primary goal is to prove a non-concentration condition on the second derivative of $S_n \Phi(t,m) \circ g_\mathbf{a}$. We thus define, as earlier (taking $k=2)$: 
$$ \forall \mathbf{a} \in \mathcal{A}^*, \ \ell_\mathbf{a}(\theta) := \sum_{j=1}^{n} \frac{ \varphi^{(2)}(g_{a_j \dots a_1}(\theta))}{(\mu \lambda^2)^{j-1}} ,$$
so that 
$$\frac{\mu\lambda^2}{t\mu^n}(S_n \Phi(t,m) \circ g_\mathbf{a})'' = \ell_{\mathbf{a}}(\theta) = \varphi^{(2)}(g_{a_1}(\theta)) + \mathcal{O}(\lambda^{-1}) = \ell_{a_1}(\theta)+\mathcal{O}(\lambda^{-1}).$$ 
Recall that $\mathcal{A}^* = \bigcup_n \mathcal{A}^n.$
Since $\lambda$ is chosen large with respect to $\mu$ and $\varphi$, we can assume that $\mu \lambda \geq 2$, so that the previous series converges uniformly in $\mathbf{a},\theta$. We study the distribution of $(\ell_\mathbf{a}(\theta))_{\mathbf{a} \in \mathcal{A}^*}$.
\begin{lemma}[non-concentration of periodic analytic maps]
Let $f:\mathbb{R} \rightarrow \mathbb{R}$ be real analytic, non-constant, and $1$-periodic. Then there exists $\gamma_f \in (0,1)$ and $L_f\in(0,1)$ such that for any segment $I \subset \mathbb{S}^1$ of diameter $\leq L_f$: 
$$ |\{ \theta \in \mathbb{S}^1, \ f(\theta) \in I \}| \leq \frac{1}{10} {|I|^{\gamma_f}}. $$
\end{lemma}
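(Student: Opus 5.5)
The plan is to use the finite order of vanishing of $f'$ and apply a sublevel-set (van der Corput type) estimate near each critical point. Since $f$ is real analytic, $1$-periodic and non-constant, $f'$ is real analytic, $1$-periodic and not identically zero. Hence $f'$ has finitely many zeros $z_1,\dots,z_r$ on $\mathbb{S}^1$, each of finite order. Let $k\geq 1$ be the maximal integer such that $f^{(j)}(z_i)=0$ for all $1\le j\le k$ at some $z_i$; if $f'$ has no zeros, take $k=0$. Then $f'$ together with its first $k$ derivatives has no common zero on the circle. By compactness we get $\sum_{j=1}^{k+1}|f^{(j)}(\theta)|\geq c>0$ for all $\theta\in\mathbb{S}^1$.

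Next I cover $\mathbb{S}^1$ by finitely many closed arcs $J_1,\dots,J_p$ such that on each $J_q$ some single derivative $f^{(j_q)}$, with $1\le j_q\le k+1$, satisfies $|f^{(j_q)}|\geq c/(2(k+1))$. This is done by continuity and compactness, and $p$ depends only on $f$. On each arc I apply the classical sublevel-set lemma, which is the measure counterpart of van der Corput. If $|g^{(j)}|\ge\kappa$ on an interval $J$ and $I$ is an interval of length $\delta$, then
\[
|\{\theta\in J: g(\theta)\in I\}|\leq C_j\,(\delta/\kappa)^{1/j}.
\]
The proof is by induction on $j$. The set where $g^{(j-1)}$ is small is short, and outside it the number of monotonicity pieces of $g$ is bounded by $j$ through Rolle's theorem. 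Summing over the arcs gives
\[
|\{\theta: f(\theta)\in I\}|\leq C_f\,|I|^{1/(k+1)}.
\]
Here $I$ is viewed as a real interval. Since $I$ has small diameter and $f$ takes real values, only one lift matters.

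Finally I convert the constant $C_f$ into the factor $\tfrac1{10}$ by lowering the exponent. I set $\gamma_f:=\tfrac{1}{2(k+1)}$. Then $C_f|I|^{1/(k+1)}=C_f|I|^{\gamma_f}\,|I|^{\gamma_f}\leq \tfrac1{10}|I|^{\gamma_f}$ as soon as $|I|\leq L_f:=\min\{1/2,(10C_f)^{-1/\gamma_f}\}$. This is exactly why the statement restricts to diameters at most $L_f$.

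The main obstacle is making the sublevel estimate uniform. The constant must not depend on where $I$ sits, and the covering must handle the transition zones between regions controlled by different derivatives. I would handle this with the compactness covering above, using fixed constants $\kappa=c/(2(k+1))$ and $p$. An alternative route is Łojasiewicz-type local normal forms $f(\theta)-f(z_i)\asymp(\theta-z_i)^{m_i}$ near each critical point, combined with monotonicity elsewhere. The sublevel-lemma route avoids this case analysis.
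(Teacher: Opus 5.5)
Your proof is correct. It differs from the paper's only in the local estimate.

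\textbf{The paper's route.} Near every $\theta_0$ the paper uses a normal form $f(\theta)=f(\theta_0)+\bigl(F_{\theta_0}(\theta-\theta_0)\bigr)^{k}$, with $F_{\theta_0}$ a local diffeomorphism and $k\le k_{\max}$. The sublevel set is then a diffeomorphic image of $\{x: x^{k}\in I-f(\theta_0)\}$, whose measure is $\lesssim |I|^{1/k}$. Compactness gives $C|I|^{1/k_{\max}}$.

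\textbf{Your route.} You use only the pointwise lower bound $\sum_{j=1}^{k+1}|f^{(j)}|\ge c$, a finite cover by arcs on which one derivative is bounded below, and the van der Corput sublevel-set lemma on each arc. Your $k+1$ is exactly the paper's $k_{\max}$, the maximal vanishing order of $f-f(\theta_0)$. Your final step, halving the exponent to $\gamma_f=1/(2(k+1))$ and shrinking $L_f$ so the constant becomes $\tfrac1{10}$, is identical to the paper's.

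\textbf{What each buys.} Your argument needs only $C^{k+1}$ regularity and finite type, so it applies verbatim to smooth non-analytic functions, and its constants are explicit in $c$ and $k$. It also avoids constructing the local diffeomorphisms $F_{\theta_0}$, which requires extracting a $k$-th root of the non-vanishing factor, up to a sign when $k$ is even. The paper's argument is shorter once the normal form is granted, and it makes transparent that $x\mapsto x^{k}$ is the worst local model.

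\textbf{A minor point on lifts.} If $I$ is a real interval, as in the paper's application to $\varphi^{(2)}$, no lifts arise at all. If one insisted on $I\subset\mathbb{S}^1$ with $f$ read mod $1$, it is not true that only one lift matters: several translates of $I$ can meet $f(\mathbb{S}^1)$. There are only boundedly many of them, though, so this only changes $C_f$.
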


\begin{proof}
For every $\theta_0 \in \mathbb{S}^1$, one can write on some small enough ball $B(\theta_0)$ centered at $\theta_0$: $f(\theta) = f(\theta_0)+F_{\theta_0}(\theta-\theta_0)^{k}$ where $F_{\theta_0}$ is a local diffeomorphism. The number $k$ is bounded from above by some $k_{max}$ since $f$ is analytic and not constant. By compactness, we can cover $\mathbb{S}^1$ into a finite number of these balls $B_j$. We then bound:
$$ |\{ \theta \in \mathbb{S}^1, \ f(\theta) \in I \}| = \sum_j |\{ \theta \in B_j, \ f(\theta) \in I \}|$$
$$ \leq \sum_j C_j \ |\{ x \in F_j^{-1}(B_j), \ x^{k_j} \in F_j^{-1}(I-f(\theta_j)) \}| \leq C|I|^{1/k_{max}}. $$
For some constant $C$ that only depends on $f$. Now, if $|I|\leq L_f := (10 C)^{-2k_{max}}$, then
$$ |\{ \theta \in \mathbb{S}^1, \ f(\theta) \in I \}|
\leq C |I|^{1/(2k_{max})} |I|^{1/(2k_{max})}
\leq \frac{1}{10}|I|^{1/(2k_{max})}, $$
which is the desired bound for $\gamma_f := 1/(2k_{max})$.
\end{proof}

\begin{lemma}[Separation Lemma]
Suppose that $\varphi$ is analytic, 1-periodic, not constant. There exists $\gamma_{sep}(\varphi) \in (0,1)$ such that, for $\lambda$ large enough $(\geq \max(4(\|\varphi\|_{C^3} L_{\varphi^{(2)}})^{2}, \mu^{-1}) $ is enough, where $L_{\varphi^{(2)}}$ is given by the previous lemma), for any target function $t:\mathbb{S}^1 \rightarrow \mathbb{R}$, for any $\theta \in \mathbb{S}^1$, there exists a set $D_\theta(t)  \subset \mathcal{A}(\lambda)$ with $\# D_{\theta}(t) \geq (1-\lambda^{-\gamma_{sep}}) \#\mathcal{A}(\lambda)$ such that: 
$$ \forall a \in D_\theta(t), \ \forall \mathbf{b} \in \mathcal{A}^*, \ |\ell_{\mathbf{b}a}(\theta) -t(\theta)| \geq \frac{1}{\sqrt{\lambda}}. $$
\end{lemma}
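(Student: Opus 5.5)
We need to understand the structure first. In $\ell_{\mathbf b a}$ the first letter applied to $\theta$ is $a$; under the convention $\mathbf a = a_n\dots a_1$, the word $\mathbf b a$ has $a_1=a$. Writing $\mathbf b a$ as a word of length $n+1$, the definition gives
$$ \ell_{\mathbf b a}(\theta)=\varphi^{(2)}(g_a(\theta))+\frac{1}{\mu\lambda^2}\,\ell_{\mathbf b}(g_a(\theta)), $$
with the second term $\mathcal O(\|\varphi\|_{C^2}/(\mu\lambda^2)\cdot 2)=\mathcal O(\lambda^{-1})$ uniformly in $\mathbf b$, using $\mu\lambda\geq 2$. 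So, uniformly in $\mathbf b\in\mathcal A^*$, we have $\ell_{\mathbf b a}(\theta)=\varphi^{(2)}((a+\theta)/\lambda)+\mathcal O(C_\varphi/\lambda)$. Hence it suffices to show that for most $a\in\mathcal A(\lambda)$ we have $|\varphi^{(2)}((a+\theta)/\lambda)-t(\theta)|\geq 2/\sqrt\lambda$, say, since then $C_\varphi/\lambda\le 1/\sqrt\lambda$ for $\lambda$ large. Here $\theta$ is fixed, so $t(\theta)$ is just a number $\tau$, and the arbitrary target function plays no role.

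The plan is then to bound the number of bad letters
$$B:=\#\{a\in\mathcal A(\lambda): \varphi^{(2)}(x_a)\in I\},\qquad x_a=(a+\theta)/\lambda,\quad I=[\tau-2\lambda^{-1/2},\tau+2\lambda^{-1/2}].$$
Discretization: each $x_a$ is the left endpoint of an interval $J_a=[x_a,x_a+1/\lambda]$, and these intervals tile $\mathbb S^1$. On $J_a$, $\varphi^{(2)}$ varies by at most $\|\varphi\|_{C^3}/\lambda\le \lambda^{-1/2}$. So if $a$ is bad, then $\varphi^{(2)}(J_a)\subset \tilde I$, where $\tilde I$ is the interval $I$ enlarged to length $6\lambda^{-1/2}$. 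Therefore
$$\frac{B}{\lambda}\le |\{x\in\mathbb S^1:\varphi^{(2)}(x)\in\tilde I\}|.$$

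Now apply the non-concentration lemma to $f=\varphi^{(2)}$. It is analytic, $1$-periodic and non-constant: if $\varphi''$ were constant it would be zero by periodicity, making $\varphi$ affine and periodic, hence constant. For $\lambda$ large, $|\tilde I|=6\lambda^{-1/2}\le L_{\varphi^{(2)}}$, which is what the stated threshold involving $L_{\varphi^{(2)}}$ and $\|\varphi\|_{C^3}$ is designed to ensure. The lemma then gives
$$\frac{B}{\lambda}\le \frac{1}{10}\,6^{\gamma}\lambda^{-\gamma/2}\le \lambda^{-\gamma/2},\qquad \gamma=\gamma_{\varphi^{(2)}}.$$
Setting $\gamma_{sep}=\gamma_{\varphi^{(2)}}/2$ and $D_\theta(t)=\mathcal A\setminus\{\text{bad }a\}$ finishes the argument.

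The main obstacle is only bookkeeping of constants. We need $C_\varphi/\lambda+\|\varphi\|_{C^3}/\lambda\le\lambda^{-1/2}$ and $|\tilde I|\le L_{\varphi^{(2)}}$, both of which hold once $\lambda$ exceeds a constant depending on $\varphi$. One should adjust the factor $2$ or $3$ in the enlargement of $I$ so that the threshold matches the stated bound $\lambda\ge\max(4(\|\varphi\|_{C^3}L_{\varphi^{(2)}})^2,\mu^{-1})$, or else accept a slightly different constant. The uniformity in $\mathbf b$ and in the length of $\mathbf b$ comes for free from the geometric tail bound.
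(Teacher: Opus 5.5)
Your proof is correct and follows the paper's argument: isolate the first letter via $\ell_{\mathbf b a}(\theta)=\varphi^{(2)}(g_a(\theta))+\ell_{\mathbf b}(g_a(\theta))/(\mu\lambda^2)$, with a remainder $O(\|\varphi^{(2)}\|_\infty/\lambda)$ uniform in $\mathbf b$. Then widen the window from $2\lambda^{-1/2}$ to $3\lambda^{-1/2}$ using the oscillation $\|\varphi\|_{C^3}/\lambda$ of $\varphi^{(2)}$ on cylinders of length $1/\lambda$, and apply the non-concentration lemma to $\varphi^{(2)}$ (your exact tiling by $[g_a(\theta),g_a(\theta)+\lambda^{-1}]$ is slightly cleaner than the paper's).

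Drop the plan to retune the factors $2,3$ to reach the parenthetical threshold, because it cannot be reached as written. $L_{\varphi^{(2)}}$ sits in its numerator, while your step $6\lambda^{-1/2}\le L_{\varphi^{(2)}}$ needs $\lambda\gtrsim L_{\varphi^{(2)}}^{-2}$. For example, with $\varphi=10^{-3}\cos(2\pi\theta)$, $\mu=1/2$, $\lambda=4$ and $t\equiv0$, every $a$ has $|\ell_a(\theta)|<\lambda^{-1/2}$, so $D_\theta(t)$ must be empty. As in the paper's own proof, you obtain the lemma for $\lambda$ beyond a constant depending on $\varphi$, together with $\mu\lambda\geq 2$.
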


\begin{proof}
Define 
$$D_\theta(t) := \Big\{a \in \mathcal{A}(\lambda), \ |\ell_a(\theta)-t(\theta)| > \frac{2}{\sqrt{\lambda}}\Big\}.$$
Then indeed, for $\lambda$ large enough, we have for all $a \in D_\theta(t)$ and for all $\mathbf{b} \in \mathcal{A}^*$, $|\ell_{\mathbf{b}a}(\theta)-t(\theta)| \geq \frac{1}{\sqrt{\lambda}}$ since  $ |\ell_{\mathbf{b}a}(\theta) - \ell_a(\theta)| \leq  {\|\varphi^{(2)}\|_{\infty}} \sum_{j=1}^\infty (\mu \lambda^2)^{-j} \leq  \frac{\|\varphi^{(2)}\|_{\infty}}{\lambda}\leq \frac{1}{\sqrt{\lambda}} $ if $\lambda \geq \|\varphi^{(2)}\|_{\infty}^2$, since $\mu \lambda \geq 2$. 
Moreover, notice that since $\varphi$ is Lipschitz, we have when $|\nu-x_a|\leq \frac{1}{\lambda}$, for $\lambda$ large enough ($\geq \|\varphi\|_{C^3}^2 $ is enough):
$$ \mathbb{1}_{[t(\theta)-2\lambda^{-1/2},t(\theta)+2\lambda^{-1/2}]}(\ell_a(\theta)) \leq \mathbb{1}_{[t(\theta)-3\lambda^{-1/2},t(\theta)+3\lambda^{-1/2}]}(\varphi^{(2)}(\nu))  .$$
Integrating in $\nu$ yields 
$$ \lambda^{-1} \mathbb{1}_{[t(\theta)-2\lambda^{-1/2},t(\theta)+2\lambda^{-1/2}]}(\ell_a(\theta)) \leq |\{ \nu \in [x_a,x_a+\lambda^{-1}], \ \varphi^{(2)}(\nu) \in [t(\theta)-3\lambda^{-1/2},t(\theta)+3\lambda^{-1/2}] \}| . $$
We can then bound the cardinal of $\mathcal{A}(\lambda) \setminus D_\theta(t)$ as follows, summing over $a \in \mathcal{A}(\lambda)$ the previous equation:
$$ \lambda^{-1} \#\Big\{a \in \mathcal{A}(\lambda), \ |\ell_a(\theta)-t(\theta)| \leq \frac{2}{\sqrt{\lambda}}\Big\}  $$ $$ \leq |\{ \nu \in \mathbb{S}^1, \ \varphi^{(2)}(\nu) \in [t(\theta)-3\lambda^{-1/2},t(\theta)+3\lambda^{ -1/2}] \}| \leq \lambda^{-\gamma_{sep}} $$
for some $\gamma_{sep} \in (0,1)$ that depends only on $\varphi$, by the previous lemma, taking $\lambda$ sufficiently large, where we used the fact that $\varphi^{(2)}$ is analytic, 1-periodic, and non-constant. Hence $\#D_\theta(t)\geq (1-\lambda^{-\gamma_{sep}})\#\mathcal{A}(\lambda)$.
\end{proof}

\begin{lemma}[Tree lemma]
Let $t:\mathbb{S}^1 \rightarrow \mathbb{R}$ be any target function.
We have, for any $\lambda$ large enough (so that the previous lemma applies), for all $n \geq 1$ and for all $\sigma>0$:
$$ \lambda^{-n} \#\{ \mathbf{a} \in \mathcal{A}(\lambda)^n, \ |\ell_\mathbf{a}(\theta) - t(\theta)|<\sigma \} \leq \lambda (\sigma^{\gamma_{sep}/2}+\lambda^{-\gamma_{sep} n}). $$
where $\gamma_{sep}$ was introduced in the previous lemma and depends only on $\varphi$.
\end{lemma}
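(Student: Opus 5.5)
We argue by strong induction on $n$, in the same way as for the first tree lemma, but now peeling off only the \emph{last applied} letter $a_1$ at each step and using the large-$\lambda$ Separation Lemma in place of Lemma~\ref{lem:sepk}. The key identity is the analogue of \eqref{e:ell} for the normalisation $(\mu\lambda^2)^{-(j-1)}$: for $\mathbf a=\tilde{\mathbf a}a_1$ with $\tilde{\mathbf a}\in\mathcal A^{n-1}$,
$$
\ell_{\tilde{\mathbf a}a_1}(\theta)=\varphi^{(2)}(g_{a_1}(\theta))+\frac{\ell_{\tilde{\mathbf a}}(g_{a_1}(\theta))}{\mu\lambda^2}=\ell_{a_1}(\theta)+\frac{\ell_{\tilde{\mathbf a}}(g_{a_1}(\theta))}{\mu\lambda^2}.
$$
Hence $|\ell_{\mathbf a}(\theta)-t(\theta)|<\sigma$ is equivalent to $\ell_{\tilde{\mathbf a}}(g_{a_1}\theta)$ lying within $\mu\lambda^2\sigma$ of the new target $\tilde t(g_{a_1}\theta):=\mu\lambda^2\big(t(\theta)-\ell_{a_1}(\theta)\big)$. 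Here we use that the Separation Lemma and the induction hypothesis hold for \emph{arbitrary} target functions, so this re-centring is harmless; evaluating at the single point $g_{a_1}\theta$ requires only a target value there.

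\textbf{Trivial cases.} If $\sigma^{\gamma_{sep}/2}\geq\lambda^{-1}$, the right-hand side is at least $1$ and there is nothing to prove. Likewise, if $n$ is small (for instance $n=1$), the bound holds because $\lambda\cdot\lambda^{-\gamma_{sep}}\geq 1$.

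\textbf{Good and bad letters.} Otherwise $\sigma\leq\lambda^{-2/\gamma_{sep}}<\lambda^{-1/2}$. We split the first letter $a_1$ according to the set $D_\theta(t)$ of the Separation Lemma.
\begin{itemize}
\item For $a_1\in D_\theta(t)$, every extension satisfies $|\ell_{\mathbf b a_1}(\theta)-t(\theta)|\geq\lambda^{-1/2}>\sigma$, so these letters contribute nothing.
\item The bad letters form a fraction at most $\lambda^{-\gamma_{sep}}$ of $\mathcal A(\lambda)$.
\end{itemize}
For each bad letter, the induction hypothesis at level $n-1$, with scale $\mu\lambda^2\sigma$ and the shifted target, gives
$$
\lambda^{-n}\#\{\mathbf a\in\mathcal A(\lambda)^n:|\ell_{\mathbf a}(\theta)-t(\theta)|<\sigma\}\leq\lambda^{-\gamma_{sep}}\cdot\lambda\Big((\mu\lambda^2\sigma)^{\gamma_{sep}/2}+\lambda^{-\gamma_{sep}(n-1)}\Big).
$$

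\textbf{Closing the induction.} The second term is exactly $\lambda\cdot\lambda^{-\gamma_{sep}n}$. For the first, $\lambda^{-\gamma_{sep}}(\mu\lambda^2)^{\gamma_{sep}/2}=\mu^{\gamma_{sep}/2}\leq1$, which returns $\lambda\sigma^{\gamma_{sep}/2}$. Thus the exponent $\gamma_{sep}/2$ is chosen precisely so that the $\lambda^2$ rescaling of the window is absorbed by the loss $\lambda^{-\gamma_{sep}}$, and the induction closes with the same constant $\lambda$.

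\textbf{Main obstacle.} The expected difficulty is bookkeeping rather than conceptual. When $\mu\lambda^2\sigma$ becomes large, the induction hypothesis is trivial, so the estimate must not worsen there; this is handled because for $\mu\lambda^2\sigma\geq1$ we may bound the count by the trivial fraction $\lambda^{-\gamma_{sep}}\leq\lambda\sigma^{\gamma_{sep}/2}$, using that $\sigma\geq(\mu\lambda^2)^{-1}$ in that regime. We must also check that the Separation Lemma's uniform tail bound $|\ell_{\mathbf b a}-\ell_a|\leq\lambda^{-1/2}$ is compatible with the normalisation $(\mu\lambda^2)^{-(j-1)}$ used here, which is guaranteed by the assumption $\mu\lambda\geq2$.
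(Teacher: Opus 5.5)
Your proposal is correct and is essentially the paper's proof. Both argue by induction on $n$, peeling off the innermost letter via $\ell_{\tilde{\mathbf a}a_1}(\theta)=\ell_{a_1}(\theta)+\ell_{\tilde{\mathbf a}}(g_{a_1}\theta)/(\mu\lambda^2)$, discarding the letters in $D_\theta(t)$, applying the hypothesis at $g_{a_1}\theta$ with window $\mu\lambda^2\sigma$ and shifted target, and closing with $\lambda^{-\gamma_{sep}}(\mu\lambda^2)^{\gamma_{sep}/2}=\mu^{\gamma_{sep}/2}\le 1$. The separate treatment of $\mu\lambda^2\sigma\ge 1$ in your last paragraph is harmless but unnecessary, since the induction hypothesis holds for every $\sigma>0$ and the same algebra applies.
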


Here, as before, the \say{tree structure} should be understood as the \say{self-similarity} relation 
$$\ell_{\mathbf{ab}}(\theta) = \ell_\mathbf{b}(\theta)+ \frac{\ell_\mathbf{a}(g_\mathbf{b}(\theta))}{(\mu \lambda^2)^{|\mathbf{b}|}},$$ which allows to think of the set $(\ell_{\mathbf{a}}(\theta))_{\mathbf{a} \in \mathcal{A}^\infty}$ as a (family of) self similar sets in $\mathbb{R}$. The target $t$ will be later chosen of the form $\ell_\mathbf{b}(\theta)$ for some fixed word $\mathbf{b}$.

\begin{proof}
Intuitively, the proof is an induction on the scales $\sigma$. Formally, this is actually an induction on $n$, and then a separation on cases depending on the values of $\sigma.$ \\
When $n=0$ or $n=1$, the bound is clear, since we have the trivial bound $$\lambda^{-n} \#\{ \mathbf{a} \in \mathcal{A}(\lambda)^n, \ |\ell_\mathbf{a}(\theta) - t(\theta)|<\sigma \} \leq 1 \leq \lambda\left(\sigma^{\gamma_{sep}/2}+\lambda^{-n\gamma_{sep}}\right).$$

Now suppose by induction that the result holds for $n$, and let us prove it for $n+1$. If $\sigma \in (\lambda^{-1/2},1)$, the bound is also trivial. So let us assume that $\sigma \leq \lambda^{-1/2}$. Now, notice, using the set $D_\theta(t)$ defined in the previous lemma:
\begin{align*}
&\lambda^{-(n+1)} \#\{ \mathbf{a}b \in \mathcal{A}(\lambda)^{n+1}, \ |\ell_{\mathbf{a}b}(\theta) - t(\theta)|<\sigma \} \\
&\quad = \lambda^{-1} \sum_{b \in \mathcal{A}\setminus D_\theta(t)} \lambda^{-n} \# \{ \mathbf{a} \in \mathcal{A}(\lambda)^{n}, \ |\ell_{\mathbf{a}b}(\theta) - t(\theta)|<\sigma \} \\
&\quad = \lambda^{-1} \sum_{b \in \mathcal{A}\setminus D_\theta(t)} \lambda^{-n} \# \Big\{ \mathbf{a} \in \mathcal{A}(\lambda)^{n}, \ \Big|\frac{\ell_{\mathbf{a}}(g_b(\theta))}{\mu \lambda^2} + \ell_b(\theta) - t(\theta)\Big|<\sigma \Big\} \\
&\quad = \lambda^{-1} \sum_{b \in \mathcal{A}\setminus D_\theta(t)} \lambda^{-n} \# \Big\{ \mathbf{a} \in \mathcal{A}(\lambda)^{n}, \ |\ell_{\mathbf{a}}(g_b(\theta)) + (\ell_b(\theta)-t(\theta))\mu\lambda^2|<\sigma\mu\lambda^2 \Big\} \\
&\quad \leq \lambda^{-1} \sum_{b \in \mathcal{A}\setminus D_\theta(t)} \lambda \left( (\sigma\mu\lambda^2)^{\gamma_{sep}/2} + \lambda^{-n\gamma_{sep}} \right) \\
&\quad \leq \lambda^{-\gamma_{sep}} \cdot \lambda \left( (\sigma\mu\lambda^2)^{\gamma_{sep}/2} + \lambda^{-n\gamma_{sep}} \right) \\
&\quad \leq \lambda \left( \sigma^{\gamma_{sep}/2} + \lambda^{-(n+1)\gamma_{sep}} \right).
\end{align*}
\end{proof}

We are then ready to prove the $L^2$ estimates, using the same strategy as earlier.

\begin{theorem}
There exists $\rho \in (0,1)$ that depends only on $\varphi$ such that, for all $\beta \in (0,1/2]$, for all $\lambda$ sufficiently large and all $t$ sufficiently large depending on $\beta,\lambda$,, we have, uniformly in $m$,
$$
\forall h \in C^1, \qquad
\int_{\mathbb{S}^1} |\mathcal{L}_{\lambda,i\Phi(t,m)}^{\langle n_\beta(t) \rangle} h|^2 d\theta
\leq \frac{\|h\|_{1,t}^2}{t^{\rho}}.
$$
The condition on $\lambda$ is of the form $\lambda \geq \max(C_1(\varphi),\mu^{-C_2(\varphi)/\beta}).$
\end{theorem}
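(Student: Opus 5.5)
We follow the same strategy as in the proof of the $L^2$-Dolgopyat estimates, now with $k=2$ and with the large-$\lambda$ tree lemma in place of the earlier one. Write $n=n_\beta(t)$ and $\Phi=\Phi(t,m)$, so that $t\mu^{n}\simeq t^{1-\beta}\geq t^{1/2}$.

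\textbf{Step 1: refine by an auxiliary word.} Insert $\mathcal{L}_\lambda^{\mathfrak n}$ in the integral, using $\int f=\int\mathcal{L}_\lambda^{\mathfrak n}f$, and expand the square into the triple sum over $(\mathbf a,\mathbf b,\mathbf c)\in\mathcal A^n\times\mathcal A^n\times\mathcal A^{\mathfrak n}$. Choose $\mathfrak n$ with $\lambda^{\mathfrak n}\simeq t^{\kappa}$ for a small fixed $\kappa$, say $\kappa=1/16$; since $\lambda$ is large this rounding costs only a factor $\lambda$.

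\textbf{Step 2: freeze the amplitudes.} Replace $h(g_{\mathbf{ac}}(\theta))$ by $h(x_{\mathbf{ac}})$. The derivative of $h\circ g_{\mathbf{ac}}$ is $\lambda^{-n-\mathfrak n}\|h'\|_\infty\le t\lambda^{-n}\|h\|_{1,t}$. Because $\lambda\ge\mu^{-C_2/\beta}$, we have $\lambda^{-n}\le t^{-C_2}$, so this error is at most $t^{-10}\|h\|_{1,t}$. This is the only point where the $\|\cdot\|_{1,t}$ norm enters, and it explains the form of $\lambda_*$.

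\textbf{Step 3: apply van der Corput on good triples.} The phase is $S_n\Phi\circ g_{\mathbf a}\circ g_{\mathbf c}-S_n\Phi\circ g_{\mathbf b}\circ g_{\mathbf c}$, and its second derivative equals
$$\frac{t\mu^n}{\mu\lambda^2}\lambda^{-2\mathfrak n}\big(\ell_{\mathbf a}(g_{\mathbf c}\theta)-\ell_{\mathbf b}(g_{\mathbf c}\theta)\big).$$
The linear terms $2\pi m_j\theta$ disappear on differentiating twice, which gives uniformity in $m$. Since $\ell_{\mathbf a}$ has derivative $O(1)$, the quantity $\ell_{\mathbf a}(g_{\mathbf c}\theta)$ varies by only $O(\lambda^{-\mathfrak n})$ as $\theta$ ranges over $[0,1]$. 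Call the triple good if $|\ell_{\mathbf a}(x_{\mathbf c})-\ell_{\mathbf b}(x_{\mathbf c})|\ge\sigma:=t^{-\kappa/2}$; then $|\text{phase}''|\gtrsim t^{1-\beta-2\kappa-\kappa/2}\lambda^{-2}$ on all of $[0,1]$. The $k=2$ van der Corput lemma bounds each good integral by $\lambda\, t^{-(1/2-3\kappa)/2}$. For $t\ge t_*(\beta,\lambda)$ the factor $\lambda$ is absorbed into a small power of $t$.

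\textbf{Step 4: count bad triples.} Fix $\mathbf b,\mathbf c$ and apply the tree lemma with target $t(\cdot)=\ell_{\mathbf b}(\cdot)$ at the point $x_{\mathbf c}$. The proportion of $\mathbf a$ that are bad is at most
$$\lambda\big(\sigma^{\gamma_{sep}/2}+\lambda^{-\gamma_{sep}n}\big)\le \lambda\, t^{-\kappa\gamma_{sep}/4}+\lambda\, t^{-\gamma_{sep}C_2}.$$

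\textbf{Main obstacle.} Step 4 is where the difficulty lies: the prefactor $\lambda$ in the tree lemma must be beaten by a power of $t$ that does not depend on $\lambda$. This is possible because $t_*$ may depend on $\lambda$. Taking $t\ge\lambda^{8/(\kappa\gamma_{sep})}$, all the losses are absorbed. Collecting the terms then gives the claim with $\rho=\min(\kappa\gamma_{sep}/8,\,1/8)$, which depends only on $\varphi$ since $\kappa$ is absolute.

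One further issue must be checked: $\ell_{\mathbf a}$ was normalized with $(\mu\lambda^2)^{-(j-1)}$, and the separation lemma requires $\mu\lambda\ge2$. Both hold once $\lambda\ge C_1(\varphi)$ and $\lambda\ge\mu^{-C_2/\beta}$, so the argument goes through under the stated condition on $\lambda$.
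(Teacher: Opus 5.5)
Your proposal is correct and follows essentially the same route as the paper. You refine by an auxiliary word $\mathbf c$ at scale $t^{1/16}$, freeze the amplitudes using $\lambda\ge\mu^{-C_2/\beta}$, and apply the $k=2$ van der Corput lemma on triples where $|\ell_{\mathbf a}-\ell_{\mathbf b}|$ exceeds the square root of the variation scale. You then count the remaining triples with the large-$\lambda$ tree lemma, absorbing the $\lambda$ prefactors by taking $t\ge t_*(\beta,\lambda)$. The differences are only cosmetic: you take $\mathbf c\in\mathcal A^{\mathfrak n}$ where the paper takes $\mathcal A^{2\mathfrak n}$, which changes the resulting value of $\rho$.
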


\begin{proof}[Proof (of $L^2$-Dolgopyat estimates)]
Let $\beta \in (0,1/2]$ and recall that $n := n_\beta(t)$ is such that $\mu^{n_\beta(t)} \sim t^{-\beta} \geq t^{-1/2}$. We suppose that $\lambda$ is large enough so that the previous tree lemma applies, and that $\lambda \geq \mu^{-10/\beta}.$ We start by writing (denoting $\Phi = \Phi(t,m)$ for clarity):
\begin{align*} 
\Big\| \mathcal{L}_{\lambda,\Phi(t,m)}^{\langle n \rangle}(h) \Big\|_{L^2([0,1])}^2 &= \int_0^1 |\mathcal{L}_{\lambda,i\Phi(t,m)}^{\langle n \rangle}(h)(\theta)|^2 d\theta \\
& = \int_0^1 \Big| \lambda^{-n} \sum_{\mathbf{a}} e^{i S_n \Phi \circ g_{\mathbf{a}}(\theta)} h(g_\mathbf{a})(\theta) \Big|^2 d\theta \\
& = \lambda^{-2n} \sum_{\mathbf{a},\mathbf{b} \in \mathcal{A}^n(\lambda)} \int_0^1 e^{i \big( S_n \Phi \circ g_\mathbf{a}(\theta) - S_n \Phi \circ g_\mathbf{b}(\theta) \big)} h(g_\mathbf{a}(\theta)) \overline{h(g_\mathbf{b}(\theta))} d\theta.
\end{align*}
Let us then iterate a little more our transfer operator: recall that we have the relation $\int h d\theta = \int \mathcal{L}_\lambda^{\mathfrak{n}}(h) d\theta$ for any $\mathfrak{n}$. We then choose $\mathfrak{n}$ such that $\lambda^{2\mathfrak{n}} \simeq t^{1/8}$ and we write:
$$ \Big\| \mathcal{L}_{\lambda,\Phi(t,m)}^{\langle n \rangle}(h) \Big\|_{L^2([0,1])}^2 = \frac{1}{\lambda^{2n+2 \mathfrak{n}}} \underset{\mathbf{c} \in \mathcal{A}(\lambda)^{2\mathfrak{n}}}{\sum_{\mathbf{a},\mathbf{b} \in \mathcal{A}(\lambda)^n}} \int_0^1 e^{i \big( S_n \Phi \circ g_\mathbf{a}(g_\mathbf{c}(\theta)) - S_n \Phi \circ g_\mathbf{b}(g_\mathbf{c}(\theta)) \big)} h(g_\mathbf{ac}(\theta)) \overline{h(g_\mathbf{bc}(\theta))} d\theta. $$
Since $h$ is $C^1$ and $g_{\mathbf{ac}}' = \lambda^{-n-2\mathfrak{n}} \leq t^{-2}$ for $t$ large enough (as $\lambda \geq \mu^{-10/\beta}$),, this gives, writing $h(g_{\mathbf{ac}}(\theta)) = h(x_{\mathbf{ac}})+\mathcal{O}(\|h\|_{C^1} t^{-2})$:
$$ \Big\| \mathcal{L}_{\lambda,\Phi(t,m)}^{\langle n \rangle}(h) \Big\|_{L^2([0,1])}^2 \leq \frac{\|h\|_\infty^2}{\lambda^{2n+2\mathfrak{n}}} \underset{\mathbf{c} \in \mathcal{A}(\lambda)^{2\mathfrak{n}}}{\sum_{\mathbf{a},\mathbf{b} \in \mathcal{A}(\lambda)^n}} \Big| \int_0^1 e^{i \big( S_n \Phi \circ g_\mathbf{a}(g_\mathbf{c}(\theta)) - S_n \Phi \circ g_\mathbf{b}(g_\mathbf{c}(\theta)) \big)}  d\theta \Big| + \frac{\|h\|_{{1,t}}^2}{t} .$$
Let us then concentrate on the integral $I_{\mathbf{a},\mathbf{b},\mathbf{c}}(t) := \int_0^1 e^{i \big( S_n \Phi \circ g_\mathbf{a}(g_\mathbf{c}(\theta)) - S_n \Phi \circ g_\mathbf{b}(g_\mathbf{c}(\theta)) \big)}  d\theta$. We have
$$ I_{\mathbf{a},\mathbf{b},\mathbf{c}}(t) =  \int_0^1 e^{i T \psi_{\mathbf{a},\mathbf{b},\mathbf{c}}(\theta)}  d\theta $$
with $T=t \mu^{n-1} \lambda^{-5\mathfrak{n}-2} \geq t^{1/8}$ and 
$$\psi_{\mathbf{a},\mathbf{b},\mathbf{c}}(\theta) = t^{-1} \mu^{1-n} \lambda^{5\mathfrak{n}+2} \big( S_n \Phi \circ g_\mathbf{a}(g_\mathbf{c}(\theta)) - S_n \Phi \circ g_\mathbf{b}(g_\mathbf{c}(\theta)) \big).$$

Again the classical van Der Corput lemma ensures that $|I_{\mathbf{a},\mathbf{b},\mathbf{c}}(t)| \leq c T^{-1/2}$ (for some universal constant $c$) if $|\psi_{\mathbf{a},\mathbf{b},\mathbf{c}}^{(2)}(\theta)| \geq 1$ on $[0,1]$. We hence get the bound:
$$ \Big\| \mathcal{L}_{\lambda,\Phi(t,m)}^{\langle n \rangle}(h) \Big\|_{L^2([0,1])}^2 \lesssim \|h\|_{1,t}^2 \Big( \frac{1}{t^{1/16}} + \lambda^{-2n-2\mathfrak{n}} \#\{ (\mathbf{a},\mathbf{b},\mathbf{c}) \in \mathcal{A}^{2n+2\mathfrak{n}} \ | \ \exists \theta, |\psi_{\mathbf{a},\mathbf{b},\mathbf{c}}^{(2)}(\theta)| \leq 1\} \Big). $$
To conclude, we need to bound this cardinality. We first compute:
\begin{align*}
\psi_{\mathbf{a},\mathbf{b},\mathbf{c}}^{(2)}(\theta) &=: \lambda^{\mathfrak{n}} \big( \ell_{\mathbf{a}}(g_\mathbf{c}(\theta)) - \ell_{\mathbf{b}}(g_\mathbf{c}(\theta)) \big)  \\
& = \lambda^{\mathfrak{n}} \big( \ell_{\mathbf{a}}(g_\mathbf{c}(0)) - \ell_{\mathbf{b}}(g_\mathbf{c}(0)) \big) + \mathcal{O}(\lambda^{-\mathfrak{n}}) .
\end{align*}
It follows that, for $t$ large enough, we can write, using the tree lemma:
\begin{align*}
& \lambda^{-2n-2\mathfrak{n}} \#\{ (\mathbf{a},\mathbf{b},\mathbf{c}) \in \mathcal{A}^{2n+2\mathfrak{n}} \ | \ \exists \theta, |\psi_{\mathbf{a},\mathbf{b},\mathbf{c}}^{(2)}(\theta)| \leq 1\}\\
&\quad \leq \lambda^{-2n-2\mathfrak{n}} \#\{ (\mathbf{a},\mathbf{b},\mathbf{c}) \in \mathcal{A}^{2n+2\mathfrak{n}} \ , \ \lambda^{\mathfrak{n}} \big| \ell_{\mathbf{a}}(g_\mathbf{c}(0)) - \ell_{\mathbf{b}}(g_\mathbf{c}(0)) \big| \leq 2\} \\
&\quad \leq \lambda^{-n-2\mathfrak{n}} \sum_{(\mathbf{b},\mathbf{c}) \in \mathcal{A}^{n+2\mathfrak{n}}} \lambda^{-n} \#\{ \mathbf{a} \in \mathcal{A}^n, \ \ell_\mathbf{a}(g_\mathbf{c}(0)) \in [\ell_\mathbf{b}(g_\mathbf{c}(0))-2\lambda^{-\mathfrak{n}},\ell_\mathbf{b}(g_\mathbf{c}(0))+2\lambda^{-\mathfrak{n}} ] \} \\
&\quad \leq 2 \lambda \Big( \lambda^{- \gamma_{sep} \mathfrak{n}/2} + \lambda^{-n \gamma_{sep}} \Big) \lesssim t^{-\gamma_{sep}/40}.
\end{align*}
Since also the van der Corput contribution is $\lesssim t^{-1/16}$, we obtain
$$
\int_{\mathbb S^1}
\left|\mathcal L_{\lambda,i\Phi(t,m)}^{\langle n_\beta(t)\rangle}h\right|^2\,d\theta
\lesssim
t^{-\rho}\|h\|_{1,t}^2,
\qquad
\rho:=\min\left\{\frac1{16},\frac{\gamma_{sep}}{40}\right\}>0.
$$
Thus $\rho$ depends only on $\varphi$.
\end{proof}

This establishes the desired  Dolgopyat's estimates, and fast dispersion for strongly irregular Weierstrass crystals.

\section{Limits on the rate of dispersion}  \label{sec:maximal-dispersion}

A natural question is how rapidly quantum dispersion can occur in long-range crystals. Also what are the conditions on the hoppings that guarantee some minimal rate of dispersion? Our main results provide arbitrarily fast polynomial decay in suitable regimes. One may therefore ask whether exponential decay is possible. The first result of this section shows that the answer is negative.

\subsection{Maximal rate of dispersion}

We now prove Proposition \ref{thm:noexponential-intro}, recall it here:

\begin{proposition}\label{thm:exponential}
Let $H$ be any crystal on $\mathbb Z$. Then $\langle \delta_0,e^{-itH}\delta_0\rangle$ cannot decay exponentially as $|t|\to\infty$. In particular, exponential dispersion is impossible.
\end{proposition}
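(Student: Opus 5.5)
The plan is to work with the spectral measure of $\delta_0$. For a crystal $H=H_{\mathbf w}$ with $\mathbf w\in\ell^1(\mathbb Z)$ real and symmetric, Floquet theory gives $\langle\delta_0,e^{-itH}\delta_0\rangle=\int_0^1 e^{-ith(\theta)}\,d\theta=\int_{\mathbb R}e^{-itx}\,d\nu(x)$, where $\nu=h_*d\theta$ is a probability measure supported in the compact interval $[-\|\mathbf w\|_{\ell^1},\|\mathbf w\|_{\ell^1}]\supset\sigma(H)$. We argue by contradiction and assume $|\widehat\nu(t)|\le Ce^{-c|t|}$ for all $t$ and some $c,C>0$; it is harmless to assume the bound for all real $t$, since $\widehat\nu$ is bounded by $1$ and we may enlarge $C$. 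Note also that $\widehat\nu(-t)=\overline{\widehat\nu(t)}$, so decay as $t\to+\infty$ already gives decay in both directions.

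\textbf{Step 1 (analytic density).} Because $\widehat\nu\in L^1(\mathbb R)$, Fourier inversion gives $d\nu=f\,dx$ with $f(x)=\frac1{2\pi}\int_{\mathbb R}e^{itx}\widehat\nu(t)\,dt$. The exponential bound lets us replace $x$ by $z=x+iy$ with $|y|<c$: the integrand is bounded by $Ce^{-(c-|y|)|t|}$, so the integral converges locally uniformly on the strip. By Morera's theorem (or differentiation under the integral sign), $f$ extends holomorphically to $\{|\operatorname{Im}z|<c\}$. This is the Paley--Wiener direction described in the introduction.

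\textbf{Step 2 (contradiction).} The density $f$ vanishes on the open set $\mathbb R\setminus\operatorname{supp}\nu$, which contains the half-line $(\|\mathbf w\|_{\ell^1},\infty)$. A function holomorphic on the connected strip that vanishes on a real interval vanishes identically by the identity theorem. So $f\equiv0$, contradicting $\nu(\mathbb R)=\|\delta_0\|^2=1$. Hence $\langle\delta_0,e^{-itH}\delta_0\rangle$ cannot decay exponentially.

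\textbf{Consequence for dispersion.} The dispersion statement follows from $|\langle\delta_0,e^{-itH}\delta_0\rangle|\le\|e^{-itH}\|_{\ell^1\to\ell^\infty}$, since $\|\delta_0\|_{\ell^1}=1$. Any exponential bound on the operator norm would therefore give an exponential bound on this diagonal matrix element, which we have just ruled out.

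\textbf{Main obstacle.} None of these steps is deep. The only point needing care is the precise meaning of ``exponential decay'': it should mean a bound along all large $|t|$, not along a subsequence, and this is the reading under which the argument applies. It is also worth recording that the boundedness of $H$, guaranteed by $\mathbf w\in\ell^1$, is exactly what supplies the compact support used in Step 2.
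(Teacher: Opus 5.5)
Your proposal is correct and follows essentially the same route as the paper: spectral measure of $\delta_0$, Fourier inversion to a continuous density, holomorphic extension to a strip from the exponential decay, and the identity theorem contradicting compact support and unit mass. You also spell out the bound $|\langle\delta_0,e^{-itH}\delta_0\rangle|\le\|e^{-itH}\|_{\ell^1\to\ell^\infty}$ that deduces the dispersion statement, which the paper leaves implicit.
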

\begin{proof}
The proof is a combination of the following steps:
    \begin{enumerate}
        \item If $\mu$ is the spectral measure of the crystal $H$, then by the spectral theorem, $\langle \delta_0,e^{-itH}\delta_0\rangle = \int_{\sigma(H)} e^{-itx}\,d\mu(x) = \widehat{\mu}(t)$.
        \item Since exponential decay implies $\widehat{\mu}\in L^1(\mathbb R)$, Fourier inversion shows that $\mu$ is absolutely continuous with a continuous density $g$.
        \item So assume the spectrum is AC with spectral density $g \in L^2(\mathbb R)$. Then $\langle \delta_0,e^{-itH}\delta_0\rangle = \widehat{g}(t)$.
        \item If $\widehat{g}(t)$ decays exponentially, say $|\widehat{g}(t)|\leq Ce^{-c|t|}$, then the Paley-Wiener theorem implies that $g$ is the restriction to $\mathbb R$ of a function holomorphic in every strip $\{z\in\mathbb C:|\operatorname{Im}z|<b\}$ with $b<c$, see \cite[Chapter~4]{SteinShakarchiComplex}.
        \item This is impossible because, since $H$ is bounded, the spectral density $g$ has compact support. Hence $g$ vanishes on some open interval of $\mathbb R$. Its holomorphic extension therefore vanishes on a set with an accumulation point in the strip, and hence vanishes identically by holomorphicity. This contradicts with $\int g\,dx=\widehat{\mu}(0)=\|\delta_0\|_2^2=1$.
    \end{enumerate}
    
    \end{proof}

No $L^2$ version of the Paley-Wiener theorem is needed in the proof above. Since $\widehat{g}$ decays exponentially, $\widehat{g}\in L^1(\mathbb R)$, while $g\in L^1(\mathbb R)$ as a spectral density. Fourier inversion therefore applies, and the exponential decay makes the inversion integral converge locally uniformly in a complex strip, giving directly the required holomorphic extension of $g$.

The argument leaves open the possibility of dispersion faster than every polynomial but slower than exponential. There is no analogous Fourier-analytic obstruction in this regime: if $g\in C_c^\infty(\mathbb R)$, then repeated integration by parts gives $|\widehat g(t)|\leq C_N(1+|t|)^{-N}$ for every $N\geq1$. More strongly, the Beurling-Malliavin multiplier theorem \cite{BM} gives compactly supported functions with sub-exponential Fourier decay, for instance of the form
$$
\exp\left(-c\frac{|t|}{(\log |t|)^2}\right).
$$
By Favard's theorem, compactly supported probability measures with infinite support can be realized as spectral measures of bounded Jacobi matrices; see \cite{KillipSimon}. For crystals, however, the inverse problem is more restrictive: in the one-vertex translation-invariant case the spectral measure must have the form $h_*\mathcal L$ for a periodic Floquet function $h$, and dispersion moreover requires estimates uniform in the off-diagonal Fourier mode. This leads to the following question.

\begin{question}
Can one construct a long-range crystal exhibiting super-polynomial but sub-exponential quantum dispersion?
\end{question}

Our construction suggests one possible approach. Let $\lambda_n\to\infty$ be a strictly increasing sequence of integers and consider the non-stationary Weierstrass phase
$$
W(\theta)=\sum_{n=0}^\infty \mu^n\varphi(\lambda^{\langle n\rangle}\theta),
\qquad
\lambda^{\langle n\rangle}:=\lambda_1\cdots\lambda_n.
$$
Increasing the lacunarity with the scale could produce progressively stronger oscillatory cancellation and hence super-polynomial dispersion. Alternatively, one could ask whether ideas from the Beurling-Malliavin construction can be adapted to produce Floquet functions of long-range crystals with such decay.

\subsection{Regularity obstructions to fast dispersion}

So far we have studied how increasingly irregular Floquet functions can produce fast dispersion. We now turn to the converse question: to what extent does regularity of the Floquet function limit the possible rate of dispersion? We first show that even local H\"older regularity gives an obstruction. More precisely, if the Floquet function is $C^\alpha$ on any open interval, then the dispersive norm cannot be $o(t^{-1/\alpha})$. Thus faster dispersion requires the Floquet function to be correspondingly irregular everywhere.

\begin{proposition}\label{thm:maxdis}
Let $h : \mathbb{R}\rightarrow \mathbb{R}$ be $1$-periodic and continuous. Suppose that there exists some open set $U \subset \mathbb{S}^1$ such that $h_{|U} \in C^\alpha(U,\mathbb{R})$ for $\alpha \in (0,1]$. Then
$$
\limsup_{t \rightarrow \infty} \Big(t^{1/\alpha}\sup_{m \in \mathbb{Z}} \Big| \int_0^1 e^{i t h(\theta)} e^{-2 i \pi m \theta} d\theta \Big| \Big)> 0.
$$
\end{proposition}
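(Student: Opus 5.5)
The plan is a duality argument: pair the function $f_t:=e^{ith}$ against a smooth bump living at the scale where $h$ is essentially constant. The Fourier coefficients $c_m(t):=\int_0^1 e^{ith(\theta)}e^{-2i\pi m\theta}\,d\theta$ are exactly the quantities in the statement. For any smooth $1$-periodic $\chi$ we have, by Parseval,
$$
\int_0^1 f_t(\theta)\overline{\chi(\theta)}\,d\theta=\sum_{m\in\mathbb Z}c_m(t)\overline{\widehat\chi(m)} .
$$
Hence
$$
\Big|\int_0^1 f_t\,\overline{\chi}\,d\theta\Big|\leq \sup_m|c_m(t)|\cdot\sum_{m}|\widehat\chi(m)| .
$$
So it suffices to find, for each large $t$, a bump $\chi_t$ with $\sum_m|\widehat{\chi_t}(m)|\lesssim 1$ and $|\int f_t\overline{\chi_t}|\gtrsim t^{-1/\alpha}$. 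This would in fact give $\liminf>0$, which is stronger than the $\limsup$ in the statement.

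Fix $\theta_0\in U$, a nonnegative $\psi\in C_c^\infty((-1,1))$ with $\int\psi=1$, and let $C_h$ be the $C^\alpha$ constant of $h$ on an interval $[\theta_0-r,\theta_0+r]\subset U$. Set
$$
\delta:=\min\{r,(2C_ht)^{-1/\alpha}\},\qquad \chi_t(\theta):=\psi\big((\theta-\theta_0)/\delta\big),
$$
periodised, which is legitimate since $\delta\le r<1/2$.

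For the lower bound, on the support of $\chi_t$ we have $|t(h(\theta)-h(\theta_0))|\le tC_h\delta^\alpha\le 1/2$. Using $|e^{ix}-1|\le|x|$,
$$
\Big|\int f_t\,\chi_t\Big|=\Big|\int e^{it(h-h(\theta_0))}\chi_t\Big|\geq \int\chi_t-\tfrac12\int\chi_t=\tfrac{\delta}{2}.
$$
For large $t$ the minimum defining $\delta$ is $(2C_ht)^{-1/\alpha}$, so this lower bound is $\asymp t^{-1/\alpha}$.

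For the upper bound on the coefficients, note $\widehat{\chi_t}(m)=\delta e^{-2i\pi m\theta_0}\widehat\psi(\delta m)$. Then $\sum_m|\widehat{\chi_t}(m)|=\sum_m\delta|\widehat\psi(\delta m)|$ is a Riemann sum of $|\widehat\psi|$ with mesh $\delta\le1$. Since $\widehat\psi$ is Schwartz, bounding $|\widehat\psi(\xi)|\lesssim(1+|\xi|)^{-2}$ gives
$$
\sum_m\delta(1+\delta|m|)^{-2}\lesssim 1
$$
uniformly in $\delta\in(0,1]$. Combining the two estimates yields $\sup_m|c_m(t)|\gtrsim \delta\gtrsim t^{-1/\alpha}$ for all large $t$, which proves the proposition.

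The only point needing care is this uniform $\ell^1$ bound on $\widehat{\chi_t}$, which is the reason for using a smooth bump rather than an indicator function. Otherwise the argument is routine, and the sign convention $e^{-2i\pi m\theta}$ is immaterial since the supremum runs over all $m\in\mathbb Z$.
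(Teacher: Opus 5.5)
Your proof is correct and follows the paper's argument. You pair $e^{ith}$ against a smooth bump of width $\asymp t^{-1/\alpha}$ centred at a point of $U$, bound the pairing from below by the Hölder continuity of $h$, and bound it from above by $\sup_m|c_m(t)|$ times the $\ell^1$ norm of the bump's Fourier coefficients. The paper instead normalises the bump to unit mass and phrases the argument as a contradiction, which is just a rescaling of your direct estimate; both versions in fact give $\sup_m|c_m(t)|\gtrsim t^{-1/\alpha}$ for all large $t$, that is, the $\liminf$ bound you point out.
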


In particular, a crystal whose Floquet function is locally $C^\alpha$ cannot disperse strictly faster than $t^{-1/\alpha}$. Notice that the assumption is only local: a single interval on which $h$ has this regularity already obstructs faster decay. This complements Theorem~\ref{thm:main}, where increasingly rough Weierstrass Floquet functions produce increasingly fast polynomial dispersion.

\begin{proof}[Proof of Proposition \ref{thm:maxdis}]
Suppose, towards a contradiction, that there exists $\varepsilon_0(t) \rightarrow 0$ as $t \rightarrow \infty$ such that $$ \sup_{m \in \mathbb{Z}} \Big| \int_0^1 e^{i t h(\theta)} e^{-2 i \pi m \theta} d\theta \Big| \leq \varepsilon_0(t) t^{-1/\alpha}. $$
Let $\rho \in C^\infty_c(\mathbb{R},\mathbb{R}^+)$ be a smooth nonnegative bump function supported on $[-1/4,1/4]$, with integral one. Choose $a \in U$ and denote the associated approximation of $\delta_a$: $\rho_{a,\varepsilon} := \varepsilon^{-1} \rho\big((\cdot-a)/\varepsilon\big)$. First, we have the bound
$$ \Big|\int_{\mathbb{S}^1} e^{it h(\theta)} \rho_{a,\varepsilon}(\theta) d\theta - e^{it h(a)} \Big| \leq  \int_{\mathbb{S}^1} |e^{it h(\theta)} - e^{i t h(a)} | \rho_{a,\varepsilon}(\theta) d\theta \leq t \sup_{\theta \in [a-\varepsilon,a+\varepsilon]}|h(\theta)-h(a)| \lesssim t \varepsilon^\alpha. $$
In particular, $$ \Big|\int_{0}^{1} e^{it h(\theta)} \rho_{a,\varepsilon}(\theta) d\theta \Big| \geq 1-C t\varepsilon^\alpha.$$
Second, we have:
$$ \Big|\int_{0}^{1} e^{it h(\theta)} \rho_{a,\varepsilon}(\theta) d\theta \Big|= \Big|\sum_{m \in \mathbb{Z}} c_m(\rho_{a,\varepsilon}) \int_{-1/2}^{1/2} e^{i t h(\theta)} e^{2i\pi m \theta} d\theta \Big|$$
$$ \lesssim \varepsilon_0(t) t^{-1/\alpha} \|c(\rho_{a,\varepsilon})\|_{\ell^1(\mathbb{Z})} $$
with $$ \|c(\rho_{a,\varepsilon})\|_{\ell^1(\mathbb{Z})} = \sum_{m \in \mathbb{Z}} |\widehat{\rho_{a,\varepsilon}}(m)| = \sum_{\nu \in \varepsilon \mathbb{Z}} |\widehat{\rho}(\nu)| \lesssim \varepsilon^{-1} \|\widehat{\rho}\|_{L^1(\mathbb{R})}. $$
Hence $$ \Big|\int_{-1/2}^{1/2} e^{it h(\theta)} \rho_{a,\varepsilon}(\theta) d\theta \Big| \lesssim \varepsilon_0(t) t^{-1/\alpha} \varepsilon^{-1} .$$
Choosing $\varepsilon :=  (2Ct)^{-1/\alpha}$ and letting $t \rightarrow +\infty$ yields a contradiction.
\end{proof}

We next apply this observation to crystals with monotone weights in $H$. Suppose that the weights are eventually decreasing, $\omega(n+1)\leq \omega(n)$ for all sufficiently large $n$. We show that this mild assumption already forces the Floquet function to be $C^1$ away from the integers, and hence Proposition~\ref{thm:maxdis} gives a universal $1/t$ obstruction to dispersion.

\begin{lemma}
If $u_n \geq 0$ is eventually decreasing and satisfies $\sum_n u_n < \infty$, then $u_n = o(1/n)$. Moreover, the sequence $\varepsilon_n := n u_n$ satisfies $\sum_{n} |\varepsilon_n-\varepsilon_{n+1}|<\infty$.
\end{lemma}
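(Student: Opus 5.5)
The plan is to handle the two claims separately, using only monotonicity on a tail together with summability. Fix $N_0$ such that $u_{n+1}\le u_n$ for all $n\ge N_0$. Since we only want asymptotic statements, together with the summability of a sequence whose first finitely many terms are irrelevant, we may freely discard the indices $n<N_0$.

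For $u_n=o(1/n)$ I would use the classical Olivier/Cauchy condensation argument. For $n\ge 2N_0$, monotonicity gives $u_k\ge u_n$ for every $k$ with $\lceil n/2\rceil\le k\le n$. Hence
$$
\tfrac{n}{2}\,u_n\le \sum_{\lceil n/2\rceil\le k\le n}u_k\le \sum_{k\ge \lceil n/2\rceil}u_k .
$$
The right-hand side is the tail of a convergent series, so it tends to $0$. Therefore $\varepsilon_n=nu_n\to0$.

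For the bounded variation of $\varepsilon_n=nu_n$, I would write
$$
\varepsilon_n-\varepsilon_{n+1}=n(u_n-u_{n+1})-u_{n+1}.
$$
For $n\ge N_0$ the first term is nonnegative, so
$$
|\varepsilon_n-\varepsilon_{n+1}|\le n(u_n-u_{n+1})+u_{n+1}.
$$
The terms $u_{n+1}$ sum to a finite quantity by hypothesis. For the remaining sum I would apply summation by parts: for $M>N_0$,
$$
\sum_{n=N_0}^{M}n(u_n-u_{n+1})=N_0u_{N_0}+\sum_{n=N_0+1}^{M}u_n-Mu_{M+1}\le N_0u_{N_0}+\sum_{n}u_n .
$$
This bound is uniform in $M$. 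Since all summands are nonnegative, the partial sums are monotone and hence converge. Adding back the finitely many indices $n<N_0$ then gives $\sum_n|\varepsilon_n-\varepsilon_{n+1}|<\infty$. In fact letting $M\to\infty$ and using the first part, $Mu_{M+1}\to0$, which identifies the limit exactly, but this is not needed.

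There is no serious obstacle. The only point requiring care is that monotonicity holds only eventually, so every estimate must be carried out on the tail $n\ge N_0$. The pre-tail part contributes a finite amount to both claims.
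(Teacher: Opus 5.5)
Your proof is correct and follows the paper's argument. You bound a block of terms by monotonicity against a vanishing tail to get $nu_n\to0$, then split $\varepsilon_n-\varepsilon_{n+1}=n(u_n-u_{n+1})-u_{n+1}$ and use summation by parts. Your version is slightly more careful than the paper's: you work only on the tail $n\ge N_0$ where monotonicity holds, and you count the block correctly, whereas the paper's $2nu_{2n}\le u_n+\dots+u_{2n}$ is off by a harmless factor of $2$.
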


\begin{proof}
We have $2n u_{2n} \leq u_n+\dots+u_{2n} \leq \sum_{k=n}^{\infty} u_k \rightarrow 0$. Then $n u_n \rightarrow 0$ by monotonicity of $u_n$. Now, letting $\varepsilon_n := n u_n$, We have $$ \sum_{n=0}^N |\varepsilon_n - \varepsilon_{n+1}| = \sum_{n=0}^N |n u_n - (n+1) u_{n+1}| \leq \sum_{n=0}^N n ( u_n -  u_{n+1}) + \sum_{n=0}^N u_{n+1} $$
which converge since, by summation by parts, $$ \sum_{n=0}^N n ( u_n -  u_{n+1}) \leq n u_n + \sum_{n} {u_n} \leq \varepsilon_n + \|u\|_{\ell^1}  $$
is bounded.
\end{proof}

\begin{theorem}\label{thm:monotone-regularity}
Let $(\omega(n))_{n\in\mathbb Z}$ be crystal weights and suppose that $(\omega(n))_{n\geq0}$ is eventually decreasing. Then the associated Floquet function $h$ satisfies $h\in C^0(\mathbb S^1,\mathbb R)\cap C^1((0,1),\mathbb R)$. In particular, by symmetry, $h'(1/2)=0$.
\end{theorem}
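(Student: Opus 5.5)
Since the weights are symmetric and summable, the Floquet function is $h(\theta)=\omega(0)+2\sum_{n\geq1}\omega(n)\cos(2\pi n\theta)$. The series converges absolutely and uniformly, so $h\in C^0(\mathbb S^1,\mathbb R)$ is immediate. The work is the $C^1$ regularity on $(0,1)$. I will differentiate term by term and show that the derived series
$$
-4\pi\sum_{n\geq1} n\,\omega(n)\sin(2\pi n\theta)=-4\pi\sum_{n\geq1}\varepsilon_n\sin(2\pi n\theta),\qquad \varepsilon_n:=n\,\omega(n),
$$
converges uniformly on every compact subinterval $[\delta,1-\delta]\subset(0,1)$. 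By the standard theorem on termwise differentiation, this gives $h\in C^1((0,1))$ with $h'$ equal to that sum.

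The uniform convergence comes from Abel summation, using the preceding lemma applied to $u_n=\omega(n)$ (eventually decreasing, nonnegative, summable). That lemma gives $\varepsilon_n\to0$ and $\sum_n|\varepsilon_n-\varepsilon_{n+1}|<\infty$. Write $D_N(\theta)=\sum_{n=1}^N\sin(2\pi n\theta)$. The conjugate Dirichlet kernel bound gives
$$
|D_N(\theta)|\leq \frac{1}{|\sin(\pi\theta)|}\leq \frac{1}{\sin(\pi\delta)}
$$
uniformly for $\theta\in[\delta,1-\delta]$. Summation by parts yields
$$
\sum_{n=M}^{N}\varepsilon_n\sin(2\pi n\theta)=\varepsilon_N D_N-\varepsilon_M D_{M-1}+\sum_{n=M}^{N-1}(\varepsilon_n-\varepsilon_{n+1})D_n .
$$
This is bounded by $(\sin\pi\delta)^{-1}\bigl(\varepsilon_N+\varepsilon_M+\sum_{n\geq M}|\varepsilon_n-\varepsilon_{n+1}|\bigr)$, which tends to $0$ as $M\to\infty$ uniformly in $\theta$. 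So the Cauchy criterion holds uniformly and the derived series converges uniformly on compacts of $(0,1)$.

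Finally, the symmetry $h(1-\theta)=h(\theta)$ holds because $h$ is a cosine series. Differentiating gives $h'(1-\theta)=-h'(\theta)$ on $(0,1)$, and evaluating at $\theta=1/2$ gives $h'(1/2)=0$. This is also visible directly, since every term $\sin(\pi n)$ vanishes.

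The step needing the most care is the Abel summation. The point is that monotonicity alone does not make $n\omega(n)$ monotone, so Dirichlet's test cannot be applied directly. This is exactly why the lemma's bounded-variation statement for $\varepsilon_n$ is needed, and the argument works on compacts away from the integers, where $D_N$ blows up.
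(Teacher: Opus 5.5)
Your proof is correct and uses the same mechanism as the paper's proof: Abel summation with $\varepsilon_n=n\omega(n)$, the lemma's bounded-variation property of $(\varepsilon_n)$, and the conjugate Dirichlet kernel bound $|\sum_{j\le n}\sin(2\pi j\theta)|\le 1/|\sin(\pi\theta)|$ away from the integers. The only difference is packaging. You sum by parts in the differentiated series and invoke termwise differentiation. The paper instead sums by parts in $h$ itself, writes the partial sums as integrals of the conjugate Dirichlet kernel from $1/2$, and exchanges sum and integral. Your version is the cleaner of the two and avoids the constant bookkeeping in that step.
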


\begin{proof}
We have $h(\theta) := \sum_{n \in \mathbb{Z}} \omega(n) e^{2 i \pi n \theta}$ for $\theta \in \mathbb{R}$. Since $\sum_n \omega(n) = 1$ and $\omega(n) \geq 0$, this function is continuous, bounded, periodic on $\mathbb{R}$. To see that it is differentiable on $\mathbb{R} \setminus \mathbb{Z}$, we do an Abel transform. Rewrite $h$ as $$ h(\theta) = \omega(0) + 2\sum_{n=1}^\infty \omega(n) (-1)^n + 2 \sum_{n=1}^\infty \omega(n) (\cos(2 \pi n \theta) - (-1)^n). $$ Then, notice that, for all $N \geq 1$, we have (denoting $\varepsilon_n = n \omega(n)$)
$$  \sum_{n=1}^N \omega(n) (\cos(2 \pi n \theta)-(-1)^n) = \sum_{n=1}^N \varepsilon(n) \frac{\cos(2 \pi n \theta)-(-1)^n}{n} $$ $$= \varepsilon(N) S_N(\theta) - \sum_{n=1}^{N-1} (\varepsilon(n+1)-\varepsilon(n)) S_n(\theta) $$
where $S_n(\theta) = \sum_{j=1}^n \frac{\cos(2 \pi j \theta)-(-1)^j}{j} $. Notice that, for $\theta \in (0,1)$, we have $$S_n(\theta) = - \sum_{j=1}^n \frac{1}{2 \pi }\int_{1/2}^\theta \sin(2 \pi j \nu) d\nu = -\frac{1}{2 \pi }\int_{1/2}^\theta  \sum_{j=1}^n \sin(2 \pi j \nu) d\nu = -\int_{1/2}^\theta  s_n(\nu) d\nu, $$
where $$s_n(\nu) := \frac{1}{2\pi} \sum_{j=1}^n \sin(2 \pi j \nu) =   \frac{ \sin(\pi n \nu) \sin(\pi(n+1)\nu)}{2 \pi  \sin(\pi \nu)} .$$
We have $s_n(\theta) \leq \frac{C}{\theta} + \frac{C}{1-\theta} $ for some $C \leq 1$ independent on $n$ and $\theta \in (0,1)$. Hence the same bound holds for $S_n(\theta)$. Added with the fact that $\varepsilon(n)-\varepsilon(n+1)$ is summable, we get convergence locally uniformly of the series:
$$ \forall \theta \in (0,1), \quad h(\theta) = \omega(0) + 2\sum_{n=1}^\infty \omega(n) (-1)^n + \sum_{n=1}^\infty \Big(\varepsilon(n)-\varepsilon(n+1) \Big) S_n(\theta) .$$
The bound on $s_n(\theta)$ allows us to exchange the sum and the integral and we find
$$ \forall \theta \in (0,1), \quad h(\theta) =  \omega(0) + 2\sum_{n=1}^\infty \omega(n) (-1)^n + 2 \int_{1/2}^\theta \sum_{n=1}^\infty \Big(\varepsilon(n)-\varepsilon(n+1) \Big) s_n(\nu) d\nu . $$
In particular, $h \in C^{1}((0,1),\mathbb{R})$, and for any $\theta \in (0,1)$, we have
$$ h'(\theta) = 2 \sum_{n=1}^\infty (\varepsilon_n-\varepsilon_{n+1}) s_n(\theta) .$$\end{proof}

Combining Theorem~\ref{thm:monotone-regularity} with Proposition~\ref{thm:maxdis} for $\alpha=1$ immediately gives:

\begin{corollary}\label{cor:monotone}
If the crystal weights $(\omega(n))_{n\geq0}$ are eventually decreasing, then dispersion cannot be faster than $1/t$.
\end{corollary}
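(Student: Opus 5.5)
Corollary~\ref{cor:monotone} should follow by combining Theorem~\ref{thm:monotone-regularity} with Proposition~\ref{thm:maxdis} for $\alpha=1$; the only work is to check the hypotheses. Let $h$ be the Floquet function of the crystal. By Theorem~\ref{thm:monotone-regularity}, $h\in C^0(\mathbb S^1,\mathbb R)\cap C^1((0,1),\mathbb R)$. Choose $U=(1/4,3/4)$, whose closure is a compact subset of $(0,1)$. The derivative $h'$ is continuous on $[1/4,3/4]$, so it is bounded there. The mean value theorem then shows that $h|_U$ is Lipschitz, which means $h|_U\in C^1(U,\mathbb R)$ in the H\"older sense with $\alpha=1$. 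The function $h$ is also $1$-periodic and continuous, so every hypothesis of Proposition~\ref{thm:maxdis} holds.

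Proposition~\ref{thm:maxdis} then gives
$$
\limsup_{t\to\infty}\, t\,\sup_{m\in\mathbb Z}\left|\int_0^1 e^{ith(\theta)}e^{-2i\pi m\theta}\,d\theta\right|>0.
$$
Next we turn this into a statement about the dispersive norm. By \eqref{eq:osc-int-intro}, $\|e^{-itH}\|_{\ell^1\to\ell^\infty}$ is the supremum over $n$ of $\left|\int_0^1 e^{2\pi in\theta}e^{-ith(\theta)}\,d\theta\right|$. Taking complex conjugates and relabelling $n=-m$, the two suprema are equal. Replacing $t$ by $-t$ does not change the conclusion, since the $\limsup$ is taken as $|t|\to\infty$. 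Hence $\|e^{-itH}\|_{\ell^1\to\ell^\infty}\neq o(1/|t|)$, so dispersion is not faster than $1/t$.

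There is no real obstacle here. The one point to get right is that Theorem~\ref{thm:monotone-regularity} gives $C^1$ regularity only on the open interval $(0,1)$, and $h'$ could blow up near the integers. This is why $U$ must be taken with compact closure inside $(0,1)$, so that $h'$ is bounded on $U$ and the Lipschitz estimate holds.
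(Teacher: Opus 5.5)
Your proposal is correct and follows the paper's argument exactly: the paper obtains the corollary by combining Theorem~\ref{thm:monotone-regularity} with Proposition~\ref{thm:maxdis} at $\alpha=1$. Your choice of $U=(1/4,3/4)$, with compact closure in $(0,1)$, correctly turns $C^1((0,1))$ regularity into the Lipschitz ($\alpha=1$ H\"older) bound that the proposition's proof uses. One small simplification: complex conjugation alone already identifies the two suprema with $n=m$, so the relabelling $n=-m$ and the substitution $t\mapsto -t$ are unnecessary, though harmless.
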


Moreover, for every nontrivial crystal in this class, the spectral measure has a nonzero absolutely continuous component. Indeed, since $h$ is nonconstant and $C^1$ on $(0,1)$, there exists an interval on which $h'\neq0$, and the push-forward of Lebesgue measure restricted to this interval is absolutely continuous.

\section{Transport and spectrum} \label{sec:further}

We conclude with further consequences of the roughness of the Weierstrass Floquet functions to the transport and spectrum of the corresponding long-range crystals.

\subsection{Speed of transport}

In addition to dispersion, \textit{transport} is another widely studied notion in quantum dynamics. Here we are interested in two natural questions:
\begin{itemize}
\item What is the speed of transport for Weierstrass crystals, in particular in regimes where the spectrum is purely singular continuous?
\item Can a crystal exhibit genuinely sub-ballistic transport?
\end{itemize}
The transport is said to be \textit{ballistic} if
$$
\lim_{t\to\infty}\frac{\|Xe^{itH}\psi\|}{t}=c>0
$$
for suitably localized initial states $\psi$, for instance compactly supported $\psi$. Here $X$ denotes the position operator,
$$
(X\psi)(n):=n\psi(n),\qquad n\in\mathbb Z.
$$
More generally, one may consider initial states satisfying $\|X\psi\|<\infty$.

For non-locally finite crystals, both ballistic motion and much faster transport can occur. In \cite{KernerPostSabriTaufer}, the following two phenomena were established:
\begin{enumerate}
\item Under suitable regularity assumptions on the Floquet eigenvalues, the transport is ballistic. In particular, in the one-vertex setting considered here, if the Floquet function $h$ belongs to $W^{1,2}(\mathbb T^d)$, then the transport of $\delta_0$ is ballistic.
\item For the fractional Laplacian on $\mathbb Z$, sufficiently rough Floquet functions can instead lead to instantaneous divergence of the second moment. More precisely, for $H=(-\Delta)^\alpha$ with $\alpha\leq1/4$,
$$
\|Xe^{itH}\delta_0\|=+\infty
$$
for every $t\neq0$.
\end{enumerate}

The same Sobolev mechanism can be applied directly to the Weierstrass crystals considered in this paper. We record the short argument below, as it gives a simple relation between the regularity of the Floquet function and the speed of transport.

\textbf{The proof of (2)} relies on the following observation: if $\|x e^{itH}\delta_0\|$ is finite, let $\psi_t = e^{itH}\delta_0$. Then by hypothesis, $\sum_n n^2 |\psi_t(n)|^2<\infty$. But $\psi_t(n) = \langle \delta_n, e^{itH}\delta_0\rangle = \langle F^{-1}\delta_n,F^{-1}e^{itH}\delta_0\rangle=\int_0^1 e^{-2\pi i n\theta} e^{ith(\theta)} d\theta=\widehat{\varphi}_t(n)$, where $\varphi_t(\theta)=e^{ith(\theta)}$.

In other words, 
\[
\|x e^{itH}\delta_0\|<\infty \iff \varphi_t(\theta)=e^{ith(\theta)}\in W^{1,2}(\mathbb{T}^d) \,.
\] 
We show that for the fractional Laplacian, $\varphi_t(\theta)$ is in $W^{1,1}(\mathbb{T})$ but not in $W^{1,2}(\mathbb{T})$.

This argument will work for any crystal having the same property. This suggests that the Weierstrass crystal with singular continuous spectrum explodes in finite time as well.

\textbf{The proof of (1)} is a bit lengthy in the paper because it follows from a more general theorem. For the special case of (1), one can argue as follows. Let's simplify even further and take $d=1$ (this is unnecessary). We are assuming that $h\in W^{1,2}$. Since exponentiation is smooth, then $\varphi_t=e^{ith}\in W^{1,2}(\mathbb{T})$ as well for any $t$. So it has a weak derivative which is by definition $(\partial_w \varphi_t)(\theta)=2\pi i \sum_n n\widehat{\varphi_t}(n)e^{2\pi i n\theta}$, and $\partial_w \varphi_t\in L^2$.

On the other hand, 
\[
\sum_n e^{2\pi i n\theta} n \widehat{\varphi_t}(n) = \sum_n e^{2\pi i n\theta} n \int_0^1 e^{-2\pi i nx}e^{ith(x)}dx = \sum_n e^{2\pi i n\theta} n (e^{itH}\delta_0)(n) = \widehat{xe^{it H}\delta_0}(\theta) \,.
\]
So we get $\widehat{xe^{itH}\delta_0}\in L^2$ and
\[
\widehat{xe^{itH}\delta_0}(\theta)= \frac{1}{2\pi i}(\partial_w\varphi_t)(\theta)=\frac{1}{2\pi i}t(\partial_w h)(\theta)e^{ith(\theta)} \,.
\]
So $\|xe^{itH}\delta_0\|<\infty$ for any $t$ and
\begin{equation}\label{e:conspeed}
\frac{\|xe^{itH}\delta_0\|}{t} = \frac{\|\widehat{xe^{itH}\delta_0}\|}{t} = \frac{\|\partial_w h\|}{2\pi}
\end{equation}
for any $t$, in particular for the limit. (I didn't realize before that this is constant in $t$).

When you start from $\psi\neq \delta_0$, you need the limit, because lower order terms (in $t$) appear in the chain rule, involving derivatives of $\widehat{\psi}(\theta)$.

Note that from the previous calculation, as long as $\|xe^{itH}\delta_0\|<\infty$, then $\varphi_t=e^{ith}\in W^{1,2}$ and $\frac{\| x e^{itH}\delta_0\|}{t}=\frac{\|\partial_w \varphi_t\|}{2\pi t}$. When $h$ is $W^{1,2}$, we get \eqref{e:conspeed}.

\begin{lemma}
For $\varphi$ fixed and $\mu$ fixed, for $\lambda$ large enough, $W_{\mu,\lambda} \notin C^{1/2}(\mathbb{S}^1)$.
\end{lemma}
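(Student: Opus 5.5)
The plan is to argue by contradiction, combining the two quantitative results already established: the fast decay of Theorem~\ref{thm:main}(2) and the regularity obstruction of Proposition~\ref{thm:maxdis}. Fix the non-constant real-analytic $1$-periodic $\varphi$ and $\mu\in(0,1)$. Theorem~\ref{thm:main}(2) provides $c(\varphi)>0$ and $\lambda_*(\mu,\varphi)$. For every integer
$$\lambda\geq\max\Big\{\lambda_*(\mu,\varphi),\ \mu^{-3/c(\varphi)}\Big\},$$
the exponent $\beta_\lambda:=c(\varphi)\log\lambda/|\log\mu|$ is at least $3>2$. Hence
$$\sup_{m\in\mathbb Z}\Big|\int_0^1 e^{2\pi i m\theta}e^{itW_{\mu,\lambda}(\theta)}\,d\theta\Big|\lesssim |t|^{-\beta_\lambda},$$
and so $t^{2}\sup_m|\cdots|\to0$ as $t\to\infty$.

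Now suppose, towards a contradiction, that $W_{\mu,\lambda}\in C^{1/2}(\mathbb S^1)$. Then $W_{\mu,\lambda}$ is continuous and $1$-periodic, and it is $C^{1/2}$ on the open set $U=\mathbb S^1$. Proposition~\ref{thm:maxdis} with $\alpha=1/2$ then gives
$$\limsup_{t\to\infty}\ t^{2}\sup_{m\in\mathbb Z}\Big|\int_0^1 e^{itW_{\mu,\lambda}(\theta)}e^{-2\pi i m\theta}\,d\theta\Big|>0.$$
The sign convention in the Fourier mode is irrelevant, since replacing $m$ by $-m$ does not change the supremum. This lower bound contradicts the previous paragraph, so $W_{\mu,\lambda}\notin C^{1/2}$.

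The same argument actually shows more. $W_{\mu,\lambda}$ fails to be $C^{1/2}$ on every open interval, because Proposition~\ref{thm:maxdis} only needs local regularity.

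The argument is essentially bookkeeping. The only point requiring care is to check that the lower bound on $\lambda$ is compatible with Theorem~\ref{thm:main}(2). That theorem holds for all $\lambda\ge\lambda_*$, so enlarging $\lambda$ further to make $\beta_\lambda>2$ is harmless. A secondary point is that the implied constant in $\lesssim$ may depend on $\lambda$, which is fine since $\lambda$ is fixed before letting $t\to\infty$.

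As a fallback independent of the Dolgopyat machinery, I would show directly that the Hölder exponent is at most $\alpha=|\log\mu|/\log\lambda$. The approach is to estimate oscillations of $W$ on intervals of length $\lambda^{-n}$ via the autosimilarity $W=\sum_{j<n}\mu^j\varphi(\lambda^j\cdot)+\mu^nW(\lambda^n\cdot)$. The first sum varies by only $O(\lambda^{-n}\sum_j(\mu\lambda)^j)$ on such an interval when $\mu\lambda>1$, while the tail oscillates by $\gtrsim\mu^n$ because $W$ is non-constant. Once $\alpha<1/2$, this rules out $C^{1/2}$. The main obstacle in this fallback is showing that the tail oscillation is not cancelled by the head, and that is why I prefer the contradiction argument above.
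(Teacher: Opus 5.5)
Your argument is correct, but it is not the paper's route. The paper proves the lemma intrinsically, via the oscillation analysis of Appendix~\ref{ap:B}. When $W$ is not analytic, one splits $X_{\mathbf a}=Y_{\mathbf a}+Z_{\mathbf a}$, where $Z_{\mathbf a}$ is exactly autosimilar modulo polynomials. Theorem~\ref{thm:osc} then shows that around every point, at every scale $\lambda^{-n}$, $W$ stays at distance $\gtrsim\mu^n=\lambda^{-\alpha n}$ from every polynomial. Hence $W\notin C^\gamma$ for every $\gamma>\alpha=|\log\mu|/\log\lambda$, and $\alpha<1/2$ as soon as $\lambda>\mu^{-2}$.

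You argue by duality instead. Uniform-in-$m$ decay of exponent $>2$ from Theorem~\ref{thm:main}(2) is incompatible, by Proposition~\ref{thm:maxdis}, with $C^{1/2}$ regularity on any open set. There is no circularity, since Section~\ref{sec:mainproof} never uses this lemma.

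Your route is short and gives the local statement on every open interval. It also needs no input on non-analyticity of $W$; the paper's route needs that input and leaves it implicit for general $\varphi$. The price is the whole Dolgopyat machinery and a non-explicit threshold $\lambda>\mu^{-2/c(\varphi)}$, which is typically far larger than $\mu^{-2}$. It also only excludes $C^\gamma$ for $\gamma>\alpha/c(\varphi)$, so it sees the H\"older exponent only up to the factor $c(\varphi)$. In particular it would not give Corollary~\ref{cor:transport} in its sharp form ``as soon as $\alpha<1/2$''.

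Your fallback is in fact easy to close for large $\lambda$; the cancellation you worry about cannot occur.
\begin{itemize}
\item On an interval of length $\lambda^{-n}$, the head varies by at most $\|\varphi'\|_\infty\mu^n/(\mu\lambda-1)$.
\item The tail varies by $\mu^n\,\mathrm{osc}(W)$.
\item Pick $j_0$ with $0<|j_0|<\lambda$ and $\widehat\varphi(j_0)\neq0$. Then $\widehat W(j_0)=\widehat\varphi(j_0)$, because $\varphi(\lambda^n\cdot)$ has only frequencies divisible by $\lambda^n$. This gives $\mathrm{osc}(W)\geq2|\widehat\varphi(j_0)|$, uniformly in $\lambda$.
\end{itemize}
So once $\mu\lambda-1>\|\varphi'\|_\infty/|\widehat\varphi(j_0)|$, every interval of length $\lambda^{-n}$ carries oscillation $\geq|\widehat\varphi(j_0)|\mu^n$. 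This contradicts $C^{1/2}$ whenever $\mu\sqrt\lambda>1$. The result is an elementary proof with an explicit, near-sharp threshold that uses neither Theorem~\ref{thm:main} nor Appendix~\ref{ap:B}.
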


\begin{proof}
The local study of $W$ done before proves that $W$ has oscillations like $x^\alpha$ for $\alpha:=|\ln(\mu)|/\ln(\lambda)$ everywhere and at every scale. If $\alpha<1/2$, $W$ can not be $1/2$-Hölder. See appendix \ref{ap:B} for details.
\end{proof}

\begin{lemma}
$W^{1,2}(\mathbb{S}^1) \subset C^{1/2}(\mathbb{S}^1)$.
\end{lemma}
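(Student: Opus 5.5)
The plan is the standard one-dimensional Sobolev embedding, proved directly by the fundamental theorem of calculus followed by Cauchy--Schwarz. Let $f\in W^{1,2}(\mathbb S^1)$ and write $f'$ for its weak derivative, which lies in $L^2(\mathbb S^1)\subset L^1(\mathbb S^1)$.

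The first step is to show that $f$ has an absolutely continuous representative. Set
$$
F(\theta)=\int_0^\theta f'(s)\,ds .
$$
Then $F$ is absolutely continuous on $[0,1]$, and its weak derivative is $f'$. Hence $f-F$ has zero weak derivative on $(0,1)$, so it is almost everywhere equal to a constant. This is the classical fact that distributions on an interval with vanishing derivative are constant: one pairs with test functions of the form $\psi-(\int\psi)\chi$ with $\int\chi=1$.

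I would then replace $f$ by the representative $F+c$. Periodicity of this representative is automatic: $\int_0^1 f'=\widehat{f'}(0)=0$, since the zeroth Fourier coefficient of a weak derivative on $\mathbb T$ vanishes. This yields a continuous $1$-periodic representative, and it satisfies
$$
f(y)-f(x)=\int_x^y f'(s)\,ds
$$
for all $x<y$, and hence also along the shorter arc of the circle.

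The second step is the estimate itself. For $x,y\in\mathbb S^1$, integrate along the shorter arc $J$, whose length is $|J|=d(x,y)$. Cauchy--Schwarz gives
$$
|f(y)-f(x)|\le \int_J|f'|\le |J|^{1/2}\Big(\int_J|f'|^2\Big)^{1/2}\le \|f'\|_{L^2}\,d(x,y)^{1/2}.
$$
Combined with $\|f\|_\infty\le |\widehat f(0)|+\|f'\|_{L^1}\le \|f\|_{L^2}+\|f'\|_{L^2}$, this gives the continuous embedding $\|f\|_{C^{1/2}}\lesssim \|f\|_{W^{1,2}}$.

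The only step that needs care is the first one, namely identifying the Sobolev function with its absolutely continuous representative. Everything else is a one-line computation. Alternatively, one can avoid distribution theory by arguing on the Fourier side. Since $\sum_n |n|^2|\widehat f(n)|^2<\infty$, Cauchy--Schwarz with weights $|n|^{-1}$ gives $\sum_n|\widehat f(n)|<\infty$, so the Fourier series converges uniformly to a continuous representative. The Hölder bound then follows from splitting
$$
|e^{2\pi i n x}-e^{2\pi i n y}|\le \min(2,2\pi|n||x-y|)
$$
at the frequency threshold $|n|\sim|x-y|^{-1}$. I would take the direct route above as the main proof.
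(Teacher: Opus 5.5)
Your proof is correct and follows the same route as the paper: the fundamental theorem of calculus for the absolutely continuous representative, then Cauchy--Schwarz, giving $|f(x)-f(y)|\le \|f'\|_{L^2}|x-y|^{1/2}$. The paper takes three things for granted that you justify explicitly: the existence of a continuous periodic representative, the vanishing of $\int_0^1 f'$, and the sup-norm bound.
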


\begin{proof}
Let $h \in W^{1,2}(\mathbb{S}^1)$; by definition, its derivative in the sense of distributions is in $L^2(\mathbb{S}^1)$. By the fundamental theorem of calculus in $W^{1,2}$, we can write:
$$ h(x)-h(y) = \int_x^y h'(t)dt.$$
Then, by Cauchy-Schwarz:
$ |h(x)-h(y)| \leq \|h'\|_{L^2(\mathbb{S}^1)} |x-y|^{1/2} $.
\end{proof}

\begin{lemma}
If $\lambda$ is large enough depending on $\mu,\varphi$, for all $t \neq 0$, $e^{i t W_{\mu,\lambda}} \notin W^{1,2}$.
\end{lemma}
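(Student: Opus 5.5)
We argue by contradiction, combining the two preceding lemmas. Fix $t\neq0$ and suppose that $\varphi_t:=e^{itW_{\mu,\lambda}}\in W^{1,2}(\mathbb S^1)$. By the lemma $W^{1,2}(\mathbb S^1)\subset C^{1/2}(\mathbb S^1)$, the function $\varphi_t$ is then $1/2$-H\"older. The goal is to transfer this regularity back to $W_{\mu,\lambda}$ itself, which would contradict the first lemma stating that $W_{\mu,\lambda}\notin C^{1/2}$ for $\lambda$ large enough.

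The transfer is local. $W:=W_{\mu,\lambda}$ is continuous and real valued, so for $\delta>0$ small (depending on $t$ and the uniform modulus of continuity of $W$), whenever $|x-y|\le\delta$ we have $|t(W(x)-W(y))|\le 1$. On the range $|s|\le 1$ the map $s\mapsto e^{is}-1$ satisfies $|e^{is}-1|\ge c|s|$ with $c=2\sin(1/2)/1>0$. Hence
$$
|W(x)-W(y)|\le \frac{1}{c|t|}\,|\varphi_t(x)-\varphi_t(y)|\le \frac{\|\varphi_t'\|_{L^2}}{c|t|}\,|x-y|^{1/2},\qquad |x-y|\le\delta .
$$
For $|x-y|>\delta$ we simply use $|W(x)-W(y)|\le 2\|W\|_\infty\le 2\|W\|_\infty\delta^{-1/2}|x-y|^{1/2}$. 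Together these show $W\in C^{1/2}(\mathbb S^1)$.

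This contradicts the earlier lemma for $\lambda$ large depending on $\mu,\varphi$ (so that $\alpha=|\log\mu|/\log\lambda<1/2$). Hence $e^{itW_{\mu,\lambda}}\notin W^{1,2}$ for every $t\neq0$.

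The main obstacle is not in this argument but in the input lemma $W_{\mu,\lambda}\notin C^{1/2}$. Its proof is only sketched and relies on Appendix~\ref{ap:B}. If I had to make it self-contained, I would use the autosimilarity $W(\theta)=\sum_{j<n}\mu^j\varphi(\lambda^j\theta)+\mu^nW(\lambda^n\theta)$. For $|h|\approx\lambda^{-n}$, the $n$-th block oscillates by $\gtrsim\mu^n$. The earlier blocks are $C^1$ with derivative $\lesssim(\mu\lambda)^{n}$, contributing $\lesssim \mu^n$ times a constant that shrinks relative to the main term once $\lambda$ is large. The tail is $O(\mu^{n+1})$. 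One then picks $\theta$ and $h$ realising an oscillation $\gtrsim\mu^n=|h|^{\alpha}\gg|h|^{1/2}$.

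The contrapositive step above only needs $\varphi_t\in C^{1/2}$ and boundedness, so it is routine.
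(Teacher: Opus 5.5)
Your argument is correct and follows the paper's proof: you combine $W^{1,2}\subset C^{1/2}$ with $W_{\mu,\lambda}\notin C^{1/2}$ through the local invertibility of $s\mapsto e^{is}$, and you make the paper's one-line ``local diffeomorphism'' remark rigorous by splitting into small scales (uniform continuity of $W$ together with $|e^{is}-1|\ge 2\sin(1/2)|s|$ for $|s|\le1$) and large scales (boundedness of $W$). One caution about your optional sketch of $W\notin C^{1/2}$: the crude tail bound $O(\mu^{n+1})$ is not small relative to the main term $\mu^n$, because $\mu$ is fixed and does not improve as $\lambda$ grows; the fix is to take $h=k\lambda^{-n-1}$ with $k\in\mathbb Z$ and $k/\lambda$ close to some $s_0$ satisfying $\varphi(\cdot+s_0)\not\equiv\varphi$, so that every term with index $j\ge n+1$ cancels exactly by $1$-periodicity.
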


\begin{proof}
The function $e^{it W}$ can not be $1/2$-Hölder, since $e^{it \theta}$ is a local analytic diffeomorphism (for $t \neq 0$) so that $W$ would be itself $1/2$-Holder. Hence $e^{it W} \notin W^{1,2}(\mathbb{S}^1)$ for $t \neq 0$.
\end{proof}

\begin{corollary}\label{cor:transport}
The Weierstrass crystal blows up in finite time as soon as $|\ln(\mu)|/\ln(\lambda) < 1/2$.
More generally, any crystal with Floquet function that is not $1/2$-Hölder blows up in finite time.
\end{corollary}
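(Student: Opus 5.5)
The corollary will follow by chaining the three preceding lemmas with the characterisation, from the discussion of (2), of finiteness of the second moment.

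The first step is to record the identity already derived: with $\psi_t=e^{itH}\delta_0$ one has $\psi_t(n)=\widehat{\varphi_t}(n)$ with $\varphi_t=e^{ith}$. By Parseval,
\[
\|X\psi_t\|^2=\sum_n n^2|\widehat{\varphi_t}(n)|^2=\frac{1}{4\pi^2}\|\partial_w\varphi_t\|_{L^2}^2 .
\]
So $\|Xe^{itH}\delta_0\|<\infty$ if and only if $\varphi_t\in W^{1,2}(\mathbb S^1)$. "Blows up in finite time" will be read as $\|Xe^{itH}\delta_0\|=+\infty$ for every $t\neq0$, exactly as in (2).

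For the general statement, let $h$ be a real continuous Floquet function that is not $1/2$-Hölder, and fix $t\neq0$. I will argue by contradiction and suppose $e^{ith}\in W^{1,2}$. By the embedding lemma $W^{1,2}(\mathbb S^1)\subset C^{1/2}(\mathbb S^1)$, $e^{ith}$ is then $1/2$-Hölder. The next step is to transfer this regularity back to $h$.

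This transfer is the one genuinely delicate point, because $x\mapsto e^{itx}$ is not globally injective. The plan is to handle it locally:
\begin{itemize}
\item Since $h$ is uniformly continuous, there is $\delta>0$ with $|h(x)-h(y)|<\pi/(2|t|)$ whenever $|x-y|<\delta$.
\item For such $x,y$, the inequality $|e^{ia}-e^{ib}|\geq \frac{2}{\pi}|a-b|$ for $|a-b|\leq\pi$ gives $|h(x)-h(y)|\leq \frac{\pi}{2|t|}|e^{ith(x)}-e^{ith(y)}|\lesssim |x-y|^{1/2}$.
\item For $|x-y|\geq\delta$, boundedness of $h$ gives the Hölder bound trivially.
\end{itemize}
Hence $h\in C^{1/2}$, a contradiction, and so $\|Xe^{itH}\delta_0\|=\infty$ for all $t\neq0$.

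For the Weierstrass statement, it then suffices to show that $W_{\mu,\lambda}\notin C^{1/2}$ whenever $\alpha=|\ln\mu|/\ln\lambda<1/2$; this is a sharper, unconditional form of the lemma above. I would argue directly from autosimilarity. Iterating $W=\varphi+\mu W(\lambda\cdot)$ gives, for $x$ and $y=x+\lambda^{-n}u$,
\[
W(y)-W(x)=\sum_{j<n}\mu^j\big(\varphi(\lambda^jy)-\varphi(\lambda^jx)\big)+\mu^n\big(W(\lambda^nx+u)-W(\lambda^nx)\big).
\]
The first sum is $O(\mu^n(\lambda\mu)^{-1})$ when $\lambda\mu>1$, since $\alpha<1$ forces this; if $\lambda\mu\le1$ it is $O(n\lambda^{-n})$ instead, still $o(\mu^n)$ in the relevant range.

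Since $W$ is nonconstant, I can pick $x_0$ and $u$ with $|W(x_0+u)-W(x_0)|=c>0$. Taking $x=\lambda^{-n}x_0$ then yields oscillation of $W$ of size about $c\mu^n$ at scale $\lambda^{-n}$, that is, of size $|x-y|^{\alpha}$. The contribution of the finite sum must stay below $c\mu^n/2$; this holds provided $\lambda$ is large relative to $\|\varphi\|_{C^1}/c$, which is the "$\lambda$ large enough" regime of the lemma and is the place where Appendix~\ref{ap:B} can be cited for full generality. Because $\alpha<1/2$, the ratio $|W(x)-W(y)|/|x-y|^{1/2}\gtrsim \lambda^{n(1/2-\alpha)}\to\infty$, so $W\notin C^{1/2}$, and the general statement applies.
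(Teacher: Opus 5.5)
Your reduction matches the paper's: the second moment is finite iff $e^{ith}\in W^{1,2}$, then $W^{1,2}\subset C^{1/2}$, then Hölder regularity is transferred from $e^{ith}$ back to $h$. Your transfer step, via uniform continuity and $|e^{ia}-e^{ib}|\ge\frac{2}{\pi}|a-b|$ for $|a-b|\le\pi$, is a correct and more careful version of the paper's ``local analytic diffeomorphism'' remark. So the second sentence of the corollary is fully proved.

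The gap is in your direct proof that $W_{\mu,\lambda}\notin C^{1/2}$.

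First, the constant $c=|W(x_0+u)-W(x_0)|$ is computed for $W=W_{\mu,\lambda}$, so it depends on $\lambda$. The requirement ``$\lambda$ large relative to $\|\varphi\|_{C^1}/c$'' is therefore circular as written.

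Second, and more seriously, nonconstancy of $W$ cannot suffice, so the claimed ``sharper, unconditional'' form is false. Take $\varphi=\mu\psi(\lambda\cdot)-\psi$ with $\psi$ analytic, e.g.\ $\psi(\theta)=\cos(2\pi\theta)$, $\mu=1/2$, $\lambda=5$. The series then telescopes to $W=-\psi$, which is nonconstant and smooth, hence $1/2$-Hölder, even though $\alpha=\ln 2/\ln 5<1/2$. In your identity the finite sum exactly cancels the $\mu^n$-term.

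For fixed $\varphi,\mu$ and large $\lambda$, your argument can be rescued by a $\lambda$-uniform bound:
\begin{itemize}
\item From $\varphi=W-\mu W(\lambda\cdot)$ one gets $\operatorname{osc}W\ge\operatorname{osc}\varphi/(1+\mu)$.
\item Choose $x_0$ and $x_0+u$ at a minimum and a maximum of $W$, so $|u|\le 1$.
\item Then $|W(y)-W(x)|\ge\mu^n\big(\operatorname{osc}\varphi/(1+\mu)-\|\varphi'\|_\infty/(\lambda\mu-1)\big)$.
\item This is $\gtrsim\mu^n$ once $\lambda\mu-1>(1+\mu)\|\varphi'\|_\infty/\operatorname{osc}\varphi$.
\end{itemize}
This gives, by an elementary route, exactly the paper's ``$\lambda$ large enough'' lemma.

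However, the first sentence of the corollary claims blow-up for every $\lambda$ with $\alpha<1/2$. For the crystal of Theorem~\ref{thm:maincrystal} that means every $\lambda\ge 5$, while the rescued argument only reaches $\lambda\ge 12$ there. For the full range you need the paper's route. The lemma in Appendix~\ref{ap:B} shows that Weierstrass functions built from cosine polynomials with nonnegative coefficients are never analytic. Combined with Theorem~\ref{thm:regWeierstrass} (via Theorem~\ref{thm:osc}), this gives $W\notin C^\gamma$ for every $\gamma>\alpha$ once $W$ is not analytic. So citing Appendix~\ref{ap:B} ``for full generality'' is not optional: it supplies the non-coboundary input that your sketch lacks.
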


\subsection{Spectrum}

We conclude with some observations on the spectral type of the Weierstrass crystals. Although our main results concern dispersion, the spectral question is closely related: the spectral measure of $\delta_0$ is precisely the occupation measure $(W_{\mu,\lambda})_*d\theta$ of the Floquet function
$$
W_{\mu,\lambda}(x)=\sum_{k\geq0}\mu^k\cos(2\pi\lambda^k x).
$$
There are two regimes in which the spectral type is understood. When $\lambda\mu<1$, the spectrum is purely absolutely continuous, while for $\lambda$ sufficiently large relative to $\mu^{-1}$ our Fourier-decay theorem again gives purely absolutely continuous spectrum, despite the much lower regularity of $W_{\mu,\lambda}$. The most interesting case is the critical parameter $\lambda\mu=1$, where we expect purely singular continuous spectrum.

Indeed, if $\lambda\mu<1$, then $W_{\mu,\lambda}\in C^1$ with
$$
W'_{\mu,\lambda}(x)=-2\pi\sum_{k\geq0}(\lambda\mu)^k\sin(2\pi\lambda^k x).
$$
By the classical uniqueness theorem for lacunary series \cite[Theorem 6]{Zyg32}, $W'_{\mu,\lambda}$ cannot vanish on a set of positive measure unless it vanishes identically. Hence $W'_{\mu,\lambda}\neq0$ almost everywhere, and \cite[Theorem 6.3]{KernerPostSabriTaufer} implies that the spectrum is purely absolutely continuous. At the other extreme, Theorem~\ref{thm:main} gives sufficiently fast Fourier decay whenever $\lambda$ is sufficiently large relative to $\mu^{-1}$. Once the decay exponent exceeds $1$, Fourier inversion implies that $(W_{\mu,\lambda})_*d\theta$ has a continuous density, and hence the spectrum is purely absolutely continuous.

At the critical case $\lambda\mu=1$ the situation changes. Recall that $\Lambda^\ast$ denotes the Zygmund class, and let $\Lambda^\ast_0\subset\Lambda^\ast$ denote the small Zygmund class, consisting of functions satisfying $
f(x+t)+f(x-t)-2f(x)=o(|t|)$, $t\to0.$ A classical criterion of Pitt \cite{AndersonHousworthPitt} states that if a real-valued periodic function $h$ belongs to the small Zygmund class $\Lambda^\ast_0$ and is almost everywhere non-differentiable, then $M_h$ has purely singular continuous spectrum. This criterion gives that the related examples
$$
h(x)=\sum_n a_n2^{-n}\cos(2\pi2^nx),\qquad
a_n\to0,\qquad \sum_n a_n^2=\infty,
$$
considered in \cite{KernerPostSabriTaufer}. It does not apply directly to the classical Weierstrass function. One has
$$
W_{\mu,\lambda}\in
\begin{cases}
\Lambda^\ast_0, & \lambda\mu<1,\\
\Lambda^\ast\setminus\Lambda^\ast_0, & \lambda\mu=1.
\end{cases}
$$
whereas for $\lambda\mu<1$ the function is already $C^1$. Thus Pitt's criterion narrowly misses precisely the critical case.

There is nevertheless evidence for singular continuity at criticality. As we mentioned in the introduction, the local time of $W_{1/2,2}$ defined with $\varphi(\theta)=\sin(2\pi\theta)$ does not exist \cite{Buc}, that is, the image of the Lebesgue measure under $W_{1/2,2}$ is singular. Also almost every level set is finite. On the other hand, Theorem~\ref{thm:main} gives dispersion for the corresponding multiplication operator, showing in particular that dispersion does not by itself imply absolute continuity of the spectral measure, although this operator is not a crystal of the class considered here. Moreover, the arguments behind Pitt-type results suggest studying the derivatives of the partial sums: sufficiently high recurrence of these derivatives can be used to construct a full-measure set whose image has zero Lebesgue measure. See Appendix \ref{ap:B}, in particular Proposition \ref{prop:density}, for related properties. For the critical Weierstrass function these derivatives are governed by lacunary ergodic sums, suggesting a possible route to proving singularity. We thus have the following diagram for the spectrum of long-range crystals arising from $W_{\mu,\lambda}$:
\begin{itemize}
\item if $\lambda<\mu^{-1}$, the spectrum is purely absolutely continuous;
\item if $\lambda=\mu^{-1}$, we conjecture that the spectrum is purely singular continuous;
\item if $\lambda$ is sufficiently large relative to $\mu^{-1}$, the spectrum is purely absolutely continuous.
\end{itemize}
It is natural to ask more generally whether the spectrum is purely absolutely continuous throughout the supercritical regime $\lambda>\mu^{-1}$. If the critical conjecture holds, $\lambda\mu=1$ would give a particularly interesting example of a long-range crystal exhibiting both singular continuous spectrum and quantum dispersion.

\appendix

\section{A $C^1$ Van Der Corput Lemma}\label{ap:A}

It is useful for us to have at our disposition a general version of Van Der Corput lemma. We prove a version here, which might be of independent interest. A similar result (although only controlling $m = 0$) was proved by Algom, Chang, Wu, Wu \cite{ACWW} in a more general setting. We prove our version for completeness and because the proof is quite elementary in the case of the Lebesgue measure.

\begin{theorem}[$C^{1}$ Van Der Corput]\label{thm:c1+alphavandercorput}
Let $h:[0,1] \rightarrow \mathbb{R}$ be a $C^{1}$ function. Suppose that $h$ is sufficiently nonlinear, in the following sense:
$$ \forall y \in \mathbb{R}, \ \lambda( \theta \in [0,1], \ h'(\theta)= y ) =0. $$
Then, for any continuous function $\chi:[0,1] \rightarrow \mathbb{C}$, we have:
$$   \sup_{m \in \mathbb{R}} \Big| \int_{0}^1 e^{2 i \pi (\xi h(\theta) - m \theta)} \chi(\theta) d\theta \Big| \underset{|\xi| \rightarrow \infty}{\longrightarrow} 0. $$
\end{theorem}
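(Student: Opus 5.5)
We reduce to the case $\chi \equiv 1$ on small intervals and then run a quantitative stationary-phase-type argument on the derivative $h'$, uniformly in $m$.

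\emph{Reduction in $\chi$.} Since $\chi$ is uniformly continuous, for any $\delta>0$ we partition $[0,1]$ into intervals $J_1,\dots,J_K$ of length $\leq\delta$. On each $J_i$ we replace $\chi$ by its value at the left endpoint, at a total cost $\omega_\chi(\delta)$, where $\omega_\chi$ is the modulus of continuity of $\chi$. Hence it suffices to show that
\[
\sup_{m\in\mathbb R}\sup_{J\subset[0,1]}\Big|\int_J e^{2i\pi(\xi h(\theta)-m\theta)}\,d\theta\Big| \longrightarrow 0 .
\]
Here the supremum over intervals $J$ is needed, and the convergence only needs to be uniform in $m$, for each fixed $\delta$. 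Afterwards we let $\delta\to0$ after $|\xi|\to\infty$.

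\emph{Key non-concentration step (main obstacle).} Set $y=m/\xi$, so that the phase is $\xi(h(\theta)-y\theta)$ with derivative $\xi(h'(\theta)-y)$. The difficulty is to obtain uniformity in $y\in\mathbb R$ from the pointwise hypothesis $\mathrm{Leb}\{h'=y\}=0$. We define
\[
F(y,\eta):=\mathrm{Leb}\{\theta\in[0,1]: |h'(\theta)-y|\leq\eta\}.
\]
For fixed $y$, $F(y,\eta)\downarrow \mathrm{Leb}\{h'=y\}=0$ as $\eta\to0$. We claim that $\sup_y F(y,\eta)\to0$. Suppose not. Then there are $\varepsilon>0$, $\eta_j\to0$ and $y_j$ with $F(y_j,\eta_j)\geq\varepsilon$. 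The $y_j$ stay bounded, since $h'$ is bounded, so up to a subsequence $y_j\to y_*$. For $j$ large, $\{|h'-y_j|\leq\eta_j\}\subset\{|h'-y_*|\leq\eta\}$ for any fixed $\eta>0$. This gives $F(y_*,\eta)\geq\varepsilon$ for all $\eta$, hence $\mathrm{Leb}\{h'=y_*\}\geq\varepsilon$, a contradiction. For $|y|>\|h'\|_\infty+1$ the estimate is trivial. So $\sup_y F(y,\eta)=:\omega(\eta)\to0$ as $\eta\to0$.

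\emph{Splitting and integration by parts.} Fix $\eta>0$ and $y$. The set $B=\{|h'-y|<\eta\}$ is open, so we write $[0,1]\setminus B$ as a union of closed pieces on which $|h'-y|\geq\eta$. The contribution of $B$ is at most $\omega(\eta)$. The complement has no obvious bound on its number of components, so we first approximate $h'$ uniformly by a polynomial $p$ with $\|h'-p\|_\infty<\eta/4$. The set $\{|p-y|\geq\eta/2\}$ is then a union of at most $\deg p+1$ intervals, and the number of intervals is bounded independently of $y$. Its complement in $[0,1]$ lies inside $\{|h'-y|<\eta\}$ and so has measure $\leq\omega(\eta)$.

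On each such interval $I$ we have $|h'-y|\geq\eta/4$, and $h'-y$ has constant sign there by continuity. Although $h$ is only $C^1$, we still get an estimate: we apply the first-derivative van der Corput argument using $\Phi=h-y\theta$. Substituting $u=\Phi(\theta)$, which is monotone on $I$, gives
\[
\int_I e^{2i\pi\xi\Phi}\,d\theta=\int e^{2i\pi\xi u}\,\frac{du}{\Phi'(\Phi^{-1}(u))}.
\]
The weight $1/\Phi'\circ\Phi^{-1}$ is continuous and bounded by $4/\eta$, but it is not monotone, so we cannot simply integrate by parts. Instead we approximate $h'$ again. We replace $1/(h'-y)$ by $1/(p-y)$, which is monotone on finitely many subintervals with a number bounded in terms of $\deg p$. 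The error is controlled by $\|h'-p\|_\infty/\eta^2$ times the length of the $u$-range, and this is made small by choosing $p$ closer, with tolerance $\eta^3$, say. Integration by parts against the monotone part then yields $O_{p,\eta}(|\xi|^{-1})$ uniformly in $y$.

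\emph{Conclusion.} Altogether,
\[
\sup_m\Big|\int_J e^{2i\pi(\xi h-m\theta)}\Big|\leq \omega(\eta)+C\eta+C_{\eta}|\xi|^{-1}.
\]
We let $|\xi|\to\infty$, then $\eta\to0$, and finally $\delta\to0$ to conclude.
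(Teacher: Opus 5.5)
Your proof is correct, and it takes a different route from the paper's for the oscillatory part, while relying on the same key input.

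\textbf{Shared step.} Your compactness argument showing $\sup_y \mathrm{Leb}\{|h'-y|\leq\eta\}\to0$ is exactly the non-concentration step the paper uses. The paper phrases it as uniform continuity, with modulus $\tilde\omega$, of the distribution function of the non-atomic, compactly supported measure $(h')_*\mathrm{Leb}$.

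\textbf{Paper's route.} The paper works at a single scale adapted to $\xi$. It takes a concave modulus of continuity $\omega$ for both $h'$ and $\chi$, and chooses $n$ with $2^{-n}\sqrt{\omega(2^{-n})}\simeq 1/|\xi|$. On each dyadic interval of length $2^{-n}$ it freezes $\chi$ and linearizes $h$, with phase error $\lesssim\sqrt{\omega(2^{-n})}$. It then evaluates the linear-phase integrals exactly as sinc factors. Finally it counts the dyadic intervals with $|h'(x_{\mathbf a})-m/\xi|\lesssim\omega(2^{-n})^{1/4}$ via $\tilde\omega$.

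\textbf{Your route.} You split $[0,1]$ into stationary and non-stationary regions at a fixed threshold $\eta$. A polynomial approximation $p$ of $h'$ bounds the number of non-stationary intervals and produces a piecewise monotone weight. You then conclude by integration by parts after the substitution $u=\Phi(\theta)$.

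\textbf{What each buys.} The paper's scheme gives directly an explicit bound in terms of $\omega(2^{-n})$ and $\tilde\omega$, and this is reused immediately for the $C^{1+\alpha}$ rate $|\xi|^{-\alpha\gamma/6}$. Your argument is closer to the classical stationary-phase picture but is qualitative. Your constant $C_\eta$ depends on $\deg p$, so getting a rate would require Jackson-type bounds on $\deg p$ in terms of the modulus of continuity of $h'$.

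\textbf{A bookkeeping fix.} Your estimate for replacing $1/(h'-y)$ by $1/(p-y)$ is ``$\|h'-p\|_\infty\eta^{-2}$ times the length of the $u$-range''. As stated this is not uniform in $m$, because $|\Phi(I)|=\int_I|h'-y|\,d\theta$ grows like $|y|=|m/\xi|$. Undoing the substitution gives the sharper bound
\[
\int_I\frac{|h'-p|}{|p-y|}\,d\theta\leq \frac{2\|h'-p\|_\infty}{\eta}\,|I|,
\]
which is uniform in $y$. With this, tolerance $\eta^2$ for $p$ suffices, and your conclusion holds uniformly in $m\in\mathbb R$.
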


\begin{proof}[Proof of Theorem \ref{thm:c1+alphavandercorput}]
Choose $\omega:(0,1] \rightarrow (0,\infty)$ a concave modulus of continuity for $\chi$ and $h'$ (one can choose for example the concave shell of any common modulus of continuity). By concavity, and since $\omega(0^+)=0^+$, $\omega$ is increasing, continuous and bijective near zero. Moreover, concavity yields $\omega(x/2) \geq \omega(x)/2$. For all $|\xi| \geq 1$, set $n(\xi) := \max \{ n \geq 1 \ | \ \xi^2 4^{-n} \omega(2^{-n}) \geq 1 \}$. Obviously we have $n(\xi) \underset{|\xi| \rightarrow \infty}{\longrightarrow} \infty$. Moreover, since $$ 1 \geq \xi^{2} 4^{-(n(\xi)+1)} \omega(2^{-(n(\xi)+1)}) \geq \xi^{2} 4^{-n(\xi)} \omega(2^{-n(\xi)})/8 \geq 1/8 ,$$
we find the growth relation
$$ 2^{-n(\xi)} \sqrt{\omega(2^{-n(\xi)})} \simeq 1/\xi .$$
Now that this is fixed, let us start the proof. We let $|\xi| \geq 1$ and we will write $n=n(\xi)$.
We will consider the numbers with finite binary digits: $$\forall \mathbf{a} \in \{0,1\}^{n}, \ x_\mathbf{a} := \sum_{k=1}^n a_k 2^{-k}.$$ We have:
$$\int_{0}^1 e^{2 i \pi (\xi h(\theta) - m \theta)} \chi(\theta) d\theta = \sum_{\mathbf{a} \in \{0,1\}^n} \int_{x_\mathbf{a}}^{x_{\mathbf{a}}+2^{-n}} e^{2 i \pi (\xi h(\theta) - m \theta)} \chi(\theta) d\theta  $$
$$ = 2^{-n} \sum_{\mathbf{a} \in \{0,1\}^n} \int_{0}^{1} e^{2 i \pi (\xi h(x_\mathbf{a}+\theta 2^{-n}) - m (x_\mathbf{a} + \theta 2^{-n}))} \chi(x_\mathbf{a}+\theta 2^{-n}) d\theta $$
$$ = 2^{-n} \sum_{\mathbf{a} \in \{0,1\}^n}  \chi(x_\mathbf{a}) e^{-2 i \pi m x_\mathbf{a}}\int_{0}^{1} e^{2 i \pi (\xi h(x_\mathbf{a}+\theta 2^{-n}) - m \theta 2^{-n})}  d\theta + \mathcal{O}(\omega(2^{-n})).$$
Now, since $h$ is $C^{1}$, we have by the mean value theorem, for all $\varepsilon$,
$$ h(x_{\mathbf{a}} + \varepsilon) = h(x_\mathbf{a})+h'(y(x_{\mathbf{a}},\varepsilon)) \varepsilon $$ for some $y \in [x_\mathbf{a},x_{\mathbf{a}}+\varepsilon]$, which gives the asymptotic expansion
$$ h(x_{\mathbf{a}} + \varepsilon) = h(x_\mathbf{a})+h'(x_\mathbf{a}) \varepsilon + \mathcal{O}(\varepsilon \  \omega(\varepsilon)), $$
from which we find
$$ \int_{0}^1 e^{2 i \pi (\xi h(\theta) - m \theta)} d\theta =  2^{-n} \sum_{\mathbf{a} \in \{0,1\}^n} \chi(x_\mathbf{a}) e^{-2 i \pi m x_\mathbf{a}}\int_{0}^{1} e^{2 i \pi (\xi h(x_\mathbf{a})+\xi h'(x_\mathbf{a}) \theta 2^{-n}  - m \theta 2^{-n})} e^{2 i \pi \xi \cdot \mathcal{O}(2^{-n} \omega(2^{-n}))} d\theta $$ $$ = 2^{-n} \sum_{\mathbf{a} \in \{0,1\}^n} \chi(x_\mathbf{a}) e^{2 i \pi (\xi h(x_\mathbf{a})-m x_\mathbf{a})} \int_{0}^{1} e^{2 i \pi \xi 2^{-n} ( h'(x_\mathbf{a}) - \frac{m}{\xi}) \theta} d\theta + \mathcal{O}\big(\sqrt{\omega(2^{-n})}\big) $$
Now, the inner integral is explicit. Denoting $\sigma^{-1} := \xi 2^{-n} \pi \simeq \omega(2^{-n})^{-1/2}$ and $\mu := m/\xi$, we compute:
$$ \Big|\int_{0}^{1} e^{2 i \sigma^{-1} ( h'(x_\mathbf{a}) - \mu) \theta} d\theta \Big| = \Big|\text{sinc}\Big(\frac{h'(x_\mathbf{a})-\mu}{\sigma} \Big)\Big| \leq \min\Big( 1 \ , \ \frac{\sigma}{|h'(x_\mathbf{a})-\mu|}  \Big). $$
We can then bound these integrals by either $1$ if $|h'(x_\mathbf{a})-\mu| \leq \sqrt{\sigma}$, or by $\sqrt{\sigma}$ if $|h'(x_\mathbf{a})-\mu| \geq \sqrt{\sigma}$.
Hence, we find the bound:
$$ \Big|\int_{0}^1 e^{2 i \pi (\xi h(\theta) - \mu \theta)} \chi(\theta) d\theta \Big| \leq  2^{-n} \sum_{\mathbf{a} \in \{0,1\}^n} \|\chi\|_\infty \Big| \int_{0}^{1} e^{2i  \sigma^{-1} ( h'(x_\mathbf{a}) - \mu) \theta} d\theta \Big| + \mathcal{O}\big(\sqrt{\omega(2^{-n})}\big) $$
$$ \lesssim \frac{1}{2^n}  \sum_{\mathbf{a} \in \{0,1\}^n} \min\Big( 1 \ , \ \frac{\sigma}{|h'(x_\mathbf{a})-\mu|}  \Big) + \mathcal{O}\big(\sqrt{\omega(2^{-n})}\big) $$
$$ \leq \frac{1}{2^n} \# \{ \mathbf{a} \in \{0,1\}^n, \ |h'(x_\mathbf{a}) - \mu| \leq \sqrt{\sigma} \} +   \mathcal{O}\big(\sqrt{\omega(2^{-n})}\big) .$$
To conclude, we use the regularity of $h'$, in the following way. We have, for all $\theta \in [0, 2^{-n}]$, the bound 
$$ |h'(x_\mathbf{a}) - h'(x_\mathbf{a}+\theta)| \leq \omega(\theta). $$
In particular, for $\mathbf{a} \in \{0,1\}$, we have:
$$ \Big(|h'(x_\mathbf{a}) - \mu| \leq \sqrt{\sigma} \Big) \Rightarrow \Big( \forall \theta \in [0,2^{-n}], \ |h'(x_\mathbf{a}+\theta)-\mu| \leq \sqrt{\sigma} + \omega(2^{-n}) \Big). $$
Which allows us to bound:
$$ \frac{1}{2^n} \# \{ \mathbf{a} \in \{0,1\}^n, \ |h'(x_\mathbf{a}) - \mu| \leq \sqrt{\sigma} \} = \lambda\Big( \underset{|h'(x_\mathbf{a}) - m|  \leq \sqrt{\sigma}}{\bigcup_{\mathbf{a} \in \{0,1\}^n}} [x_\mathbf{a},x_\mathbf{a}+2^{-n}] \Big) $$
$$ \leq \lambda\Big( \theta \in [0,1], \ |h'(\theta)-m| \leq C \omega(2^{-n})^{1/4}  \Big). $$
Now the pushforward measure $(h')_* \lambda$ is non-atomic by hypothesis, and is of compact support. It follows that it's cumulative distribution function is uniformly continuous, with modulus of continuity given by some $\tilde{\omega}$. We can then bound
$$ \lambda\Big( \theta \in [0,1], \ |h'(\theta)-m| \leq C \omega(2^{-n})^{1/4}  \Big) \leq \tilde{\omega}(C \omega(2^{-n})^{1/4}). $$
by our nonlinearity hypothesis on $h'$.
In the end, our oscillatory integral is bounded by:
$$ \Big|\int_{0}^1 e^{2 i \pi (\xi h(\theta) - m \theta)} \chi(\theta) d\theta \Big| \lesssim \tilde{\omega}(C \omega(2^{-n})^{1/4}) + \sqrt{\omega(2^{-n})}.$$
Which proves that $\Big|\int_{0}^1 e^{2 i \pi (\xi h(\theta) - m \theta)} \chi(\theta) d\theta \Big| \underset{|\xi| \rightarrow \infty} \longrightarrow 0$, and this uniformly in $m$.
\end{proof}

In fact, the previous proof gives an explicit rate of decay depending on the Frostman exponent of $(h')_* \lambda$ and the modulus of continuity of $\chi$ and $h'$. In the setting where $h \in C^{1+\alpha}$, we get polynomial decay under natural hypothesis.

\begin{theorem}[$C^{1+\alpha}$ Van Der Corput]\label{thm:c1+alphavandercorput}
Let $\alpha \in (0,1)$ and let $h:[0,1] \rightarrow \mathbb{R}$ be a $C^{1+\alpha}$ function. Suppose that $h$ is sufficiently nonlinear, in the following sense:
$$ \exists C_0, \ \exists \gamma \in (0,1), \ \forall I \subset \mathbb{R} \text{ segment}, \  \lambda( \theta \in [0,1], \ h'(\theta) \in I ) \leq C_0 |I|^\gamma .$$
Then there exists a constant $C \geq 1$ that depends only on $C_0$ such that, for any $\alpha$-Hölder function $\chi:[0,1] \rightarrow \mathbb{C}$, we have:
$$  \forall |\xi| \geq 1, \ \sup_{m \in \mathbb{R}} \Big| \int_{0}^1 e^{2 i \pi (\xi h(\theta) - m \theta)} \chi(\theta) d\theta \Big| \leq C \|h\|_{C^{1+\alpha}}  \|\chi\|_{C^\alpha} |\xi|^{-\alpha \gamma/6}. $$
\end{theorem}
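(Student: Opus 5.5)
We will rerun the proof of the $C^1$ van der Corput lemma (Theorem~\ref{thm:c1+alphavandercorput}, first version), now tracking every error quantitatively with $\omega(x)=\|h\|_{C^{1+\alpha}}x^\alpha$ for $h'$ and $\|\chi\|_{C^\alpha}x^\alpha$ for $\chi$. First fix the scale: choose $n$ with $2^{-n}\simeq |\xi|^{-2/(2+\alpha)}$, so that $\xi 2^{-n}\cdot 2^{-n}\omega(2^{-n})\simeq \|h\|_{C^{1+\alpha}}\,|\xi|^{-\alpha/(2+\alpha)}$. This is the same choice as $n(\xi)$ in that proof, made explicit. We then decompose $[0,1]$ into the dyadic intervals $[x_\mathbf a,x_\mathbf a+2^{-n}]$.

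On each piece we make three replacements. We freeze $\chi$ at $x_\mathbf a$, at cost $\|\chi\|_{C^\alpha}2^{-n\alpha}$. We Taylor expand $h(x_\mathbf a+\varepsilon)=h(x_\mathbf a)+h'(x_\mathbf a)\varepsilon+O(\|h\|_{C^{1+\alpha}}\varepsilon^{1+\alpha})$, at cost $|\xi|\,\|h\|_{C^{1+\alpha}}2^{-n(1+\alpha)}\,\|\chi\|_\infty$ after using $|e^{ix}-1|\le|x|$. The remaining linear-phase integral is an explicit sinc, bounded by $\min(1,\sigma/|h'(x_\mathbf a)-\mu|)$, where $\sigma\simeq (|\xi|2^{-n})^{-1}$ and $\mu=m/\xi$.

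Next we split the words $\mathbf a$ according to whether $|h'(x_\mathbf a)-\mu|\le\sqrt\sigma$. The words with $|h'(x_\mathbf a)-\mu|>\sqrt\sigma$ contribute at most $\sqrt\sigma\,\|\chi\|_\infty$ in total. For the close words, Hölder continuity of $h'$ shows that each such dyadic interval lies in $\{\theta:|h'(\theta)-\mu|\le\sqrt\sigma+\|h\|_{C^{1+\alpha}}2^{-n\alpha}\}$. The nonlinearity hypothesis bounds the measure of this set by $C_0(2\sqrt\sigma+2\|h\|_{C^{1+\alpha}}2^{-n\alpha})^\gamma$. Every estimate is uniform in $m\in\mathbb R$, since $m$ only enters through the unimodular factor $e^{-2i\pi m x_\mathbf a}$ and the shift $\mu$, and the hypothesis holds for every segment.

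Finally we collect powers of $|\xi|$. With the choice above, $2^{-n\alpha}\simeq|\xi|^{-2\alpha/(2+\alpha)}$ and $\sigma\simeq|\xi|^{-\alpha/(2+\alpha)}$, so every term is at least $|\xi|^{-\alpha\gamma/(2(2+\alpha))}\ge|\xi|^{-\alpha\gamma/6}$ up to constants, because $2(2+\alpha)\le6$. The prefactors are polynomial in $\|h\|_{C^{1+\alpha}}$ and $\|\chi\|_{C^\alpha}$ (with $\|\chi\|_\infty\le\|\chi\|_{C^\alpha}$). To get the stated linear dependence, we first normalize: we may assume $\|h\|_{C^{1+\alpha}}\ge1$ by trivially enlarging the norm. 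We then use that $(a+b)^\gamma\le a^\gamma+b^\gamma\le a+b+2$ whenever $a,b\ge1$ or the quantity is at most $1$; if a term exceeds $1$ the bound holds trivially, since the integral is at most $\|\chi\|_\infty$.

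The main obstacle is this bookkeeping: making sure the constant depends only on $C_0$, with at most linear dependence on the norms. We will handle it by treating separately the regime where $\|h\|_{C^{1+\alpha}}|\xi|^{-\alpha\gamma/6}\ge1$, where the estimate is trivial, and the complementary regime, where all the small quantities are genuinely small and the powers $\gamma<1$ can be absorbed. If needed, we will slightly re-tune $n$ by including $\|h\|_{C^{1+\alpha}}$ in its definition, which only improves the exponents.
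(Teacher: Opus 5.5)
Your proposal is correct and is essentially the paper's proof: the paper likewise substitutes $\omega(x)\simeq x^\alpha$ and $\tilde\omega(x)\simeq x^\gamma$ into the bound from the $C^1$ argument, with the same scale $2^{-n}\simeq|\xi|^{-2/(2+\alpha)}$. It reaches the same exponent $\alpha\gamma/(2(2+\alpha))$ and does not track the dependence on the norms. Your extra bookkeeping works once you justify the normalization $\|h\|_{C^{1+\alpha}}\geq 1$: applying the hypothesis to the segment $I=h'([0,1])$, whose length is at most $\|h\|_{C^{1+\alpha}}$, gives $1\leq C_0\|h\|_{C^{1+\alpha}}^\gamma$, hence $\max(1,\|h\|_{C^{1+\alpha}})\leq \max(1,C_0^{1/\gamma})\|h\|_{C^{1+\alpha}}$, so the constant also depends on $\gamma$. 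Also fix two slips: the Taylor cost is $|\xi|2^{-n}\omega(2^{-n})$, not $|\xi|2^{-2n}\omega(2^{-n})$, and the final comparison is $|\xi|^{-\alpha\gamma/(2(2+\alpha))}\leq|\xi|^{-\alpha\gamma/6}$, not $\geq$.
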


\begin{proof}
Recall that the previous proof gives us a rate of decay of the form
$$ \sup_m \Big| \int_0^1 e^{2 i \pi \xi h} \chi d\theta \Big| \lesssim \tilde{\omega}\Big(C \omega(2^{-n(\xi)})^{1/4} \Big) + \omega(2^{-n(\xi)})^{1/2}$$
where $n(\xi)$ is fixed so that $\xi^2 4^{-n(\xi)} \omega(2^{-n(\xi)}) \simeq 1$.
In our current setting, we have $\tilde{\omega}(t) \simeq t^\gamma$ and $\omega(t) \simeq t^\alpha$, so that
$$ \xi^2 4^{-n(\xi)} \omega(2^{-n(\xi)}) \simeq \xi^{2} 2^{(2+\alpha)(n(\xi))} \simeq 1$$
gives $ 2^{-n(\xi)} \simeq \xi^{-\frac{2}{2+\alpha}} $,
and the bound becomes
$$ \tilde{\omega}\Big(C \omega(2^{-n(\xi)})^{1/4} \Big) + \omega(2^{-n(\xi)})^{1/2} \lesssim \xi^{-\frac{2\alpha \gamma}{4(2+\alpha)}} \lesssim \xi^{- \alpha \gamma/6}, $$
which is the announced bound.
\end{proof}

Our proof gives a decay rate of the same order of magnitude as the usual Van Der Corput Lemma when $h$ is smooth, but with an additional factor of $1/6$. Notice that any periodic analytic non-constant function $h : \mathbb{S}^1 \rightarrow \mathbb{R}$ always satisfy that $(h')_*\lambda$ is $\gamma$-Frostman, for $\gamma^{-1} < \infty$ being the higher order of vanishing of $h'$. Consequently (this is also a direct consequence of the usual Van Der Corput Lemma):

\begin{corollary}
Any crystal associated to an analytic non-constant Floquet function exhibit polynomial dispersion.
\end{corollary}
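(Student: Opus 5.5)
The corollary asserts that an analytic non-constant Floquet function $h$ gives $\sup_{m}\bigl|\int_0^1 e^{2i\pi m\theta}e^{-ith(\theta)}\,d\theta\bigr|\lesssim |t|^{-\delta}$ for some $\delta>0$. By \eqref{eq:osc-int-intro}, this quantity is exactly the dispersive norm, so the statement is a direct application of the $C^{1+\alpha}$ van der Corput Theorem above with $\chi\equiv1$, $\xi=-t/(2\pi)$, and $m$ ranging over $\mathbb Z\subset\mathbb R$.

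The proof has three steps.

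First, check regularity. Since $h$ is real analytic and periodic, it lies in $C^{1+\alpha}$ for every $\alpha\in(0,1)$; we fix $\alpha=1/2$. The weight $\chi\equiv1$ is trivially $\alpha$-Hölder.

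Second, verify the nonlinearity hypothesis, which is the only real step. Because $h$ is non-constant and periodic, $h'$ is real analytic, periodic, and not identically zero. We may also assume $h'$ is non-constant: a periodic function with constant nonzero derivative cannot exist, since $\int_0^1 h'=0$. We then apply the earlier non-concentration lemma for periodic analytic maps to $f=h'$. Each point of the circle has a neighbourhood on which $h'(\theta)-h'(\theta_0)$ behaves like $F(\theta-\theta_0)^{k}$ with $F$ a local diffeomorphism and $k\le k_{\max}<\infty$. Compactness then gives
\[
\bigl|\{\theta:\ h'(\theta)\in I\}\bigr|\le C_0|I|^{\gamma}, \qquad \gamma=1/k_{\max}.
\]
That lemma states this only for short intervals $|I|\le L_f$, but long intervals are handled trivially: the left-hand side is at most $1\le L_f^{-\gamma}|I|^{\gamma}$, so enlarging $C_0$ covers them.

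Third, apply the theorem. With $\alpha=1/2$ it yields
\[
\sup_{m}\Bigl|\int_0^1 e^{2i\pi m\theta}e^{-ith(\theta)}\,d\theta\Bigr| \lesssim \|h\|_{C^{3/2}}\,|t|^{-\gamma/12}, \qquad |t|\ge 2\pi,
\]
which is polynomial dispersion. Alternatively, one can avoid the appendix: split $[0,1]$ into finitely many intervals on which some derivative $|h^{(j)}|$ with $j\le k_{\max}+1$ is bounded below, and apply the classical van der Corput lemma to the phase $th(\theta)-2\pi m\theta$. For $j\ge2$ the linear term in $m$ does not affect $h^{(j)}$; for $j=1$ the bound is still uniform in $m$ after passing to the second derivative on pieces where $h''\ne0$.

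The main obstacle is making the non-concentration bound uniform over all intervals $I$, including the finite order of vanishing of $h'-y$ at every level $y$ and at every point. Analyticity, periodicity and compactness of the circle give this, via the earlier lemma.
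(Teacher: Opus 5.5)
Your proposal is correct and follows the paper's own argument. The paper likewise deduces the corollary from the $C^{1+\alpha}$ van der Corput theorem, observing that the push-forward of Lebesgue measure under $h'$ is $\gamma$-Frostman with $\gamma^{-1}$ the maximal order of vanishing of $h'$, and it also notes, as you do, that the classical van der Corput lemma gives the result directly.

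One small remark on your alternative sketch: the $j=1$ pieces are unnecessary and slightly muddled there, since $th'-2\pi m$ can vanish for suitable $m$. It is cleaner to cover the circle by intervals on which some $|h^{(j)}|$ with $2\le j\le K$ is bounded below, which is possible because $h''\not\equiv 0$. Then the linear term in $m$ never interferes and no monotonicity hypothesis is needed.
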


\begin{corollary}
    Any locally finite crystal exhibits dispersion.
\end{corollary}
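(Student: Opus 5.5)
The plan is to reduce the statement to the preceding corollary, i.e.\ to the $C^{1+\alpha}$ van der Corput lemma (Theorem~\ref{thm:c1+alphavandercorput}) applied to the Floquet function. Let $H_{\mathbf w}$ be a locally finite crystal, so $w_k\neq 0$ for only finitely many $k$. Then its Floquet function
$$h(\theta)=\sum_{|k|\le K} w_k e^{2\pi i k\theta}=w_0+2\sum_{k=1}^{K} w_k\cos(2\pi k\theta)$$
is a real trigonometric polynomial; here we use the symmetry $w_{-k}=w_k$. In particular $h$ is real-analytic and $1$-periodic.

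First I exclude the degenerate case $w_k=0$ for all $k\neq0$. Then $H_{\mathbf w}=w_0\,\mathrm{Id}$ and $e^{-itH_{\mathbf w}}$ is a unimodular multiple of the identity, so it cannot disperse. This case is not a genuine crystal, since it has no hopping, and I take it as excluded by the statement. In every other case $h$ is a non-constant trigonometric polynomial. Moreover $h'$ is also non-constant: $h'$ has zero mean, so a constant $h'$ would vanish identically, forcing $h$ to be constant.

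Next I verify the nonlinearity hypothesis. The function $h'$ is analytic, periodic and non-constant. By the non-concentration lemma for periodic analytic maps proved in Section~\ref{sec:mainproof}, applied to $f=h'$, there are $\gamma\in(0,1)$ and $C_0$ such that
$$|\{\theta:\ h'(\theta)\in I\}|\le C_0|I|^{\gamma}$$
for every segment $I$. Intervals of length larger than $L_f$ are handled trivially by enlarging $C_0$. The exponent $\gamma$ can be taken to be $1/(2k_{\max})$, where $k_{\max}$ is the maximal order of vanishing of $h'-y$; this order is finite because $h''\not\equiv0$.

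Finally I apply Theorem~\ref{thm:c1+alphavandercorput} with $\chi\equiv1$, $\xi=-t/2\pi$ and $m=-n$. Since $h$ is smooth, it lies in $C^{1+\alpha}$ for any $\alpha\in(0,1)$. This gives
$$\sup_{n}\Big|\int_0^1 e^{2\pi i n\theta}e^{-ith(\theta)}\,d\theta\Big|\lesssim |t|^{-\alpha\gamma/6},$$
and by \eqref{eq:osc-int-intro} this is exactly $\|e^{-itH_{\mathbf w}}\|_{\ell^1\to\ell^\infty}\to0$, indeed at a polynomial rate.

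There is no serious obstacle. The only points needing care are the exclusion of the trivial diagonal crystal and the observation that $h'$ is non-constant, which is what the non-concentration lemma requires. As an alternative route, the classical van der Corput lemma of order $k$ could be used: non-vanishing of some derivative $h^{(j)}$, $2\le j\le k$, at every point, together with a finite partition of $\mathbb T$, also yields uniform decay in $n$.
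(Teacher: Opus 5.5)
Your proposal is correct and follows the paper's route. The Floquet function of a locally finite crystal is a real trigonometric polynomial, hence analytic. The $\gamma$-Frostman property of $(h')_*d\theta$, from the non-concentration lemma, then feeds into the $C^{1+\alpha}$ van der Corput theorem with $\chi\equiv1$, exactly as in the paper's preceding corollary on analytic non-constant Floquet functions. Your explicit exclusion of the hopping-free case $H_{\mathbf w}=w_0\,\mathrm{Id}$, where the statement is false, and your zero-mean argument that $h'$ is non-constant fill in caveats the paper leaves implicit.
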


\section{On the local regularity of Weierstrass functions}\label{ap:B}

\subsection{Fractal oscillations of Weierstrass functions} 

The goal of this section is to do a detailled analysis of the oscillations of (non-analytic) Weierstrass maps. This appendix join some well known result about Weierstrass maps, and adapt some less known results adapted from \cite{Leclerc2,TZ23} (the detailed analysis on the oscillations of $X$ is very close to the core of the arguments used to study the temporal distance function in those two articles). We will prove the following regularity result; actually stronger statement will be described at the end of the section.

\begin{theorem}[Cohomology condition]{\label{thm:regWeierstrass}}
The Weierstrass map $W$ is analytic if and only if there exists $\psi \in C^\omega(\mathbb{S}^1,\mathbb{R})$ such that $\varphi(\theta)=\mu \psi(\lambda \theta)-\psi(\theta)$. Denote $\alpha \in (0,\infty)$ so that $\mu \lambda^\alpha=1$. If $W$ is not analytic, then: \begin{itemize}
\item When $\alpha \in \mathbb{N}$,  $W \in (\bigcup_{\beta<\alpha} C^\beta) \setminus C^\alpha$.
\item When $\alpha \notin \mathbb{N}$, $X \in C^\alpha \setminus (\bigcup_{\gamma>\alpha} C^{\gamma})$.
\end{itemize}
\end{theorem}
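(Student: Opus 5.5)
The plan splits into the cohomological characterisation and the sharp H\"older regularity; I treat them in that order.

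\emph{Cohomology equivalence.} If $\varphi=\mu\psi\circ\lambda-\psi$ with $\psi$ analytic, the series defining $W$ telescopes to $W=-\psi$ (as noted at the end of the proof of Lemma~\ref{lem:sepk}), so $W$ is analytic. Conversely, if $W$ is analytic, the autosimilarity relation $W(\theta)=\varphi(\theta)+\mu W(\lambda\theta)$ already gives the coboundary equation with $\psi:=-W$. So this half is immediate.

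\emph{Upper regularity.} I would prove it by the standard splitting of the series at the scale $|x-y|\sim\lambda^{-J}$. Set $r:=\lfloor\alpha\rfloor$ (or $\alpha-1$ when $\alpha\in\mathbb N$) and differentiate termwise $r$ times. This gives $W^{(r)}=\sum_j(\mu\lambda^r)^j\varphi^{(r)}(\lambda^j\cdot)$ with $\mu\lambda^r>1$ only when $r$ exceeds the admissible order, so up to that order the series converges. The low-frequency part $j<J$ is estimated with the mean value theorem, and the high-frequency part $j\ge J$ with the sup bound. For $\alpha\notin\mathbb N$ both contribute $\lesssim|x-y|^{\alpha-r}$, which gives $W\in C^\alpha$. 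For $\alpha\in\mathbb N$ the same computation produces an extra factor $J\sim\log(1/|x-y|)$, so we only get $W\in C^\beta$ for every $\beta<\alpha$.

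\emph{Lower bound: the main obstacle.} Here I would reuse the functions $X_{\mathbf a}$ from \eqref{e:Xadef}, with $k$ large so that $\mu\lambda^k>1$. Suppose $W\in C^{\gamma}$ with $\gamma>\alpha$, respectively $W\in C^\alpha$ when $\alpha\in\mathbb N$. The first step is to compute $k$-th derivatives of the finite partial sums and rescale them to the cylinder $g_{\mathbf a}([0,1])$ of length $\lambda^{-n}$. Define $\ell_{\mathbf a}(\theta):=\frac{1}{t\mu^n}(S_n\Phi(t,m)\circ g_{\mathbf a})^{(k)}(\theta)$, the normalised $k$-th derivative of the rescaled partial sum, exactly as in the Tree Lemma. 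Then $W\circ g_{\mathbf a}$ equals a smooth part plus $\mu^n W$. The smooth part has $k$-th derivative $\mu^n\ell_{\mathbf a}$, up to a bounded factor. The next step is to use the hypothesised regularity: it makes the oscillation of $W$ on cylinders of length $\lambda^{-n}$ equal to $o(\mu^n)=o(\lambda^{-n\alpha})$. Applying this to a finite-difference operator of order $k$, and dividing by $\mu^n$, should force $\ell_{\mathbf a}(\theta)-\ell_{\mathbf b}(\theta)\to0$ uniformly in words $\mathbf a,\mathbf b$ sharing the same first letters (in the indexing of Lemma~\ref{lem:sepk}). Equivalently, $X_{\mathbf a}=X_{\mathbf b}$ for all infinite words $\mathbf a,\mathbf b$. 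The proof of Lemma~\ref{lem:sepk} then shows that this identity yields the cohomology equation, and hence that $W$ is analytic, a contradiction. Equivalently, one can invoke Lemma~\ref{lem:sepk} directly: it gives words whose difference is at least $c_0$, so the $k$-th finite difference of $W$ at scale $\lambda^{-N}$ is $\gtrsim\mu^{N}$ along infinitely many $N$. This contradicts $C^\gamma$ for $\gamma>\alpha$, and in the integer case contradicts $C^\alpha$ because the error term is then $o(\mu^N)$.

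The delicate part is controlling the $\mu^nW$ remainder inside the finite difference. It is only $O(\mu^n)$, the same order as the main term. I would handle it by taking the finite-difference step $\delta\lambda^{-n}$ with $\delta$ small. The smooth part then contributes $\asymp\delta^k\mu^n c_0$, while the remainder $\mu^n(W(\cdot)\text{ differences at scale }\delta)$ is $\lesssim\mu^n\delta^{\alpha'}$ for some $\alpha'$. The argument needs $\alpha'>k$, so I would instead compare two words $\mathbf a,\mathbf b$. Their remainders $\mu^nW(\lambda^n g_{\mathbf a}\theta)=\mu^nW(\theta)=\mu^nW(\lambda^n g_{\mathbf b}\theta)$ are identical, and therefore cancel exactly in the difference. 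This is where I expect the real care to be needed.
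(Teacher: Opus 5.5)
Your proposal is correct. The cohomology equivalence ($\psi=-W$, by telescoping or by the autosimilarity relation) and the upper H\"older bound are essentially the paper's argument: your splitting of the differentiated series at $|x-y|\asymp\lambda^{-J}$ is the standard equivalent of the paper's iteration of the autosimilarity relation for $\delta^\beta X^{(k)}_x$.

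The lower bound takes a genuinely different route. The paper works on the two-sided shift and builds the decomposition $X_{\mathbf a}=Y_{\mathbf a}+Z_{\mathbf a}$, with $Z_{\mathbf a}$ exactly self-similar modulo polynomials of degree $\le N-1$. Compactness then gives a uniform lower bound $\kappa_0$ on the distance from $Z_{\mathbf a}$ to such polynomials. From this the paper derives Theorem~\ref{thm:osc}: at every point $x$ and every scale $\lambda^{-n}$, $W$ deviates from every such polynomial by at least $\kappa\mu^n$.

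You instead reuse Lemma~\ref{lem:sepk} together with one observation. For two words $\mathbf a,\mathbf b\in\mathcal A^n$ of the same length, the rough remainders $\mu^nW(\theta)$ cancel exactly. Hence $D=W\circ g_{\mathbf a}-W\circ g_{\mathbf b}$ is analytic with $D^{(k)}=\mu^n(\ell_{\mathbf a}-\ell_{\mathbf b})$. The mean value theorem for $\Delta^k_\delta$ then gives $|\Delta^k_{\delta\lambda^{-n}}W|\ge\tfrac12c_0\delta^k\mu^n$ at $x_{\mathbf a}$ or at $x_{\mathbf b}$, along the infinitely many lengths supplied by Lemma~\ref{lem:sepk}. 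On the other side, $W\in C^\gamma$ with $\alpha<\gamma\le k$ gives $|\Delta^k_sW|\lesssim s^\gamma$. In the integer case, $W\in C^\alpha$ with $k>\alpha$ gives $o(s^\alpha)$ by uniform continuity of $W^{(\alpha)}$. With $\delta$ fixed, letting $n\to\infty$ gives the contradiction.

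Once you have the exact cancellation, nothing delicate remains. You can drop the single-word attempt with small $\delta$. Three imprecisions in the middle paragraph should be cleaned up:
\begin{itemize}
\item It is the $k$-th difference at scale $\lambda^{-n}$ that is $o(\mu^n)$, not the oscillation of $W$ on cylinders; for $\alpha>1$ the oscillation is of order $\lambda^{-n}\gg\mu^n$.
\item The qualifier ``sharing the same first letters'' is misplaced, since such pairs are trivially close. The argument must, and does, apply to arbitrary pairs of words of equal length.
\item You don't need to define $\ell_{\mathbf a}$ through $S_n\Phi(t,m)$; the $k$-th derivative of the smooth part of $W\circ g_{\mathbf a}$ is directly $\mu^n\ell_{\mathbf a}$.
\end{itemize}

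Your route is much shorter and recycles the Section~2 machinery. However, it only detects large $k$-th differences at uncontrolled locations along a sparse sequence of scales. That is exactly enough for global non-membership in $C^\gamma$. The paper's heavier construction gives more: $W$ is nowhere $C^{o(\alpha)}$ (for example, nowhere differentiable when $\alpha=1$). It also gives a uniform oscillation bound in every cylinder at every scale, which is reused in the proof of Theorem~\ref{thm:c1+alpharegularity}.
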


\begin{remark}
Let us do three examples. First, consider the case of $W(\theta) = \sum_{n=0}^\infty 2^{-n} \cos(2\pi 3^n \theta)$. There can not exists any analytic $\psi \in C^\omega(\mathbb{S}^1,\mathbb{R})$ such that $\cos(2 \pi \theta) = {\psi(3\theta)}/2-\psi(\theta)$. Indeed, if it was the case, then we would find  $2\pi \sin(2 \pi  \theta) = (3/2) \psi'(3\theta)-\psi'(\theta)$. Computing the first Fourier coefficient of this equation yields $c_1(\psi')=- i \pi$, and computing the $3^k$-Fourier coefficient, $k \geq 1$, gives the relation $c_{3^k}(\psi') = (3/2) c_{3^{k-1}}(\psi')$. It follows that $ c_{3^k}(\psi')  = - i \pi (3/2)^k$ which is not bounded, a contradiction. Hence $W \in C^{\ln(2)/\ln(3)} \setminus C^{\ln(2)/\ln(3)+}$. \\

As a second example, consider $W(\theta) = \sum_{n=0}^\infty 4^{-n} \cos(2\pi 2^n \theta)$. There can not exists any analytic $\psi \in C^\omega(\mathbb{S}^1,\mathbb{R})$ such that $\cos(2 \pi \theta) = {\psi(2\theta)}/4-\psi(\theta)$. The previous proof applies again by taking the third derivative of the equation and studying the Fourier coefficients of $\psi''$. Hence $\psi \in C^{2-} \setminus C^2$. \\

Last, consider $\sum_{n=0}^\infty 2^{-n} \sin(2 \pi 2^n \theta)$. There can not exists any analytic $\psi \in C^\omega(\mathbb{S}^1,\mathbb{R})$ such that $\cos(2 \pi \theta) = {\psi(2\theta)}/2-\psi(\theta)$. Indeed, in this case we would find $2\pi \cos(2 \pi \theta) = \psi'(2\theta)-\psi'(\theta)$, which gives $2\pi = 2\pi \cos(0) = \psi'(0)-\psi'(0)=0$, a contradiction. Hence $\psi \in C^{1-} \setminus C^1$.
\end{remark}

Actually, all the Weierstrass maps that appear in our \say{crystal} setting are non-analytic.

\begin{lemma}
Fix a non-constant $\varphi$ of the form $\varphi(\theta) = \sum_{j=0}^J c_j \cos(2 \pi j \theta) $ for $0<J<\infty$ and $c_j \geq 0$ . Let $\lambda \in \mathbb{N} \setminus \{0,1\}$ and $\mu \in (0,1)$. Then $W(\theta) := \sum_{n=0}^\infty \mu^n \varphi(\lambda^n \theta)$ is not analytic.
\end{lemma}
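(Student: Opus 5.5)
The plan is to read off the Fourier coefficients of $W$ and show that they decay only polynomially. This is incompatible with real-analyticity, because a real-analytic $1$-periodic function has exponentially decaying Fourier coefficients. The hypothesis $c_j\geq0$ is what rules out any cancellation between different terms of the series. The argument does not need the cohomology condition of Theorem~\ref{thm:regWeierstrass}.

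First, since $\varphi$ is non-constant, there is some $j_0\in\{1,\dots,J\}$ with $c_{j_0}>0$. The series $W(\theta)=\sum_{n\ge0}\mu^n\varphi(\lambda^n\theta)$ converges uniformly, because $\|\varphi\|_\infty<\infty$ and $\mu<1$. So the Fourier coefficients of $W$ can be computed termwise. Writing $\cos(2\pi j\lambda^n\theta)=\tfrac12(e^{2\pi i j\lambda^n\theta}+e^{-2\pi i j\lambda^n\theta})$, we get for every integer $q\geq1$
$$\widehat W(q)=\frac12\sum_{\substack{n\ge0,\ 1\le j\le J\\ j\lambda^n=q}}\mu^n c_j .$$
The sum is finite, and the constant term $c_0$ only contributes to $q=0$.

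Every summand is nonnegative, so $\widehat W(q)$ is bounded below by any single term. Taking $q_n:=j_0\lambda^n$ gives
$$\widehat W(q_n)\ \ge\ \tfrac12 c_{j_0}\mu^n=\tfrac12 c_{j_0}\,(q_n/j_0)^{-\alpha},\qquad \alpha=\frac{|\log\mu|}{\log\lambda}.$$
This is only polynomial decay along the sequence $q_n\to\infty$.

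On the other hand, if $W$ were real-analytic on $\mathbb S^1$, it would extend holomorphically to a strip $|\operatorname{Im}z|<\delta$. Shifting the contour in $\widehat W(q)=\int_0^1W(\theta)e^{-2\pi iq\theta}\,d\theta$ would then give $|\widehat W(q)|\lesssim e^{-2\pi\delta' |q|}$ for any $\delta'<\delta$. This contradicts the lower bound at $q_n$, so $W$ is not analytic.

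There is no serious obstacle. The one point to check is the positivity bookkeeping: distinct pairs $(n,j)$ may land on the same frequency $q$, but since all coefficients are nonnegative these coincidences only increase $\widehat W(q)$. Without the sign assumption, cancellation could occur, and one would instead have to go through the cohomological equation of Theorem~\ref{thm:regWeierstrass}.
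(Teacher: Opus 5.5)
Your proof is correct. It reaches the contradiction by a different mechanism than the paper, although the core computation is the same.

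The paper works with an analytic $\psi$ satisfying $\psi=\varphi+\mu\psi(\lambda\,\cdot)$. It differentiates $k$ times with $4\mid k$ and $\mu\lambda^k\ge2$; the divisibility by $4$ keeps the coefficients of $\varphi^{(k)}=\sum_j c_j(2\pi j)^k\cos(2\pi j\theta)$ nonnegative. It then uses the recursion $c_{\lambda^nj_0}(\psi^{(k)})\ge\mu\lambda^k\,c_{\lambda^{n-1}j_0}(\psi^{(k)})$ to show that the continuous function $\psi^{(k)}$ has unbounded Fourier coefficients.

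You instead compute $\widehat W(q)$ in closed form. You then set the explicit lower bound $\widehat W(j_0\lambda^n)\ge\frac12c_{j_0}\mu^n$ against the exponential decay forced by a holomorphic extension to a strip.

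The underlying computation agrees, since $\widehat{W^{(k)}}(q)=(2\pi iq)^k\widehat W(q)$ gives $|\widehat{W^{(k)}}(j_0\lambda^n)|\gtrsim(\mu\lambda^k)^n$. Each packaging buys something different:
\begin{itemize}
\item The paper's argument only uses boundedness of $\psi^{(k)}$. It therefore shows directly that $W\notin C^k$ once $\mu\lambda^k\ge2$, and it needs no contour shift.
\item Your closed form makes the nonnegativity of every positive-frequency coefficient visible at once. You need neither the $4\mid k$ trick nor the low-frequency bookkeeping that starts the recursion at $j_0$.
\item Your bound $\widehat W(q_n)\gtrsim q_n^{-\alpha}$, combined with $|\widehat f(q)|\lesssim q^{-\beta}$ for $f\in C^\beta$, gives $W\notin C^\beta$ for every $\beta>\alpha$. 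This is the sharp non-regularity statement of Theorem~\ref{thm:regWeierstrass} for this class of $\varphi$.
\end{itemize}
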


\begin{proof}
Suppose that there exists an analytic $\psi \in C^\omega(\mathbb{S}^1,\mathbb{R})$ such that $\psi(\theta)=\varphi(\theta)+\mu \psi(\lambda \theta)$. Choose $k=4k'$ large enough so that $\mu \lambda^k \geq 2$. We have $\psi^{(k)}(\theta)=\varphi^{(k)}(\theta)+\mu \lambda^k \psi^{(k)}(\lambda \theta)$. Notice that $\varphi^{(k)}(\theta) = \sum_{j=0}^J c_j (2\pi j)^k \cos(2 \pi j \theta)$ has nonnegative Fourier coefficients. Consider $j_0$, the smallest non-zero integer such that $c_{j_0}>0$. Studying the Fourier coefficients yields $c_{j}(\psi^{(k)}) = 0$ for $0 \leq j<j_0$ and then $c_{j_0}(\psi^{(k)}) = c_{j_0} (2\pi j_0)^k$.
Notice then that, for $n \geq 1$, we have $c_{\lambda^n j_0}(\psi^{(k)}) = c_{\lambda^n j_0}(\varphi^{(k)})+ (\mu \lambda^k) c_{\lambda^{n-1}j_0}(\psi^{(k)}) \geq 2 c_{\lambda^{n-1}j_0}(\psi^{(k)})$
so that $c_{\lambda^n j_0}(\psi^{(k)}) \geq 2^n c_{j_0} (2 \pi j_0)^k$ which is not bounded in $n$, a contradiction.
\end{proof}

Now let us now prove Theorem \ref{thm:regWeierstrass}. To study the regularity of $X$ close to a point $x$, we introduce the centered notation:
$$X_x(t)=\sum_{n=0}^\infty \mu^n \varphi(\lambda^n(x+t))$$
and $\varphi_x(t) := \varphi(x+t)$.
Notice that family of centered functions $(X_x)_{x \in \mathbb{S}^1}$ is autosimilar:
$$X_x(t)=\sum_{n=0}^\infty \mu^n \varphi(\lambda^n(x+t)) = \varphi(x+t) + \mu \sum_{n=1}^\infty \mu^{n-1} \varphi(\lambda^n(x+t)) $$
$$ = \varphi(x+t) + \mu \sum_{n=0}^\infty \mu^{n} \varphi(\lambda^n( \lambda x+ \lambda t)) = \varphi_x(t) + \mu X_{\lambda x}(\lambda t) .$$
The fundamental relation $$ X_x(t) = \varphi_x(t) + \mu X_{\lambda x}(\lambda t)$$
will allow us to relate oscillations at large scales with oscillations at small scales. We will also use the notation, for $\beta \in (0,1)$: $\delta^\beta f(\theta) = \frac{f(\theta)-f(0)}{|\theta|^\beta} $. Clearly, $f$ is $\beta$-Hölder at $0$ iff $\delta^\beta f$ is bounded. We start by proving the regularity result, which is easier.

\begin{lemma}
Recall that $\alpha \in (0,\infty)$ is such that $\mu \lambda^\alpha=1$. If $\alpha \notin \mathbb{N}$, then $X \in C^\alpha$. If $\alpha \in \mathbb{N}$, then $X \in C^{\alpha-\varepsilon}$ for all $\varepsilon$.
\end{lemma}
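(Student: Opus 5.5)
The plan is the classical scale-splitting argument for lacunary series, applied to $X=W=\sum_n \mu^n\varphi(\lambda^n\theta)$ (where $\mu\lambda^\alpha=1$). Write $\alpha=r+s$ with $r=\lfloor\alpha\rfloor$ and $s\in[0,1)$. I first treat the case $\alpha\notin\mathbb N$, so $s\in(0,1)$. Differentiating termwise $r$ times gives
$$X^{(r)}(\theta)=\sum_{n\ge0}(\mu\lambda^r)^n\varphi^{(r)}(\lambda^n\theta).$$
Since $\mu\lambda^r=\lambda^{-s}<1$, this series and all lower-order differentiated series converge uniformly. Hence $X\in C^r$, and it remains to show that $X^{(r)}$ is $s$-Hölder.

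Fix $\theta$ and an increment $|t|\le 1$, and choose $N\ge0$ with $\lambda^{-N-1}<|t|\le\lambda^{-N}$. I split the sum for $X^{(r)}(\theta+t)-X^{(r)}(\theta)$ at $n=N$.
\begin{itemize}
\item For the low frequencies $n\le N$, the mean value theorem gives $|\varphi^{(r)}(\lambda^n(\theta+t))-\varphi^{(r)}(\lambda^n\theta)|\le\|\varphi^{(r+1)}\|_\infty\lambda^n|t|$. Their total contribution is at most
$$\|\varphi\|_{C^{r+1}}|t|\sum_{n\le N}(\lambda^{1-s})^n\lesssim |t|\lambda^{N(1-s)}\lesssim|t|^{s},$$
where the geometric sum is dominated by its last term because $1-s>0$.
\item For the high frequencies $n>N$, I bound each difference by $2\|\varphi^{(r)}\|_\infty$. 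Their total contribution is at most
$$2\|\varphi^{(r)}\|_\infty\sum_{n>N}\lambda^{-sn}\lesssim\lambda^{-sN}\lesssim|t|^s,$$
this time because $s>0$.
\end{itemize}
Both constants depend only on $\varphi,\mu,\lambda$. This gives $X\in C^{r+s}=C^\alpha$.

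For $\alpha\in\mathbb N$ and $\varepsilon>0$, set $\beta=\alpha-\varepsilon$. I write $\mu=\lambda^{-\alpha}=\lambda^{-\beta}\lambda^{-\varepsilon}$ and differentiate only $r'=\lfloor\beta\rfloor$ times, leaving the fractional part $s'=\beta-r'$. If $s'\in(0,1)$, the same two-range split works with $\lambda^{-s}$ replaced by $\lambda^{-s'}$; the extra factor $\lambda^{-\varepsilon n}$ only helps. If $s'=0$, I decrease $\varepsilon$ slightly so that $s'>0$. Either way $X\in C^{\alpha-\varepsilon}$.

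The main obstacle is the borderline geometric sums. The low-frequency bound needs $s<1$, and the high-frequency bound needs $s>0$. This is exactly why the integer case loses an $\varepsilon$: at $s=0$ the tail sum gives $N$ identical terms rather than a convergent geometric series, producing a logarithmic loss. Everything else is routine once the split at scale $\lambda^{-N}\asymp|t|$ is fixed.
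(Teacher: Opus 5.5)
Your proof is correct and is essentially the paper's argument. The paper iterates the autosimilarity relation $X^{(k)}_x(\theta)=\varphi^{(k)}_x(\theta)+\lambda^{-\beta}X^{(k)}_{\lambda x}(\lambda\theta)$ until $\lambda^N|\theta|\simeq 1$, which is exactly your split at $n=N$: the first $N$ terms are handled by the Lipschitz bound and the remainder by $\|X^{(k)}\|_\infty$. The only difference is in the integer case, where the paper does not re-run the split but simply sums the $C^{1-\varepsilon}$ seminorms of the terms $\lambda^{-n}\varphi^{(k)}(\lambda^n\cdot)$, which form a geometric series with ratio $\lambda^{-\varepsilon}$.
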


\begin{proof}
Write $\alpha=k+\beta$ for some $\beta \in (0,1]$ and $k \geq 0$ an integer. Since $\mu \lambda^k = \lambda^{-\beta}<1$, the series $X(\theta) = \sum_{n=0}^\infty \mu^n \varphi(\lambda^n \theta)$ clearly defines a $C^k$ function, with $k$-th derivative $X^{(k)}(\theta) = \sum_{n=0}^\infty \lambda^{-\beta n} \varphi^{(k)}(\lambda^n \theta)$. It is also easy to see that $X^{(k)}$ is $\beta'$-Hölder regular, for any $\beta'=\beta-\varepsilon$. Indeed, taking $\delta^{\beta'}$ of the previous formula gives immediately, for any $\theta,x \in \mathbb{S}^1$ with $\theta \neq 0$:
$$\delta^{\beta'} X_x^{(k)}(\theta) = \sum_{n=0}^\infty \lambda^{-\varepsilon n} \delta^{\beta'} \varphi_x^{(k)}(\theta)  $$
which is clearly a bounded function since $\varphi_x^{(k)}$ is $\beta'$-Hölder regular (uniformly in $x$) . \\
Suppose now that $\alpha \notin \mathbb{N}$, so that $\alpha = k+\beta$ with $\beta \in (0,1)$. Let us show that $X^{(k)}$ is actually $C^\beta$. The autosimilarity relation $X^{(k)}_x(\theta) = \varphi^{(k)}_x(\theta)+\lambda^{-\beta} X_{\lambda x}^{(k)}(\lambda \theta)$ gives
$$ \delta^\beta X^{(k)}_x(\theta) = \delta^\beta \varphi^{(k)}_x(\theta)+ \delta^\beta X_{\lambda x}^{(k)}(\lambda \theta). $$
We iterate the relation until $\lambda^N \theta \simeq 1$:
$$ \delta^\beta X^{(k)}_x(\theta) = \sum_{n=0}^N \delta^\beta \varphi^{(k)}_x(\theta \lambda^n)+ \delta^\beta X_{\lambda x}^{(k)}(\lambda^N \theta)  \lesssim \sum_{n=0}^N |\theta \lambda^n|^{1-\beta} +  \| X^{(k)} \|_\infty \lesssim 1 .$$
Hence the family of functions $(\partial^\beta X_x^{(k)
})_{x \in \mathbb{S}^1}$ is uniformly bounded, and $X^{(k)}$ is $\beta$-Hölder.
\end{proof}

Now, we will begin a carefull study of the local oscillations of $X$. To do so, we need to introduce some dynamical background. 

\begin{definition}
Denote $\mathcal{A} := \{0,\dots,\lambda-1\}$ and consider the compact space $\mathcal{A}^\mathbb{Z}$, equipped with the (ultrametric) distance $d(\mathbf{a},\mathbf{b}) = \lambda^{- N(\mathbf{a},\mathbf{b})}$ where $N(\mathbf{a},\mathbf{b}) = \sup\{ n \geq 0 \ | \ \forall |j| \leq n, \ a_j = b_j \} \in \mathbb{N} \cup \{\infty\}$. Consider the full shift, defined as  
 $$ \begin{array}[t]{lrcl}
\sigma : & \mathcal{A}^\mathbb{Z} & \longrightarrow & \mathcal{A}^\mathbb{Z} \\
    & (a_n)_{n \in \mathbb{Z}}  & \longmapsto &  (a_{n+1})_{n \in \mathbb{Z}}  \end{array} $$
Notice that $\sigma$ is Lipchitz. The multiplication by $\lambda \in \mathbb{N} \setminus \{0,1\}$, denoted $$ \begin{array}[t]{lrcl}
m : & \mathbb{S}^1 & \longrightarrow & \mathbb{S}^1 \\
    & x \text{ mod } 1  & \longmapsto &  \lambda x \text{ mod } 1  \end{array} $$
is an expanding map with $\lambda$ inverse branches, that we denote, for $a \in \mathcal{A}$, $$ \begin{array}[t]{lrcl}
g_a : & [0,1) \subset \mathbb{S}^1 & \longrightarrow & [\frac{a}{\lambda},\frac{a+1}{\lambda}) \subset \mathbb{S}^1 \\
    & x  & \longmapsto &  \frac{a+x}{\lambda}  \end{array} $$
We further denote $g_{a_1 \dots a_n} := g_{a_1} \dots g_{a_n} : [0,1) \rightarrow \mathbb{S}^1$. It is well known that $m$ is a factor of $\sigma$. Indeed, the surjective projection $$\begin{array}[t]{lrcl}
 \pi : & \mathcal{A}^\mathbb{Z} & \longrightarrow & \mathbb{S}^1 \\
    & (a_n)_{n \in \mathbb{Z}}  & \longmapsto & \underset{n \rightarrow +\infty}{\lim} g_{a_1\dots a_n}(x_0)  \end{array} $$ 
is independent of $x_0$, Lipschitz, and a direct computation yields $\pi \circ \sigma = m \circ \pi$. Notice also that, writing $\mathbf{a} = (a_n)_{n \in \mathbb{Z}}$ and $x=\pi(\mathbf{a})$, we have $\pi(\sigma^{-k} \mathbf{a}) = g_{a_{-k} \dots a_{-1}}(x)$. Finally, define the family of functions $(X_\mathbf{a})_{\mathbf{a} \in \mathcal{A}^\mathbb{Z}}$ by $X_{\mathbf{a}}(\theta) := X_{\pi(\mathbf{a})}(\theta) = X(\pi(\mathbf{a})+\theta)$ and denote similarly $\varphi_\mathbf{a}(\theta) := \varphi_{\pi(\mathbf{a})}(\theta) = \varphi(\pi(\mathbf{a})+\theta)$.
\end{definition}

We now turn into studying the local oscillations of $X$ when $X$ is not analytic. We study in detail the local oscillations of $X$. The fact that $X$ is not $C^{\alpha+}$ (or $C^\alpha$ if $\alpha \in \mathbb{N}$) will be a consequence of this careful study. The idea is the following. For $x \in \mathbb{S}^1$ and $\alpha \in (0,\infty) \setminus \mathbb{N}$, a function $f$ is $C^\alpha$ at $x$ if there exists a polynomial $P$ of order $\lfloor \alpha \rfloor$ such that $|f(x+\theta)-P(\theta)| \leq C |\theta|^\alpha$. It is then natural to study the distance of $X$ with polynomials of degree $\lfloor \alpha \rfloor$ as we zoom into a point $x$. To help with this program, we will approximate $(X_{\mathbf{a}})_{\mathbf{a} \in \mathcal{A}^\mathbb{Z}}$ with a family of function $(Z_{\mathbf{a}})_{\mathbf{a} \in \mathcal{A}^\mathbb{Z}}$  that have more suited self-similar properties.

\begin{theorem}
Let $N$ be the smallest integer such that $\mu \lambda^N > 1$. That is, if $\alpha \notin \mathbb{N}$, $N:= \lceil \alpha \rceil $, and if $\alpha \in \mathbb{N}$, $N = \alpha+1$.
There exists a family of functions from $\mathbb{R}$ to $\mathbb{R}$, $(Y_{\mathbf{a}})_{\mathbf{a} \in \mathcal{A}^\mathbb{Z}},(Z_{\mathbf{a}})_{\mathbf{a} \in \mathcal{A}^\mathbb{Z}},(P_{\mathbf{a}})_{\mathbf{a} \in \mathcal{A}^\mathbb{Z}}$, such that :
\begin{itemize}
\item $X_{\mathbf{a}}=Y_{\mathbf{a}}+Z_{\mathbf{a}}$
\item $Y_{\mathbf{a}}$ is analytic and $Y_{\mathbf{a}}(0)=Y_{\mathbf{a}}'(0)=\dots = Y_{\mathbf{a}}^{(N-1)}(0)$
\item $P_{\mathbf{a}}$ is a polynomial of degree $\leq N-1$
\item $Z_{\mathbf{a}}(\theta) = P_{\mathbf{a}}(\theta) + \mu Z_{\mathbf{a}}(\lambda \theta)$
\end{itemize}
Furthermore, the families $(X_{\mathbf{a}})_{\mathbf{a} \in \mathcal{A}^\mathbb{Z}},(Y_{\mathbf{a}})_{\mathbf{a} \in \mathcal{A}^\mathbb{Z}},(Z_{\mathbf{a}})_{\mathbf{a} \in \mathcal{A}^\mathbb{Z}},(P_{\mathbf{a}})_{\mathbf{a} \in \mathcal{A}^\mathbb{Z}}$ are continuous in $\mathbf{a}$ into the space of continuous/analytic/continuous/polynomials of degree N-1.
\end{theorem}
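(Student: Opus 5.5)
We decompose $\varphi$ at each base point using its Taylor expansion, and define $Y_{\mathbf a}$ as a convergent series of Taylor remainders. The idea is that $\mu\lambda^N>1$ makes the remainders of order $\geq N$ summable along backward iterates. Concretely, for $x=\pi(\mathbf a)$ write
$$\varphi_x(\theta)=T_x(\theta)+R_x(\theta),\qquad T_x(\theta)=\sum_{j=0}^{N-1}\frac{\varphi^{(j)}(x)}{j!}\theta^j,$$
so that $R_x$ is analytic and vanishes to order $N$ at $0$. Iterating the fundamental relation $X_x(\theta)=\varphi_x(\theta)+\mu X_{\lambda x}(\lambda\theta)$ naively gives $X_x(\theta)=\sum_n\mu^n\varphi_{\lambda^nx}(\lambda^n\theta)$. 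This only converges because $\varphi$ is bounded, and it is useless for isolating the polynomial part. Instead we iterate \emph{backwards} along the past of $\mathbf a$, which is why the sequence space is two-sided.

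Set $x_{-k}:=\pi(\sigma^{-k}\mathbf a)=g_{a_{-k}\dots a_{-1}}(x)$, so that $\lambda^k x_{-k}\equiv x$. Applying the relation $k$ times at $x_{-k}$ gives
$$X_{x_{-k}}(\lambda^{-k}\theta)=\sum_{j=0}^{k-1}\mu^j\varphi_{x_{-k+j}}(\lambda^{j-k}\theta)+\mu^kX_x(\theta).$$
This suggests defining
$$Y_{\mathbf a}(\theta):=-\sum_{k\geq1}\mu^{-k}R_{x_{-k}}(\lambda^{-k}\theta)$$
(or an appropriately reindexed version), and then $Z_{\mathbf a}:=X_{\mathbf a}-Y_{\mathbf a}$. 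The key estimate is that $|R_y(s)|\lesssim |s|^N$, uniformly in $y$ and holomorphically on a complex strip. The $k$-th term is therefore $O(\mu^{-k}\lambda^{-Nk}|\theta|^N)=O((\mu\lambda^N)^{-k})$ on compact sets, and the series converges locally uniformly on a complex neighbourhood of any compact subset of $\mathbb R$. Hence $Y_{\mathbf a}$ is analytic, vanishes to order $N$ at $0$ (which is how I read the intended condition $Y^{(j)}_{\mathbf a}(0)=0$ for $j<N$), and depends continuously on $\mathbf a$ by uniform convergence. Here continuity in $\mathbf a$ is with respect to the two-sided metric, since $x_{-k}$ depends on the past coordinates.

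Next we check the self-similarity of $Z$. Using the reindexing $\sigma$ (so that $Y_{\sigma\mathbf a}$ uses base points $\lambda x$ and past $x,x_{-1},\dots$), one verifies the identity
$$Y_{\mathbf a}(\theta)=R_x(\theta)+\mu Y_{\sigma\mathbf a}(\lambda\theta).$$
This is exactly the cohomological equation for the remainder part, obtained by peeling off the first term of the series; I may need to adjust the sign and indexing convention so that this identity holds. Subtracting it from $X_{\mathbf a}(\theta)=\varphi_x(\theta)+\mu X_{\sigma\mathbf a}(\lambda\theta)$ gives
$$Z_{\mathbf a}(\theta)=T_x(\theta)+\mu Z_{\sigma\mathbf a}(\lambda\theta).$$
The statement writes $Z_{\mathbf a}$ on both sides, which I interpret as the shifted index $\sigma\mathbf a$ (the natural skew-product form), with $P_{\mathbf a}:=T_{\pi(\mathbf a)}$. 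This is a polynomial of degree $\leq N-1$ whose coefficients $\varphi^{(j)}(\pi(\mathbf a))/j!$ are continuous in $\mathbf a$. Continuity of $Z_{\mathbf a}$ follows from continuity of $X_{\mathbf a}$ (since $X$ is continuous and $\pi$ is Lipschitz) and of $Y_{\mathbf a}$.

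The main obstacle is getting the backward series right: choosing the indexing so that it telescopes into the stated relation, and justifying the uniform bound $|R_y(s)|\le C|s|^N$ on a complex strip for all real $y$ (which holds by periodicity and Cauchy estimates) together with its extension to all $\theta\in\mathbb R$. For large $|\theta|$ we only need local uniform convergence, and the factor $\lambda^{-k}$ eventually brings $\lambda^{-k}\theta$ into the region where the Taylor bound applies. The earlier terms, finitely many for fixed $\theta$, are analytic anyway.
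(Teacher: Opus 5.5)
Your proposal is correct and is essentially the paper's construction. The paper defines $Y_{\mathbf a}$ through its $N$-th derivative $-\sum_{n\ge1}(\mu\lambda^N)^{-n}\varphi^{(N)}_{\sigma^{-n}\mathbf a}(\lambda^{-n}\theta)$ and integrates $N$ times with vanishing Taylor data at $0$, which produces exactly your series $-\sum_{k\ge1}\mu^{-k}R_{x_{-k}}(\lambda^{-k}\theta)$, with $P_{\mathbf a}$ the order-$(N-1)$ Taylor polynomial of $\varphi$ at $\pi(\mathbf a)$. Your sign and indexing already work as written, since reindexing gives $Y_{\mathbf a}(\theta)=R_x(\theta)+\mu Y_{\sigma\mathbf a}(\lambda\theta)$ exactly, and your reading of the recursion as $Z_{\mathbf a}(\theta)=P_{\mathbf a}(\theta)+\mu Z_{\sigma\mathbf a}(\lambda\theta)$ is the form the paper itself uses later.
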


\begin{proof}
For every $\mathbf{a} \in \mathcal{A}^\mathbb{Z}$, define the analytic map $Y_\mathbf{a}$ as the only function such that $Y_{\mathbf{a}}(0)=Y_{\mathbf{a}}'(0)=\dots = Y_{\mathbf{a}}^{(N-1)}(0) = 0$ and $$Y_{\mathbf{a}}^{(N)}(\theta) := -\sum_{n=1}^\infty \frac{\varphi_{ \sigma^{-n} (\mathbf{a})}^{(N)}(\theta \lambda^{-n}) }{(\mu \lambda^N)^n} .$$ This series clearly defines an analytic map in $\theta \in \mathbb{R}$. Inspecting the series yield $$ \forall \mathbf{a} \in \mathcal{A}^\mathbb{Z}, \forall \theta \in \mathbb{R}, \quad  Y_{\mathbf{a}}^{(N)}(\theta) = - \frac{\varphi^{(N)}_{ \sigma^{-1}(\mathbf{a})}(\theta \lambda^{-1})}{\mu \lambda^N} + \frac{Y^{(N)}_{ \sigma^{-1}(\mathbf{a})}(\theta \lambda^{-1})}{\mu \lambda^N},$$
which can be rewritten as
$$ \forall \mathbf{a} \in \mathcal{A}^\mathbb{Z}, \forall \theta \in \mathbb{R}, \quad  Y_{\mathbf{a}}^{(N)}(\theta) =  {\varphi^{(N)}_{ \mathbf{a}}(\theta)} + \mu \lambda^N {Y^{(N)}_{ \sigma(\mathbf{a})}(\lambda \theta)}.$$
Integrating $N$-times, taking into account that $Y_\mathbf{a}$ must vanish at zero at order $N-1$, yields
$$ \forall \mathbf{a} \in \mathcal{A}^\mathbb{Z}, \forall \theta \in \mathbb{R}, \quad  Y_{\mathbf{a}}(\theta) =  {\varphi_{ \mathbf{a}}(\theta)} - P_\mathbf{a}(\theta) + \mu {Y_{ \sigma(\mathbf{a})}(\lambda \theta)} ,$$
where $P_\mathbf{a}(\theta)$ is the Taylor expansion of $\varphi$ at $\pi(\mathbf{a})$ of order $N-1$: $$ P_\mathbf{a}(\theta) = \sum_{j=0}^{N-1}  \frac{\varphi^{(j)}(\pi(\mathbf{a}))}{j!} \theta^j .$$
Setting $Z_\mathbf{a}(\theta) := X_\mathbf{a}-Y_\mathbf{a}$ finally yields $$ Z_{\mathbf{a}}(\theta) = P_\mathbf{a}(\theta) + \mu Z_\mathbf{a}(\lambda \theta), $$
which concludes the proof.
\end{proof}

We will quantify the oscillations of $X$ by considering the distance of $Z$ with the space of polynomials of degree $N$.

\begin{definition}
Define the distance between $Z_\mathbf{a}$ and the space of polynomials of degree less than $N$-1 as $$ \begin{array}[t]{lrcl}
\mathcal{D} : & \mathcal{A}^\mathbb{Z} & \longrightarrow & [0,\infty) \\
    & \mathbf{a}  & \longmapsto &  \underset{P \in \mathbb{R}_{N-1}[X]}{\inf} \| Z_\mathbf{a} - P \|_{L^\infty([-1,1])}. \end{array} $$
This function is continuous on the compact space $\mathcal{A}^\mathbb{Z}$, since the vector space $\mathbb{R}_{N-1}[X]$ is a finite dimensional subspace of $L^\infty([-1,1])$.
\end{definition}

\begin{lemma}
If $X$ is not analytic, there exists $\kappa_0>0$ such that for all $\mathbf{a} \in \mathcal{A}^\mathbb{Z}$,  $\mathcal{D}(\mathbf{a})\geq\kappa_0$.
\end{lemma}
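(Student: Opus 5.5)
Since $\mathcal{D}$ is continuous on the compact space $\mathcal{A}^\mathbb{Z}$, it attains its minimum. So it suffices to show that $\mathcal{D}(\mathbf{a})>0$ for every $\mathbf{a}$. I argue by contradiction: suppose $\mathcal{D}(\mathbf{a}_0)=0$ for some $\mathbf{a}_0$. The infimum over the finite-dimensional space $\mathbb{R}_{N-1}[X]$ is attained, so $Z_{\mathbf{a}_0}$ coincides with a polynomial $Q$ of degree $\le N-1$ on $[-1,1]$.

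The first step is to propagate this to all of $\mathbb{R}$ using the self-similarity $Z_{\mathbf{a}}(\theta)=P_{\mathbf{a}}(\theta)+\mu Z_{\mathbf{a}}(\lambda\theta)$. Rewriting it as $Z_{\mathbf{a}}(\lambda\theta)=\mu^{-1}(Z_{\mathbf{a}}(\theta)-P_{\mathbf{a}}(\theta))$, polynomiality of degree $\le N-1$ on $[-1,1]$ gives the same on $[-\lambda,\lambda]$, and by iteration on all of $\mathbb{R}$. (If the intended relation involves $Z_{\sigma(\mathbf{a})}$, as in the construction of $Y$, the same step shows instead that $Z_{\sigma(\mathbf{a}_0)}$ is polynomial on $[-\lambda,\lambda]$.) Consequently $X_{\mathbf{a}_0}=Y_{\mathbf{a}_0}+Q$ is real-analytic on $\mathbb{R}$, i.e.\ $X=W$ is analytic on a whole neighbourhood of the circle. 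To be safe I would only use analyticity on a nontrivial open interval $J$ around $\pi(\mathbf{a}_0)$.

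The second step is to spread analyticity from $J$ to the whole circle using $W(\theta)=\varphi(\theta)+\mu W(\lambda\theta)$. Rewriting it as $W(\lambda\theta)=\mu^{-1}(W(\theta)-\varphi(\theta))$ shows that $W$ is analytic on $\lambda J \bmod 1$. After finitely many iterations, $\lambda^k J$ has length at least one, so $W$ is analytic on all of $\mathbb{S}^1$. This contradicts the hypothesis that $X$ is not analytic. Equivalently, via the cohomology condition in Theorem~\ref{thm:regWeierstrass}, one obtains an analytic $\psi=-W$ with $\varphi=\mu\psi(\lambda\cdot)-\psi$.

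The main obstacle is bookkeeping in the first step. I need to check that the functional equation for $Z$ holds exactly with the index conventions of the preceding theorem, so that the degree of the polynomial stays $\le N-1$. I also need to make sure that the propagation really yields analyticity of $X_{\mathbf{a}_0}$ on an open set, given that $Y_{\mathbf{a}_0}$ is analytic by construction. If the shift indices do not match, I would work instead with the whole orbit $\sigma^{n}(\mathbf{a}_0)$. The zero set $\{\mathcal{D}=0\}$ is closed, and the functional equation would make it forward invariant. Its image under $\pi$ is then a closed $m$-invariant set, so it contains a point with $Z$ polynomial near it, and local analyticity of $W$ at that point suffices for the second step.
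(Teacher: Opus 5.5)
Your argument is correct and matches the paper's proof: continuity and compactness reduce the lemma to $\mathcal{D}>0$, and $\mathcal{D}(\mathbf{a}_0)=0$ forces $X_{\mathbf{a}_0}=Y_{\mathbf{a}_0}+Q$ to be analytic on $(-1,1)$. The paper then finishes at once, because $(\pi(\mathbf{a}_0)-1,\pi(\mathbf{a}_0)+1)$ has length $2$ and so already contains a full period of the $1$-periodic $X$; your propagation of $Z$ through its functional equation and your spreading of analyticity via $W(\theta)=\varphi(\theta)+\mu W(\lambda\theta)$ are valid but unnecessary.
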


\begin{proof}
The function $\mathcal{D}$ can not vanish on $\mathcal{A}^\mathbb{Z}$.
Indeed, if there was some $\mathbf{a} \in \mathcal{A}^\mathbb{Z}$ such that $\mathcal{D}(\mathbf{a})=0$, then it would mean that there exists a polynomial $P$ such that $Z_\mathbf{a}=P$ on $[-1,1]$. But then, since $X_\mathbf{a}=Y_\mathbf{a}+Z_\mathbf{a}$, it would follow that $X_\mathbf{a}$ is analytic on $[-1,1]$. Since $X_\mathbf{a}(\theta)=X(\pi(\mathbf{a})+\theta)$ and since $X$ is $1$-periodic, it would follow that $X$ is analytic on $\mathbb{R}$, which is false by hypothesis.
It then follow that $\mathcal{D}>0$ on the compact space $\mathcal{A}^\mathbb{Z}$.
This concludes the proof since $\mathcal{D}$ is continuous.
\end{proof}

Since our goal is to study the local oscillations of $X_\mathbf{a}$ near $0$, and not \emph{at} zero, we need to work a little bit more to ensure that this sup norm is not realized too close from zero. This will allow to meaningfully relate oscillation at different scales later, the sup norm being understood as giving us oscillations at a macroscopic scale.

\begin{lemma}
Suppose that $X$ is not analytic. There exists $\kappa >0$ such that, for all $\mathbf{a} \in \mathcal{A}^\mathbb{Z}$, we have:
$$ \forall  P \in \mathbb{R}_{N-1}[X], \ \exists \theta_0 \in [-1,1] \setminus [-2\kappa,2\kappa], \ \forall \theta \in [\theta_0-\kappa,\theta_0+\kappa], \quad |Z_\mathbf{a}(\theta)-P(\theta)| \geq \kappa. $$
\end{lemma}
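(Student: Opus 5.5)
The natural plan is a compactness argument by contradiction, upgrading the previous lemma (uniform positivity of $\mathcal{D}$) using the self-similarity $Z_\mathbf{a}(\theta)=P_\mathbf{a}(\theta)+\mu Z_{\sigma(\mathbf{a})}(\lambda\theta)$. I read the relation in the preceding theorem with $Z_{\sigma(\mathbf{a})}$ on the right, as in the displayed identity for $Y$. Two properties are needed: the bad region near $0$ can be excluded, and the constant can be taken uniform.

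\textbf{Step 1 (large oscillation away from the origin).} I first show that there is $r\in(0,1)$ and $\kappa_1>0$ such that, for every $\mathbf{a}$ and every $P\in\mathbb{R}_{N-1}[X]$, we have $\|Z_\mathbf{a}-P\|_{L^\infty([-1,1]\setminus[-r,r])}\geq \kappa_1$. Suppose not. Then there are $\mathbf{a}_j$, $P_j$, $r_j\to0$ with $\|Z_{\mathbf{a}_j}-P_j\|_{L^\infty([-1,1]\setminus[-r_j,r_j])}\to0$. The $Z_{\mathbf{a}_j}$ are uniformly bounded, so on $[-1,-1/2]\cup[1/2,1]$ the $P_j$ are bounded. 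Since they have degree $\leq N-1$, they are bounded in coefficients, and we may extract $P_j\to P$ and $\mathbf{a}_j\to\mathbf{a}$. By continuity of $\mathbf{a}\mapsto Z_\mathbf{a}$ (from the previous theorem), $Z_\mathbf{a}=P$ on $[-1,1]\setminus\{0\}$, hence on $[-1,1]$ by continuity. This gives $\mathcal{D}(\mathbf{a})=0$, contradicting the previous lemma. Compactness of $\mathcal{A}^\mathbb{Z}$ and finite dimensionality of $\mathbb{R}_{N-1}[X]$ are exactly what make this work.

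If one prefers a quantitative route, the scaling relation offers one. Write $\theta=\lambda^{-1}u$, so that $Z_\mathbf{a}(\lambda^{-1}u)-P(\lambda^{-1}u)=\mu\big(Z_{\sigma(\mathbf{a})}(u)-\tilde P(u)\big)$, where $\tilde P(u)=\mu^{-1}(P(\lambda^{-1}u)-P_\mathbf{a}(\lambda^{-1}u))$ is again of degree $\leq N-1$. This shows that closeness to polynomials on a small interval transfers to a larger scale. The compactness argument above already suffices, though.

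\textbf{Step 2 (from a point to an interval).} Fix $r$ and $\kappa_1$ from Step 1. For given $\mathbf{a}$ and $P$, choose $\theta_0\in[-1,1]\setminus[-r,r]$ with $|Z_\mathbf{a}(\theta_0)-P(\theta_0)|\geq\kappa_1$. To thicken this point into an interval, I need equicontinuity of $Z_\mathbf{a}-P$. The family $Z_\mathbf{a}=X_\mathbf{a}-Y_\mathbf{a}$ is uniformly equicontinuous on $[-1,1]$, since $X$ is Hölder and $Y_\mathbf{a}$ is analytic depending continuously on $\mathbf{a}$. The polynomial $P$ is the delicate part. If $\|Z_\mathbf{a}-P\|_{L^\infty([-1,1])}$ is large, say $\geq M$, then $P$ itself is large. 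In that case I use a Markov/Remez-type inequality for polynomials of degree $\leq N-1$: $|P|$ is at least comparable to $\|P\|_\infty$ on some subinterval of length $\gtrsim_N 1$ avoiding $[-r,r]$. So I split into two cases.
\begin{itemize}
\item[(a)] If $\|P\|_{L^\infty([-1,1])}\leq M$, then $P'$ is bounded by Markov's inequality. Thus $Z_\mathbf{a}-P$ has a uniform modulus of continuity, and $|Z_\mathbf{a}-P|\geq\kappa_1/2$ on $[\theta_0-\delta,\theta_0+\delta]$ for a uniform $\delta$.
\item[(b)] If $\|P\|_{L^\infty([-1,1])}> M$ with $M\gg\sup\|Z_\mathbf{a}\|_\infty$, then by Remez's inequality $|P|\geq 2\sup\|Z_\mathbf{a}\|_\infty+1$ on a uniform interval outside $[-r,r]$.
\end{itemize}

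\textbf{Step 3 (choosing $\kappa$).} Take $\kappa:=\min(\kappa_1/2,\delta,r/3,1)$, after shrinking if needed, and shift $\theta_0$ slightly inward so that $[\theta_0-\kappa,\theta_0+\kappa]\subset[-1,1]$ while still $|\theta_0|\geq 2\kappa$. With these choices the stated property holds.

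I expect the main obstacle to be the uniformity over unbounded $P$ in Step 2. The Remez-inequality case split is what I would try first. Alternatively, one can restrict a priori to $P$ with $\|P\|\leq 2\sup_\mathbf{a}\|Z_\mathbf{a}\|_\infty+1$, since otherwise the conclusion is trivial on most of $[-1,1]$.
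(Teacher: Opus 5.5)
Your proof is correct. Its core, Step 2, is the same case split the paper uses. When $\|P\|_{L^\infty([-1,1])}$ is large, $P$ dominates $\sup_{\mathbf a}\|Z_{\mathbf a}\|_\infty$ on a subinterval of fixed length in $[1/2,1]$. When it is bounded, a point where $|Z_{\mathbf a}-P|$ is large is thickened using the uniform H\"older continuity of the $Z_{\mathbf a}$ and a uniform bound on the regularity of $P$; the paper gets this bound from norm equivalence on $\mathbb{R}_{N-1}[X]$, where you use Markov.

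The real difference is how you keep $\theta_0$ away from the origin. Your Step 1 is an extra compactness-by-contradiction argument showing that the oscillation is already realised on $[-1,1]\setminus[-r,r]$. The paper does without this. It takes the maximiser $t_0$ of $|Z_{\mathbf a}-P|$ on $[-1,1]$, so that $|Z_{\mathbf a}(t_0)-P(t_0)|\ge\mathcal D(\mathbf a)\ge\kappa_0$, and proves the lower bound on the larger neighbourhood $[t_0-3\kappa,t_0+3\kappa]$. Since the statement only constrains the centre, that $3\kappa$ margin leaves room to slide to $\theta_0=t_0\pm2\kappa$ when $|t_0|<2\kappa$. The paper's route keeps $\kappa$ explicit in terms of $\kappa_0$, the H\"older constant and the norm-equivalence constant, whereas your $r,\kappa_1$ are ineffective. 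In return, you get the slightly stronger fact that the whole interval $[\theta_0-\kappa,\theta_0+\kappa]$ avoids $(-2\kappa,2\kappa)$.

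In the large-$P$ case, your Remez route is cleaner than the paper's. The paper picks a subinterval on which $P$ is monotone between two points with $|P|\ge 2R$, which does not rule out a sign change in between. You should add one step: $\{|P|>K\}\cap[1/2,1]$ has at most $2N-1$ components, because its boundary points are roots of $P^2-K^2$. Then Remez's measure bound really does give an interval of length $\gtrsim_N 1$.

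Finally, drop your closing alternative. At the threshold $2\sup_{\mathbf a}\|Z_{\mathbf a}\|_\infty+1$, a polynomial may exceed it only on a tiny set, possibly near $0$, so nothing is ``trivial''. The threshold must be large relative to the Remez constant, exactly as in your case (b).
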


\begin{proof}
We saw earlier that there exists $\gamma \in (0,1)$ such that the maps $X_\mathbf{a}$ are all at least $\gamma$-Hölder, and this uniformly in $\mathbf{a}$. By construction, the same holds true for $Z_\mathbf{a}$. Fix one such $\gamma \in (0,1)$, and set $R := 1+ \sup_{\mathbf{a} \in \mathcal{A}^\mathbb{Z}} \|Z_\mathbf{a}\|_{C^\gamma([-1,1])} \geq 1$. Set, for $i \in \{1,\dots,N\}$: $I_i := [1/2+i/N,1/2+i/N+1/2N] \subset [1/2,1]$.   \\
Since $\mathbb{R}_{N-1}[X]$ is a finite dimensional real vector space, every norms are equivalent. It follows that there exists $C \geq 1$ such that $$ \forall i \in  \{ 1, \dots, N \}, \  C^{-1} \|P\|_{C^\alpha([-1,1])} \leq \|P\|_{L^\infty([-1,1])} \leq C \|P\|_{L^\infty(I_i)}.$$
Denote by $\kappa_0$ the constant given by the previous lemma, set $M := 2CR$ and set $\kappa := \frac{\kappa_0}{N} (6R C^2 )^{-1/\gamma}$, and let us prove the desired property for this choice of $\kappa$. Choose some $\mathbf{a} \in \mathcal{A}^\mathbb{Z}$, and choose some $P \in \mathbb{R}_{N-1}[X]$. If $\|P\|_{L^\infty([-1,1])} \geq M$, then since $\|P\|_{L^\infty(I_i)} \geq C^{-1} \|P\|_{L^\infty([-1,1])}$, there exists for each $i$ a number $t_i \in I_i$ such that $P(t_i) \geq C^{-1}M = 2R$. There is $N$ of these numbers, since $P$ has degree at most $N-1$, there must exists $i_0 \in \{1,\dots,N-1\}$ such that $P$ is monotonous on $[t_i,t_{i+1}]$ (which is an interval of length $\geq \kappa$.) Hence, since $\|Z_\mathbf{a}\|_{L^\infty([-1,1])} \leq R$:
$$\forall t \in [t_i,t_{i+1}], \ |P(t)-Z_\mathbf{a}(t_i)| \geq 2R-\|Z_\mathbf{a}\|_{L^\infty([1/2,1])} \geq R \geq \kappa. $$

Now suppose that $\|P\|_{L^\infty([-1,1])}\leq M$. Let $t_0 \in [-1,1]$ be a number realizing the sup norm of $\mathcal{D}(\mathbf{a}) \geq \kappa_0$. Then for every $t \in [t_0-3\kappa,t_0+3\kappa]$, 
$$ |P(t)-Z_\mathbf{a}(t)| = |P(t_0)-Z_\mathbf{a}(t_0)| - \|P-Z_\mathbf{a}\|_{C^{\gamma}} (3\kappa)^\gamma $$ $$\geq \kappa_0-(CM+R)(3\kappa)^\gamma \geq \kappa_0/2 \geq \kappa, $$
which concludes the proof.
\end{proof}

To study the local oscillations of $X$, we introduce the zoomed and rescaled functions, for $\mathbf{a} \in \mathcal{A}^\mathbb{Z}$ and $n \geq 0$:
$$ X_\mathbf{a}^{\langle n \rangle}(\theta) := \mu^{-n} X_{\mathbf{a}}(\theta \lambda^{-n}) \quad , \quad Z_\mathbf{a}^{\langle n \rangle}(\theta) := \mu^{-n} Z_{\mathbf{a}}(\theta \lambda^{-n}). $$

\begin{lemma}
Suppose that $X$ is not analytic. There exists $\kappa >0$ such that the following holds. For all $\mathbf{a} \in \mathcal{A}^\mathbb{Z}$, for all $n \geq 0$:
$$ \forall P \in \mathbb{R}_{N-1}[X], \ \exists t_0 \in [-1,1] \setminus [-2\kappa,2\kappa], \ \forall t \in [t_0-\kappa,t_0+\kappa], \quad |Z_\mathbf{a}^{\langle n \rangle}(t)-P(t)| \geq \kappa. $$
\end{lemma}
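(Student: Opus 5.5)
The plan is to reduce the statement at scale $n$ to the previous lemma (the $n=0$ case) applied at the shifted point $\sigma^n(\mathbf a)$, using the exact self-similarity of $Z$. I take the relation from the decomposition theorem in its shift-covariant form, $Z_{\mathbf a}(\theta)=P_{\mathbf a}(\theta)+\mu Z_{\sigma(\mathbf a)}(\lambda\theta)$, which mirrors $Y_{\mathbf a}=\varphi_{\mathbf a}-P_{\mathbf a}+\mu Y_{\sigma(\mathbf a)}(\lambda\cdot)$ and $X_{\mathbf a}=\varphi_{\mathbf a}+\mu X_{\sigma(\mathbf a)}(\lambda\cdot)$. Iterating it $n$ times gives
$$ Z_{\mathbf a}(\theta)=\sum_{j=0}^{n-1}\mu^j P_{\sigma^j(\mathbf a)}(\lambda^j\theta)+\mu^n Z_{\sigma^n(\mathbf a)}(\lambda^n\theta). $$
Since each $P_{\sigma^j(\mathbf a)}(\lambda^j\cdot)$ is a polynomial of degree $\le N-1$, the sum is some $Q_{\mathbf a,n}\in\mathbb R_{N-1}[X]$.

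Rescaling $\theta=t\lambda^{-n}$ and multiplying by $\mu^{-n}$ yields
$$ Z^{\langle n\rangle}_{\mathbf a}(t)=\mu^{-n}Q_{\mathbf a,n}(t\lambda^{-n})+Z_{\sigma^n(\mathbf a)}(t). $$
The first term is again a polynomial $\widetilde Q$ of degree $\le N-1$ in $t$, since rescaling the variable and multiplying by a constant preserve the degree. So $Z^{\langle n\rangle}_{\mathbf a}$ equals $Z_{\sigma^n(\mathbf a)}$ plus a polynomial of degree $\le N-1$.

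Given any $P\in\mathbb R_{N-1}[X]$, set $\widetilde P:=P-\widetilde Q$. Apply the previous lemma to the point $\sigma^n(\mathbf a)\in\mathcal A^{\mathbb Z}$ and the polynomial $\widetilde P$. This gives $t_0\in[-1,1]\setminus[-2\kappa,2\kappa]$ with $|Z_{\sigma^n(\mathbf a)}(t)-\widetilde P(t)|\ge\kappa$ on $[t_0-\kappa,t_0+\kappa]$. That quantity is exactly $|Z^{\langle n\rangle}_{\mathbf a}(t)-P(t)|$, so the conclusion holds with the same $\kappa$. The constant is uniform in $n$ and $\mathbf a$ because the previous lemma is uniform over all of $\mathcal A^{\mathbb Z}$ and over all polynomials.

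The only real obstacle is bookkeeping. The decomposition theorem writes the relation as $Z_{\mathbf a}=P_{\mathbf a}+\mu Z_{\mathbf a}(\lambda\cdot)$, and it must be read with $\sigma(\mathbf a)$ on the right, as the construction of $Y$ shows. One must also check that the normalisation of $\pi$ and $\sigma$ makes $X_{\mathbf a}(\theta)=\varphi_{\mathbf a}(\theta)+\mu X_{\sigma(\mathbf a)}(\lambda\theta)$ hold, i.e. that $\lambda\pi(\mathbf a)=\pi(\sigma\mathbf a)$. This is the factor relation $\pi\circ\sigma=m\circ\pi$. Once the covariant form is fixed, the argument is immediate: the space of polynomials of degree $\le N-1$ is invariant under affine rescaling, and it absorbs every term except the renormalised remainder $Z_{\sigma^n(\mathbf a)}$.
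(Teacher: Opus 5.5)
Your proof is correct and follows the paper's own argument: iterate $Z_{\mathbf a}=P_{\mathbf a}+\mu Z_{\sigma(\mathbf a)}(\lambda\,\cdot)$ to get $Z^{\langle n\rangle}_{\mathbf a}\equiv Z_{\sigma^n(\mathbf a)}$ modulo $\mathbb R_{N-1}[X]$, then apply the previous lemma at $\sigma^n(\mathbf a)$ to the shifted polynomial, which gives the same uniform $\kappa$. Your reading of the self-similarity relation with $\sigma(\mathbf a)$ on the right, rather than the $Z_{\mathbf a}$ printed in the decomposition theorem, is also the form the paper uses in its proof.
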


\begin{proof}
The autosimilarity relation
$$ Z_\mathbf{a}(\theta) = P_\mathbf{a}(\theta)+ \mu Z_{\sigma(\mathbf{a})}(\lambda \theta) $$
can be rewriten as $$ Z_\mathbf{a}^{\langle 1 \rangle}(\theta) = \mu^{-1} Z_{\mathbf{a}}(\theta \lambda^{-1}) = \mu^{-1} P_\mathbf{a}(\theta \lambda^{-1})+ Z_{\sigma(\mathbf{a})}(\theta) $$
so that $ Z_\mathbf{a}^{\langle 1 \rangle}(\theta) = Z_{\sigma(\mathbf{a})} \text{ mod } \mathbb{R}_{N-1}[X]. $
Iterating yields, for all $n \geq 0$
$$ Z_\mathbf{a}^{\langle n \rangle} = Z_{\sigma^{n}(\mathbf{a})} \text{ mod } \mathbb{R}_{N-1}[X] ,$$
which gives the desired property, by the previous lemma applied to $Z_{\sigma^{n}(\mathbf{a})}$.
\end{proof}

\begin{lemma}\label{lem:osc}
Suppose that $X$ is not analytic. There exists $n_0 \in \mathbb{N}$ and $\kappa >0$ such that the following holds. For all $\mathbf{a} \in \mathcal{A}^\mathbb{Z}$, for all $n \geq n_0$:
$$ \forall P \in \mathbb{R}_{N-1}[X], \ \exists t_0 \in [-1,1] \setminus [-2\kappa,2\kappa], \forall t \in [t_0-\kappa,t_0+\kappa], \quad |X_\mathbf{a}^{\langle n \rangle}(t)-P(t_0)| \geq \kappa. $$
\end{lemma}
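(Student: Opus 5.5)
The approach is to transfer the oscillation property of the previous lemma, which is stated for the rescaled functions $Z_\mathbf{a}^{\langle n \rangle}$, to $X_\mathbf{a}^{\langle n \rangle}$. The idea is that the analytic part $Y_\mathbf{a}$ becomes negligible after zooming. I read the statement with $P(t)$ in place of $P(t_0)$, since the previous lemma is formulated that way. A constant polynomial is a special case of $P$, so the version comparing with a fixed value is also covered.

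\emph{Step 1 (decomposition).} By the decomposition theorem above, $X_\mathbf{a} = Y_\mathbf{a} + Z_\mathbf{a}$. Rescaling gives
$$X_\mathbf{a}^{\langle n \rangle}(t) = Y_\mathbf{a}^{\langle n \rangle}(t) + Z_\mathbf{a}^{\langle n \rangle}(t), \qquad Y_\mathbf{a}^{\langle n \rangle}(t) := \mu^{-n} Y_\mathbf{a}(t\lambda^{-n}).$$

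\emph{Step 2 (the analytic part is small).} By construction $Y_\mathbf{a}$ vanishes to order $N-1$ at $0$; the intended condition is $Y_\mathbf{a}(0)=\dots=Y_\mathbf{a}^{(N-1)}(0)=0$. Its $N$-th derivative is given by the series
$$-\sum_{m\ge1}\varphi^{(N)}_{\sigma^{-m}\mathbf{a}}(\theta\lambda^{-m})\,(\mu\lambda^N)^{-m}.$$
Since $\mu\lambda^N>1$, this series is bounded by $\|\varphi^{(N)}\|_\infty/(\mu\lambda^N-1)=:C_Y$, uniformly in $\mathbf{a}$ and $\theta$. Taylor's formula with remainder then gives $|Y_\mathbf{a}(s)|\le C_Y|s|^N/N!$. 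Hence for $|t|\le1$,
$$|Y_\mathbf{a}^{\langle n \rangle}(t)| \le C_Y \mu^{-n}\lambda^{-Nn} = C_Y(\mu\lambda^N)^{-n},$$
and this tends to $0$ uniformly in $\mathbf{a}$ and $t\in[-1,1]$, because $N$ was chosen so that $\mu\lambda^N>1$.

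\emph{Step 3 (conclusion).} Let $\kappa'$ be the constant of the previous lemma. Choose $n_0$ with $C_Y(\mu\lambda^N)^{-n_0}\le\kappa'/2$, and set $\kappa:=\kappa'/2$. Fix $\mathbf{a}$, $n\ge n_0$ and $P\in\mathbb{R}_{N-1}[X]$. The previous lemma gives $t_0\in[-1,1]\setminus[-2\kappa',2\kappa']$ such that $|Z_\mathbf{a}^{\langle n \rangle}(t)-P(t)|\ge\kappa'$ on $[t_0-\kappa',t_0+\kappa']$. The triangle inequality then yields
$$|X_\mathbf{a}^{\langle n \rangle}(t)-P(t)|\ge\kappa'-\kappa'/2=\kappa$$
on that interval. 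This interval contains $[t_0-\kappa,t_0+\kappa]$, and $t_0\notin[-2\kappa,2\kappa]$, which proves the claim.

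The only real issue is Step 2: the uniformity in $\mathbf{a}$ of the bound on $Y_\mathbf{a}^{(N)}$, which comes directly from the defining series. Once that is in place, the rest is bookkeeping of constants.
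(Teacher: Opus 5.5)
Your proof is correct and is essentially the paper's argument. You split $X_\mathbf{a}^{\langle n\rangle}=Z_\mathbf{a}^{\langle n\rangle}+\mu^{-n}Y_\mathbf{a}(\cdot\,\lambda^{-n})$, bound the second term by $O((\mu\lambda^N)^{-n})$ using that $Y_\mathbf{a}$ vanishes to order $N-1$ at $0$ with $Y_\mathbf{a}^{(N)}$ bounded uniformly in $\mathbf{a}$, and absorb it by halving the constant of the previous lemma; you make the uniformity in $\mathbf{a}$ explicit, where the paper leaves it implicit in ``$Y_\mathbf{a}(\theta)=\mathcal{O}(\theta^N)$''.

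Reading $P(t_0)$ as $P(t)$ matches what the paper's proof delivers and what Theorem~\ref{thm:osc} and the Frostman argument use. However, your side remark about constant polynomials only recovers the literal $P(t_0)$ form when $P$ itself is constant. For nonconstant $P$ it does not, because the comparison value $P(t_0)$ depends on the point $t_0$, which is produced only after $P$ is fixed.
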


\begin{proof}
Suppose that $X$ is not analytic and denote by $\kappa>0$ the constant given by the previous lemma. Now recall that $X_\mathbf{a}=Y_\mathbf{a}+Z_\mathbf{a}$, with $Y_\mathbf{a}(\theta)=\mathcal{O}(\theta^N)$. It follows that
$$ X_\mathbf{a}^{\langle n \rangle}(\theta) = \mu^{-n} X_\mathbf{a}(\theta \lambda^{-n}) = \mu^{-n} Y_\mathbf{a}(\theta \lambda^{-n}) + \mu^{-n} Z_\mathbf{a}^{\langle n \rangle}(\theta \lambda^{-n}) = Z_\mathbf{a}^{\langle n \rangle}(\theta) + \mathcal{O}( (\mu \lambda^N)^{-n} ), $$
which implies the desired result for $\kappa$ replaced by $\kappa/2$, as soon as $\mathcal{O}((\mu \lambda^N)^{-n_0}) \leq \kappa/2$.
\end{proof}

The previous result is the core statement about the local oscillations of Weierstrass functions. A slightly weakened version gives:

\begin{theorem}{\label{thm:osc}}
Suppose that $X$ is not analytic. There exists $n_0 \geq 1$ and $\kappa>0$ such that the following holds. For all $x \in \mathbb{S}^1$, for all $n \geq n_0$:
$$ \forall P \in \mathbb{R}_{N-1}[X], \ \exists \theta_n \in \lambda^{-n}([-1,1] \setminus [-\kappa,\kappa]), \quad |X(x+\theta_n)-P(\theta_n)| \geq \kappa \mu^n$$
Notice that $\mu^n = \lambda^{-\alpha n} \simeq \theta_n^{\alpha}$.
\end{theorem}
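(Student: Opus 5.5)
Theorem~\ref{thm:osc} is a rescaling of Lemma~\ref{lem:osc}, so the plan is to translate that lemma back into the original coordinates. First I pass from points $x\in\mathbb S^1$ to symbolic sequences: since the projection $\pi:\mathcal A^\mathbb Z\to\mathbb S^1$ is surjective, for each $x$ I pick $\mathbf a$ with $\pi(\mathbf a)=x$, so that $X_\mathbf a(\theta)=X(x+\theta)$. The constants $n_0$ and $\kappa$ of Lemma~\ref{lem:osc} are uniform in $\mathbf a$, so they are uniform in $x$.

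Now fix $n\ge n_0$ and a polynomial $P\in\mathbb R_{N-1}[X]$. I rescale it to $\widetilde P(t):=\mu^{-n}P(t\lambda^{-n})$, which still has degree at most $N-1$. Lemma~\ref{lem:osc}, applied to $\mathbf a$, $n$ and $\widetilde P$, gives some $t_0\in[-1,1]\setminus[-2\kappa,2\kappa]$ with
\[
|X_\mathbf a^{\langle n\rangle}(t)-\widetilde P(t_0)|\ge\kappa \quad\text{for all } t\in[t_0-\kappa,t_0+\kappa].
\]
I take $t=t_0$ and set $\theta_n:=t_0\lambda^{-n}$. Since $|t_0|\in[2\kappa,1]$, this gives $\theta_n\in\lambda^{-n}([-1,1]\setminus[-\kappa,\kappa])$, which is even better than required. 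Unwinding the definitions $X_\mathbf a^{\langle n\rangle}(t_0)=\mu^{-n}X(x+\theta_n)$ and $\widetilde P(t_0)=\mu^{-n}P(\theta_n)$, and multiplying by $\mu^n$, I get $|X(x+\theta_n)-P(\theta_n)|\ge\kappa\mu^n$. Finally, $\mu^n=\lambda^{-\alpha n}$ and $|\theta_n|\asymp\lambda^{-n}$ with constants depending only on $\kappa$, so $\mu^n\simeq|\theta_n|^\alpha$, as noted in the statement.

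The only delicate step is bookkeeping. Lemma~\ref{lem:osc} is stated with $P(t_0)$ on the right-hand side, and I evaluate at $t=t_0$, so the conclusion at the single point $\theta_n$ follows directly. The rescaling of the polynomial class has to be done with the same factor $\mu^{-n}$ used to define $X^{\langle n\rangle}_\mathbf a$, and I would check that this class is invariant under $P\mapsto\mu^{-n}P(\cdot\,\lambda^{-n})$. I expect no real obstacle: all the analytic content sits in the uniform non-degeneracy $\mathcal D\ge\kappa_0$ and its propagation across scales via $Z^{\langle n\rangle}_\mathbf a\equiv Z_{\sigma^n\mathbf a}$ modulo polynomials, both already established. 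Shrinking $\kappa$ if needed covers both the condition $\kappa\le 2\kappa$ on the window and any constants lost in the $Y_\mathbf a$ error already handled in Lemma~\ref{lem:osc}.
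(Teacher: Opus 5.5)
Your proposal is correct and matches the paper, which gives no separate argument and presents Theorem~\ref{thm:osc} as an immediate weakening of Lemma~\ref{lem:osc}: pick $\mathbf a$ with $\pi(\mathbf a)=x$, apply the lemma to the rescaled polynomial $\mu^{-n}P(\cdot\,\lambda^{-n})$, and read off the bound at $\theta_n=t_0\lambda^{-n}$. Evaluating only at $t=t_0$, as you do, also makes the argument insensitive to the $P(t_0)$ versus $P(t)$ discrepancy in the statement of Lemma~\ref{lem:osc}.
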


Let us define a space of functions regular at $x$, for $\gamma \in (0,\infty)$, by:
$$ C^{o(\gamma)}_x(\mathbb{S}^1,\mathbb{R}) := \{ f \in C^0(\mathbb{S}^1,\mathbb{R}) \ | \ \exists P \in \mathbb{R}_{\lfloor \gamma \rfloor}[X], \ |X(x+t)-P(t)| = o(t^\gamma) \}. $$
For example, $C^{o(1)}_x$ is the space of differentiable functions at $x$.

\begin{lemma}
Recall that $\alpha \in (0,\infty)$ is defined by the relation $\mu \lambda^\alpha=1$. \\ 
If there exists $x \in \mathbb{S}^1$ such that $X \in  C^{o(\alpha)}_x(\mathbb{S}^1,\mathbb{R})$, then $X$ is actually analytic. 
\end{lemma}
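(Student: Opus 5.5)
We argue by contraposition: assuming $X$ is not analytic, we show that $X\notin C^{o(\alpha)}_x(\mathbb{S}^1,\mathbb{R})$ for every $x\in\mathbb{S}^1$. The main input is Theorem~\ref{thm:osc}, which gives uniform oscillations of size $\kappa\mu^n$ at scale $\lambda^{-n}$ around every point.

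Fix $x$ and suppose, for contradiction, that there is a polynomial $P$ of degree $\le\lfloor\alpha\rfloor$ with $|X(x+t)-P(t)|=o(|t|^\alpha)$.

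The first step is to bring $P$ into the class $\mathbb{R}_{N-1}[X]$ used in Theorem~\ref{thm:osc}. Recall that $N$ is the smallest integer with $\mu\lambda^N>1$, that is, $N>\alpha$. If $\alpha\notin\mathbb{N}$, then $\lfloor\alpha\rfloor=\lceil\alpha\rceil-1=N-1$. If $\alpha\in\mathbb{N}$, then $N=\alpha+1$ and again $\lfloor\alpha\rfloor=N-1$. In both cases $P\in\mathbb{R}_{N-1}[X]$, so no truncation is needed.

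The second step applies Theorem~\ref{thm:osc} to this fixed $P$ and to each $n\ge n_0$. It produces points $\theta_n\in\lambda^{-n}([-1,1]\setminus[-\kappa,\kappa])$ with
$$|X(x+\theta_n)-P(\theta_n)|\ge\kappa\mu^n .$$
On the other hand, $\kappa\lambda^{-n}\le|\theta_n|\le\lambda^{-n}$, so $|\theta_n|^\alpha\le\lambda^{-\alpha n}=\mu^n$. Since $\theta_n\to0$, the $o(|t|^\alpha)$ hypothesis gives $|X(x+\theta_n)-P(\theta_n)|=o(\mu^n)$. This contradicts the lower bound $\kappa\mu^n$ for $n$ large, so $X$ must be analytic.

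The only point that needs care is that Theorem~\ref{thm:osc} is stated with the quantifier \say{for all $P$} coming before the choice of $\theta_n$. The statement therefore applies to our fixed Taylor-type polynomial $P$, uniformly in $n$, with no dependence of $\kappa$ on $P$. This is exactly what the reduction to $Z_{\sigma^n(\mathbf a)}$ modulo $\mathbb{R}_{N-1}[X]$ in Lemma~\ref{lem:osc} provides.

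A secondary point is the degree bookkeeping at integer $\alpha$. There one must check that $\mathbb{R}_{\lfloor\alpha\rfloor}[X]$, which is the class of polynomials allowed in the definition of $C^{o(\alpha)}_x$, coincides with $\mathbb{R}_{N-1}[X]$; this was verified in the first step.

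Beyond these two checks the argument is immediate from Theorem~\ref{thm:osc}. As a consequence, the non-$C^{\alpha+}$ (respectively non-$C^\alpha$) part of Theorem~\ref{thm:regWeierstrass} follows at every point.
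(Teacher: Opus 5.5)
Your proof is correct and is the same argument as the paper's. You take the polynomial $P$ of degree $\lfloor\alpha\rfloor=N-1$ given by the $C^{o(\alpha)}_x$ hypothesis, and contradict the $o(|t|^\alpha)$ bound using the points $\theta_n$ from Theorem~\ref{thm:osc}, since $|\theta_n|^\alpha\le\mu^n$. The paper does this in one line; your extra bookkeeping, such as checking $\lfloor\alpha\rfloor=N-1$ in both the integer and non-integer cases, only makes explicit what that line leaves implicit.
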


\begin{proof}
Suppose that $X \in \bigcup_{x \in \mathbb{S}^1} C^\alpha_x$. This means that there exists $x$ and $P \in \mathbb{R}_{\lfloor \alpha \rfloor}[X] \subset \mathbb{R}_{N-1}[X]$ such that $|X(x+t)-P(t)| = o(t^\alpha)$. This is directly contradicting the conclusion of \ref{thm:osc}, which construct a sequence of points $t_n \rightarrow 0$ such that $|
X(x+t_n)-P(t_n)| \geq \kappa t_n^\alpha$.
\end{proof}

\begin{remark}
As an example, recall that the function $X(\theta) = \sum_{n=0}^\infty 2^{-n}{\cos(2\pi 2^n \theta)}$ is not analytic. We have $\alpha=1$, so it follows that $X \in C^{1-}$, but is not differentiable at any point (not even at $0$, where the formal derivative in the sense of distributions $\sum_n \sin(2 \pi 2^n \theta)$ seem to vanish).
\end{remark}

\subsection{Frostman regularity of the occupation measure and dispersion}

We will show now that the occupation measure is always Frostman.

\begin{theorem}\label{thm:c1+alpharegularity}
If $X$ is not analytic, then there exists $\gamma(\varphi,\lambda,\mu)>0$ such that for any  interval $I \subset \mathbb{R}$ small enough, for all $P \in \mathbb{R}_{N-1}[X]$ with $N \geq 1$ the first integer such that $\mu \lambda^N >1$, we have 
$$\lambda( \theta \in {\mathbb{S}^1}  , \ |W(\theta)-P(\theta)| \leq \sigma) \lesssim \sigma^\gamma ,$$
uniformly in $P$.
\end{theorem}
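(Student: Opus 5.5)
We derive the Frostman bound from the uniform oscillation estimate of Theorem~\ref{thm:osc}, combined with a multiscale covering of $\mathbb S^1$ and a tree-type counting argument of the kind used in the proofs of the Tree Lemmas of Section~\ref{sec:mainproof}. Fix $\sigma>0$ small and a polynomial $P\in\mathbb R_{N-1}[X]$. Choose $n$ with $\kappa\mu^{n}\simeq\sigma$, up to a multiplicative constant to be fixed below. Cover $\mathbb S^1$ by the $\lambda^{n}$ intervals $J_{\mathbf a}=g_{\mathbf a}([0,1])$ of length $\lambda^{-n}$. On each $J_{\mathbf a}$ we recentre at $x_{\mathbf a}$ and write $W(x_{\mathbf a}+\theta)-P(x_{\mathbf a}+\theta)=W(x_{\mathbf a}+\theta)-\tilde P(\theta)$, where $\tilde P$ is again a polynomial of degree $\le N-1$. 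This recentring is the point of allowing arbitrary $P$: the class $\mathbb R_{N-1}[X]$ is invariant under translations and under the rescalings $\theta\mapsto\lambda^{-n}\theta$, so uniformity in $P$ is automatic.

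\textbf{Key step (one-scale loss).} We aim to show that there exist $\eta\in(0,1)$ and $m_0\ge1$, depending only on $\varphi,\lambda,\mu$, such that for every interval $J$ of length $\lambda^{-k}$ and every polynomial $Q$ of degree $\le N-1$, at least a proportion $\eta$ of the subintervals of $J$ of length $\lambda^{-k-m_0}$ satisfy $|W-Q|\ge c\mu^{k}$ on the whole subinterval. To get this, apply Lemma~\ref{lem:osc} to the zoomed function $X^{\langle k\rangle}_{\mathbf a}$, which is uniform in $\mathbf a$ and $k\ge n_0$. It provides a point $t_0$ and a whole window $[t_0-\kappa,t_0+\kappa]$ on which $|X^{\langle k\rangle}_{\mathbf a}-Q^{\langle k\rangle}|\ge\kappa$; I would use the version of the lemma with $P(t)$ rather than $P(t_0)$, as obtained through $Z^{\langle n\rangle}$. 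Since the window has length $2\kappa$, it contains at least $\eta\lambda^{m_0}$ of the subintervals once $\lambda^{-m_0}\le\kappa$. Undoing the zoom multiplies the lower bound by $\mu^{k}$. The same holds for the finitely many $k<n_0$ by compactness and non-analyticity.

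\textbf{Iteration.} Call a subinterval \emph{bad} if $|W-P|\le\sigma$ somewhere on it. A bad interval at level $k$ with $c\mu^k>\sigma$ has at most $(1-\eta)\lambda^{m_0}$ bad children. Iterating over levels $k=0,m_0,2m_0,\dots$ up to $k\approx n$ gives total measure of bad intervals at most $(1-\eta)^{n/m_0}$. Since $\mu^n\simeq\sigma$, this equals $\sigma^{\gamma}$ with $\gamma=|\log(1-\eta)|/(m_0|\log\mu|)>0$. The set $\{|W-P|\le\sigma\}$ is contained in the union of bad intervals at the final level, which gives the bound. The restriction to $I$ small simply ensures $n\ge n_0$.

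\textbf{Main obstacle.} The one-scale loss step is the delicate part. The window produced by Lemma~\ref{lem:osc} must be a genuine interval of fixed relative size on which the \emph{whole} difference $X-Q$ stays away from zero, and this must hold uniformly over all $Q$. Lemma~\ref{lem:osc} as stated compares against the constant $P(t_0)$. I would first rederive it with $P(t)$, from the $Z^{\langle n\rangle}$ lemma plus the $O((\mu\lambda^N)^{-n})$ approximation. If that route fails, the fallback is to use uniform H\"older continuity of $X$ together with equivalence of norms on $\mathbb R_{N-1}[X]$ to thicken a single good point into a window, treating large-norm $Q$ separately as in that earlier lemma.
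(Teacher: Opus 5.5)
Your argument is correct and is essentially the paper's proof. The paper also iterates Lemma~\ref{lem:osc} over nested cylinders whose generation is a multiple of $n_1+q$, with $\lambda^{-q}\le\kappa/4$, reading the lemma with $P(t)$ in place of $P(t_0)$ as its proof via $Z^{\langle n\rangle}$ actually gives; at each stage it discards a child cylinder lying in the window where $|X-P|\geq\kappa\mu^{j}>\sigma$ ($j$ the current generation), and converts the geometric bound into $\sigma^\gamma$. The differences are cosmetic: the paper discards only one child per stage, so it gets the weaker $(1-\lambda^{-(n_1+q)})^{n(\sigma)}$ instead of your $(1-\eta)^{n/m_0}$, and in both versions the zoom should be centred at the midpoint of the parent cylinder, one level deeper, so that the window from Lemma~\ref{lem:osc} really lies inside it.
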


\begin{remark}
In particular, for any non-analytic Weierstrass map, $W_* \lambda$ never have atoms.
\end{remark}

\begin{proof}
Suppose that $X$ is not analytic. Denote $n_0$ and $\kappa$ the constants given by Lemma \ref{lem:osc}. Fix $n_1 \geq n_0$ and choose $q\geq 1$ large enough so that $\lambda^{-q}\leq \kappa/4$. Then, let $P \in \mathbb{R}[X]$ of degree $\leq N-1$, and let $\sigma \in (0,\kappa)$. Set $n(\sigma)$ such that $\kappa\mu^{n(\sigma)(n_1+q)}\geq \sigma$. We cut the circle $\mathbb{S}^1$ into $\lambda^{n_1+q}$ equal parts segments:
$$
\mathbb{S}^1=\bigcup_{\mathbf{a}_1\in\mathcal{A}^{n_1+q}}I_{\mathbf{a}_1}.
$$
Lemma \ref{lem:osc} gives an interval of length $2\kappa$ in the rescaled coordinates on which $|X(\theta)-P(\theta)|\geq\kappa$. Since $\lambda^{-q}\leq\kappa/4$, this interval contains at least one cylinder $I_{\widehat{\mathbf a}_1}$ of generation $n_1+q$. Hence
$$
\{\theta\in\mathbb{S}^1:\ |X(\theta)-P(\theta)|\leq\sigma\}
\subset
\bigcup_{\substack{\mathbf a_1\in\mathcal A^{n_1+q}\\ \mathbf a_1\neq\widehat{\mathbf a}_1}}
I_{\mathbf a_1}.
$$
Now, we iterate the argument on each $I_{\mathbf a_1}$, that we cut into $\lambda^{n_1+q}$ equal parts:
$$
I_{\mathbf a_1}=\bigcup_{\mathbf a_2\in\mathcal A^{n_1+q}}I_{\mathbf a_1\mathbf a_2}.
$$
Lemma \ref{lem:osc}, applied at the suitable scale, gives for each $\mathbf a_1$ an interval of relative length $2\kappa$ on which $|X(\theta)-P(\theta)|\geq\kappa\mu^{n_1+q}>\sigma$. By the choice of $q$, this interval contains a whole descendant cylinder $I_{\mathbf a_1\widehat{\mathbf a}_2}$ for some $\widehat{\mathbf a}_2=\widehat{\mathbf a}_2(\mathbf a_1)\in\mathcal A^{n_1+q}$. It follows that
$$
\{\theta\in\mathbb{S}^1:\ |X(\theta)-P(\theta)|\leq\sigma\}
\subset
\bigcup_{\substack{\mathbf a_1\in\mathcal A^{n_1+q}\\ \mathbf a_1\neq\widehat{\mathbf a}_1}}
\ \bigcup_{\substack{\mathbf a_2\in\mathcal A^{n_1+q}\\ \mathbf a_2\neq\widehat{\mathbf a}_2(\mathbf a_1)}}
I_{\mathbf a_1\mathbf a_2}.
$$
This argument can be iterated $n(\sigma)$ times, yielding a nested union:
$$
\{\theta\in\mathbb{S}^1:\ |X(\theta)-P(\theta)|\leq\sigma\}
\subset
\bigcup_{\mathbf a_1\neq\widehat{\mathbf a}_1}
\bigcup_{\mathbf a_2\neq\widehat{\mathbf a}_2(\mathbf a_1)}
\dots
\bigcup_{\mathbf a_{n(\sigma)}\neq\widehat{\mathbf a}_{n(\sigma)}(\mathbf a_1\dots\mathbf a_{n(\sigma)-1})}
I_{\mathbf a_1\dots\mathbf a_{n(\sigma)}},
$$
where each $\mathbf a_j\in\mathcal A^{n_1+q}$. After $n(\sigma)$ steps there are at most $(\lambda^{n_1+q}-1)^{n(\sigma)}$ surviving cylinders, each of length $\lambda^{-(n_1+q)n(\sigma)}$. We can thus directly bound the measure by
$$
\lambda\Big(\theta\in\mathbb{S}^1,\ |X(\theta)-P(\theta)|\leq\sigma\Big)
\leq
\sum_{\mathbf a_1\neq\widehat{\mathbf a}_1}\dots
\sum_{\mathbf a_{n(\sigma)}\neq\widehat{\mathbf a}_{n(\sigma)}(\mathbf a_1\dots\mathbf a_{n(\sigma)-1})}
\lambda(I_{\mathbf a_1\dots\mathbf a_{n(\sigma)}})
$$
$$
\leq
(1-\lambda^{-(n_1+q)})^{n(\sigma)}
\simeq
\sigma^{\frac{\ln(1-\lambda^{-(n_1+q)})}{(n_1+q)\ln(\mu)}}.
$$
\end{proof}

Using the $C^{1+}$ Van Der Corput Lemma, we recover the dispersion result in the range $\mu \lambda < 1$.

\begin{corollary}
Let $X$ be a Weierstrass function, suppose that $\mu \lambda < 1$. Then there exists $\rho \in (0,1)$ such that
$$ \sup_{n \in \mathbb{Z}} \Big| \int_0^1 e^{2 i \pi t X(\theta)} e^{-2 i \pi n \theta} d\theta \Big| \lesssim |t|^{-\rho}, \quad t \to \infty.$$
\end{corollary}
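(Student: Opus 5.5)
The plan is to reduce the statement to the $C^{1+\alpha}$ van der Corput lemma (Theorem~\ref{thm:c1+alphavandercorput}) applied with $h=X$ and $\chi\equiv 1$, after rescaling $\xi=t$ and replacing the integer $n$ by a real frequency $m$. Since $\mu\lambda<1$, we have $\alpha_W:=|\log\mu|/\log\lambda>1$. By the regularity lemma above, $X\in C^{1+\beta}$ for some $\beta\in(0,1)$, namely $\beta=\min(\alpha_W-1,1)-\varepsilon$ for a small $\varepsilon>0$. Its derivative is given by the convergent series $X'(\theta)=\sum_{n\ge0}(\mu\lambda)^n\varphi'(\lambda^n\theta)$. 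So the regularity hypothesis of Theorem~\ref{thm:c1+alphavandercorput} holds, and what remains is the nonlinearity hypothesis: a Frostman bound $|\{\theta:\ X'(\theta)\in I\}|\le C_0|I|^\gamma$, uniform over all segments $I$.

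I split into two cases. If $X$ is analytic, then $X'$ is analytic, $1$-periodic and non-constant, since otherwise $X$ would be affine and periodic, hence constant, which contradicts $\varphi$ non-constant (a Fourier coefficient argument shows this). The non-concentration lemma for periodic analytic maps then gives the bound for small $|I|$, and it is trivial for large $|I|$.

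If $X$ is not analytic, I want to run the cylinder-removal argument of Theorem~\ref{thm:c1+alpharegularity} for $X'$ instead of $X$. Here $X'$ is itself a Weierstrass-type function with parameters $(\mu\lambda,\lambda,\varphi')$, because it satisfies the autosimilarity $X'(\theta)=\varphi'(\theta)+\mu\lambda X'(\lambda\theta)$. I then need $X'$ to be non-analytic with respect to this cohomology. This holds because any analytic solution $\psi'$ for $\varphi'$ integrates, as in the proof of Lemma~\ref{lem:sepk}, using $\int\varphi'=0$, to an analytic solution for $\varphi$, which would make $X$ analytic. Theorem~\ref{thm:c1+alpharegularity} applied to the triple $(\varphi',\mu\lambda,\lambda)$ with $P$ a constant then gives $|\{\theta:\ |X'(\theta)-y|\le\sigma\}|\lesssim\sigma^\gamma$, uniformly in $y$. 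Taking $y$ to be the midpoint and $\sigma=|I|/2$ gives the Frostman hypothesis for small intervals, and large intervals are trivial.

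Theorem~\ref{thm:c1+alphavandercorput} then yields $\sup_m|\int_0^1e^{2i\pi(tX-m\theta)}d\theta|\lesssim |t|^{-\beta\gamma/6}$, so one can take $\rho=\beta\gamma/6$. The main obstacle is the non-analytic case, specifically checking that Theorem~\ref{thm:c1+alpharegularity} really applies to $X'$ with its new parameters $\mu\lambda\in(0,1)$ and phase $\varphi'$. The oscillation lemmas are stated for general analytic $\varphi$ and $\mu\in(0,1)$, so the substitution is legitimate. The only delicate points are the equivalence of non-analyticity and the fact that $P$ may be taken constant, which is allowed since constants lie in $\mathbb{R}_{N-1}[X]$.
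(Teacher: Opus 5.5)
Your proposal is correct and follows the paper's argument, which the paper gives only in one line. You apply the $C^{1+\alpha}$ van der Corput lemma with $h=X$ and $\chi\equiv1$, and obtain the required Frostman bound on $(X')_*d\theta$ from Theorem~\ref{thm:c1+alpharegularity} applied to $X'=\sum_n(\mu\lambda)^n\varphi'(\lambda^n\theta)$ with $P$ constant, or from the analytic non-concentration lemma when $X$ is analytic. A small simplification: $X'$ is non-analytic simply because $X=X(0)+\int_0^\theta X'$, and $X$ is non-constant because $\varphi=X-\mu X(\lambda\,\cdot)$, so neither the cohomological detour nor the Fourier argument is needed.
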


\subsection{Density of the derivatives of the partial sums in the critical regime}

In the critical case where $\mu \lambda = 1$, we know that $X \in C^{1-} \setminus C^1$. In this case, instead of studying the derivative of $X$, we can study the derivative of partial sums. The goal of this section is to prove the following result.

\begin{proposition}\label{prop:density}
Let $\varphi \in C^\omega(\mathbb{S}^1,\mathbb{R})$, analytic, 1-periodic and non-constant. Suppose that there exists no analytic map such that $\varphi = \frac{1}{\lambda} \psi(\lambda \theta)-\psi(\theta)$. Then, for almost every $\theta \in \mathbb{S}^1$, the set 
$ \Big\{ \sum_{n=0}^N \varphi'(\lambda^n \theta) \ | \ N \in \mathbb{N} \Big\} $
is dense in $\mathbb{R}$.
\end{proposition}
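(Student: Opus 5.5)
We view $S_N(\theta):=\sum_{n=0}^{N}\varphi'(\lambda^n\theta)$ as the Birkhoff sums of the observable $f:=\varphi'$ over the map $m:\theta\mapsto\lambda\theta$ on $\mathbb S^1$, which is exact with respect to Lebesgue measure. Since $\int_0^1\varphi'\,d\theta=0$, this is a recurrence problem for a mean-zero skew product. Consider the cylinder skew product
$$T:\mathbb S^1\times\mathbb R\to\mathbb S^1\times\mathbb R,\qquad T(\theta,s)=(\lambda\theta,\ s+\varphi'(\theta)),$$
which preserves the infinite measure $\mathrm{Leb}\otimes\mathrm{Leb}$. Density of $\{S_N(\theta)\}_N$ in $\mathbb R$ for a.e. $\theta$ then follows from two facts: $T$ is conservative, and $T$ is ergodic. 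Indeed, ergodicity together with conservativity implies that a.e. orbit visits every open set of positive measure infinitely often (Hopf). Since the fibre direction is invariant under translations commuting with $T$, a.e. $\theta$ with starting height $0$ has $S_N(\theta)$ entering every interval.

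\textbf{Conservativity.} I would prove this through a central limit theorem. Since $\varphi'$ is analytic, the transfer operator $\mathcal L_\lambda$ has a spectral gap on Lipschitz or analytic functions, so the Nagaev--Guivarc'h method applies to the twisted operators $\mathcal L_\lambda(e^{is\varphi'}\,\cdot)$. The resulting asymptotic variance $\sigma^2$ vanishes if and only if $\varphi'=u\circ m-u$ for some $u\in L^2$. By analytic regularity of transfer-operator solutions, such a $u$ would be analytic. Integrating $\varphi'=u(\lambda\theta)-u(\theta)$ with $u=\tfrac1\lambda\psi'$ then gives $\varphi=\tfrac1\lambda\psi(\lambda\theta)-\psi(\theta)+c$. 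The constant $c$ is removed by adjusting $\psi$ by a constant, as in the proof of Lemma~\ref{lem:sepk}, because $\tfrac1\lambda-1\neq0$. This is excluded by hypothesis, so $\sigma^2>0$. A one-dimensional, mean-zero, CLT-scaled cocycle is recurrent (Atkinson / Schmidt's criterion), hence $T$ is conservative.

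\textbf{Ergodicity.} This is the main obstacle, and I would attack it through the group of essential values $E(\varphi')\subset\mathbb R$, a closed subgroup. If $E=\mathbb R$, then $T$ is ergodic and we are done. Otherwise $E=a\mathbb Z$ for some $a\ge0$, and Schmidt's theory gives a measurable $u$ with $\varphi'+u-u\circ m\in a\mathbb Z$ almost everywhere. To rule this out, I would use the local limit theorem derived from the same spectral perturbation. The characteristic operators $\mathcal L_\lambda(e^{is\varphi'}\cdot)$ can have spectral radius $1$ for some $s\neq0$ only when $e^{is\varphi'}=e^{i\beta}\,v\circ m/v$ with $v$ continuous. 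By the Livšic regularity available for expanding analytic maps, $v$ would be analytic. Taking logarithms, continuity of $\varphi'$ together with connectedness of $\mathbb S^1$ forces the integer-valued error to be constant. This again produces an analytic coboundary for $\varphi'$ up to a constant, which contradicts the hypothesis as in the conservativity step. The aperiodic local limit theorem then holds, and it implies $E(\varphi')=\mathbb R$.

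The delicate points are the Livšic-type regularity of measurable solutions and the lattice case. For both I would invoke the standard Guivarc'h--Hardy / Aaronson--Denker theory of Gibbs--Markov cocycles, verifying its hypotheses for the expanding map $m$ and the analytic observable $\varphi'$.
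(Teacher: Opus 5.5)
Your proof is correct and has the same skeleton as the paper's proof. Both use the skew product $T(\theta,s)=(\lambda\theta,s+\varphi'(\theta))$, establish conservativity and ergodicity, and then obtain density of the Birkhoff sums by recurrence to every $\mathbb S^1\times J$ plus Fubini; the paper packages this last step as Lemma~\ref{lem:dens}.

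The differences lie in how the two dynamical inputs are obtained. For conservativity the paper uses only $\int\varphi'=0$ (Lemma~\ref{lem:cons}, Atkinson's theorem). So your Nagaev--Guivarc'h CLT and the non-degeneracy $\sigma^2>0$ are not needed at that stage; they only matter later, inside your local limit theorem. For ergodicity the paper simply cites Guivarc'h's theorem (Lemma~\ref{lem:ap-erg}: strict aperiodicity and zero mean imply ergodicity), and your LLT/essential-value argument is essentially its proof.

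The genuinely different step is the verification of aperiodicity. The paper takes logarithms to get $\varphi'=w\circ m-w$ with $w$ continuous, and integrates to $\varphi=\frac1\lambda\psi\circ m-\psi$ with $\psi\in C^1$. It then notes that the critical Weierstrass function $W_{1/\lambda,\lambda}=-\psi$ would be $C^1$, contradicting Theorem~\ref{thm:regWeierstrass}. This avoids any Liv\v{s}ic theory but relies on the fine regularity analysis of Appendix~\ref{ap:B}. You instead upgrade the transfer function to an analytic one and contradict the hypothesis directly, which is more self-contained. The upgrade is elementary here: any $L^1$ solution of $\varphi'=u\circ m-u$ satisfies $u-\int u=\sum_{n\geq1}\mathcal L_\lambda^n\varphi'$. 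Since the $j$-th Fourier coefficient of $\mathcal L_\lambda^n\varphi'$ is $\widehat{\varphi'}(\lambda^n j)$, this series converges on a complex strip.

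A few small slips to fix:
\begin{itemize}
\item The antiderivative should satisfy $\psi'=u$ after normalising $\int u=0$, not $u=\frac1\lambda\psi'$.
\item When you take logarithms of $v$, you should check that $\deg v=0$. This holds because $v\circ m/v$ has degree $(\lambda-1)\deg v$, while $e^{is\varphi'}$ has degree $0$.
\item The claim that $E(\varphi')=a\mathbb Z$ yields a measurable $u$ with $\varphi'+u-u\circ m\in a\mathbb Z$ is only valid for $a>0$; for $a=0$, non-regular cocycles exist. Your argument never uses this claim, however, since you deduce $E(\varphi')=\mathbb R$ directly from the aperiodic LLT.
\end{itemize}
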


This implies:

\begin{corollary}
Suppose $\mu\lambda=1$ and suppose $X$ is not analytic. Then, for
a.e. $\theta\in\mathbb S^1$,$ \left\{\sum_{n=0}^N \varphi'(\lambda^n\theta):N\geq0\right\}$
is dense in $\mathbb R$.
\end{corollary}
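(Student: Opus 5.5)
This corollary is a direct specialization of Proposition~\ref{prop:density}. I only need to check that its cohomological hypothesis is exactly the non-analyticity of $X$ when $\mu\lambda=1$, using the cohomology characterization in Theorem~\ref{thm:regWeierstrass}. No new analysis is required.

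First I would record the standing assumptions. We have $\varphi\in C^\omega(\mathbb S^1,\mathbb R)$, non-constant and $1$-periodic, and $\lambda\ge2$ is an integer. These are precisely the regularity hypotheses on $\varphi$ in Proposition~\ref{prop:density}. The critical assumption $\mu\lambda=1$ means $\mu=1/\lambda$, so the relevant Weierstrass function is
$$X(\theta)=\sum_{n\ge0}\lambda^{-n}\varphi(\lambda^n\theta).$$

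Second, I would invoke the first assertion of Theorem~\ref{thm:regWeierstrass}. It states that $X$ is analytic if and only if there exists $\psi\in C^\omega(\mathbb S^1,\mathbb R)$ with $\varphi(\theta)=\mu\psi(\lambda\theta)-\psi(\theta)$. This equivalence was also established in the proof of Lemma~\ref{lem:sepk}: if such a $\psi$ exists, the series telescopes to $X=-\psi$. Conversely, if $X$ is analytic, then $\psi=-X$ works, by the autosimilarity relation $X(\theta)=\varphi(\theta)+\mu X(\lambda\theta)$. Taking the contrapositive with $\mu=1/\lambda$, non-analyticity of $X$ says there is no analytic $\psi$ with
$$\varphi(\theta)=\tfrac1\lambda\psi(\lambda\theta)-\psi(\theta).$$
This is verbatim the hypothesis of Proposition~\ref{prop:density}.

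Finally, applying Proposition~\ref{prop:density} gives that, for almost every $\theta$, the set $\{\sum_{n=0}^N\varphi'(\lambda^n\theta):N\ge0\}$ is dense in $\mathbb R$, which is the claim. The only point needing care is to match the normalization of the coboundary equation exactly, so that $\mu$ appears in front of $\psi(\lambda\theta)$ and not $\lambda\mu$. The easy converse direction above settles this, so I expect no genuine obstacle here; all the difficulty sits inside Proposition~\ref{prop:density} itself.
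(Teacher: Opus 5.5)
Your proposal is correct and matches the paper, which deduces the corollary from Proposition~\ref{prop:density} with no further argument; that proposition's hypothesis is exactly the non-analyticity of $X$ at $\mu=1/\lambda$, by the cohomology characterization in Theorem~\ref{thm:regWeierstrass}. Your explicit check of both directions of that equivalence (telescoping to $X=-\psi$, and taking $\psi=-X$ via autosimilarity) is a harmless addition that the paper leaves implicit.
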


To do so, we need to recall some results from infinite ergodic theory. Fix some analytic map $\phi \in C^\omega(\mathbb{S}^1,\mathbb{R})$, and  consider the skew-product dynamical system $T_\phi:\mathbb{S}^1 \times \mathbb{R} \rightarrow \mathbb{S}^1 \times \mathbb{R} $ given by $T_\phi(\theta,y)=(\lambda \theta,y+\phi(x))$. This leaves invariant the infinite-mass measure $m := \lambda_\mathbb{S}^1 \otimes \lambda_{\mathbb{R}}$.

\begin{definition}[\cite{Gu89}]
We say that $\phi$ is not strictly aperiodic if there exists a Hölder function $\psi \in C^0(\mathbb{S}^1,\mathbb{C}^*)$ and $(\alpha,\lambda) \in \mathbb{R} \times \mathbb{R}^*$ such that $e^{i \lambda \phi(\theta)} = e^{i \alpha} \frac{\psi(\lambda \theta)}{\psi(\theta)} $.
\end{definition}

\begin{lemma}[\cite{Gu89}, Section 2.3]\label{lem:ap-erg}
If $\phi$ is strictly aperiodic and $\int_{\mathbb{S}^1} \phi =0$, then $(T_\phi,\mathbb{S}^1 \times \mathbb{R},m)$ is ergodic.
\end{lemma}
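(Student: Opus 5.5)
The plan is to go through K.~Schmidt's theory of essential values of real-valued cocycles over the ergodic base $(\mathbb{S}^1,\theta\mapsto\lambda\theta,\text{Leb})$. Two properties of the cocycle generated by $\phi$ are needed, recurrence and full essential range, and together they give ergodicity of $T_\phi$ on $(\mathbb{S}^1\times\mathbb{R},m)$. Concretely, I will show:
\begin{enumerate}
\item $T_\phi$ is conservative;
\item the group $E(\phi)\subset\mathbb{R}$ of essential values equals $\mathbb{R}$.
\end{enumerate}
Schmidt's criterion turns (2) into a statement about multiplicative cohomological equations. For a conservative cocycle, $E(\phi)$ is a closed subgroup of $\mathbb{R}$. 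Its annihilator consists of those $s\in\mathbb{R}$ for which there is a measurable $f:\mathbb{S}^1\to\mathbb{S}^1$ with
$$ e^{is\phi(\theta)} = \frac{f(\lambda\theta)}{f(\theta)} \quad\text{a.e.} $$
So $E(\phi)=\mathbb{R}$ is equivalent to: this equation has no measurable solution for any $s\neq0$.

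For conservativity, I would use Atkinson's theorem. The base is ergodic, $\phi\in L^1$ and $\int\phi=0$, so the Birkhoff sums $S_n\phi$ return arbitrarily close to $0$ infinitely often for a.e.\ $\theta$. This is exactly recurrence of the skew product, hence conservativity. No aperiodicity is needed at this step.

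The main step is a Livsic-type regularity result: a \emph{measurable} solution $f$ of the twisted equation is automatically Hölder. I will argue with the twisted transfer operator $\mathcal{L}_{is}h=\mathcal{L}_\lambda(e^{-is\phi}h)$, adapting the sign convention as needed.
\begin{itemize}
\item From the equation, $\mathcal{L}_{is}$ fixes $\bar f$ in $L^2$. This uses that $\mathcal{L}_\lambda$ is the $L^2$-adjoint of the Koopman operator $h\mapsto h(\lambda\,\cdot)$, via the duality identity in Section~\ref{sec:tran}.
\item On $C^\beta$, the Lasota--Yorke inequality proved earlier, in its Hölder version, gives quasi-compactness of $\mathcal{L}_{is}$, with essential spectral radius at most $\lambda^{-\beta}$.
\item I then need the unimodular eigenvalue $1$ seen in $L^2$ to come from a Hölder eigenfunction. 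I would smooth: $\mathcal{L}_{is}^n(\bar f\ast\rho_\varepsilon)$ converges to the spectral projection of $\bar f\ast\rho_\varepsilon$ on the peripheral spectrum. Its $L^2$-limit is $\bar f$ up to $O(\|\bar f-\bar f\ast\rho_\varepsilon\|_{L^2})$. Since the peripheral eigenspaces are finite dimensional and consist of Hölder functions, $\bar f$ has a Hölder representative.
\end{itemize}
Once $f$ is Hölder and of modulus one, the equation is precisely the relation excluded by strict aperiodicity, with $\alpha=0$ and $\psi=f$. Hence $s=0$ is forced, so $E(\phi)=\mathbb{R}$, and together with conservativity Schmidt's criterion gives ergodicity.

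The obstacle I expect is this regularity step, in particular checking that the $L^2$ eigenvector really lies in the peripheral spectrum seen on $C^\beta$. The way around it is the uniform contraction of the inverse branches $g_a$, which makes $\mathcal{L}_{is}^n$ smoothing. If that route becomes messy, I would fall back on the standard alternative. It goes by Fourier transform in $y$ of an invariant $L^\infty$ function, convolved with an $L^1$ kernel. This produces, for each frequency $s$, a measurable solution of the same twisted equation, and the same regularity-plus-aperiodicity argument then kills every $s\neq0$. What survives is a function of $\theta$ alone, which is constant by ergodicity of $\theta\mapsto\lambda\theta$.
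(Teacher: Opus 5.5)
The step that does not go through is the reduction you attribute to Schmidt. For a recurrent real cocycle one only has the inclusion $\{s:\ e^{is\phi}=f(\lambda\,\cdot)/f \text{ for some measurable } f\}\subseteq E(\phi)^\perp$. This holds because a solution $f$ makes $F(\theta,y)=f(\theta)e^{isy}$ a $T_\phi$-invariant function, and translation by an essential value fixes every invariant function. Equality holds when $\phi$ is regular, for instance when $E(\phi)=c\mathbb{Z}$ with $c>0$: the Mackey range is then an ergodic action of the compact group $\mathbb{R}/c\mathbb{Z}$, hence transitive. It fails for non-regular cocycles. A recurrent $\phi$ can have $E(\phi)=\{0\}$ without being a coboundary, with a properly ergodic Mackey range carrying no nontrivial $L^\infty$-eigenvalues. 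Then $T_\phi$ is not ergodic, yet $e^{is\phi}$ is a measurable coboundary for no $s\neq0$. So your Livsic step plus strict aperiodicity only excludes $E(\phi)=c\mathbb{Z}$ and $\phi$ being a coboundary. The non-regular case $E(\phi)=\{0\}$ is untouched, and excluding it is exactly the implication you need. Your fallback has the same defect. For an invariant $F\in L^\infty$ the Fourier transform in $y$ is only a distribution, and the twist $e^{is\phi(\theta)}$ depends on $s$. Testing against a bump near $s_0$ therefore never gives an exact solution of the cohomological equation at a single frequency. With fibre $\mathbb{R}$ rather than a compact group, invariant functions need not decompose into characters.

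What is missing is a direct proof that every $t\in\mathbb{R}$ is an essential value, i.e.\ a local limit theorem. This is how Guivarc'h proves the lemma; the paper gives no argument and simply cites him, and cites Aaronson for conservativity, which your Atkinson step handles correctly. Take $A$ a finite union of cylinders and $\chi$ with compactly supported Fourier transform. Then $\int\mathbf 1_A(\theta)\mathbf 1_A(\lambda^n\theta)\chi(S_n\phi(\theta)-t)\,d\theta=\frac1{2\pi}\int\widehat\chi(s)e^{-ist}\int\mathbf 1_A\,\mathcal{L}_\lambda^n\big(e^{isS_n\phi}\mathbf 1_A\big)\,d\theta\,ds$.

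For small $|s|$, perturbation theory on a Hölder space gives a simple leading eigenvalue $1-\sigma^2s^2/2+o(s^2)$. There is no linear term since $\int\phi=0$, and $\sigma^2>0$ because strict aperiodicity forbids $\phi$ from being a Hölder coboundary. This produces a main term $\sim\widehat\chi(0)\mathrm{Leb}(A)^2/(\sigma\sqrt{2\pi n})$. For $s$ in the support of $\widehat\chi$ away from $0$, quasi-compactness and strict aperiodicity give a spectral radius uniformly below $1$, hence an $O(r^n)$ contribution. Approximating indicators of intervals and general sets then yields $E(\phi)=\mathbb{R}$, and Schmidt's criterion with your conservativity step gives ergodicity.

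In this route the relevant eigenfunctions are Hölder by construction, so your measurable-to-Hölder step is unnecessary. As written, that step would also need Cesàro means, since $\mathcal{L}_{is}^n(\bar f\ast\rho_\varepsilon)$ need not converge when there are peripheral eigenvalues other than $1$.
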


\begin{comment}
\begin{proof}
See \url{https://www.cambridge.org/core/services/aop-cambridge-core/content/view/2D210905909FEAA898D0538C83D76FE2/S0143385700005083a.pdf/proprietes-ergodiques-en-mesure-infinie-de-certains-systemes-dynamiques-fibres.pdf} , Corollary 3 in Section 2.3.
\end{proof}
\end{comment}

\begin{lemma}[\cite{Aa97}, Corollary 8.1.5]\label{lem:cons}
If $\int_{\mathbb{S}^1} \phi = 0$, then $T_\phi$ is conservative.
\end{lemma}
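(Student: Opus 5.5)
The approach is to reduce conservativity of $T_\phi$ to recurrence of the real-valued cocycle $S_n\phi(\theta)=\sum_{k=0}^{n-1}\phi(\lambda^k\theta)$ over the base $m:\theta\mapsto\lambda\theta$, and then obtain that recurrence from Atkinson's theorem. Throughout I read the fibre translation in $T_\phi$ as $y\mapsto y+\phi(\theta)$. Since $\phi$ is analytic, it is bounded and hence in $L^1(\mathbb S^1)$. The base is ergodic (indeed exact) for Lebesgue measure, by the mixing property of multiplication by $\lambda$ recalled in Section~\ref{sec:tran}.

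The first step is to pass to the natural extension. Let $\hat m$ be the invertible natural extension of $m$ with its invariant probability $\hat\mu$; concretely, the two-sided shift on $\mathcal A^{\mathbb Z}$ with the Bernoulli measure, coded through $\pi$ as in Appendix~\ref{ap:B}. Lift $\phi$ to $\hat\phi=\phi\circ\pi$. The map $\pi\times\mathrm{id}$ intertwines $T_{\hat\phi}$ with $T_\phi$ and pushes $\hat\mu\otimes\lambda_{\mathbb R}$ to $m$. If $W$ were a wandering set of positive measure for $T_\phi$, its preimage would be a wandering set of positive measure for $T_{\hat\phi}$. It therefore suffices to prove that $T_{\hat\phi}$ is conservative, and now the base is invertible, ergodic and probability preserving, with $\int\hat\phi\,d\hat\mu=0$.

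The second step, which I expect to be the main obstacle, is Atkinson's recurrence theorem in its strong form. For every measurable $C$ with $\hat\mu(C)>0$, every $\varepsilon>0$, and a.e. $x\in C$, there are infinitely many $n\ge1$ with $\hat m^n x\in C$ and $|S_n\hat\phi(x)|<\varepsilon$. To prove it I would induce on $C$. The induced map $\hat m_C$ is ergodic and preserves the normalised measure, and the induced cocycle $\phi_C(x)=S_{r_C(x)}\hat\phi(x)$, where $r_C$ is the return time, is integrable with mean zero by Kac's lemma. This reduces the claim to the case $C$ equal to the whole space. That case I would handle by the classical argument. By Birkhoff, $S_n\hat\phi=o(n)$ a.e. Suppose the set $E$ of points whose orbit never returns $\varepsilon$-close had positive measure. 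Then points of $E$ along one orbit have cocycle values separated by at least $\varepsilon$. So among the first $n$ iterates, about $n\hat\mu(E)$ of them lie in $E$, and their values are $\varepsilon$-separated reals all of size $o(n)$. For large $n$ there is not enough room for them, a contradiction.

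The final step turns recurrence into conservativity. Suppose $W$ is wandering with positive measure. Shrink it to $W\subset C\times[a,a+\varepsilon/2]$ with the section of $W$ over each $x\in C$ nonempty, and with $C$ of positive measure. Atkinson gives $n\ge1$ with $\hat m^nx\in C$ and $|S_n\hat\phi(x)|<\varepsilon$. I would like to conclude that $T^n_{\hat\phi}W\cap W$ has positive measure, but this does not follow immediately, because the fibre coordinate only returns to within $\varepsilon$. To handle this I would use the measurable form of Halmos' recurrence criterion. A non-conservative system admits a wandering set of the form $\{(x,y):y\in[f(x),f(x)+\delta)\}$, and applying Atkinson with $C$ chosen so that $f$ varies by less than $\delta/4$ on $C$ forces positive-measure overlap. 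Alternatively, one can invoke directly Schmidt's equivalence between recurrence of a cocycle and conservativity of its skew product.
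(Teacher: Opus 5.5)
The paper gives no proof of this lemma; it only cites Aaronson's Corollary 8.1.5. Your first two steps are sound. The preimage under $\pi\times\mathrm{id}$ of a wandering set is wandering and has the same measure. The Birkhoff counting argument does show that $E=\{x:|S_n\hat\phi(x)|\ge\varepsilon\ \forall n\ge1\}$ is null. Strictly, that gives only one $\varepsilon$-return, not infinitely many. For infinitely many, run the same count on $G_K=\{|S_n\hat\phi|\ge\varepsilon\ \forall n\ge K\}$, using only visits spaced at least $K$ apart.

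The gap is in the last step. You claim that a non-conservative skew product has a wandering set of the form $\{y\in[f(x),f(x)+\delta)\}$. This is not a form of Halmos' recurrence criterion, and you give no proof of it. It also cannot come from shrinking a given wandering set $W$, since the fibres $W_x$ may be fat Cantor sets containing no interval. As written, your self-contained argument therefore stops short, and the proof rests entirely on the fallback citation of Schmidt.

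The repair is to require only that the strip be mostly filled by $W$, not contained in it. Fix $\eta<1/6$. By the Lebesgue density theorem, with a measurable selection over rational left endpoints, there are $\delta>0$, a measurable $f$ and a set $C$ of positive measure such that $\mathrm{Leb}(W_x\cap[f(x),f(x)+\delta))>(1-\eta)\delta$ for $x\in C$. Shrinking $C$, you may assume $f$ varies by less than $\eta\delta$ on $C$. Now suppose $\hat m^nx\in C$ and $|S_n\hat\phi(x)|<\eta\delta$. The sets $W_x\cap[f(x),f(x)+\delta)$ and $\bigl(W_{\hat m^nx}\cap[f(\hat m^nx),f(\hat m^nx)+\delta)\bigr)-S_n\hat\phi(x)$ lie in a common interval of length $(1+4\eta)\delta$, and each has measure $>(1-\eta)\delta$. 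They therefore overlap in measure $>(1-6\eta)\delta$. Your one-return Atkinson statement supplies such an $n$ for a.e. $x\in C$. Pigeonholing $n$ over $x\in C$ then gives $(\hat\mu\otimes\mathrm{Leb})(W\cap T_{\hat\phi}^{-n}W)>0$, contradicting that $W$ is wandering.

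Two shorter finishes are also available.
\begin{itemize}
\item Since $T_{\hat\phi}$ is invertible, any wandering $W$ and any continuous $0<h\in L^1(\mathbb R)$ satisfy $\int_W\sum_{n\ge0}h(y+S_n\hat\phi(x))\,d\hat\mu(x)\,dy\le\int_{\mathbb R}h<\infty$. But the sum diverges whenever $|S_n\hat\phi(x)|<1$ infinitely often, which holds for a.e. $x$ by the infinitely-many version of your Step 2.
\item You can bypass the natural extension and Atkinson altogether. If $W\subset\mathbb S^1\times[-L,L]$ is wandering for $T_\phi$ itself, then $n\,m(W)-o(n)\le\sum_{k<n}m\bigl(T_\phi^{-k}W\cap(\mathbb S^1\times[-\delta n,\delta n])\bigr)\le2\delta n$, using only Birkhoff's $S_k\phi=o(k)$. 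Hence $m(W)\le2\delta$ for every $\delta>0$.
\end{itemize}
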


\begin{lemma}[\cite{Aa97}, Corollary 1.2.3]\label{lem:dens}
Suppose that $T_\phi$ is conservative and ergodic. Then for any continuous function $f:\mathbb{S}^1 \times \mathbb{R} \rightarrow \mathbb{R}$, $$ \overline{\{ f(T_\phi^n(\theta,y)) \ , \ n \geq 0\} } = \text{spt}(f_* m). $$
\end{lemma}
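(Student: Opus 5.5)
The statement should hold for $m$-almost every starting point $(\theta,y)$; that is the form in which it will be used, and the plan below proves that. The argument is soft: it uses only measure preservation, conservativity and ergodicity of $T_\phi$ on the $\sigma$-finite space $(\mathbb S^1\times\mathbb R,m)$, and no fine structure of the skew-product. I will write $\mathcal X=\mathbb S^1\times\mathbb R$, $T=T_\phi$, and $\nu=f_*m$. Note that $\nu$ is a possibly infinite Borel measure on $\mathbb R$. Its support is the set of $s\in\mathbb R$ such that $\nu(U)>0$ for every open $U\ni s$, and it is closed. I will prove the two inclusions separately.

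\emph{Inclusion ``$\subset$''.} Set $V:=\mathbb R\setminus \text{spt}(\nu)$. This is open, hence a countable union of open intervals $U$, each with $\nu(U)=0$ by definition of the support. So $\nu(V)=0$, i.e. $m(f^{-1}(V))=0$. Since $T$ preserves $m$, each $T^{-n}f^{-1}(V)$ is $m$-null. Hence
\[
\mathcal N:=\bigcup_{n\geq0}T^{-n}f^{-1}(V)
\]
is null. For $(\theta,y)\notin\mathcal N$ every value $f(T^n(\theta,y))$ lies in the closed set $\text{spt}(\nu)$, and therefore so does the closure of the orbit values.

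\emph{Inclusion ``$\supset$''.} Fix a countable basis $(U_k)$ of open intervals of $\mathbb R$ with rational endpoints. Keep only those $k$ with $\nu(U_k)>0$, and set $A_k:=f^{-1}(U_k)$, so $m(A_k)>0$. The key step is the following. For a conservative ergodic measure-preserving map and a set $A$ of positive measure, $m$-a.e. point of $\mathcal X$ visits $A$ (indeed infinitely often). To see this, consider $B:=\bigcup_{n\geq0}T^{-n}A$. Then $T^{-1}B\subset B$. Conservativity (Halmos' recurrence theorem) upgrades this to $T^{-1}B=B$ mod $m$, because $B\setminus T^{-1}B$ is a wandering set and so is null. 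By ergodicity, $B$ is either null or conull, and $B\supset A$ has positive measure, so $B$ is conull.

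Intersecting the resulting countably many conull sets with $\mathcal X\setminus\mathcal N$ gives a conull set $G$. Take $(\theta,y)\in G$, a point $s\in\text{spt}(\nu)$, and $\varepsilon>0$. Choose $U_k\ni s$ inside $(s-\varepsilon,s+\varepsilon)$. Then $\nu(U_k)>0$ because $s$ is in the support, so the orbit of $(\theta,y)$ enters $A_k$. Hence some $f(T^n(\theta,y))$ lies within $\varepsilon$ of $s$, which gives the density.

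The only delicate point is this hitting step in infinite measure. Ergodicity alone only says that invariant sets are null or conull. The forward-hitting set $B$ is merely sub-invariant, and conservativity is exactly what makes it invariant mod $0$. If one wants a clean reference, Halmos recurrence applied to $B\setminus T^{-1}B$, which is wandering, settles it; this is the step I would write out carefully. In the application to Proposition~\ref{prop:density}, Lemma~\ref{lem:cons} and Lemma~\ref{lem:ap-erg} supply conservativity and ergodicity. One then takes $f(\theta,y)=y$, so that $\nu$ is $\lambda_{\mathbb S^1}(\mathbb S^1)$ times Lebesgue measure on $\mathbb R$, whose support is $\mathbb R$. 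The Fubini step from ``a.e. $(\theta,y)$'' to ``a.e. $\theta$ with $y=0$'' is harmless, since shifting $y$ translates the whole orbit set.
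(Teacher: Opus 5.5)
Your argument is correct and is essentially the standard proof of the cited result, Aaronson's Corollary 1.2.3, which the paper quotes without proof: conservativity plus ergodicity give that $m$-a.e.\ point hits every set of positive measure, and a countable base of intervals turns this into density of $\{f(T_\phi^n(\theta,y))\}$ in $\text{spt}(f_*m)$. You are also right to add the quantifier ``for $m$-a.e.\ $(\theta,y)$'', which the statement omits and without which it fails (at $\theta=0$ the orbit values of $f(\theta,y)=y$ are $y+n\phi(0)$). The one spot to tighten is that $\nu(\mathbb R\setminus\text{spt}\,\nu)=0$ needs a Lindel\"of or countable-cover argument, not just the definition of the support.
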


With that, we are ready to prove Proposition \ref{prop:density}.

\begin{proof}
Set $\phi = \varphi'$. Obviously $\int_{\mathbb{S}^1} \phi = 0$. Furthermore, $\phi$ is strictly aperiodic. Because if it was not, then $\phi$ would be cohomologous to a constant, but since $\int \phi =0$, it would actually be cohomologous to zero. Integrating the cohomology relation would give that $X$ is $C^1$, which is not possible by hypothesis, by Theorem \ref{thm:regWeierstrass}. By Lemma \ref{lem:ap-erg} and \ref{lem:cons}, the dynamical system $(T_\phi,\mathbb{S}^1 \times \mathbb{R}, m)$ is ergodic and conservative. By Lemma \ref{lem:dens}, choosing $f(\theta,y)=y$, we have $$ \overline{\{ \sum_{0\leq k \leq n} \phi(\lambda^k \theta) , \ , \ n \geq 0 \}} =  \overline{\{ f(T^n(\theta,y)) \ , \ n \geq 0\} } = \text{spt}(f_* m) = \mathbb{R}. $$
\end{proof}

\begin{comment}

As a consequence, we find the following corollary. \textbf{Actually, this should also give pure singularity for $(X e_m)_* (d\theta)$.}

\begin{corollary}
Suppose $\mu \lambda=1$ and suppose $X$ is not analytic. Then $X_* d\theta$ is purely singular.
\end{corollary}

\begin{proof}
If $X$ is not analytic then there exists no Hölder $\psi \in C^0(\mathbb{S}^1,\mathbb{R})$ such that $\varphi' = \psi \circ f - \psi$. It follows from \ref{prop:density} that for a.e. $\theta \in \mathbb{S}^1$, the set \{ $\sum_n \varphi'(\lambda^n \theta) , \ , \ n \geq 0$  \} is dense in $\mathbb{R}$. \\

\textbf{Hence the pushforward measure is purely singular by some argument. ?}

\end{proof}

\end{comment}

\section{On the large lacunarity limit} \label{ap:large-lacunarity}

Fix $0<\mu<1$, and let $\varphi\in C^0(\mathbb S^1,\mathbb R)$. Suppose that there exists an interval $I$ on which $\varphi$ is analytic and non-constant. For example, one can choose $\varphi$ to be constant equal to $1$ for $1/10<x<4/10$, constant equal to $-1$ for $6/10<x<9/10$, and piecewise affine on the remaining parts of the circle so that the resulting function is continuous. For each integer $\lambda\geq2$, recall that $W_{\mu,\lambda}(\theta)=\sum_{n=0}^\infty\mu^n\varphi(\lambda^n\theta)$. Since increasing the lacunarity produces arbitrarily fast Fourier decay and dispersion, it is natural to ask what the limiting object is as $\lambda\to\infty$. We show that in this limit the terms $\varphi(\lambda^n\theta)$ behave asymptotically as independent random variables. More precisely, the limiting measure below is the law of the random series $\sum_{n=0}^\infty\mu^nX_n$, where $(X_n)_{n\geq0}$ are i.i.d.\ with common distribution $\varphi_*d\theta$. Thus, in the large-lacunarity limit, the deterministic Weierstrass series behaves like a random series with i.i.d.\ coefficients. For functions $\varphi$ taking predominantly the values $\pm1$, as in the example above, this limit is closely related to a Bernoulli convolution.

\begin{theorem}
For any $0 < \mu < 1$ the weak limit
\[
m_\mu:=\lim_{\lambda\to\infty}(W_{\mu,\lambda})_*(d\theta)
\]
exists and is the distribution of $\sum_{n=0}^\infty\mu^nX_n$, where the $X_n$ are i.i.d.\ with distribution $\varphi_*d\theta$. Moreover, $|\widehat{m_\mu}(\xi)|\lesssim|\xi|^{-\rho}$ as $|\xi|\to\infty$ for some $\rho>0$.
\end{theorem}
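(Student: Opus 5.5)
Two things have to be shown: first the weak limit, via characteristic functions, and then the Fourier decay of the limiting law, by a direct estimate on a product.

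\emph{Weak limit.} Truncate the series. For $\varepsilon>0$ choose $K$ with $\sum_{n>K}\mu^n\|\varphi\|_\infty<\varepsilon$. The tails of both $W_{\mu,\lambda}$ and $\sum\mu^nX_n$ are then uniformly $\varepsilon$-small. This holds for $W_{\mu,\lambda}$ uniformly in $\lambda$, so it suffices to prove weak convergence of the truncated sums $W^K_\lambda(\theta)=\sum_{n=0}^K\mu^n\varphi(\lambda^n\theta)$.

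For this, compute the joint law of $(\varphi(\theta),\varphi(\lambda\theta),\dots,\varphi(\lambda^K\theta))$ under $d\theta$. For continuous test functions $f_0,\dots,f_K$ on $\mathbb R$, apply the exponential mixing (Perron--Frobenius--Ruelle) property of Section~\ref{sec:tran} iteratively:
\[
\int\prod_{n=0}^K f_n(\varphi(\lambda^n\theta))\,d\theta\;\longrightarrow\;\prod_{n=0}^K\int f_n\circ\varphi\,d\theta .
\]
The identity $\int h_1(\lambda\theta)h_2(\theta)\,d\theta=\int h_1\,\mathcal L_\lambda h_2\,d\theta$ reduces one factor at a time. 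One only needs $\mathcal L_\lambda h\to\int h$ uniformly for continuous $h$ as $\lambda\to\infty$, and this is a Riemann-sum statement: $\mathcal L_\lambda h(\theta)=\lambda^{-1}\sum_a h((a+\theta)/\lambda)$, so no Lipschitz bound is needed. Taking $f_n(x)=e^{i\xi\mu^nx}$ gives pointwise convergence of the characteristic functions of $W^K_\lambda$ to $\prod_{n\le K}\widehat{\varphi_*d\theta}(\mu^n\xi)$. Lévy's continuity theorem, combined with the $\varepsilon$-tail control, then yields $m_\mu=\mathrm{law}(\sum\mu^nX_n)$.

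\emph{Fourier decay.} We have $\widehat{m_\mu}(\xi)=\prod_{n\ge0}\widehat\nu(\mu^n\xi)$ with $\nu=\varphi_*d\theta$. Restrict $\varphi$ to a compact subinterval $J\subset I$ on which $\varphi$ is analytic and non-constant and $\varphi'$ has finitely many zeros of order at most $k$. The van der Corput lemma with $k$-th derivative, or the $C^{1+\alpha}$ van der Corput lemma of Appendix~\ref{ap:A}, then gives $|\int_J e^{i\eta\varphi}\,d\theta|\le C|\eta|^{-1/(k+1)}$. Hence, for $|\eta|\ge R$ with $R$ large,
\[
|\widehat\nu(\eta)|\le 1-|J|+C|\eta|^{-1/(k+1)}\le q<1 .
\]

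Now count the factors with $\mu^n|\xi|\in[R,|\xi|]$. There are about $\log(|\xi|/R)/|\log\mu|$ of them, and each has modulus at most $q$; all remaining factors have modulus at most $1$. Therefore
\[
|\widehat{m_\mu}(\xi)|\le q^{\log(|\xi|/R)/|\log\mu|}\lesssim|\xi|^{-\rho},\qquad \rho=\frac{\log(1/q)}{|\log\mu|}>0 .
\]

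I expect the main obstacle to be the uniform bound $\sup_{|\eta|\ge R}|\widehat\nu(\eta)|<1$. It requires controlling the oscillatory integral over $J$ while the rest of the circle, where $\varphi$ may be constant and so contribute atoms, is bounded trivially by its measure $1-|J|$. This step is exactly where the analytic non-constant piece is used. The trivial bound is harmless only because $|J|>0$ strictly dominates the error term $C|\eta|^{-1/(k+1)}$ for large $|\eta|$.
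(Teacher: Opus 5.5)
Your proof is correct. The Fourier-decay half is the same as the paper's. You split $\widehat{\varphi_*d\theta}$ into an oscillatory integral over a piece where $\varphi$ is analytic and non-constant plus a remainder bounded by its measure, obtain $\sup_{|\eta|\ge R}|F(\eta)|\le q<1$, and count the factors of the infinite product with $\mu^n|\xi|\ge R$. Your van der Corput justification on a compact $J\subset I$ is in fact more explicit than the paper's one-line claim $F_1(t)\lesssim t^{-\rho}$.

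The weak-limit half takes a different route. The paper uses the autosimilarity $W=\varphi+\mu W(\lambda\,\cdot)$ and a single application of $\mathcal{L}_\lambda$ to get the approximate renewal identity $\widehat{(W_{\mu,\lambda})_*d\theta}(t)=F(t)\,\widehat{(W_{\mu,\lambda})_*d\theta}(\mu t)+O(\varepsilon_t(1/\lambda))$. It then passes to a subsequential weak limit; tightness is automatic from $\|W_{\mu,\lambda}\|_\infty\le\|\varphi\|_\infty/(1-\mu)$. Iterating the limiting equation $\widehat{m_\mu}(t)=F(t)\widehat{m_\mu}(\mu t)$ identifies every subsequential limit, which gives uniqueness. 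You instead truncate the series and iterate the duality $\int h_1(\lambda\theta)h_2(\theta)\,d\theta=\int h_1\,\mathcal{L}_\lambda h_2\,d\theta$ $K$ times. This shows that the joint laws of $(\varphi(\lambda^n\theta))_{0\le n\le K}$ converge to a product measure, and you control the tail uniformly in $\lambda$.

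The paper's route is shorter and leans on the exact self-similarity. Yours avoids compactness and subsequences and never uses the relation $W=\varphi+\mu W(\lambda\,\cdot)$. It also proves a stronger intermediate fact: asymptotic independence of the coordinates, which is precisely the ``behaves like i.i.d.'' heuristic stated before the theorem. As a result, your weak-limit argument applies verbatim with any absolutely summable coefficient sequence in place of $\mu^n$, where no renewal equation is available.
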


\begin{proof}
Up to a subsequence, a weak limit $m_\mu$ exists. Let us show that the limit is unique. We have, for a fixed $t \in \mathbb{R}$:
$$ \widehat{W_* (d\theta)}(t) = \int_{\mathbb{S}^1} e^{i t W_{\mu,\lambda}(\theta)} d\theta $$
$$ = \int_{\mathbb{S}^1} e^{i t \varphi(\theta)} e^{it \mu W_{\mu,\lambda}(\lambda \theta)} d\theta $$
$$ = \int_{\mathbb{S}^1} \mathcal{L}_\lambda( e^{i t \varphi})(\theta) e^{it \mu W_{\mu,\lambda}(\theta)} d\theta. $$
Now, $e^{i t \varphi}$ is continuous. If we denote by $\varepsilon_t(\cdot)$ a modulus of continuity, we can write
$$ \Big| \mathcal{L}_\lambda(e^{it \varphi})(\theta) - \int_{\mathbb{S}^1} e^{i t \varphi} d\theta \Big| \leq \varepsilon_t(1/\lambda) .$$
Hence
$$ \widehat{W_*(d\theta)}(t) = \int_{\mathbb{S}^1} e^{it \varphi} \int_{\mathbb{S}^1} e^{it\mu W_{\mu,\lambda}} + O(\varepsilon_t(1/\lambda)) $$
$$ = F(t) \widehat{W_* (d\theta)}(\mu t) + O(\varepsilon_t(1/\lambda)), $$
where $F(t) := \int_{\mathbb{S}^1} e^{i t \varphi}$. Taking the limit of $\lambda \rightarrow +\infty$ along our subsequence yields the relation
$$ \widehat{m_\mu}(t) = F(t)\widehat{m_\mu}(t\mu) $$
so that
$$ \widehat{m_\mu}(t) = \prod_{n=0}^\infty F(t \mu^n). $$
If $(X_n)_{n\geq0}$ are i.i.d.\ with distribution $\varphi_*d\theta$, then $F(t)=\mathbb E(e^{itX_0})$, and by independence
$$ \mathbb E\left(e^{it\sum_{n=0}^\infty\mu^nX_n}\right)=\prod_{n=0}^\infty F(t\mu^n). $$
Hence $m_\mu$ is precisely the distribution of $\sum_{n=0}^\infty\mu^nX_n$. This also proves that the limit is unique, so the weak limit indeed exists.

Now let us bound $F$. We have
$$ F(t) = \int_{\mathbb{S}^1} e^{it \varphi(\theta)} d\theta = F_1(t) + F_2(t) $$
where $F_1(t) = \int_{I} e^{it \varphi}$, where $\varphi$ is analytic and non-constant, and $F_2(t) = \int_{\mathbb{S}^1 \setminus I} e^{it \varphi}$ is the remaining part. We have the bounds $F_1(t) \lesssim t^{-\rho}$ for some $\rho$, and $|F_2(t)| \leq |\mathbb{S}^1 \setminus I| := 1-|I|$. In particular, if $t$ is large enough $t \geq T$, we can write $|F(t)| \leq 1-|I|/2$. Then, letting $N(t)$ be the largest integer such that $t \mu^{N(t)} \geq T$, we find
$$ |\widehat{m_\mu}(t)| \leq \prod_{n=1}^{N(t)} |F(t \mu^n)| \leq (1-|I|/2)^{N(t)} \lesssim t^{- |\ln(1-|I|/2)/\ln(\mu)|} .$$
\end{proof}

\end{document}